\title{\large Searching for an Intruder on Graphs\\ and Their Subdivisions}
\date{}
\author{Anton~Bernshteyn}
\address[Anton Bernshteyn]{\normalfont School of Mathematics, Georgia Institute of Technology, Atlanta, GA, USA}
\email{bahtoh@gatech.edu}
\author{Eugene Lee}
\address[Eugene Lee]{\normalfont Department of Mathematical Sciences, Carnegie Mellon University, Pittsburgh, PA, USA}
\email{eleehuaj@andrew.cmu.edu}
\thanks{Research of the first named author is partially supported by the NSF grant DMS-2045412.}
\newcommand{\probend}{\vspace*{0.8\baselineskip} \hrule\hrule \vspace*{0.2\baselineskip} \hrule}
\newcommand{\abs}[1]{\left|#1\right|}
\newcommand{\pr}[1]{\left(#1\right)}
\newcommand{\FC}{\mathsf{FC}}
\newcommand{\PC}{\mathsf{PC}}
\newcommand{\s}{\mathsf{in}}
\newcommand{\gwt}[1]{\bm{#1}}
\newcommand{\pw}{\mathsf{pw}}
\renewcommand{\leq}{\leqslant}
\renewcommand{\le}{\leqslant}
\renewcommand{\ge}{\geqslant}
\renewcommand{\geq}{\geqslant}
\newcommand{\0}{\varnothing}
\newcommand{\concat}{{^\smallfrown}}
\newcommand{\bemph}[1]{{\normalfont#1}} 
\newcommand{\ep}[1]{\bemph{(}#1\bemph{)}} 
\newcommand{\emphd}[1]{{\fontseries{b}\selectfont#1}}
\newtheoremstyle{bfnote}%
{}{}%
{\slshape}{}%
{\bfseries}{\bfseries.}%
{ }%
{\thmname{#1}\thmnumber{ #2}\thmnote{ \ep{\normalfont{}#3}}}
\newtheoremstyle{claim}%
{}{}%
{\slshape}{}%
{\bfseries}{.}%
{ }%
{\thmname{#1}\thmnumber{ #2}\thmnote{ \ep{\normalfont{}#3}}}
\theoremstyle{bfnote}
\newtheorem{theo}{Theorem}[section]
\newtheorem*{theo*}{Theorem}
\newtheorem{prop}[theo]{Proposition}
\newtheorem{lemma}[theo]{Lemma}
\newtheorem{claim}[theo]{Claim}
\newtheorem*{claim*}{Claim}
\newtheorem{corl}[theo]{Corollary}
\newtheorem*{corl*}{Corollary}
\theoremstyle{definition}
\newtheorem{defn}[theo]{Definition}
\newtheorem*{defn*}{Definition}
\newtheorem{exmp}[theo]{Example}
\newtheorem*{exmp*}{Example}
\newtheorem{prob}[theo]{Problem}
\theoremstyle{remark}
\newtheorem*{ques*}{Question}
\newtheorem*{remk*}{Remark}
\theoremstyle{claim}
\newcounter{ForClaims}[section]
\newtheorem{smallclaim}{Claim}[ForClaims]
\newenvironment{claimproof}{\noindent{\color{purple!20}$\circ$}\hspace{1em}}{\hfill{\color{purple!20}$\bullet$}\smallskip}
\newcommand{\neutralize}[1]{\expandafter\let\csname c@#1\endcsname\count@}
\newenvironment{theocopy}[1]
{%
	\neutralize{theo}\phantomsection
	\begin{theo}}
	{\end{theo}}
\newenvironment{defncopy}[1]
{%
	\neutralize{theo}\phantomsection
	\begin{defn}}
	{\end{defn}}
\setlist{topsep=3pt,itemsep=3pt}
\renewcommand{\thesubsection}{\arabic{section}.\Alph{subsection}}
\titleformat{\section}[block]{\large\bfseries}{\thesection.}{1ex}{}
\titleformat{\subsection}[block]{\bfseries}{\thesubsection.}{1ex}{}
\titleformat{\subsubsection}[runin]{\itshape}{\bfseries\upshape\thesubsubsection.}{1ex}{}[.---]
\titlespacing*{\section}{0pt}{*3}{*1}
\titlespacing*{\subsection}{0pt}{*3}{*1}
\titlespacing*{\subsubsection}{0pt}{*1.5}{*0}
\begin{document}

\begin{abstract} In this paper we analyze a variant of the pursuit-evasion game on a graph $G$ where the intruder occupies a vertex, is allowed to move to adjacent vertices or remain in place, and is `invisible' to the searcher, meaning that the searcher operates with no knowledge of the position of the intruder. On each stage, the searcher is allowed to inspect an arbitrary set of $k$ vertices. The minimum $k$ for which the searcher can guarantee the capture of the intruder is called the \textit{inspection number} of $G$. We also introduce and study the \textit{topological inspection number}, a quantity that captures the limiting behavior of the inspection number under subdivisions of $G$. Our central theorem provides a full classification of graphs with topological inspection number up to $3$.
\end{abstract}
\maketitle

\section{Introduction}
All graphs in this paper are finite, undirected, and simple. A class of graph-theoretic problems that has been the topic of much interest is that of pursuit-evasion games, where the object is to capture an intruder who is allowed to move within a graph in some manner. 
The general nature of this problem lends itself to many variants; whether the intruder occupies vertices or edges of the graph, the amount of information revealed to the searchers as well as the constraints on their movement are all facets of the problem that can be modified to ask different questions. The version we analyze in this paper is the following:
\begin{defn}[Zero-Visibility Search Game]
For a positive integer $k$, the \emphd{zero-visibility $k$-search game} is a two-player game played on a graph $G$, where one player serves as the \emphd{searcher}, while the other plays as the \emphd{intruder}. Both players have full knowledge of the graph.

The intruder starts the game by occupying a vertex of their choice. At all times, the location of the intruder is unknown to the searcher (in other words, the intruder is `invisible').
The players alternate turns, beginning with the searcher. On each turn, the searcher `inspects' an arbitrary $k$-element set of vertices, and if the intruder is presently at any of those vertices, the game ends and the searcher wins. Otherwise, on the intruder's turn, they can choose to move to an adjacent vertex or remain at the same vertex. We emphasize that the searcher does not know if and where the intruder moves. 

A winning strategy for the searcher is a finite sequence of moves which guarantees that the intruder will always be caught
, regardless of the intruder's starting location and moves. The \emphd{inspection number} of the graph, denoted $\s(G)$, is the minimum $k$ such that a winning strategy for the searcher exists.\label{defn:s(G)}
\end{defn}

This problem has a natural equivalent formulation in terms of curbing the spread of an infection on the vertices of $G$. Suppose that each vertex can be in one of two states: \emph{cleared} or \emph{contaminated}. Initially, all vertices are contaminated. On every move, we may clear any $k$ vertices. After this, every cleared vertex that has a contaminated neighbor becomes contaminated again. The {inspection number} of $G$ is then equal to the smallest $k$ for which all vertices of $G$ can be cleared in finitely many moves (see Proposition~\ref{prop:searching_clearing}).

Very similar pursuit-evasion games have been considered previously. To\v{s}i\'{c} \cite{Tosic} introduced the \emph{zero-visibility Cops \&{} Robber game}, which differs from ours in that instead of examining arbitrary sets of $k$ vertices, the searcher controls $k$ tokens that occupy vertices of $G$ and can only be moved along edges. This model was also studied by Berger, Gilbers, Gr\"une, and Klein \cite{Berger} and Dereniowski, Dyer, Tifenbach, and Yang \cite{Dereniowski}. 
Another related model was studied in \cite{ACDL} by Aydinian, Cicalese, Deppe, and Lebedev. In their model, the searcher can only learn whether the intruder is located at one of the examined vertices \ep{but not at which vertex specifically}. 
The game investigated in \cite{Haslegrave} by Haslegrave, in \cite{Britnell} by Britnell and Wildon, and in \cite{Abramovskaya} by Abramovskaya, Fomin, Golovach, and Pilipczuk, called in the latter the \emph{Hunters \&{} Rabbit game}, is almost the same as ours except that the intruder is required to move to an adjacent vertex on every move.
For further references, see the surveys \cite{AlspachSurvey} by Alspach, \cite{FominSurvey} by Fomin and Thilikos, and \cite{BonatoYangSurvey} by Bonato and Yang.

It is easy to see that the complete graph $K_n$ has inspection number $n$, paths have inspection number $2$, and cycles have inspection number $3$. We also compute the inspection number of rectangular grid graphs:
\begin{theocopy}{theo:gr}
For each $n, m \ge 2$, the inspection number of the $n \times m$ grid graph is $\min\{n, m\} + 1$.\label{theo:grids}
\end{theocopy}

Trees can have arbitrarily high inspection numbers:

\begin{theocopy}{theo:trees}
For every $k$, there is a tree $T$ of maximum degree $3$ with $\s(T) > k$.\label{theo:trees_copy}
\end{theocopy}

Our proof of Theorem~\ref{theo:trees_copy} is surprisingly intricate. In general, proving lower bounds on the inspection number appears challenging.

An interesting aspect of pursuit-evasion games is whether or not they admit monotonic optimal strategies. 
Here a strategy is \textit{monotonic} if 
cleared vertices never become contaminated again. 
Some pursuit-evasion games admit optimal monotonic strategies \cite{Bienstock, LaPaugh}, while some do not \cite{Yang}. 
In their investigation of the zero-visibility Cops \&{} Robber game, Dereniowski et al.~\cite{Dereniowski} established sharp bounds on the number of cops needed for a monotonic winning strategy in terms of the pathwidth of the underlying graph. 
We prove the following result in the same vein:

\begin{theocopy}{theo:mono}
    For every connected graph $G$, the minimum $k$ such that the searcher has a monotonic winning strategy in the zero-visibility $k$-search game on $G$ \ep{the {\upshape\emphd{monotonic inspection number}}} is equal to $\pw(G) + 1$, where $\pw(G)$ denotes the pathwidth of $G$.\label{theo:mono_copy}
\end{theocopy}

Theorem~\ref{theo:mono} implies that $\s(G) \leq \pw(G) + 1$ for all connected graphs $G$. The $n \times m$ grid graph meets this bound with equality, since it has pathwidth $\min\{n, m\}$ \cite{Ellis} (note that the proof in the cited paper is via considering another pursuit-evasion game) and inspection number $\min\{n, m\} + 1$ by Theorem~\ref{theo:grids}. On the other hand, there are graphs $G$ with $\s(G) < \pw(G) + 1$ (and hence without a monotonic optimal search strategy). One such example is shown in Fig.~\ref{fig:K4}. Our further results imply that there are graphs with inspection number $3$ and arbitrarily high pathwidth (see Theorem \ref{theo:trees1}).

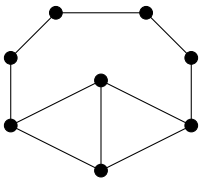
\begin{figure}[t]

\begin{center}    

\begin{tikzpicture}[scale=1.2]
    \filldraw (0,0) circle (2pt);
    \filldraw (1,0.5) circle (2pt);
    \filldraw (1,-0.5) circle (2pt);
    \filldraw (2,0) circle (2pt);
    
    \filldraw (0,0.75) circle (2pt);
    \filldraw (0.5,1.25) circle (2pt);
    \filldraw (1.5,1.25) circle (2pt);
    \filldraw (2,0.75) circle (2pt);
    
    \draw (0,0) -- (1,0.5) -- (1,-0.5) -- (2,0) -- (2,0.75) -- (1.5, 1.25) -- (0.5, 1.25) -- (0,0.75) -- (0,0) -- (1,-0.5) (1,0.5) -- (2,0);
\end{tikzpicture}

\caption{This graph (a subdivision of $K_4$) has pathwidth $3$ and hence monotonic inspection number $4$, but its ordinary inspection number is $3$ (see Example~\ref{ex:k4subdiv}).\label{fig:K4}}
\end{center}
\end{figure}

Another interesting direction is to study pursuit-evasion games on subdivisions of graphs, see, e.g., \cite{Delcourt}. Edge subdivisions can both increase and decrease the inspection number. For example, the complete graph $K_4$ has inspection number $4$ (see Example~\ref{exp:complete}). The subdivision of $K_4$ shown in Fig.~\ref{fig:K4} has inspection number $3$, but it also has a further subdivision with inspection number $4$ (see Theorem~\ref{res:1}). We hence wish to investigate the ``limiting'' behavior of the inspection number under edge subdivisions, captured precisely in the following notion:

\begin{defncopy}{defn:top_search} The \emphd{topological inspection number} of a graph $G$, denoted $\s_t(G)$, is the minimum $k$ such that for every subdivision $H$ of $G$, there is a further subdivision $H'$ of $H$ with $\s(H') \leq k$.

Equivalently, $\s_t(G)$ is the minimum $k$ such that for all $\ell$, $G$ has a subdivision $H$ with $\s(H) \le k$ in which every edge of $G$ is subdivided at least $\ell$ times.
\end{defncopy}

The topological inspection number can deviate significantly from the inspection number. For instance, by Theorem~\ref{theo:trees}, trees can have arbitrarily high inspection numbers, but we shall prove the following:

\begin{theo}
    The topological inspection number of every tree is at most $3$.\label{theo:trees1}
\end{theo}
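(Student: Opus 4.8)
The plan is to work with the clearing reformulation from Proposition~\ref{prop:searching_clearing}, tracking the \emph{contaminated set} $C$ (the set of possible intruder locations): on each turn the searcher deletes an inspected $k$-set from $C$, wins if $C$ becomes empty, and otherwise $C$ is replaced by its closed neighbourhood $N[C]$. The single feature I would exploit is the \emph{speed differential}: contamination spreads by at most one vertex per turn along the graph, whereas the searcher may inspect any $3$ vertices and is effectively infinitely fast. Combined with the freedom to subdivide edges as much as I like (the definition of $\s_t$ only asks for \emph{some} subdivision with all edges subdivided at least $\ell$ times and inspection number at most $3$), this is what will make $3$ inspectors enough.

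I would first isolate two gadgets. A \emph{wall}: one inspector that repeatedly re-inspects a single interior vertex $w$ of a long path permanently prevents contamination from crossing $w$, since at every spread step $w\notin C$ and so the protected side never acquires a contaminated neighbour. A \emph{window}: two inspectors sweeping the consecutive pairs $\{t,t+1\},\{t+1,t+2\},\dots$ clean an entire path (this is exactly the bound $\s(\text{path})\le 2$). These combine to clean any subdivided star with three inspectors: park one inspector permanently at the centre as a wall (the centre is a cut vertex, so a single wall isolates all legs), then window-sweep the legs one at a time. Each finished leg then stays clean \emph{for free}, protected by the central wall. Notably this needs no subdivision; the difficulty only appears when branch vertices stack up along a root-to-leaf path.

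For such trees the naive idea of holding a wall at the root of every \emph{finished} subtree requires more than three inspectors, because these walls nest along the path. The fix, and the whole point of subdividing, is to never hold nested walls: after finishing a subtree I would abandon its protection and instead rely on a long clean \emph{buffer} left on the edge joining it to the unfinished part. Since contamination crawls at speed one, it cannot cross a buffer of length $\beta$ in fewer than $\beta$ turns; hence a finished subtree remains clean provided I return to re-seal that edge within $\beta$ turns. Choosing edges higher in the tree to be astronomically longer than everything below them guarantees the buffers outlast the work still to be done.

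Concretely I would root $T$ at a leaf and define a recursive procedure $\mathrm{CLEAN}(v)$ that cleans the subtree $T_v$ and sweeps a buffer up the parent edge $e_v$: recursively run $\mathrm{CLEAN}(u_1),\dots,\mathrm{CLEAN}(u_k)$ on the children of $v$ one at a time, then finish at $v$ with the wall-plus-window gadget. This finishing step uses only three inspectors even when $v$ has large degree, because the wall at $v$ makes all the child-stubs independent, so they can be window-swept into $v$ successively and remain clean under the wall. Correctness demands that each $\mathrm{length}(e_{u_i})$ exceed the total running time of everything performed after $u_i$ is finished until the merge at $v$ re-seals $e_{u_i}$. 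The hard part will be precisely this bookkeeping: defining the running times $\tau(v)$ by the recursion, and producing a \emph{single} subdivision with every edge cut at least $\ell$ times that simultaneously satisfies all buffer inequalities. Because $T$ is finite the quantities $\tau(v)$ are finite, so I expect the constraints can be met by assigning lengths from the leaves upward, each higher edge chosen much longer than the sum of the work below it; verifying this consistency, together with the claim that the speed-one bound keeps every finished subtree clean until its re-seal and that the merge never exceeds three inspectors, is where the real work lies.
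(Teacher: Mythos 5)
Your plan is sound and reaches the theorem by a genuinely more direct route than the paper. The paper obtains this statement as a corollary of its full classification: every graph in $\mathcal{F}_1 \cup \mathcal{F}_2 \cup \mathcal{F}_3$ contains a cycle, so a tree satisfies condition \ref{item:families} of Theorem~\ref{theo:main}, hence admits a simple GSP decomposition, hence has topological inspection number at most $3$ by the amalgamation machinery of Section~\ref{sec:dec_ts}. Your recursion is, in effect, that machinery specialized to trees and stripped of the GSP formalism: your ``wall'' at a branch vertex combined with window sweeps of the incident edges is the content of Lemmas~\ref{lemma:clearing_from_bottom} and~\ref{lemma:clearing_to_top} (clearing a ball around a vertex that is searched on every turn), and your ``finish a subtree, leave a long clean buffer on the parent edge, return before the speed-one front crosses it'' is precisely the mechanism behind Propositions~\ref{prop:amalgamation_s}, \ref{prop:amalgamation_b}, and~\ref{prop:amalgamation_b2}; a tree's decomposition never invokes the parallel operation, which is the only place the paper's forbidden-subgraph analysis is really needed. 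What your route buys is a short, self-contained proof for trees; what the paper's route buys is that the tree case falls out of a theorem that also handles graphs with cycles. Two points deserve care in the write-up. First, each re-seal of a child edge $e_{u_i}$ must be swept \emph{from the contamination front toward the walled vertex} $v$, as your phrase ``into $v$'' indicates: a sweep outward from $v$ advances at the same speed as the retreating front and never terminates. Second, the buffer inequalities couple siblings, not only ancestors and descendants: while you re-seal $e_{u_1}$, the fronts on $e_{u_2},\dots,e_{u_k}$ keep advancing, so sibling edge lengths must grow rapidly in the order in which they are re-sealed. Your stated condition --- that $\mathrm{length}(e_{u_i})$ exceed the total time elapsed between finishing $u_i$ and re-sealing $e_{u_i}$ --- is the right one, and since each re-seal takes at most the length of the edge being re-sealed, all constraints can be discharged by assigning lengths greedily from the leaves upward under a fixed processing order, with every length also taken larger than the prescribed $\ell$.
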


Theorem~\ref{theo:trees1} also implies that there are graphs with inspection number $3$ and arbitrarily high pathwidth, since the pathwidth does not decrease under edge subdivisions and there are trees of arbitrarily high pathwidth.


Theorem~\ref{theo:trees1} is a corollary of our main result, which provides a complete characterization of graph with topological inspection number at most $3$. It is not hard to see that the only connected graphs with topological inspection number at most $2$ are paths (see Proposition~\ref{prop:paths}). 
The case of graphs with topological inspection number at most $3$ 
turns out to be significantly more involved:

\begin{theocopy}{theo:main}
Let $G$ be a connected graph with $\abs{V(G)} \ge 2$. The following are equivalent:
\begin{enumerate}[label=\ep{\normalfont\arabic*}]
    \item\label{item:ts3} $G$ has topological inspection number at most $3$.
    \item\label{item:families} $G$ does not contain as a subgraph any element of $\mathcal{F}_1, \mathcal{F}_2$, or $\mathcal{F}_3$ \ep{see Definition \ref{defn:families}}.
    \item\label{item:GSP} $G$ has a simple generalized series-parallel decomposition \ep{see Definitions \ref{defn:GSP}--\ref{defn:simple}}.
\end{enumerate}\label{theo:classification}
\end{theocopy}

This equivalence gives two different ways of detecting if a graph has topological inspection number at most $3$. Item \ref{item:families} describes such graphs by means of an explicit list of forbidden subgraphs. The forbidden subgraphs form three infinite families. The first family, $\mathcal{F}_1$, comprises the subdivisions of $K_4$. The other two families, $\mathcal{F}_2$ and $\mathcal{F}_3$, are described in Definition \ref{defn:families}. Representative examples of each family of forbidden subgraphs are depicted in Fig.~\ref{fig:representatives}. All the graphs in these families contain cycles, so the equivalence \ref{item:ts3} $\Longleftrightarrow$ \ref{item:families} yields Theorem~\ref{theo:trees1}.

\begin{figure}[t]
\begin{center}
\begin{tikzpicture}[x=0.75pt,y=0.75pt,yscale=-1,xscale=1]
\draw    (105.31,142.67) .. controls (109.14,116.67) and (113.52,113.67) .. (125.57,103.67) ;
\draw    (145.83,148.67) .. controls (143.64,120.67) and (137.07,111.67) .. (125.57,103.67) ;
\draw    (125.57,187.67) .. controls (135.43,179.67) and (143.64,165.67) .. (145.83,148.67) ;
\draw    (125.57,187.67) .. controls (110.79,181.67) and (107.5,171.67) .. (105.31,142.67) ;
\draw    (105.31,142.67) .. controls (117.36,155.67) and (130.5,158.67) .. (145.83,148.67) ;
\draw    (125.57,187.67) .. controls (57.83,191.67) and (58.83,102.67) .. (125.57,103.67) ;
\draw    (215.38,144.4) .. controls (200.36,130.16) and (215.38,104.95) .. (238.99,101.67) ;
\draw    (216.45,145.5) .. controls (218.6,132.35) and (223.97,113.72) .. (238.99,101.67) ;
\draw    (236.84,187.13) .. controls (215.38,187.13) and (194.99,160.84) .. (216.45,145.5) ;
\draw    (236.84,187.13) .. controls (223.97,172.89) and (217.53,160.84) .. (216.45,145.5) ;
\draw    (238.99,143.3) .. controls (227.19,133.44) and (230.41,111.53) .. (238.99,101.67) ;
\draw    (236.84,187.13) .. controls (225.04,177.27) and (230.41,153.17) .. (238.99,143.3) ;
\draw    (238.99,143.3) .. controls (249.72,133.44) and (251.87,114.82) .. (238.99,101.67) ;
\draw    (236.84,187.13) .. controls (250.8,177.27) and (251.87,154.26) .. (238.99,143.3) ;
\draw    (265.43,144.4) .. controls (274.1,124.09) and (265.43,104.95) .. (236.84,101.67) ;
\draw    (264.13,145.5) .. controls (261.53,132.35) and (255.04,113.72) .. (236.84,101.67) ;
\draw    (236.84,187.13) .. controls (262.83,187.13) and (278.39,162.44) .. (264.13,145.5) ;
\draw    (236.84,187.13) .. controls (252.44,172.89) and (262.83,160.84) .. (264.13,145.5) ;
\draw    (324.67,131.01) .. controls (323.78,121.13) and (327.72,111.13) .. (341.98,106.67) ;
\draw    (324.67,131.01) .. controls (334.28,128.13) and (339.53,118.13) .. (343.8,106.67) ;
\draw    (341.98,154.13) .. controls (326.4,149.13) and (325.09,143.13) .. (324.67,131.01) ;
\draw    (341.98,154.13) .. controls (342.16,141.13) and (335.59,134.13) .. (324.67,131.01) ;
\draw    (360.05,131.01) .. controls (360.71,117.74) and (353.55,109.13) .. (342.41,106.67) ;
\draw    (360.05,131.01) .. controls (348.55,125.13) and (345.55,117.13) .. (342.41,106.67) ;
\draw    (342.41,154.13) .. controls (355.55,151.13) and (360.55,144.13) .. (360.05,131.01) ;
\draw    (342.41,154.13) .. controls (347.55,141.13) and (350.55,137.13) .. (360.05,131.01) ;
\draw    (381.67,158.01) .. controls (380.78,148.13) and (384.72,138.13) .. (398.98,133.67) ;
\draw    (381.67,158.01) .. controls (391.28,155.13) and (396.53,145.13) .. (400.8,133.67) ;
\draw    (398.98,181.13) .. controls (383.4,176.13) and (382.09,170.13) .. (381.67,158.01) ;
\draw    (398.98,181.13) .. controls (399.16,168.13) and (392.59,161.13) .. (381.67,158.01) ;
\draw    (417.05,158.01) .. controls (417.71,144.74) and (410.55,136.13) .. (399.41,133.67) ;
\draw    (417.05,158.01) .. controls (405.55,152.13) and (402.55,144.13) .. (399.41,133.67) ;
\draw    (399.41,181.13) .. controls (412.55,178.13) and (417.55,171.13) .. (417.05,158.01) ;
\draw    (399.41,181.13) .. controls (404.55,168.13) and (407.55,164.13) .. (417.05,158.01) ;
\draw    (343.8,106.67) .. controls (383.8,76.67) and (359.41,211.13) .. (399.41,181.13) ;
\draw    (342.41,154.13) .. controls (357.55,183.13) and (374.55,113.13) .. (400.8,133.67) ;

\draw (104,197.4) node [anchor=north west][inner sep=0.75pt]    {$\mathcal{F}_{1}$};
\draw (232,201.4) node [anchor=north west][inner sep=0.75pt]    {$\mathcal{F}_{2}$};
\draw (366,202.4) node [anchor=north west][inner sep=0.75pt]    {$\mathcal{F}_{3}$};
\end{tikzpicture}
\caption{A representative example of each class of forbidden subgraphs. The family $\mathcal{F}_1$ comprises the subdivisions of $K_4$, while the other two families are more complicated (see Definition \ref{defn:families}).}\label{fig:representatives}
\end{center}
\end{figure}
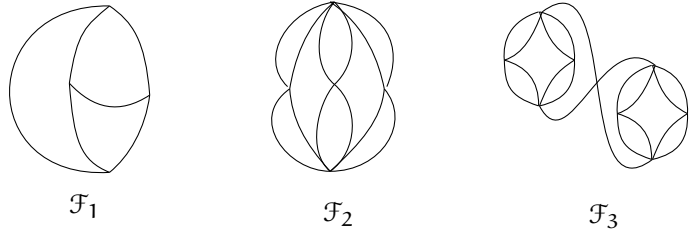

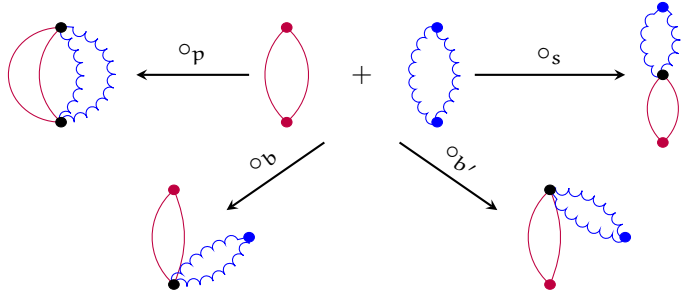
\begin{figure}[t]
    \centering
    \begin{tikzpicture}[yscale=0.9]
       \node at (0,0) {$+$};
       
       \draw[bend left=45,color=purple] (-1,-0.7) to (-1,0.7);
       \draw[bend right=45,color=purple] (-1,-0.7) to (-1,0.7);
       \filldraw[color=purple] (-1,-0.7) circle (2pt);
       \filldraw[color=purple] (-1,0.7) circle (2pt);
       
       \draw[bend left=45,style={decorate,decoration=bumps},color=blue] (1,-0.7) to (1,0.7);
       \draw[bend right=45,style={decorate,decoration=bumps},color=blue] (1,-0.7) to (1,0.7);
       \filldraw[color=blue] (1,-0.7) circle (2pt);
       \filldraw[color=blue] (1,0.7) circle (2pt);
       
       \draw[color=purple] (-4,-0.7) arc (270:90:0.7);
       \draw[bend left=45,color=purple] (-4,-0.7) to (-4,0.7);
       \draw[bend right=45,style={decorate,decoration=bumps},color=blue] (-4,-0.7) to (-4,0.7);
       \draw[style={decorate,decoration=bumps},color=blue] (-4,-0.7) arc (-90:90:0.7);
       \filldraw (-4,-0.7) circle (2pt);
       \filldraw (-4,0.7) circle (2pt);
       
       \draw[bend left=45,color=purple] (4,-1) to (4,0);
       \draw[bend right=45,color=purple] (4,-1) to (4,0);
       \draw[bend left=45,style={decorate,decoration=bumps},color=blue] (4,0) to (4,1);
       \draw[bend right=45,style={decorate,decoration=bumps},color=blue] (4,0) to (4,1);
       \filldraw[color=purple] (4,-1) circle (2pt);
       \filldraw (4,0) circle (2pt);
       \filldraw[color=blue] (4,1) circle (2pt);
       
       \draw[bend left=45,color=purple] (-2.5,-3.1) to (-2.5,-1.7);
       \draw[bend right=20,color=purple] (-2.5,-3.1) to (-2.5,-1.7);
       \draw[bend left=20,style={decorate,decoration=bumps},color=blue] (-2.5,-3.1) to (-1.5,-2.4);
       \draw[bend right=45,style={decorate,decoration=bumps},color=blue] (-2.5,-3.1) to (-1.5,-2.4);
       \filldraw[color=purple] (-2.5,-1.7) circle (2pt);
       \filldraw (-2.5,-3.1) circle (2pt);
       \filldraw[color=blue] (-1.5,-2.4) circle (2pt);
       
       \draw[bend left=45,color=purple] (2.5,-3.1) to (2.5,-1.7);
       \draw[bend right=20,color=purple] (2.5,-3.1) to (2.5,-1.7);
       \draw[bend left=20,style={decorate,decoration=bumps},color=blue] (3.5,-2.4) to (2.5,-1.7);
       \draw[bend right=45,style={decorate,decoration=bumps},color=blue] (3.5,-2.4) to (2.5,-1.7);
       \filldraw (2.5,-1.7) circle (2pt);
       \filldraw[color=purple] (2.5,-3.1) circle (2pt);
       \filldraw[color=blue] (3.5,-2.4) circle (2pt);
       
       \draw [thick,-stealth] (1.5,0) to node[midway,above]{$\circ_s$} (3.5,0);
       \draw [thick,-stealth] (-1.5,0) to node[midway,above]{$\circ_p$} (-3,0);
       \draw [thick,-stealth] (-0.5,-1) to node[midway,above,sloped]{$\circ_b$} (-1.8,-2);
       \draw [thick,-stealth] (0.5,-1) to node[midway,above,sloped]{$\circ_{b'}$} (1.8,-2);
    \end{tikzpicture}
    \caption{Pictorial representations of the four permissible operations defining the class of generalized series-parallel graphs. A decomposition is \textit{simple} if each of the $\circ_p$ operations in it has one of the subgraphs involved containing an edge separating its \textit{terminals}, the two distinguished vertices.}\label{fig:gsp}
\end{figure}

Item \ref{item:GSP} gives a constructive characterization of graphs with topological inspection number at most $3$. Roughly speaking, it says that a graph with topological inspection number at most $3$ can be inductively built up from individual edges via certain permissible operations. The four permissible operations are the \emph{series operation} $\circ_s$, the \emph{parallel operation} $\circ_p$, and the two \emph{branch operations} $\circ_b$ and $\circ_{b'}$. These operations are illustrated in Fig.~\ref{fig:gsp}. Graphs that can be constructed from individual edges using these operations are called \emph{generalized series-parallel} \ep{\emph{GSP} for short}; see Definition~\ref{defn:GSP} for details.

It is well-known that $2$-connected $K_4$-subdivision-free graphs are \emph{series-parallel}, i.e., they can be built from individual edges using only the operations $\circ_s$ and $\circ_p$ \cite[Theorem 2]{Duffin}. In general, connected $K_4$-subdivision-free graphs are GSP (see \S\ref{subsec:K4-free}). Since subdivisions of $K_4$ belong to the forbidden family $\mathcal{F}_1$, item \ref{item:families} of Theorem~\ref{theo:classification} implies that every connected graph $G$ with $\s_t(G) \leq 3$ must be GSP. However, as the graphs in $\mathcal{F}_2$ and $\mathcal{F}_3$ demonstrate, not all GSP graphs have topological inspection number at most $3$. The main point of \ref{item:GSP} is that a graph $G$ with $\s_t(G) \leq 3$ must admit a GSP decomposition that is \emph{simple}, which means that every time the parallel operation is used, at least one of the graphs it is applied to has to contain an edge separating the two distinguished vertices. The details are given in Definition~\ref{defn:simple}.

Our proof of the equivalence \ref{item:ts3} $\Longleftrightarrow$ \ref{item:GSP} is constructive; that is, given a graph $G$ with $\s_t(G) \leq 3$, it explicitly describes a subdivision of $G$ with a successful search strategy. 

Our characterization of graphs with topological inspection number at most $3$ via forbidden subgraphs shows that every such graph must be planar, since every non-planar graph contains a subdivision of $K_4$. Furthermore, some forbidden graphs in $\mathcal{F}_1 \cup \mathcal{F}_2 \cup \mathcal{F}_3$ are planar. On the other hand, none of the forbidden subgraphs are outerplanar, so every outerplanar graph has topological inspection number at most $3$. Also, there exist graphs which are not outerplanar but have topological inspection number $3$ ($K_{2, 3}$ is such an example). Thus, graphs with topological inspection number at most $3$ form a natural class strictly between outerplanar and planar graphs.

The rest of the paper is organized as follows.
\begin{itemize}
    \item Section \ref{sec:prelim} introduces the notation to be used in the rest of the paper. 
    \item Section \ref{sec:comp} computes bounds on the inspection number for several special classes of graphs. In particular, it contains the proofs of Theorems~\ref{theo:grids} and \ref{theo:trees}.
    \item Section \ref{sec:mono} investigates monotonic search strategies and proves Theorem~\ref{theo:mono}.
    \item Section \ref{sec:ts} introduces the topological inspection number and classifies graphs with topological inspection number at most $2$.
    \item Section \ref{sec:defns} introduces the necessary definitions for the statement of Theorem \ref{theo:classification}.
    \item Section \ref{sec:forb_lower} proves that each graph in $\mathcal{F}_1 \cup \mathcal{F}_2 \cup \mathcal{F}_3$ has topological inspection number at least $4$ (this is implication \ref{item:ts3} $\Longrightarrow$ \ref{item:families} of Theorem~\ref{theo:classification}).
    \item Section \ref{sec:forb_dec} proves that every graph without any forbidden subgraph has a simple generalized series-parallel decomposition (this is implication \ref{item:families} $\Longrightarrow$ \ref{item:GSP} of Theorem~\ref{theo:classification}).
    \item Section \ref{sec:dec_ts} proves that every graph with a simple generalized series-parallel decomposition has topological inspection number at most $3$ (this is implication \ref{item:GSP} $\Longrightarrow$ \ref{item:ts3} of Theorem~\ref{theo:classification}).
    \item Section \ref{sec:conclusion} discusses several future directions of study, as well as some open problems.
\end{itemize}

\subsubsection*{Acknowledgments}

We are very grateful to the anonymous referees for carefully reading this paper and making many helpful suggestions.

\section{Preliminaries}\label{sec:prelim}

\subsection{Graph-theoretic notation}

Given a graph $G$, we denote by $V(G)$ and $E(G)$ its vertex and edge sets, respectively. If $S \subseteq V(G)$, $N_G(S)$ denotes the set of neighbors of $S$ in $G$. We use $G[S]$ to denote the subgraph of $G$ induced by $S$, and $G - S$ is the subgraph induced by $V(G) \setminus S$. We call a set $S$ of vertices \emphd{$G$-connected} if $G[S]$ is connected. For $v \in V(G)$ and $r \in \mathbb{N}$, $B_G(v, r)$ denotes the \emphd{ball} of radius $r$ around $v$ in $G$, i.e., the set of all the vertices of $G$ at distance at most $r$ from $v$. The \emphd{boundary} of $S$, denoted $\partial_G(S)$, is the set of all the vertices in $S$ that have a neighbor in $V(G) \setminus S$. Note that if $H$ is a subgraph of $G$ and $S \subseteq V(H)$, then $\partial_H(S) \subseteq \partial_G(S)$.

We say a set $S \subseteq V(G)$ \emphd{separates} a collection of nonempty sets of vertices $\mathcal{F} \subseteq \mathcal{P}(V(G) \setminus S)$ if every connected component of $G - S$ has nonempty intersection with at most one member of $\mathcal{F}$. We naturally extend this notion to separating vertices by identifying vertices with singleton sets.

\begin{lemma} If $G$ is $k$-connected and $S \subseteq V(G)$ satisfies $\abs{S} \ge k$, then $\abs{\partial_G(S)} \ge k$ or $S = V(G)$.\label{claim:connectedness_lemma}
\end{lemma}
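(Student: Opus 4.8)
The plan is to fix $S$ with $\abs{S} \ge k$, assume $S \neq V(G)$ (so that $V(G) \setminus S \neq \0$), and prove directly that $\abs{\partial_G(S)} \ge k$. The guiding observation is that $\partial_G(S)$ functions as a vertex separator between the ``interior'' $S \setminus \partial_G(S)$ of $S$ and the complement $V(G) \setminus S$: by the very definition of the boundary, any vertex of $S$ adjacent to a vertex outside $S$ must itself lie in $\partial_G(S)$. The argument then splits into two cases depending on whether this interior is empty.

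First I would dispose of the degenerate case $S \setminus \partial_G(S) = \0$, in which every vertex of $S$ lies on the boundary. Here $\partial_G(S) = S$, so $\abs{\partial_G(S)} = \abs{S} \ge k$ and we are done without appealing to connectivity at all.

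For the principal case, set $A := S \setminus \partial_G(S)$ and $B := V(G) \setminus S$, and assume both are nonempty (the latter because $S \neq V(G)$). By definition of $\partial_G(S)$, no vertex of $A$ has a neighbor outside $S$, and in particular none has a neighbor in $B$; hence $G$ has no edges between $A$ and $B$. Therefore, in $G - \partial_G(S)$ the nonempty sets $A$ and $B$ lie in distinct connected components, so $\partial_G(S)$ is a vertex cut of $G$. Since $G$ is $k$-connected, the removal of any set of fewer than $k$ vertices leaves $G$ connected; thus $\abs{\partial_G(S)} \ge k$, completing the proof.

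I do not expect a genuine obstacle: the two subtleties are merely (i) verifying that $\partial_G(S)$ truly separates $A$ from $B$, which is immediate from the definition of the boundary, and (ii) remembering to treat the boundary case $A = \0$ on its own, so that $k$-connectivity is invoked only when $\partial_G(S)$ is an honest separating set.
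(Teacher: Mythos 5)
Your proof is correct and follows essentially the same route as the paper's: both observe that $\partial_G(S)$ separates $S \setminus \partial_G(S)$ from $V(G) \setminus S$ and invoke $k$-connectivity when both sides are nonempty, handling the degenerate case $\partial_G(S) = S$ separately. Your write-up is just a slightly more explicit version of the paper's argument.
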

\begin{proof} Note that $\partial_G(S)$ separates $S \setminus \partial_G(S)$ from $V(G) \setminus S$. If $\abs{\partial_G(S)} < \abs{S} < V(G)$, then these two sets are nonempty, and hence $\abs{\partial_G(S)} \ge k$. Otherwise, we have $S = V(G)$ or $\abs{\partial_G(S)} = \abs{S} \ge k$.
\end{proof}





\subsection{Searches}








Let us now introduce some notation useful for describing search strategies.

\begin{defn}[Searches] A $k$\emphd{-search} of length $\ell$ on a graph $G$ is a sequence $\mathcal{S} = \{S_{t} \subseteq V(G)\}_{t=1}^\ell$ for some $\ell \in \mathbb{N}$ where $\abs{S_t} \le k$ for each $t$. Note the convention of denoting searches by calligraphic letters and the individual sets by the same non-calligraphic letter with a subscript. We refer to $S_t$ (in the context of a search $\mathcal{S}$) as the \emphd{set of vertices searched at turn $t$}. We write $\abs{\mathcal{S}} = \ell$ to denote the length of $\mathcal{S}$. We sometimes refer to $k$ as the \emphd{search size}. We will occasionally refer to $k$-searches as \emphd{searches} when the specific search size is not important.

Given an \emphd{initial set of cleared vertices} $A \subseteq V(G)$ (which is usually taken to be $\0$), each $k$-search $\mathcal{S}$ of length $\ell$ has two associated sequences of sets of vertices $\{\PC_t(\mathcal{S}, A)\}_{t=1}^\ell$ and $\{\FC_t(\mathcal{S}, A)\}_{t=0}^\ell$, the \emphd{pre-cleared} and \emphd{fully cleared} vertices respectively after each turn. These are defined recursively by:
\begin{align*}
\FC_0(\mathcal{S}, A) &= A; \\
\PC_{t+1}(\mathcal{S}, A) &= \FC_t(\mathcal{S}, A) \cup S_{t+1}; \\
\FC_{t+1}(\mathcal{S}, A) &= \PC_{t+1}(\mathcal{S}, A)\setminus \partial_G(\PC_{t+1}(\mathcal{S}, A)).
\end{align*}
When $A = \0$, we will typically omit it from the notation, thus writing $\PC_t(\mathcal{S})$ and $\FC_t(\mathcal{S})$ for $\PC_t(\mathcal{S}, \0)$ and $\FC_t(\mathcal{S}, \0)$ respectively. We use $\FC_*(\mathcal{S}, A)$ (resp. $\PC_*(\mathcal{S}, A)$) to denote the set $\FC_\ell(\mathcal{S}, A)$ (resp. $\PC_\ell(\mathcal{S}, A)$) of fully cleared (resp.~pre-cleared) vertices after the final turn. 

We say a $k$-search $\mathcal{S}$ is \emphd{successful} if $\FC_*(\mathcal{S}) = V(G)$. 
\label{defn:search}
\end{defn}
\begin{prop}
The inspection number of a graph $G$ \ep{as given by Definition~\ref{defn:s(G)}} is equal to the minimum $k$ such that there exists a successful $k$-search on $G$. \label{prop:searching_clearing} 
\end{prop}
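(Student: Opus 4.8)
The plan is to establish the equivalence between the game-theoretic formulation (Definition~\ref{defn:s(G)}) and the combinatorial clearing formulation (Definition~\ref{defn:search}) by showing a two-way implication: a successful $k$-search on $G$ yields a winning strategy for the searcher in the zero-visibility $k$-search game, and conversely. The central conceptual point is that the set $\FC_t(\mathcal{S}, \0)$ of fully cleared vertices after turn $t$ should be exactly the set of vertices where the searcher can, with certainty, rule out the presence of the intruder. The recursive definition is engineered precisely to mirror the dynamics of the game: at turn $t+1$ the searcher inspects $S_{t+1}$, adding these to the previously cleared set to obtain the pre-cleared set $\PC_{t+1}$, and then the intruder's move can recontaminate any cleared vertex adjacent to a still-contaminated vertex, which is captured by deleting the boundary $\partial_G(\PC_{t+1})$.

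First I would make precise the invariant: for a fixed search $\mathcal{S}$ played by the searcher, define the set $C_t$ of \emph{possible intruder locations} after the searcher's turn $t$ and the intruder's subsequent (possible) move, consistent with the intruder not yet having been caught. Initially every vertex is a possible location, so $C_0 = V(G)$. I would then prove by induction on $t$ that $C_t = V(G) \setminus \FC_t(\mathcal{S}, \0)$. The inductive step splits into the two game phases. When the searcher inspects $S_{t+1}$ without catching the intruder, the set of surviving positions shrinks to $C_t \setminus S_{t+1}$, whose complement is $\FC_t \cup S_{t+1} = \PC_{t+1}$. When the intruder then moves (or stays), a vertex $v$ is a possible new location iff $v$ or some neighbor of $v$ was a surviving position, i.e. iff $v \notin \PC_{t+1}$ or $v$ has a neighbor outside $\PC_{t+1}$; the complement of this set of possible locations is exactly $\PC_{t+1} \setminus \partial_G(\PC_{t+1}) = \FC_{t+1}$. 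This matches the recursion and closes the induction.

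With the invariant in hand, the proposition follows quickly in both directions. If there is a successful $k$-search $\mathcal{S}$, meaning $\FC_*(\mathcal{S}) = V(G)$, then $C_{|\mathcal{S}|} = \0$, so after the searcher has executed the moves $S_1, \dots, S_\ell$ there are no consistent intruder positions remaining; hence the intruder must have been caught at some earlier inspection regardless of its play, giving a winning searcher strategy with inspection sets of size at most $k$. Conversely, any winning searcher strategy is, by definition, a finite sequence of inspection sets each of size at most $k$; reading these off as $S_1, \dots, S_\ell$ defines a $k$-search, and because the strategy guarantees capture, no intruder location survives, so $C_\ell = \0$ and thus $\FC_*(\mathcal{S}) = V(G)$, i.e. the search is successful. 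Taking the minimum $k$ over all such strategies/searches yields the equality of the two quantities.

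The main subtlety to handle carefully is that in the zero-visibility game the searcher has no feedback, so a winning strategy is genuinely a fixed (non-adaptive) sequence of inspections rather than a decision tree; I would note explicitly that adaptivity gives the searcher no advantage here precisely because inspections reveal no information unless they end the game, which is why a plain $k$-search suffices. I would also take care with the convention on who moves first and with the edge case where the intruder could have been caught on an inspection before the last turn: the possible-locations set $C_t$ is defined to already incorporate the constraint ``not yet caught,'' so the bookkeeping stays consistent throughout. This boundary-of-the-game reasoning, rather than any single computation, is the step I expect to require the most care.
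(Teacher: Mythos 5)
Your proposal is correct and follows essentially the same route as the paper: both establish by induction the invariant that the set of possible intruder locations after the searcher's (resp.\ intruder's) $t$-th turn is $V(G)\setminus\PC_t(\mathcal{S})$ (resp.\ $V(G)\setminus\FC_t(\mathcal{S})$) and then read off the equivalence. Your explicit remarks on non-adaptivity and the ``not yet caught'' bookkeeping are details the paper leaves implicit, but the underlying argument is the same.
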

\begin{proof} We can consider the natural correspondence between strategies for the searcher and $k$-searches by identifying the sets of vertices inspected at turn $i$ with the $i$th term of the $k$-search. A simple inductive argument shows that given this correspondence, the set of all the possible locations of the intruder after the searcher's $i$th turn is precisely $V(G) \setminus \PC_i(\mathcal{S})$ and the set of all the possible locations of the intruder after the intruder's $i$th turn is precisely $V(G) \setminus \FC_i(\mathcal{S})$. The searcher's strategy is winning if and only if the set of possible locations is reduced to $\0$ by a finite sequence of moves, which is equivalent to the search being successful.
\end{proof}

It is clear that for any subgraph $H$ of $G$, we have $\s(H) \le \s(G)$, as any successful search on $G$, with the vertex sets appropriately restricted, is also a successful search on $H$.


\begin{exmp}\label{exp:complete}
$\s(K_n) = n$.
\end{exmp}
\begin{proof}
Suppose $\mathcal{S}$ is a $k$-search on $K_n$ for some $k < n$. Note that if $\FC_i(\mathcal{S}) = \0$, then $\PC_{i+1}(\mathcal{S}) \ne V(K_n)$, so $\FC_{i+1}(\mathcal{S}) = \0$ as well. Hence $\mathcal{S}$ cannot be successful. On the other hand, checking all the vertices on the first move gives a successful $n$-search, and hence $\s(K_n) = n$.
\end{proof}

\section{Some computations}\label{sec:comp}

\subsection{Sets with small boundary}

Our primary tool for proving lower bounds on the inspection number is the following proposition:
\begin{prop}
Let $G$ be a connected graph. If $\s(G) \le k$, then for each $1 \le i \le \abs{V(G)}$, there exists some $C \subseteq V(G)$ such that $\abs{\partial_G(C)} < k$ and $i - k < \abs{C} < i$.\label{prop:boundary}
\end{prop}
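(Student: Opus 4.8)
The plan is to fix a successful $k$-search $\mathcal{S}$ of length $\ell$ on $G$ (which exists by Proposition~\ref{prop:searching_clearing}) and to track the sizes $a_t := \abs{\FC_t(\mathcal{S})}$ of the fully cleared sets. Since $\FC_0(\mathcal{S}) = \0$ and $\mathcal{S}$ is successful, we have $a_0 = 0$ and $a_\ell = \abs{V(G)}$; moreover, because $\FC_t(\mathcal{S}) \subseteq \PC_t(\mathcal{S}) = \FC_{t-1}(\mathcal{S}) \cup S_t$ with $\abs{S_t} \le k$, consecutive sizes satisfy $a_t \le a_{t-1} + k$. So the sequence $a_0, \dots, a_\ell$ is a walk from $0$ to $\abs{V(G)}$ whose up-steps are at most $k$. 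The quantitative observation I would exploit is that, using $\FC_t(\mathcal{S}) = \PC_t(\mathcal{S}) \setminus \partial_G(\PC_t(\mathcal{S}))$, whenever $a_t > a_{t-1}$ we get
\[
\abs{\partial_G(\PC_t(\mathcal{S}))} \;=\; \abs{\PC_t(\mathcal{S})} - \abs{\FC_t(\mathcal{S})} \;\le\; (a_{t-1} + k) - a_t \;\le\; k - 1.
\]

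The set $C$ I would produce is \emph{not} $\FC_t(\mathcal{S})$ itself, whose boundary can be large, but rather its complement $C = V(G) \setminus \FC_t(\mathcal{S})$ for a well-chosen $t$. The conceptual crux is that $\partial_G(V(G) \setminus \FC_t(\mathcal{S})) \subseteq \partial_G(\PC_t(\mathcal{S}))$: if $v \notin \FC_t(\mathcal{S})$ has a neighbor $u \in \FC_t(\mathcal{S})$, then every neighbor of $u$ lies in $\PC_t(\mathcal{S})$, so $v \in \PC_t(\mathcal{S}) \setminus \FC_t(\mathcal{S}) = \partial_G(\PC_t(\mathcal{S}))$. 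Thus at an up-step this complement has boundary of size at most $k-1 < k$, while its cardinality is $\abs{V(G)} - a_t$, which I can steer by choosing where the walk first crosses a prescribed level.

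Concretely, for a target $i$ with $k < i \le \abs{V(G)}$, I would set $m = \abs{V(G)} - i + 1$ (so $1 \le m \le \abs{V(G)} - k$) and let $t$ be the first index with $a_t \ge m$; then $a_{t-1} < m \le a_t$, so this is an up-step and $a_t \le a_{t-1} + k \le m - 1 + k$. Taking $C = V(G) \setminus \FC_t(\mathcal{S})$ gives $\abs{\partial_G(C)} < k$ and $\abs{C} = \abs{V(G)} - a_t \in [\,i-k,\; i-1\,]$. For small targets $i \le k$ the statement is immediate: any set $C$ of size $i-1 \le k-1$ automatically satisfies $\abs{\partial_G(C)} \le \abs{C} < k$, and $i - k < i - 1 < i$.

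The one genuinely delicate point, and the step I expect to be the main obstacle, is upgrading the size bound from the closed interval $[\,i-k,\;i-1\,]$ to the open interval demanded by the statement, i.e.\ ruling out $\abs{C} = i-k$. This is precisely the extreme case $a_t = a_{t-1} + k$ with $a_{t-1} = m-1$, which forces $\abs{\PC_t(\mathcal{S})} = \abs{\FC_t(\mathcal{S})}$ and hence $\partial_G(\PC_t(\mathcal{S})) = \0$. Since $k < i$ guarantees that $\PC_t(\mathcal{S})$ is a proper nonempty subset of $V(G)$, an empty boundary is impossible in a connected graph (this is Lemma~\ref{claim:connectedness_lemma} applied with $k=1$), so this case does not occur and $\abs{C} \ge i-k+1$. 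This connectivity contradiction is what converts the closed interval into the open one and completes the argument.
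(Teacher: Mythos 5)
Your proof is correct, and it realizes the same underlying idea as the paper's proof with a complementary choice of witness set. The paper takes $t$ minimal with $\abs{\PC_t(\mathcal{S})} \ge i$ and exhibits $C = \PC_{t-1}(\mathcal{S})$ directly: the chain $i \le \abs{\PC_t(\mathcal{S})} \le \abs{\FC_{t-1}(\mathcal{S})} + k = \abs{\PC_{t-1}(\mathcal{S})} - \abs{\partial_G(\PC_{t-1}(\mathcal{S}))} + k$ yields the boundary bound at once, and nonemptiness of $\partial_G(\PC_{t-1}(\mathcal{S}))$ (connectedness) yields the strict lower bound on $\abs{C}$. You instead threshold the fully cleared count at level $m = \abs{V(G)} - i + 1$ and take $C = V(G) \setminus \FC_t(\mathcal{S})$, which costs two extra ingredients the paper does not need: the containment $\partial_G(V(G)\setminus\FC_t(\mathcal{S})) \subseteq \partial_G(\PC_t(\mathcal{S}))$ (your justification of it is correct) and a separate treatment of small targets $i \le k$. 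Both arguments hinge on the identical observation -- at the step where the cleared region first crosses a prescribed level, the pre-cleared set has boundary of size less than $k$ because each step adds at most $k$ vertices -- and both invoke connectedness in the same place, to turn the closed size interval into the open one. Your route is a legitimate alternative; it just trades the paper's one-line size computation for the complementation lemma and the case split. (One shared caveat: in the degenerate instance $G = K_1$, $k = i = 1$, no set of size strictly between $0$ and $1$ exists, so both your argument for $i \le k$ and the paper's own proof break down there; this is immaterial since every connected graph with an edge has inspection number at least $2$.)
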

\begin{proof} Let $\mathcal{S}$ be a successful $k$-search on $G$. For each $1 \le i \le \abs{V(G)}$, consider the minimal $t$ such that $\abs{\PC_t(\mathcal{S})} \ge i$. We claim that $C = \PC_{t-1}(\mathcal{S})$ works. Indeed, $\abs{\PC_{t-1}(\mathcal{S})} < i$, so we can write
\begin{align*}
\abs{\PC_{t-1}(\mathcal{S})} &< i \le \abs{\PC_{t}(\mathcal{S})} \le \abs{\FC_{t-1}(\mathcal{S})} + k = \abs{\PC_{t-1}(\mathcal{S})} - \abs{\partial_G(\PC_{t-1}(\mathcal{S}))} + k. 
\end{align*}
Thus, we have $\abs{\partial_G(\PC_{t-1}(\mathcal{S}))} < k$. Furthermore,
\[i - k \le \abs{\PC_{t-1}(\mathcal{S})} - \abs{\partial_G(\PC_{t-1}(\mathcal{S}))} < \abs{\PC_{t-1}(\mathcal{S})} < i,\]
where we use that $\partial_G(\PC_{t-1}(\mathcal{S}))$ is nonempty. 
\end{proof}

\subsection{Grid graphs}

Here we compute the inspection number of rectangular grid graphs.

\begin{defn}[Grid Graphs]
An $n \times m$ \emphd{grid graph}, denoted $\Gamma_{n, m}$, comprises the vertices
\[\{(i, j) \in \mathbb{Z}^2 \mid 0 \le i < n, 0 \le j < m\},\]
with vertices $(a, b)$ and $(c, d)$ adjacent if and only if $\abs{a - c} + \abs{b - d} = 1$.

Visually the first coordinate indexes the row of a vertex, and the second one the column. 
Rows increase in index from bottom to top, and columns increase in index from left to right.
\end{defn}

\begin{theo}
For each $n, m \ge 2$, $\s(\Gamma_{n, m}) = \min\{n, m\} + 1$.\label{theo:gr}
\end{theo}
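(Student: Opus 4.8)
The plan is to prove both inequalities, assuming without loss of generality that $n \le m$ (so the target value is $n+1$); since transposing rows and columns is a graph isomorphism and $\min\{n,m\}$ is symmetric, this loses nothing. For the upper bound $\s(\Gamma_{n,m}) \le n+1$ I would exhibit an explicit successful $(n+1)$-search via a monotone ``staircase'' sweep. List the vertices in column-major order $v_1, \dots, v_{nm}$ (column $0$ bottom-to-top, then column $1$, and so on) and let $P_t = \{v_1, \dots, v_t\}$. Each prefix $P_t$ consists of some full columns together with a bottom segment of the next column, so its outer boundary $N_G(P_t) \setminus P_t$ is contained in the unfilled top of the current column plus the matching bottom of the next column and hence has at most $n$ vertices. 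Define the search by $S_t := \{v_t\} \cup \pr{N_G(P_t) \setminus P_t}$, so $\abs{S_t} \le n+1$. A straightforward induction shows $\FC_t(\mathcal{S}) \supseteq P_t$: if $\FC_{t-1}(\mathcal{S}) \supseteq P_{t-1}$ then $\PC_t(\mathcal{S}) \supseteq P_t \cup \pr{N_G(P_t)\setminus P_t} = N_G[P_t]$, so every vertex of $P_t$ has all its neighbors pre-cleared and survives recontamination. Taking $t = nm$ gives $\FC_*(\mathcal{S}) = V(\Gamma_{n,m})$, as desired.

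For the lower bound $\s(\Gamma_{n,m}) \ge n+1$ I would argue by contradiction through Proposition~\ref{prop:boundary}: if $\s(\Gamma_{n,m}) \le n$, then for every $i$ there is a set $C$ with $\abs{\partial_G(C)} < n$ and $i - n < \abs{C} < i$. The crux is an isoperimetric statement ruling out small boundary for middle sizes: if $\varnothing \ne C \ne V(\Gamma_{n,m})$ and $\abs{\partial_G(C)} \le n-1$, then $\abs{C} \le \binom{n}{2}$ or $\abs{C} \ge nm - \binom{n-1}{2}$. Granting this, every size $s$ with $\binom{n}{2} < s < nm - \binom{n-1}{2}$ is forbidden; since $\binom{n}{2} + \binom{n-1}{2} = (n-1)^2$ and $m \ge n$, this open interval has length $nm - (n-1)^2 - 1 \ge n-1$ and therefore contains the $n-1$ consecutive integers $\binom{n}{2}+1, \dots, \binom{n}{2}+n-1$. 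Choosing $i = \binom{n}{2}+n$ then contradicts Proposition~\ref{prop:boundary}.

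To establish the isoperimetric claim I would begin with pigeonhole. Since $\abs{\partial_G(C)} \le n-1$ is strictly smaller than both the number $n$ of rows and the number $m$ of columns, some row $R$ and some column $Q$ are free of boundary vertices. A boundary-free row must lie entirely inside $C$ or entirely outside $C$, because a vertex of $C$ whose neighbors are all in $C$ forces its horizontal neighbors into $C$ as well, and this propagates along the row; the same holds for $Q$. Next, $C$ cannot contain a full row while omitting another full row, for then every column would meet both $C$ and its complement and contribute a distinct boundary vertex, giving $\abs{\partial_G(C)} \ge m \ge n$; symmetrically for columns. Since $R$ and $Q$ meet, they are either both inside or both outside $C$, so either $C$ contains a full row and a full column (and then, by the previous step, no row or column is fully outside, forcing $\overline{C}$ into a corner region) or $\overline{C}$ contains a full cross (forcing $C$ into a corner region). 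In either case a compression/symmetrization argument — pushing the corner-confined set toward the appropriate corner without increasing its inner vertex boundary — reduces matters to a staircase (Young-diagram) set, whose area for a given boundary length is maximized by the triangular region $\{(i,j): i+j \le n-2\}$ of size $\binom{n}{2}$; this yields the two thresholds.

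The main obstacle is the tightness required in the near-square case $m \approx n$. When $m$ greatly exceeds $n$ the forbidden band of sizes is huge and only a crude estimate is needed, but for $m = n$ the extremal triangle already occupies size $\binom{n}{2} \approx n^2/2$, and the window of forbidden sizes has length only $2(n-1)$, so the thresholds $\binom{n}{2}$ and $nm - \binom{n-1}{2}$ must be obtained essentially exactly. A further subtlety is that the inner vertex boundary $\partial_G$ is \emph{not} symmetric under complementation, so the ``small'' and ``large'' sides of the dichotomy genuinely require compressing $C$ and $\overline{C}$ toward \emph{opposite} corners; verifying that the correct compression does not increase $\abs{\partial_G(\cdot)}$ and that the compressed sets are exactly staircases is where the real work lies, the remainder being bookkeeping.
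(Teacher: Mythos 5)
Your upper bound is complete and correct: with $n=\min\{n,m\}$, the prefix sweep $S_t=\{v_t\}\cup\pr{N_G(P_t)\setminus P_t}$ does satisfy $\abs{N_G(P_t)\setminus P_t}\le n$ (the $n-s$ exposed vertices of the current column and the $s$ exposed vertices of the next one), and your induction $\FC_t(\mathcal{S})\supseteq P_t$ closes. This is the same monotone sweep as in the paper, which instead searches a sliding window of $\min\{n,m\}+1$ consecutive vertices in the same linear order. Your lower-bound skeleton also coincides with the paper's: the same threshold claim (inner boundary at most $n-1$ forces $\abs{C}\le\binom{n}{2}$ or $\abs{C}\ge nm-\binom{n-1}{2}$), the same window of forbidden sizes fed into Proposition~\ref{prop:boundary}, and the same opening moves (a boundary-free row and a boundary-free column, each fully inside or fully outside $C$; no full row together with an empty row, etc.). The gap is that the threshold claim itself --- the entire content of the lower bound --- is not proved.

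Two concrete problems. First, the structural assertion is false as stated: if $\overline{C}$ contains a full row and a full column, then $C$ lies in the complement of a cross, which is a union of up to four rectangles, not ``a corner region''; and in the complementary case $\overline{C}$ need not be near any corner at all (it can sit in the middle of the grid). Second, and more seriously, the proof of the size bound is delegated entirely to an unproven compression lemma (``pushing \dots toward the appropriate corner without increasing its inner vertex boundary'') plus an unperformed optimization over staircases --- you say yourself this is where the real work lies. The route is salvageable: one can use $\abs{\partial_G(C)}=\abs{N_G(\overline{C})\setminus\overline{C}}$ to reduce to the outer boundary, prove a column-by-column compression lemma in the style of Bollob\'as--Leader, check that ``no full row and no full column'' survives compression so the limit is a Young diagram with heights $h_0\ge h_1\ge\cdots$ all below $n$, and then verify that $\sum_j\max(h_j-h_{j+1},1)\le n-1$ forces $\sum_j h_j\le\binom{n}{2}$. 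But none of this is in your write-up, and vertex-boundary compression on grids is precisely the kind of step that cannot be waved through. The paper avoids compression altogether with a direct double count: for each nonempty line $\ell_i$ parallel to the empty line $\ell_k$, the $\abs{\ell_i\cap C}$ segments joining $\ell_i\cap C$ to $\ell_k$ each contain a boundary vertex, and every nonempty line outside the strip between $\ell_i$ and $\ell_k$ contributes one more; summing the resulting inequality $\abs{\ell_i\cap C}\le n-1-m(i)$ over nonempty lines, with $\sum_i m(i)\ge\binom{\abs{I}}{2}$, gives $\abs{C}\le\binom{n}{2}$ in a few lines --- and that final quadratic maximization is exactly the computation your staircase optimization would reduce to anyway.
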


\begin{proof}\stepcounter{ForClaims} \renewcommand{\theForClaims}{\ref{theo:gr}}
Let $G = \Gamma_{n,m}$. Suppose, without loss of generality, that $n \ge m$, so our goal is thus to show that $\s(G) = m + 1$. For convenience, we relabel the vertices of $G$, writing $(a, b)$ as $v_{am + b}$ (see Fig.~\ref{fig:grid}).

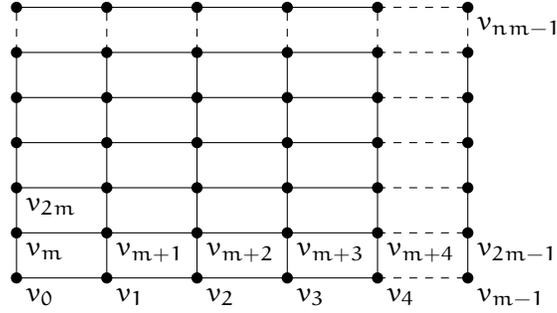
\begin{figure}[t]
\begin{center}
\begin{tikzpicture}[scale=1.2]
\draw (0, 3) -- (4, 3);
\draw (5, 0) -- (5, 2.5);
\draw[dashed] (4,3) -- (5, 3) -- (5, 2.5);
\foreach \x in {0, 1, ..., 4}
{
    \draw[dashed] (\x, 2.5) -- (\x, 3);
    \draw (\x, 0) -- (\x, 2.5);
}
\foreach \y in {0, 0.5, ..., 2.5}
{
    \draw[dashed] (4, \y) -- (5, \y);
    \draw (0, \y) -- (4, \y);
}

\foreach \y in {0, 0.5, ..., 3} {
    \foreach \x in {0, 1, ..., 5} {
        \node[fill=black,circle,inner sep=1.5pt] at (\x, \y){};
    }
}

\node[anchor=north west] at (0, 0){$v_0$};
\node[anchor=north west] at (1, 0){$v_1$};
\node[anchor=north west] at (2, 0){$v_2$};
\node[anchor=north west] at (3, 0){$v_3$};
\node[anchor=north west] at (4, 0){$v_4$};
\node[anchor=north west] at (5, 0){$v_{m-1}$};
\node[anchor=north west] at (0, 0.5){$v_m$};
\node[anchor=north west] at (1, 0.5){$v_{m+1}$};
\node[anchor=north west] at (2, 0.5){$v_{m + 2}$};
\node[anchor=north west] at (3, 0.5){$v_{m + 3}$};
\node[anchor=north west] at (4, 0.5){$v_{m + 4}$};
\node[anchor=north west] at (5, 0.5){$v_{2m-1}$};
\node[anchor=north west] at (0, 1){$v_{2m}$};
\node[anchor=north west] at (5, 3){$v_{nm - 1}$};
\end{tikzpicture}
\end{center}
\caption{The graph $\Gamma_{n,m}$.}\label{fig:grid}
\end{figure}

To prove that $\s(G) \le m + 1$, consider the $(m+1)$-search $\mathcal{S}$ defined via
\[S_t := \{v_{t+\ell} \mid 0 \le \ell \le m\} \quad \text{for all } 1 \le t \le m(n-1).\]
Note that for each $v_i$, its neighbors $v_j$ satisfy $j \le i + m$. If for some $t$, $\{v_i \mid i \le t\} \subseteq \FC_t(\mathcal{S})$, then we have
$\{v_i \mid i \le t + m + 1\} \subseteq \PC_{t+1}(\mathcal{S})$,
and hence
$\{v_i \mid i \le t + 1\} \subseteq \FC_{t+1}(\mathcal{S})$.
Induction thus shows that the search is successful, so $\s(G) \le m + 1$, as desired.

We now wish to show that no successful $m$-search exists. We shall analyze the sets with boundaries of size less than $m$ and then use Proposition~\ref{prop:boundary} to conclude the argument.

\begin{smallclaim} Any set $S \subseteq V(G)$ with $\abs{\partial_G(S)} \le m - 1$ satisfies \[\abs{S} \le \frac{m(m-1)}{2} \quad \text{or} \quad \abs{S} \ge mn - \frac{(m-2)(m-1)}{2}.\]\label{claim:grid_bdry}
\end{smallclaim}\vspace*{-15pt}
\begin{claimproof} Denote the $i$th column and $j$th row of $G$ by $c_i$ and $r_j$ respectively. For brevity, let a row or column be \emphd{empty/full} if it is disjoint from/contained in $S$ respectively. Note that each row or column is either empty, full, or contains a boundary vertex of $S$.

If there exist both an empty column and a full column, then every row is neither empty nor full and hence there are at least $n$ boundary vertices of $S$, a contradiction. Similarly, if there is neither an empty column nor a full column, then every column contains a boundary vertex of $S$ and so there are at least $m$ boundary vertices of $S$, a contradiction.

Now suppose there exists an empty column $c_k$ but no full columns. Take some other arbitrary column $c_i$ and consider the segments of each row between $c_i$ and $c_k$. For $\abs{c_i \cap S}$ of them, they contain a vertex in $c_i$, and none of them contain a vertex in $c_k$, so they contain at least $\abs{c_i \cap S}$ boundary vertices of $S$. Furthermore, every column not contained between $c_i$ and $c_k$ inclusive is disjoint from each of these segments. If $m(i)$ is the number of nonempty columns not contained between $c_i$ and $c_k$ inclusive, then 
$|\partial_G(S)| \ge \abs{c_i \cap S} + m(i)$,
as such columns are neither empty nor full and hence contain at least one boundary vertex of $S$ each. Thus,
\begin{equation}\label{eq:sum}
m - 1 - m(i)\ge \abs{c_i \cap S}.\tag{$*$}
\end{equation}
Now let $I = \{i \mid c_i \text{ is nonempty}\}$. Then
\[
    \sum_{i \in I} m(i) \ge \binom{\abs{I}}{2}, 
\]
as for each pair of distinct columns $c_i, c_j$, either $c_i$ is not between $c_j$ and $c_k$ or $c_j$ is not between $c_i$ and $c_k$. Summing \eqref{eq:sum} over $I$ gives
\[\abs{S} = \sum_{i \in I} \abs{c_i \cap S} \le \sum_{i \in I} (m - 1 - m(i)) \le \abs{I}(m-1) - \binom{\abs{I}}{2} = \frac{\abs{I}(2m - \abs{I} - 1)}{2} \le \frac{(m-1)m}{2}.\]

The remaining case where there exists a full column $c_k$ is analogous, except we get the boundary vertices on the segments for which $c_i$ does not contain a vertex of $S$, and thus the boundary vertex in $c_i$ is distinct from the boundary vertices of each segment. The analog of \eqref{eq:sum} is hence
\[|\partial_G(S)| \ge \abs{c_i \setminus S} + 1 + m'(i) \quad\implies\quad m - 2 - m'(i) \ge \abs{c_i \setminus S},\]
where $m'(i)$ is the number of non-full columns not contained between $c_i$ and $c_k$. Via the same bound for the sum of $m'(i)$, we get
\[
|V(G)\setminus S| \leq \sum_{i \in J} (m - 2 - m'(i)) \leq \frac{|J|(2m-|J|-3)}{2} \leq \frac{(m-2)(m-1)}{2},
\]
where $J = \{i \mid c_i \text{ is not full}\}$. Therefore, $\abs{S} \ge mn - \frac{(m-2)(m-1)}{2}$, as desired.
\end{claimproof}

Claim~\ref{claim:grid_bdry} implies that there does not exist a set $S$ with $|\partial_G(S)| < m$ and
\[\frac{m(m-1)}{2} < \abs{S} < \frac{m(m-1)}{2} + m = m^2 - \frac{(m-2)(m-1)}{2} \le mn - \frac{(m-2)(m-1)}{2},\]
contradicting Proposition \ref{prop:boundary} and completing the proof.
\end{proof}


\subsection{Trees}

In this subsection we show that trees can have arbitrarily large inspection number.

\begin{theo}
For every $k$, there is a tree $T$ of maximum degree $3$ with $\s(T) > k$.\label{theo:trees}
\end{theo}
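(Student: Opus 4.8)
The plan is to prove the lower bound through Proposition~\ref{prop:boundary}, which reduces the problem to a purely combinatorial statement about the sizes of vertex sets with small boundary. Taking the contrapositive, to certify $\s(T) > k$ it suffices to exhibit a single \emph{forbidden window}: an interval of $k-1$ consecutive integers $\{i-k+1, \dots, i-1\}$ such that no $C \subseteq V(T)$ with $\abs{\partial_T(C)} < k$ has $\abs{C}$ lying in it. Indeed, for such an $i$ there is then no $C$ with $\abs{\partial_T(C)} < k$ and $i - k < \abs{C} < i$, contradicting the conclusion of Proposition~\ref{prop:boundary}. Thus the entire task becomes the construction of a tree of maximum degree $3$ whose \emph{boundary profile} $\{\abs{C} : \abs{\partial_T(C)} < k\}$ has a gap of length at least $k-1$ somewhere in its interior.

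Next I would build the tree. A naive candidate such as the complete binary tree fails: one whole branch is a near-balanced set with boundary $1$, so its profile has no interior gap. The fix is a tree that is \emph{ternary at every scale}: a recursively defined tree $\sigma_j$ in which each level splits into three branches of nearly equal size $s$, joined through a small two-vertex gadget so that the maximum degree stays $3$ (branches are attached only at degree-$2$ slots of the gadget). The point of three equal branches rather than two is that the subset sums of their sizes, $\{0, s, 2s, 3s\}$, avoid the midpoint $\tfrac32 s$ of the total by a macroscopic margin $\approx \tfrac12 s$; hence no union of whole branches can have size near the global midpoint, and to reach a mid-range size one is forced to carve a \emph{partial} branch.

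The core is then an induction over $j$. For each $\sigma_j$, each boundary budget $b < k$, and each state of the gadget root (outside $C$; inside and interior to $C$; inside and on $\partial_T(C)$), track the set $\Phi_j(\mathrm{state}, b)$ of realizable sizes $\abs{C}$, counting boundary inside $\sigma_j$ and leaving the parent edge to the caller. The recursion expresses $\Phi_{j+1}$ from $\Phi_j$ through two operations: taking whole branches, a coarse sumset that moves $\abs{C}$ by multiples of $s$ and so keeps it near multiples of $s$; and spending one unit of budget to carve into a branch, which shifts $\abs{C}$ only within that branch's own realizable set. I would prove the invariant that $\Phi_j(\cdot, b)$ is confined to a neighborhood---of radius growing only linearly in $b$---of these coarse values, so that a window of length $\gg k$ around the global midpoint stays unreached as long as $b$ is below a threshold growing linearly in $j$. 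The engine of the inductive step is exactly midpoint-avoidance: reaching the midpoint of $\sigma_{j+1}$ requires carving a near-midpoint piece of some branch $\sigma_j$, which by the inductive hypothesis already costs a full extra unit of budget, so the threshold rises by one per level. Choosing $j \approx k$ produces a forbidden window for budget $k-1$, and Proposition~\ref{prop:boundary} yields $\s(\sigma_j) > k$.

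The hard part will be that Proposition~\ref{prop:boundary} quantifies over \emph{all} vertex sets, not merely the ``take some whole branches and recurse into one'' sets the construction is designed around. The $k-1$ boundary vertices may be distributed arbitrarily among the three branches and across many scales, and partial carvings may occur simultaneously in several branches. Controlling the realizable-size set in this full generality---showing that the accumulated ``fine'' shifts from all partial pieces can never sum across half a granularity while the budget stays below the threshold---is the quantitative heart of the argument and is presumably what makes it intricate. Subsidiary care is needed to bound boundary vertices additively over disjoint pieces, to bookkeep the two gadget vertices' states, and to verify throughout that attaching branches only at degree-$2$ slots keeps the maximum degree equal to $3$.
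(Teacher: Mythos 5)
Your reduction to Proposition~\ref{prop:boundary} is the right first move and is exactly how the paper proceeds: it suffices to find a window of $k-1$ consecutive integers missed by $\{\abs{C} : \abs{\partial_T(C)}<k\}$. But your dismissal of the complete binary tree is mistaken, and it sends you down a harder path. The paper's proof uses precisely the perfect binary tree (of depth $20kN$ with $N>\log_2(k)+2$). Your objection --- that a whole branch is a near-balanced set with boundary $1$ --- only shows that the profile contains a value near the midpoint; it does not show the profile has no gap. Proposition~\ref{prop:boundary} lets you place the window at \emph{any} $i$, and the paper places it at $i\approx\sum_{j=1}^{4k+1}2^{L-jN}$, a sum of many well-separated powers of $2$. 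The heart of the paper's argument is a slice-by-slice recursion (relating $\abs{S_i}$ to $2\abs{S_{i+1}}$ up to boundary corrections $\abs{A_i},\abs{B_{i+1}}$) showing that any $S$ with $\abs{\partial_T(S)}<k$ whose size is within $k$ of that target must contribute boundary vertices in $2k$ disjoint depth ranges, forcing $\sum\abs{A_i}+\sum\abs{B_i}\ge 2k$, a contradiction. In effect, the binary tree's profile at budget $<k$ is confined near short signed combinations of powers of $2$, and the chosen target has too many ``digits'' to be one of them.

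Beyond the detour, the central inductive step of your own plan is unsound as stated. You claim that reaching the midpoint $\tfrac{3}{2}s$ of $\sigma_{j+1}$ requires carving a near-midpoint piece of some branch $\sigma_j$, so that the budget threshold rises by one per level. But the midpoint can be hit by splitting the deficit across several branches: take one whole branch plus pieces of sizes $\tfrac14 s$ and $\tfrac14 s$ from the other two, or pieces of sizes $0.7s$ and $0.8s$ from two branches --- none of these is near the branch midpoint $\tfrac12 s$, so the inductive hypothesis of midpoint-avoidance gives you nothing, and each partial branch need only cost one boundary vertex. To close the induction you would need the much stronger invariant that the \emph{entire} achievable-size profile of $\sigma_j$ at budget $b$ is confined to an explicit sparse set, uniformly over all distributions of the $b$ boundary vertices across branches and scales; this is exactly the part you defer as ``the quantitative heart'' and never supply. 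Since that invariant is the whole content of the theorem (and is what the paper's $A_i,B_i$ recursion actually establishes for the binary tree), the proposal as written has a genuine gap at its core.
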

\begin{proof}\stepcounter{ForClaims} \renewcommand{\theForClaims}{\ref{theo:trees}}
Take any $N > \log_2(k) + 2$ and consider the perfect binary tree $T$ of depth $L = 20kN$, i.e., a rooted tree in which every non-leaf vertex has exactly two children and the distance from the root to every leaf is equal to $L$. 
We wish to compute the possible sizes of subsets of $V(T)$ with fewer than $k$ boundary vertices and then apply Proposition~\ref{prop:boundary} to conclude that $\s(T) > k$.

Consider an arbitrary subset $S \subseteq V(T)$ with $\abs{\partial_T(S)} < k$. For each $0 \le i \le L$, let $S_i$ be the set of the vertices of $S$ at depth $L - i$ (so $S_0$ comprises the leaf vertices in $S$, while $S_L$ will be the root if it is in $S$). We define two subsets $A_i$, $B_i \subseteq S_i \cap \partial_T(S)$ by
\begin{align*}
    A_i &= \{x \in S_i \cap \partial_T(S) \mid \text{the parent of $x$ is not in $S$}\};\\
    B_i &= \{x \in S_i \cap \partial_T(S) \mid \text{not every child of $x$ is in $S$}\}.
\end{align*}
This gives us a recursive bound on the size of $S_i$ for $i < L$:
\[2\abs{S_{i+1}} + \abs{A_i} - 2\abs{B_{i+1}} \le \abs{S_i} \le 2\abs{S_{i+1}} + \abs{A_i} - \abs{B_{i+1}},\]
noting that every vertex in $B_{i+1}$ has either $0$ or $1$ child in $S$. Iterating this bound gives the following relation between the sizes of any two $S_i$s:
\begin{equation}\label{eq:i_ell}
2^\ell \abs{S_{i+\ell}} + \sum_{j=0}^{\ell - 1} 2^{j}\pr{ \abs{A_{i+j}} - 2 \abs{B_{i+j + 1}}}\le \abs{S_i} \le 2^\ell \abs{S_{i+\ell}} + \sum_{j=0}^{\ell - 1} 2^{j} \pr{\abs{A_{i+j}} - \abs{B_{i+j + 1}}}. \tag{$\spadesuit$}
\end{equation}
We denote by $R(i; \ell)$ the `slice' $\bigcup_{j=0}^\ell S_{i+j}$. Summing \eqref{eq:i_ell} gives:
\begin{align*}\label{eq:i_ell_sum}\pr{2^{\ell + 1} - 1}\abs{S_{\ell}} &+ \sum_{j=0}^{\ell - 1} \pr{2^{j+1}-1}\pr{\abs{A_{j}} - 2\abs{B_{j+1}}} \le \abs{R(0; \ell)} \\
&\le \pr{2^{\ell + 1} - 1}\abs{S_{\ell}} + \sum_{j=0}^{\ell - 1} \pr{2^{j+1}-1}\pr{\abs{A_{j}} - \abs{B_{j+1}}}.\tag{$\heartsuit$}
\end{align*}


By Proposition~\ref{prop:boundary}, if $\s(T) \leq k$, then there exists a set $S \subseteq V(T)$ with $\abs{\partial_T(S)} < k$ such that
\begin{equation}\label{eq:S_size}
    \abs{\abs{S} - \sum_{j = 1}^{4k+1} 2^{L - jN}} < k.\tag{$\diamondsuit$}
\end{equation}
For the remainder of the proof, we fix any such set $S$.

\begin{smallclaim}
    For each $1 \leq a \leq 2k$, there is $1 \leq j \leq 2N$ such that $\abs{A_{L-2aN-j}}$ or $\abs{B_{L-2aN-j}}$ is nonzero.\label{claim:tree_slice}
\end{smallclaim}
\begin{claimproof}
Note that $\abs{S} = \abs{R(0; L-2aN - 1)} + \abs{R(L-2aN; 2aN)}$. Since $R(L - 2aN; 2aN)$ is a subset of a perfect binary tree of depth $2aN$, 
\[\abs{R(L-2aN; 2aN)} \le 2^{2aN + 1} - 1 \le 2^{5kN}.\]
Suppose for the sake of contradiction that
\begin{equation}\label{eq:contradiction}
    \abs{A_{L-2aN -j}} = \abs{B_{L-2aN-j}} = 0 \quad \text{for all } 1 \le j \leq 2N.\tag{$\clubsuit$}
\end{equation}
Let $K = \pr{2^{L - 2aN} - 1}\abs{S_{L-2aN - 1}}$. Then, by \eqref{eq:i_ell_sum}, we have
\begin{align*}
    \abs{R(0; L-2aN-1)} &\geq K  + \sum_{j=0}^{L-2aN-2} \pr{2^{j+1}-1}\pr{\abs{A_{j}} - 2\abs{B_{j+1}}} \\
    [\text{by \eqref{eq:contradiction}}]\qquad&\geq K - \sum_{j=0}^{L- 2(a+1)N - 2} \pr{2^{j+1}-1}2\abs{B_{j+1}} \\
    [\text{since $\textstyle\sum\abs{B_i} \le \abs{\partial_T(S)} < k$}]\qquad&\geq K - 2^{L-2(a+1)N}k.
\end{align*}
Similarly,
\begin{align*}
    \abs{R(0; L-2aN-1)} &\leq K  + \sum_{j=0}^{L-2aN-2} \pr{2^{j+1}-1}\pr{\abs{A_{j}} - \abs{B_{j+1}}} \\
    [\text{by \eqref{eq:contradiction}}]\qquad&\leq K + \sum_{j=0}^{L - 2(a+1)N - 1} \pr{2^{j+1}-1}\abs{A_{j}} \\
    [\text{since $\textstyle\sum\abs{A_i} \le \abs{\partial_T(S)} < k$}]\qquad&\leq K + 2^{L-2(a+1)N}k.
\end{align*}
Since $\abs{S_{L-2aN - 1}} \leq 2^{2aN + 1} \leq 2^{5kN}$, $K$ is within $2^{5kN}$ of a multiple of $2^{L - 2aN}$. The maximum difference that $\abs{S}$ could attain from a multiple of $2^{L - 2aN}$ is therefore
\begin{align*}
    \abs{R(L-2aN; 2aN)} + \abs{S_{L-2aN-1}} + 2^{L - 2(a+1)N}k &\leq 2^{5kN} + 2^{5kN} + 2^{L - 2(a+1)N}k \\
    &< 2^{L - 2(a+1)N + \log_2(k) + 1}.
\end{align*}
On the other hand, by \eqref{eq:S_size}, $\abs{S}$ differs from a multiple of $2^{L - 2aN}$ at least by
\[
    2^{L-(2a+1)N} - k >  2^{L-(2a+1)N - 1}, 
\]
which is a contradiction since $N > \log_2(k) + 2$.
\end{claimproof}

Applying Claim~\ref{claim:tree_slice} to each $1 \leq a \leq 2k$ yields $2k$ distinct values $i$ such that $|A_i|$ or $|B_i|$ is nonzero, and therefore $\sum \abs{A_i} + \sum \abs{B_i} \ge 2k$. This is a contradiction since $\sum \abs{A_i}$, $\sum \abs{B_i} \leq \abs{\partial_T(S)} < k$.
\end{proof}

\section{Monotonic searches}\label{sec:mono}

\begin{defn}[Monotonic Searches]
A search $\mathcal{S}$ is called \emphd{monotonic} if $\FC_i(\mathcal{S}) \subseteq \FC_j(\mathcal{S})$ for every $i \le j$. Let $\s_m(G)$ be the minimum $k$ for which there exists a successful monotonic $k$-search on $G$. 
\end{defn}

Let us also recall the standard definition of the pathwidth of a graph. 

\begin{defn}[Pathwidth]
A \emphd{path decomposition} of a graph $G$ is a sequence of subsets $\{X_i \subseteq V(G)\}_{i=1}^\ell$ fulfilling the following two criteria:
\begin{itemize}
    \item For each edge $\{a, b\} \in E(G)$, there exists some $i$ such that $\{a, b\} \subseteq X_i$.
    \item For each $1 \le i < j < k \le \ell$ and $v \in V(G)$, if $v \in X_i$ and $v \in X_k$, then $v \in X_j$.
\end{itemize}
The \emphd{width} of such a path decomposition is $\max_{1 \le i \le \ell} \abs{X_i} - 1$ (if the decomposition is empty we set the width to be 0). The \emphd{pathwidth} of a graph, denoted $\pw(G)$, is the minimum width among all its path decompositions.
\end{defn}

\begin{prop} For any connected graph $G$, $\s_m(G) \le \pw(G) + 1$. \end{prop}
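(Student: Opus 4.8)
The plan is to take an optimal path decomposition of $G$ and convert it directly into a successful monotonic search of size $\pw(G)+1$. Let $\{X_i\}_{i=1}^\ell$ be a path decomposition of width $\pw(G)$, so $\abs{X_i} \le \pw(G)+1$ for every $i$. The natural candidate is simply the search $\mathcal{S}$ defined by $S_i := X_i$; this is a $(\pw(G)+1)$-search since each bag has at most $\pw(G)+1$ vertices. The two things I need to verify are that $\mathcal{S}$ is \emph{monotonic} and that it is \emph{successful}, i.e.\ $\FC_*(\mathcal{S}) = V(G)$.

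For monotonicity, the key is the defining property of path decompositions that the bags containing any fixed vertex $v$ form a contiguous interval of indices. First I would observe that once a vertex $v$ enters $\FC_t(\mathcal{S})$, the crucial point is that all of its neighbors must have been \emph{pre-cleared} at some turn $\le t$. Indeed, $v$ leaving the fully-cleared set would require a contaminated neighbor, but the interval structure forces that after the last bag containing $v$, no neighbor of $v$ can appear in a later bag (any edge $\{v,w\}$ lives in a common bag, so $w$'s interval overlaps $v$'s). I would set up an invariant of the form: if $v \in \FC_t(\mathcal{S})$, then $v \notin X_{t'}$ and no neighbor of $v$ lies in $X_{t'}$ for any $t' > t$, which prevents $v$ from ever re-entering the boundary. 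Establishing this invariant cleanly, by induction on $t$ using the recursive definitions of $\PC$ and $\FC$ together with the interval property, is the heart of the monotonicity argument.

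For success, I would argue that every vertex $v$ gets fully cleared by the time its interval of bags ends. Let $i_v$ be the last index with $v \in X_{i_v}$. By the edge-covering property, every neighbor $w$ of $v$ appears in some common bag with $v$, and by the interval property $w$ never appears after index $i_v$ unless it also appeared at or before it; more carefully, at the turn $i_v$ the vertex $v$ is pre-cleared (it is in $X_{i_v} = S_{i_v}$), and I want all of $v$'s neighbors to be pre-cleared by then as well so that $v \in \FC_{i_v}(\mathcal{S})$. This follows because any neighbor $w$ with $w \notin \PC_{i_v}(\mathcal{S})$ would have to first appear in a bag strictly after $i_v$, but then the common bag of $\{v,w\}$ would force $v$ to reappear after $i_v$, contradicting the choice of $i_v$. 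Combined with monotonicity, once fully cleared a vertex stays fully cleared, so $\FC_*(\mathcal{S}) = V(G)$.

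\textbf{The main obstacle} I anticipate is organizing the induction for monotonicity so that the boundary computation at each step is airtight: I must rule out that a pre-cleared vertex is knocked back out of $\FC$ by a contaminated neighbor that appears \emph{later} in the decomposition. This is precisely where the interval (consistency) axiom of path decompositions does the work, and the care lies in tracking, at each turn $t$, exactly which vertices have entered the pre-cleared set versus which still have an uncleared neighbor waiting in a future bag. Once the invariant ``$v \in \FC_t(\mathcal{S})$ implies no future bag touches $v$ or its neighborhood'' is proved, both monotonicity and the final clearing of all vertices follow without further difficulty.
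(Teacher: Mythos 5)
Your choice of search is the same as the paper's (take $S_i := X_i$), but the two verification steps as you've sketched them contain a genuine gap. The invariant you propose for monotonicity --- ``if $v \in \FC_t(\mathcal{S})$, then $v \notin X_{t'}$ and no neighbor of $v$ lies in $X_{t'}$ for any $t' > t$'' --- is simply false. Take $G$ to be the path $a\,\text{--}\,b\,\text{--}\,c$ with the width-$1$ decomposition $X_1 = \{a,b\}$, $X_2 = \{b,c\}$: one computes $\FC_1(\mathcal{S}) = \{a\}$, yet the neighbor $b$ of $a$ appears in $X_2$. (The search is still monotonic here; the invariant is just not the right thing to prove.) Your success argument has a related circularity: you claim that a neighbor $w$ of $v$ with $w \notin \PC_{i_v}(\mathcal{S})$ must first appear in a bag strictly after $i_v$, but a vertex that appeared in an early bag can in principle have dropped back out of the pre-cleared set by turn $i_v$ --- ruling that out is exactly monotonicity, which you then in turn derive from success. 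So neither half of the argument closes.

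The way to break the circle (and what the paper does) is to first prove, by induction on $t$ and \emph{without} any appeal to monotonicity, the statement: if $v \notin X_i$ for all $i > t$, then $v \in \FC_t(\mathcal{S})$. The induction works because for each neighbor $u$ of $v$, either $u$'s interval reaches index $t$ (in which case $u \in X_t \subseteq \PC_t(\mathcal{S})$, using the interval property and the common bag for the edge $\{u,v\}$), or $u$'s interval ends before $t$, in which case the induction hypothesis applied at $t-1$ gives $u \in \FC_{t-1}(\mathcal{S}) \subseteq \PC_t(\mathcal{S})$. Crucially the hypothesis is stated for \emph{every} $t$ past the end of $v$'s interval, not just the endpoint, which is what lets you avoid invoking monotonicity. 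This statement already yields success. Monotonicity is then proved separately by contradiction: if $v \in \FC_t(\mathcal{S}) \setminus \FC_{t+1}(\mathcal{S})$, some neighbor $u$ satisfies $u \in \PC_t(\mathcal{S}) \setminus \PC_{t+1}(\mathcal{S})$; then $u$ was searched at some index $\le t$ but $u \notin X_{t+1}$, so by the interval property $u$ never appears again and hence can never be pre-cleared again, contradicting the success just established. I recommend restructuring your proof along these lines.
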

\begin{proof} The statement is trivial if $G$ has only one vertex, so we may assume that  
$G$ does not have isolated vertices, and thus every vertex must be included in at least one set in each path decomposition of $G$. 

Let $\mathcal{S} = \{S_i \subseteq V(G)\}_{i=1}^k$ be a path-decomposition of $G$ of width $\pw(G)$, i.e., $\abs{S_i} \le \pw(G) + 1$ for each $i$, for each edge of $G$ there exists $i$ such that $S_i$ contains both its endpoints, and for all $i \le j \le k$, $S_i \cap S_k \subseteq S_j$. Note that for each $v \in V(G)$, the set of indices $i$ for which $v \in S_i$ forms a consecutive sequence of integers. We claim that $\mathcal{S}$ is a successful monotonic ($\pw(G) + 1$)-search of $G$.

First, we show by induction that for each $t$, if $v \not \in S_i$ for all $i > t$, then $v \in \FC_t(\mathcal{S})$. Suppose the claim holds for all $t < t_0$, and suppose $v \not \in S_i$ for every $i > t_0$. Consider any $u \in N_G(v)$, and let $t_u$ be maximal such that $u \in S_{t_u}$. If $t_u \ge t_0$, then $u \in S_{t_0}$ as $\{u, v\} \subseteq S_i$ for some $i \le t_0$, and so $u \in \PC_{t_0}(\mathcal{S})$. Otherwise, $u \not \in S_i$ for each $i > t_0 - 1$, so by the induction hypothesis $u \in \FC_{t_0 - 1}(\mathcal{S}) \subseteq \PC_{t_0}(\mathcal{S})$. A similar argument gives that $v \in \PC_{t_0}(\mathcal{S})$. Hence $v$ and each of its neighbors are in $\PC_{t_0}(\mathcal{S})$, so $v \in \FC_{t_0}(\mathcal{S})$, completing the induction. This shows that the search is successful.

Now we show that the search is monotonic. Suppose that $v \in \FC_t(\mathcal{S}) \setminus \FC_{t+1}(\mathcal{S})$. Then there exists some $u \in N_G(v)$ such that $u \in \PC_t(\mathcal{S}) \setminus \PC_{t+1}(\mathcal{S})$. Since $u \in \PC_t(\mathcal{S})$, there is some $t_0 \leq t$ such that $u \in S_{t_0}$. On the other hand, since $u \not \in \PC_{t+1}(\mathcal{S})$, we have $u \not \in S_{t+1}$. This implies that $u \not \in S_{t_1}$ for all $t_1 \geq t+1$, and hence $u \not \in \FC_\ast(\mathcal{S})$, contradicting the fact that the search $\mathcal{S}$ is successful.
\end{proof}

\begin{prop} For any connected graph $G$, $\s_m(G) \ge \pw(G) + 1$.
\end{prop}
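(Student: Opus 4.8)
The plan is to prove the reverse inequality by turning a monotonic search into a path decomposition of matching width. Concretely, I would show that any successful monotonic $k$-search $\mathcal{S} = \{S_t\}_{t=1}^\ell$ on $G$ yields a path decomposition of $G$ of width at most $k-1$. Applying this with $k = \s_m(G)$ gives $\pw(G) \le \s_m(G) - 1$, which is exactly the desired bound $\s_m(G) \ge \pw(G) + 1$ (and, combined with the previous proposition, completes Theorem~\ref{theo:mono}).

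For each vertex $v$, monotonicity together with $\FC_0(\mathcal{S}) = \0$ and success ($\FC_\ell(\mathcal{S}) = V(G)$) guarantees that $v$ enters the fully cleared set exactly once and then stays, so $c(v) := \min\{t : v \in \FC_t(\mathcal{S})\}$ is well defined with $1 \le c(v) \le \ell$. The crucial definition is $\alpha(v)$, the smallest $s$ such that $v \in \PC_{s'}(\mathcal{S})$ for \emph{every} $s'$ with $s \le s' \le c(v)$; that is, $\alpha(v)$ marks the start of the maximal block of consecutive turns, ending at $c(v)$, throughout which $v$ is pre-cleared. I would assign to $v$ the interval $I_v = [\alpha(v), c(v)]$ and take the bags $X_t = \{v : \alpha(v) \le t \le c(v)\}$ for $1 \le t \le \ell$. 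The consecutiveness axiom then holds automatically, since $v \in X_t$ is by construction equivalent to $t \in I_v$, and every vertex is covered because $v \in X_{c(v)}$.

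Two things remain: edge coverage and the width bound. For an edge $\{u,v\}$ with $c(u) \le c(v)$, I would argue that $u, v \in X_{c(u)}$. The key point is that $u \in \FC_s(\mathcal{S})$ for every $s \ge c(u)$, and since $\FC_s(\mathcal{S}) = \PC_s(\mathcal{S}) \setminus \partial_G(\PC_s(\mathcal{S}))$, the vertex $u$ has no neighbor outside $\PC_s(\mathcal{S})$; hence $v \in \PC_s(\mathcal{S})$ for all $s \in [c(u), c(v)]$, which forces $\alpha(v) \le c(u)$ and therefore $c(u) \in I_u \cap I_v$. For the width bound, I would show $X_t \subseteq S_t$: if $\alpha(v) \le t \le c(v)$, then $v \in \PC_t(\mathcal{S})$ by the definition of $\alpha(v)$, while $t \le c(v)$ gives $v \notin \FC_{t-1}(\mathcal{S})$; as $\PC_t(\mathcal{S}) = \FC_{t-1}(\mathcal{S}) \cup S_t$, this places $v \in S_t$. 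Thus $\abs{X_t} \le \abs{S_t} \le k$, so the width is at most $k-1$.

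The delicate point—and the step I expect to be the crux—is the choice of $\alpha(v)$. Taking it to be the first turn at which $v$ is ever pre-cleared would break the width bound, since a wastefully searched vertex can be pre-cleared long before it is finally cleared, inflating $X_t$ beyond $S_t$; taking it too late would break edge coverage. Choosing the maximal \emph{consecutive} pre-cleared run ending at $c(v)$ is exactly what reconciles these two requirements, and the reason it works is the monotonicity-driven fact that a fully cleared vertex keeps all of its neighbors pre-cleared on every subsequent turn. Everything else should be routine bookkeeping with the recursive definitions of $\PC$ and $\FC$ from Definition~\ref{defn:search}.
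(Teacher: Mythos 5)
Your proof is correct and follows essentially the same route as the paper's: both extract from the monotonic search a path decomposition whose bags sit inside the (size-$k$) search sets, using that a fully cleared vertex keeps all its neighbors pre-cleared on every later turn (for edge coverage) and that a vertex pre-cleared at turn $t$ but not fully cleared before turn $t$ must lie in $S_t$ (for the width bound). The only difference is bookkeeping: the paper trims each search set directly and then verifies the consecutiveness axiom, whereas you define the intervals $[\alpha(v), c(v)]$ up front so that consecutiveness is automatic.
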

\begin{proof} Let $\mathcal{S}'$ be a successful monotonic search. We can modify each $S'_i$ to $S_i$ by removing the vertices which are not in $\FC_{i}(\mathcal{S}') \cup N_G(\FC_i(\mathcal{S}'))$, as well as removing the vertices in $\FC_{i-1}(\mathcal{S}')$. Note that $\FC_i(\mathcal{S}) = \FC_i(\mathcal{S}')$ for each $i$, so $\mathcal{S}$ is still successful and monotonic. 

For each pair of adjacent vertices $\{v, w\}$, consider the minimal $t$ for which after turn $t$, either $v$ or $w$ is in $\FC_t(\mathcal{S})$. Since $v$, $w \not\in \FC_{t-1}(\mathcal{S})$ but $\{v, w\} \subseteq \PC_t(\mathcal{S})$, we must have $\{v, w\} \subseteq S_t$. Hence for each such pair there exists some $S_t$ containing both. Now suppose $v \in S_i$, $S_j$ for some $i < j$. By the modification we performed, $v \not \in \FC_{j-1}(\mathcal{S})$, and so by monotonicity $v \not \in \FC_t(\mathcal{S})$ for any $t < j$. In particular, $v \not \in \FC_i(\mathcal{S})$, yet $v \in S_i$. Hence, again by our modification, $v$ has a neighbor in $\FC_i(\mathcal{S})$. This neighbor must remain fully cleared, so $v \in \PC_{t}(\mathcal{S})$ for each $t \ge i$. Hence $v \in S_t$ for each $i < t < j$, and so $\mathcal{S}$ is a path-decomposition of $G$ of width $\max \abs{S_i} - 1$, proving the claim.
\end{proof}

These two propositions combined yield the following result: 

\begin{theo}
    For every connected graph $G$, $\s_m(G) = \pw(G) + 1$.\label{theo:mono}
\end{theo}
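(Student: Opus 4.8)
The plan is to obtain the theorem as an immediate consequence of the two preceding propositions, which together pin $\s_m(G)$ down from both sides. Since the equality $\s_m(G) = \pw(G)+1$ splits into the inequalities $\s_m(G) \le \pw(G)+1$ and $\s_m(G) \ge \pw(G)+1$, and each has already been established, combining them yields the result directly. What is worth recording is the conceptual shape of each direction, since the two transformations are inverse to one another.

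For the upper bound, the strategy is to exhibit a single object that simultaneously realizes a minimum-width path decomposition and a monotonic search. Starting from a path decomposition $\{S_i\}$ of width $\pw(G)$, one reinterprets the sequence of bags as a $(\pw(G)+1)$-search. The key observations are that the set of indices $i$ with $v \in S_i$ is always an interval, so a vertex becomes fully cleared precisely once it stops appearing in later bags, and that the edge-covering axiom guarantees every neighbor of such a vertex is already pre-cleared. A downward induction on $t$ then shows the search is successful, and the interval property shows no cleared vertex can be recontaminated, giving monotonicity.

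For the lower bound, the plan is to run this transformation in reverse: given a successful monotonic search $\mathcal{S}'$, one prunes each search set by discarding vertices that are neither in $\FC_i(\mathcal{S}')$ nor adjacent to it, and also discarding vertices already in $\FC_{i-1}(\mathcal{S}')$. This pruning preserves every $\FC_i$, so the search stays successful and monotonic, while forcing the resulting sets to satisfy both path-decomposition axioms: edge-covering holds because for each edge the moment one endpoint first becomes fully cleared both endpoints must lie in the current search set, and the interval axiom follows from monotonicity together with the observation that an uncleared vertex appearing in a pruned bag must have a cleared neighbor and therefore persists in every subsequent bag until it is itself cleared.

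The only genuinely delicate point is verifying the interval condition for the lower bound after pruning, since this is exactly where successfulness and monotonicity must be combined carefully; everything else is routine bookkeeping. As both inequalities are already proven above, the theorem follows simply by juxtaposing the two propositions.
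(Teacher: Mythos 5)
Your proposal is correct and matches the paper exactly: the theorem is stated there as the immediate conjunction of the two preceding propositions, whose proofs use precisely the transformations you describe (reading a path decomposition as a monotonic search for the upper bound, and pruning a monotonic search into a path decomposition for the lower bound). Nothing further is needed.
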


Note that this implies that $\s(G) \le \pw(G) + 1$. However, this inequality can be strict; in fact, we will see that the difference $\s_m(G) - \s(G)$ can be arbitrarily large.

\section{The topological inspection number}\label{sec:ts}

\begin{defn}[Subdivisions]
Given a graph $G$, an \emphd{edge subdivision} on an edge $\{v, w\} \in E(G)$ is the operation where we add a new vertex $u$, and replace the edge with a pair of edges $\{v, u\}$ and $\{u, w\}$. A \emphd{subdivision} of $G$ is a graph which can be obtained from $G$ by a (possibly empty) finite sequence of edge subdivisions.
\end{defn}

\begin{exmp}
There exist a graph $G$ and a subdivision $H$ of $G$ such that $\s(H) \ne \s(G)$.\label{ex:k4subdiv}
\end{exmp}
\begin{proof}
Consider the subdivision of $K_4$ shown in Fig.~\ref{fig:k4sub}. 

\begin{figure}[h]
\begin{center}
\begin{tikzpicture}[scale=0.1]
\node [draw,fill=white,circle,minimum size=12pt,outer sep=0pt,inner sep=0pt] (D) at (50,-14) {$D$};
\node [draw,fill=white,circle,minimum size=12pt,outer sep=0pt,inner sep=0pt] (B) at (42,-28) {$B$};
\node [draw,fill=white,circle,minimum size=12pt,outer sep=0pt,inner sep=0pt] (C) at (58,-28) {$C$};
\node [draw,fill=white,circle,minimum size=12pt,outer sep=0pt,inner sep=0pt] (A) at (50,-42) {$A$};
\node [draw,fill=white,circle,minimum size=12pt,outer sep=0pt,inner sep=0pt] (I4) at (30,-14) {$I_4$};
\node [draw,fill=white,circle,minimum size=12pt,outer sep=0pt,inner sep=0pt] (I3) at (22,-21) {$I_3$};
\node [draw,fill=white,circle,minimum size=12pt,outer sep=0pt,inner sep=0pt] (I1) at (30,-42) {$I_1$};
\node [draw,fill=white,circle,minimum size=12pt,outer sep=0pt,inner sep=0pt] (I2) at (22, -33) {$I_2$};
\draw (A) -- (B) -- (C) -- (D) -- (I4) -- (I3) -- (I2) -- (I1) -- (A) -- (C) (B) -- (D);
\end{tikzpicture}
\end{center}
\caption{A subdivision of $K_4$ with inspection number $3$.}\label{fig:k4sub}
\end{figure}
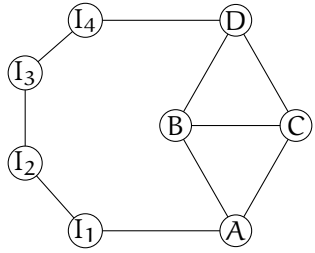

The following table lists a successful 3-search $\mathcal{S}$ of this graph:

\begin{center}
\begin{tabular}{c|c|c|c}
$i$ & $S_i$ & $\PC_i(\mathcal{S})$ & $\FC_i(\mathcal{S})$\\
\hline
1 & $\{A, I_1, I_2\}$ & $\{A, I_1, I_2\}$ & $\{I_1\}$ \\
2 & $\{A, I_2, I_3\}$ & $\{A, I_1, I_2, I_3\}$ & $\{I_1, I_2\}$ \\
3 & $\{A, I_3, I_4\}$ & $\{A, I_1, I_2, I_3, I_4\}$ & $\{I_1, I_2, I_3\}$ \\
4 & $\{A, I_4, D\}$ & $\{A, I_1, I_2, I_3, I_4, D\}$ & $\{I_1, I_2, I_3, I_4\}$  \\
5 & $\{A, B, C\}$ & $\{A, I_1, I_2, I_3, I_4, D, B, C\}$ & $\{A, I_1, I_2, I_3\}$ \\
6 & $\{B, C, D\}$ & $\{A, I_1, I_2, I_3, D, B, C\}$ &  $\{A, I_1, I_2, B, C\}$ \\
7 & $\{D, I_3, I_4\}$ & $V(G)$ & $V(G)$ \\
\end{tabular}
\end{center}

\noindent However, $\s(K_4) = 4$ by Example~\ref{exp:complete}, proving the claim.
\end{proof}

As explained in the introduction, we wish to investigate the ``limiting'' behavior of the inspection number under edge subdivisions, captured precisely in the following notion:

\begin{defn}[Topological inspection number]
The \emphd{topological inspection number} of a graph $G$, denoted $\s_t(G)$, is the minimum $k$ such that for every subdivision $H$ of $G$, there is a further subdivision $H'$ of $H$ with $\s(H') \leq k$.

Equivalently, $\s_t(G)$ is the minimum $k$ such that for all $\ell$, $G$ has a subdivision $H$ with $\s(H) \le k$ in which every edge of $G$ is subdivided at least $\ell$ times.\label{defn:top_search}
\end{defn}

It is clear that $\s_t(H) = \s_t(G)$ for every subdivision $H$ of $G$. We observe that $\s_t(G)$ is finite:

\begin{prop}
    $\s_t(G) \le \abs{V(G)} + 2$.
\end{prop}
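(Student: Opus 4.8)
The plan is to use the second (equivalent) formulation of $\s_t$ from Definition~\ref{defn:top_search}: it suffices to show that for every $\ell$ there is a subdivision $H$ of $G$ in which each edge is subdivided at least $\ell$ times and $\s(H) \le \abs{V(G)} + 2$. Write $n = \abs{V(G)}$ and let $H$ be obtained from $G$ by subdividing every edge exactly $\ell$ times; call the $n$ vertices inherited from $G$ the \emph{branch vertices} and the newly added ones the \emph{interior vertices}, so that each edge of $G$ becomes an internally disjoint path whose interior consists of $\ell$ interior vertices and whose two endpoints are branch vertices. By Proposition~\ref{prop:searching_clearing} it is enough to exhibit a successful $(n+2)$-search $\mathcal{S}$ on $H$.

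The search I would use keeps $n$ of its $n+2$ inspectors permanently stationed on the branch vertices, so that every branch vertex lies in $S_t$ (and hence in $\PC_t(\mathcal{S})$) for all $t$; the remaining two inspectors clear the edge-paths one at a time. To clear a single edge-path $u_0, u_1, \dots, u_\ell, u_{\ell+1}$ (with $u_0, u_{\ell+1}$ branch vertices) I would let the two mobile inspectors occupy the adjacent pair $\{u_t, u_{t+1}\}$ on turn $t$, for $t = 1, \dots, \ell$. A routine induction on $t$—identical in spirit to the standard $2$-search of a path that shows paths have inspection number $2$—gives that after turn $t$ the interior vertices $u_1, \dots, u_t$ are all fully cleared: each $u_t$ has both neighbors $u_{t-1}, u_{t+1}$ in $\PC_t(\mathcal{S})$ (the former because it was already cleared, the latter because it is currently inspected), while the branch endpoints $u_0, u_{\ell+1}$ serve as permanently pre-cleared ``walls.'' Thus after $\ell$ turns the entire interior of the path is fully cleared.

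The key bookkeeping observation is that an interior vertex of an already-processed edge-path stays cleared once its path is done: its only two neighbors lie on the same path and are either cleared interior vertices or guarded branch vertices, all of which remain in $\PC_t(\mathcal{S})$ for as long as we keep inspecting the branch vertices. Consequently the cleared edge-paths are never recontaminated while the two mobile inspectors move on to the next path, so each path-clearing phase begins from an $\FC$-set containing every previously cleared interior and proceeds exactly as analyzed above. After all edge-paths have been processed, every interior vertex is fully cleared; on the final turn each branch vertex is inspected and all of its neighbors (the first interior vertices of its incident paths) are cleared, so the branch vertices become fully cleared as well, yielding $\FC_*(\mathcal{S}) = V(H)$. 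The only point requiring care is precisely this non-recontamination as the mobile inspectors jump between paths, but since the branch vertices are guarded throughout, I do not anticipate any genuine obstacle.
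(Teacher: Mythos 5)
Your proof is correct and is essentially the same search the paper uses: the paper also stations all of $V(G)$ in every inspected set and sweeps two additional vertices along the subdivision vertices of each edge in turn, exactly your $n$ stationary plus $2$ mobile inspectors. The only cosmetic differences are that the paper packages this as a path decomposition of width $\abs{V(G)}+1$ of an arbitrary subdivision $H$ and invokes Theorem~\ref{theo:mono} to get $\s(H)\le \pw(H)+1 \le \abs{V(G)}+2$, whereas you restrict to uniform subdivisions (appealing to the second formulation of Definition~\ref{defn:top_search}) and verify the success and non-recontamination of the search directly by induction.
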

\begin{proof}
    Let $H$ be a subdivision of $G$. Enumerate the vertices of $V(G)$ as $v_1$, \ldots, $v_k$, and the edges in $E(G)$ as $e_1$, \ldots, $e_\ell$. Then each vertex in $V(H)$ is either in $V(G)$, or is the result of some series of subdivisions on an edge $e_i$. We may order these latter vertices in order of the index of the subdivided edge they lie on, and break ties by distance to the lower-indexed endpoint of the subdivided edge (i.e., if $a$, $b \in V(H)$ lie on subdivided edge $e_i$ with endpoints $v_c$, $v_d$ with $c < d$, then whichever of $a$ or $b$ lies closer to $v_c$ will come first in the order). Using this order we can enumerate the vertices in $V(H) \setminus V(G)$ as $w_1$, \ldots, $w_m$. Note that each $w_i$'s neighbors are contained in $V(G) \cup \{w_{i-1}, w_{i+1}\}$.
    
    Now consider the sequence of sets given by $S_t = V(G) \cup \{w_t, w_{t+1}\}$. Each $w_i$ appears in either a single set or two consecutive sets while each $v_i$ appears in every set. Furthermore, note that every pair of vertices with at least one vertex in $V(G)$ appears at least once, and every pair of vertices of the form $\{w_i, w_{i+1}\}$ appears once as well. Since every pair of neighbors must have one of these forms, every pair of neighbors appears together in a set of this sequence. This sequence thus witnesses $\pw(H) \le \abs{V(G)} + 1$, and hence, by Theorem~\ref{theo:mono}, $\s(H) \le \abs{V(G)} + 2$. Since $H$ is a subdivision of itself, we have $\s_t(G) \le \abs{V(G)} + 2$.
\end{proof}


Evidently, the only connected graph $G$ with $\s_t(G) = 1$ is the one-vertex graph $K_1$. Next we characterize the graphs $G$ with $\s_t(G) = 2$:

\begin{prop}
If $G$ is a connected graph with $\s_t(G) \le 2$, then $G$ is a path.\label{prop:paths}
\end{prop}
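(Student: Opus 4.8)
The plan is to prove the contrapositive: if $G$ is connected and is \emph{not} a path, then $\s_t(G) \ge 3$. A connected graph fails to be a path precisely when either it contains a cycle or it contains a vertex of degree at least $3$. So first I would observe that since $\s_t(H) = \s_t(G)$ for every subdivision $H$ of $G$, and since taking subgraphs does not increase $\s_t$ (as taking subgraphs does not increase $\s$, and subdivisions of subgraphs are subgraphs of subdivisions), it suffices to exhibit, in each of these two cases, a small concrete obstruction whose topological inspection number is at least $3$.

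In the cycle case, $G$ contains a cycle $C_n$ as a subgraph, and every subdivision of $C_n$ is again a cycle, so I only need the fact that cycles have $\s_t \ge 3$. This follows from the stated fact that cycles have inspection number $3$ together with a lower bound: I would use Proposition~\ref{prop:boundary}. In a cycle, every proper nonempty vertex subset $C$ with $1 \le \abs{C} \le \abs{V}-1$ that is not the whole vertex set has boundary of size at least $1$, but more carefully, any set whose complement is also nonempty and which is not an arc/whose complement is split has boundary $\ge 2$; choosing $i$ appropriately in Proposition~\ref{prop:boundary} (with $k = 2$) forces a set $C$ with $\abs{\partial C} < 2$, i.e.\ $\abs{\partial C} \le 1$, and $i - 2 < \abs{C} < i$, which is impossible for an intermediate size on a long enough cycle since a connected arc in a cycle always has exactly two boundary vertices unless $C$ is empty or everything. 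Hence no successful $2$-search exists on a sufficiently subdivided cycle, giving $\s_t(C_n) \ge 3$.

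In the degree case, $G$ contains a vertex of degree $\ge 3$, hence contains a subdivision of the star $K_{1,3}$ (the "claw") as a subgraph; equivalently, after subdividing, $G$ contains a long subdivided claw. So it suffices to show $\s_t(K_{1,3}) \ge 3$, i.e.\ that a claw all of whose three edges are subdivided many times admits no successful $2$-search. Again I would invoke Proposition~\ref{prop:boundary} with $k = 2$: I need to show that in a long subdivided claw $T$, there is a "forbidden window" of sizes $i$ for which no set $C$ with $\abs{\partial_T(C)} \le 1$ has $i - 2 < \abs{C} < i$. The point is that a subset with at most one boundary vertex must be, up to the central vertex, essentially a union of full branches plus one partial initial segment, so its possible sizes are severely constrained; choosing the three branches long enough forces a gap in attainable sizes of length at least $2$, contradicting Proposition~\ref{prop:boundary}.

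The main obstacle is the bookkeeping in the claw case: I must carefully enumerate which vertex sets $C$ in a subdivided $K_{1,3}$ have $\abs{\partial_T(C)} \le 1$ and verify that their sizes miss an interval of length $2$. A set with no boundary vertex is $\0$ or $V(T)$; a set with exactly one boundary vertex is governed by that single vertex controlling the "frontier," and because the three arms emanate from the degree-$3$ center, a single boundary vertex cannot simultaneously separate off arbitrary intermediate portions of more than one arm. Making this precise—and choosing the subdivision lengths so that the attainable $\abs{C}$ leave a gap—is the crux; once that gap is established, Proposition~\ref{prop:boundary} immediately rules out a successful $2$-search, and combining the two cases completes the proof that non-path connected graphs have $\s_t \ge 3$.
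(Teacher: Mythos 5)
Your proposal is correct, and for the harder case (a vertex of degree at least $3$) it follows essentially the same route as the paper: both apply Proposition~\ref{prop:boundary} with $k=2$ to a subdivided claw and exhibit a window of sizes in which no vertex set has boundary of size at most $1$. The enumeration you defer as ``the crux'' does go through: a set $C$ with $\partial_T(C) = \{b\}$ must equal $\{b\}$ together with a union of some (but not all) components of $T - b$, so in a claw with three arms of length $\ell$ the attainable sizes of sets with at most one boundary vertex are exactly $\{0,1,\dots,\ell+1\} \cup \{2\ell+1,\dots,3\ell+1\}$, missing $\ell+2,\dots,2\ell$ whenever $\ell \ge 2$; the paper's proof is precisely the minimal instance $\ell = 2$ (each claw edge subdivided once, seven vertices, $i=5$). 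The two arguments genuinely differ only in the cycle case, where the paper argues directly --- the first vertex of the cycle to become fully cleared forces itself and its two cycle-neighbors into a single search set --- rather than via Proposition~\ref{prop:boundary}; your boundary argument works as well, with the small correction that a singleton in a cycle has \emph{one} boundary vertex rather than two, so you should apply Proposition~\ref{prop:boundary} with $i \geq 3$ (forcing $\abs{C}=2$, which is indeed unattainable with boundary at most $1$). Finally, your reduction to the two obstructions $C_n$ and $K_{1,3}$ via monotonicity of $\s_t$ under subgraphs is valid but should be stated and justified explicitly (subdivisions of a subgraph extend to subdivisions of the whole graph, whose good further subdivisions restrict back), since the paper only records monotonicity of $\s$; the paper instead works inside an explicit subdivision $G'$ of $G$ itself.
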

\begin{proof} Suppose $G$ contains as a subgraph a cycle $C$. We wish to show that $\s(G) \ge 3$. Suppose otherwise and take a successful $2$-search $\mathcal{S}$ of $G$. Let $v \in V(C)$ be such that the first index $i$ with $v \in \FC_i(\mathcal{S})$ is minimal among all $v \in V(C)$. This implies that $v$ and its two neighbors (which are distinct) in $C$ are all contained in $\PC_i(\mathcal{S})$. However, by minimality of $i$, none of these vertices are contained in $\FC_{i-1}(\mathcal{S})$, and so all three must be contained in $S_i$, a contradiction. Since subdivisions of cyclic graphs remain cyclic, we must have $G$ acyclic.

Now suppose, toward a contradiction, that $G$ contains a vertex $v_0$ with $\deg(v_0) \ge 3$. Let $G'$ be an arbitrary subdivision of $G$ where each edge incident to $v_0$ is subdivided at least once. Let $v_1$, $v_2$, $v_3 \in N_{G'}(v_0)$ be distinct. Each of $v_1$, $v_2$, $v_3$ has a unique neighbor apart from $v_0$, and these neighbors are pairwise distinct. Let these neighbors be $w_1$, $w_2$, $w_3$ respectively. Let $H$ be the subgraph of $G'$ induced on $\{v_0, v_1, v_2, v_3, w_1, w_2, w_3\}$. It is straightforward to check that every subset $S \subseteq V(H)$ of size $|S| = 4$ has $|\partial_H(S)| \geq 2$. Hence, by Proposition \ref{prop:boundary} applied with $i = 5$,  
%
%
%
$\s(G') \geq \s(H) \ge 3$. Therefore, $\s_t(G) \ge 3$, which is a contradiction.

Hence, $G$ must be an acyclic graph of maximum degree at most $2$, i.e., a path.
\end{proof}

Since the inspection number of any path is at most $2$ and subdivisions of paths are themselves paths, Proposition~\ref{prop:paths} implies that paths are precisely the connected graphs with topological inspection number at most $2$. In the remainder of this paper we establish a characterization of the connected graphs $G$ with $\s_t(G) = 3$, which turns out to be significantly more complicated.

\section{Statement of the classification theorem}\label{sec:defns}


\begin{defn}[Bipaths] Let a \emphd{bipath} be the union of a pair of edge-disjoint paths $P_1$, $P_2$ which share at least three vertices, two of which are their endpoints, such that the shared vertices appear on $P_1$ and $P_2$ in the same order. 
(See Fig.~\ref{fig:bipath}.)

We call the shared vertices the \emphd{primary vertices} of the bipath and the number of primary vertices the \emphd{order} of the bipath. The shared endpoints of $P_1$ and $P_2$ will be referred to as the \emphd{endpoints} of the bipath. 

Note that each pair of consecutive primary vertices has a pair of internally vertex-disjoint paths between them contained in the bipath. These will be called \emphd{primary paths}.

\begin{figure}[h]
\begin{center}
\begin{tikzpicture}[scale=0.8,x=0.75pt,y=0.75pt,yscale=-1,xscale=1,scale=0.8]

\draw  [fill={rgb, 255:red, 0; green, 0; blue, 0 }  ,fill opacity=1 ] (100,125.72) .. controls (100,122.56) and (102.56,120) .. (105.72,120) .. controls (108.87,120) and (111.43,122.56) .. (111.43,125.72) .. controls (111.43,128.87) and (108.87,131.43) .. (105.72,131.43) .. controls (102.56,131.43) and (100,128.87) .. (100,125.72) -- cycle ;
\draw  [fill={rgb, 255:red, 0; green, 0; blue, 0 }  ,fill opacity=1 ] (206,126.72) .. controls (206,123.56) and (208.56,121) .. (211.72,121) .. controls (214.87,121) and (217.43,123.56) .. (217.43,126.72) .. controls (217.43,129.87) and (214.87,132.43) .. (211.72,132.43) .. controls (208.56,132.43) and (206,129.87) .. (206,126.72) -- cycle ;
\draw  [fill={rgb, 255:red, 0; green, 0; blue, 0 }  ,fill opacity=1 ] (271,126.72) .. controls (271,123.56) and (273.56,121) .. (276.72,121) .. controls (279.87,121) and (282.43,123.56) .. (282.43,126.72) .. controls (282.43,129.87) and (279.87,132.43) .. (276.72,132.43) .. controls (273.56,132.43) and (271,129.87) .. (271,126.72) -- cycle ;
\draw  [fill={rgb, 255:red, 0; green, 0; blue, 0 }  ,fill opacity=1 ] (367,127.72) .. controls (367,124.56) and (369.56,122) .. (372.72,122) .. controls (375.87,122) and (378.43,124.56) .. (378.43,127.72) .. controls (378.43,130.87) and (375.87,133.43) .. (372.72,133.43) .. controls (369.56,133.43) and (367,130.87) .. (367,127.72) -- cycle ;
\draw    (105.72,125.72) .. controls (129.15,89.18) and (192.43,93.47) .. (211.72,126.72) ;
\draw    (211.72,126.72) .. controls (228.43,97.47) and (273.43,106.47) .. (276.72,126.72) ;
\draw    (276.72,126.72) .. controls (293.43,97.47) and (349.43,92.47) .. (372.72,127.72) ;
\draw    (105.72,125.72) .. controls (135.43,157.47) and (186.43,159.47) .. (211.72,126.72) ;
\draw    (276.72,126.72) .. controls (306.43,158.47) and (347.43,160.47) .. (372.72,127.72) ;
\draw    (211.72,126.72) .. controls (226.43,152.47) and (261.43,157.47) .. (276.72,126.72) ;

\draw (96,100.4) node [anchor=north west][inner sep=0.75pt]    {$a$};
\draw (209,99.4) node [anchor=north west][inner sep=0.75pt]    {$b$};
\draw (274,98.4) node [anchor=north west][inner sep=0.75pt]    {$c$};
\draw (368,98.4) node [anchor=north west][inner sep=0.75pt]    {$d$};

\end{tikzpicture}
\caption{A bipath of order 4, with primary vertices $a$, $b$, $c$, $d$.}\label{fig:bipath}
\end{center}
\end{figure}
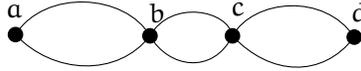
\end{defn}

From here, we construct three families of graphs to be used in our theorem:

\begin{defn}[Families $\mathcal{F}_1$, $\mathcal{F}_2$, $\mathcal{F}_3$]
Let $\mathcal{F}_1$ comprise all subdivisions of $K_4$.

Let $\mathcal{F}_2$ comprise the graphs consisting of 3 bipaths which share their endpoints but are otherwise pairwise vertex-disjoint.

Let $\mathcal{F}_3$ comprise the graphs consisting of 4 bipaths $B_{a}$, $B_{b}$, $B_{c}$, $B_{d}$ and two paths $P_1$, $P_2$, such that $B_{a}$, $B_{b}$ share endpoints $v_1$ and $v_2$, $B_c$, $B_d$ share endpoints $v_3$ and $v_4$, but the bipaths are otherwise pairwise vertex-disjoint. Furthermore, $P_1$ has endpoints $v_1$, $v_3$, and $P_2$ has endpoints $v_2$, $v_4$, but these paths have no other vertices in common with the bipaths (however, $P_1$ and $P_2$ may intersect each other).

Examples of graphs from the families $\mathcal{F}_1$, $\mathcal{F}_2$, $\mathcal{F}_3$ are shown in Fig.~\ref{fig:representatives1}.\label{defn:families}
\end{defn}

\begin{figure}[H]
\begin{center}
\begin{tikzpicture}[x=0.75pt,y=0.75pt,yscale=-1,xscale=1]
\draw    (105.31,142.67) .. controls (109.14,116.67) and (113.52,113.67) .. (125.57,103.67) ;
\draw    (145.83,148.67) .. controls (143.64,120.67) and (137.07,111.67) .. (125.57,103.67) ;
\draw    (125.57,187.67) .. controls (135.43,179.67) and (143.64,165.67) .. (145.83,148.67) ;
\draw    (125.57,187.67) .. controls (110.79,181.67) and (107.5,171.67) .. (105.31,142.67) ;
\draw    (105.31,142.67) .. controls (117.36,155.67) and (130.5,158.67) .. (145.83,148.67) ;
\draw    (125.57,187.67) .. controls (57.83,191.67) and (58.83,102.67) .. (125.57,103.67) ;
\draw    (215.38,144.4) .. controls (200.36,130.16) and (215.38,104.95) .. (238.99,101.67) ;
\draw    (216.45,145.5) .. controls (218.6,132.35) and (223.97,113.72) .. (238.99,101.67) ;
\draw    (236.84,187.13) .. controls (215.38,187.13) and (194.99,160.84) .. (216.45,145.5) ;
\draw    (236.84,187.13) .. controls (223.97,172.89) and (217.53,160.84) .. (216.45,145.5) ;
\draw    (238.99,143.3) .. controls (227.19,133.44) and (230.41,111.53) .. (238.99,101.67) ;
\draw    (236.84,187.13) .. controls (225.04,177.27) and (230.41,153.17) .. (238.99,143.3) ;
\draw    (238.99,143.3) .. controls (249.72,133.44) and (251.87,114.82) .. (238.99,101.67) ;
\draw    (236.84,187.13) .. controls (250.8,177.27) and (251.87,154.26) .. (238.99,143.3) ;
\draw    (265.43,144.4) .. controls (274.1,124.09) and (265.43,104.95) .. (236.84,101.67) ;
\draw    (264.13,145.5) .. controls (261.53,132.35) and (255.04,113.72) .. (236.84,101.67) ;
\draw    (236.84,187.13) .. controls (262.83,187.13) and (278.39,162.44) .. (264.13,145.5) ;
\draw    (236.84,187.13) .. controls (252.44,172.89) and (262.83,160.84) .. (264.13,145.5) ;
\draw    (324.67,131.01) .. controls (323.78,121.13) and (327.72,111.13) .. (341.98,106.67) ;
\draw    (324.67,131.01) .. controls (334.28,128.13) and (339.53,118.13) .. (343.8,106.67) ;
\draw    (341.98,154.13) .. controls (326.4,149.13) and (325.09,143.13) .. (324.67,131.01) ;
\draw    (341.98,154.13) .. controls (342.16,141.13) and (335.59,134.13) .. (324.67,131.01) ;
\draw    (360.05,131.01) .. controls (360.71,117.74) and (353.55,109.13) .. (342.41,106.67) ;
\draw    (360.05,131.01) .. controls (348.55,125.13) and (345.55,117.13) .. (342.41,106.67) ;
\draw    (342.41,154.13) .. controls (355.55,151.13) and (360.55,144.13) .. (360.05,131.01) ;
\draw    (342.41,154.13) .. controls (347.55,141.13) and (350.55,137.13) .. (360.05,131.01) ;
\draw    (381.67,158.01) .. controls (380.78,148.13) and (384.72,138.13) .. (398.98,133.67) ;
\draw    (381.67,158.01) .. controls (391.28,155.13) and (396.53,145.13) .. (400.8,133.67) ;
\draw    (398.98,181.13) .. controls (383.4,176.13) and (382.09,170.13) .. (381.67,158.01) ;
\draw    (398.98,181.13) .. controls (399.16,168.13) and (392.59,161.13) .. (381.67,158.01) ;
\draw    (417.05,158.01) .. controls (417.71,144.74) and (410.55,136.13) .. (399.41,133.67) ;
\draw    (417.05,158.01) .. controls (405.55,152.13) and (402.55,144.13) .. (399.41,133.67) ;
\draw    (399.41,181.13) .. controls (412.55,178.13) and (417.55,171.13) .. (417.05,158.01) ;
\draw    (399.41,181.13) .. controls (404.55,168.13) and (407.55,164.13) .. (417.05,158.01) ;
\draw    (343.8,106.67) .. controls (383.8,76.67) and (359.41,211.13) .. (399.41,181.13) ;
\draw    (342.41,154.13) .. controls (357.55,183.13) and (374.55,113.13) .. (400.8,133.67) ;

\draw (104,197.4) node [anchor=north west][inner sep=0.75pt]    {$\mathcal{F}_{1}$};
\draw (232,201.4) node [anchor=north west][inner sep=0.75pt]    {$\mathcal{F}_{2}$};
\draw (366,202.4) node [anchor=north west][inner sep=0.75pt]    {$\mathcal{F}_{3}$};
\end{tikzpicture}
\caption{Representative members of the families $\mathcal{F}_1$, $\mathcal{F}_2$, $\mathcal{F}_3$. Each curved segment represents a path which may be subdivided arbitrarily many times.}\label{fig:representatives1}
\end{center}
\end{figure}
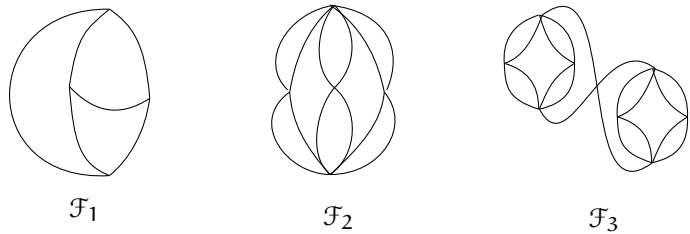

\begin{defn}[Generalized Series-Parallel Graphs]
A \emphd{graph with terminals} $\gwt{G}$ is an ordered triple of the form $(G, u, v)$ where $G$ is a graph and $u, v$ are distinct vertices in $V(G)$, called the \emphd{terminals} of $\gwt{G}$. Note the convention of denoting graphs with terminals by the boldface version of the letter used to denote the underlying graph. The \emphd{generalized series-parallel graphs}, or \emphd{GSP graphs} for short, are the minimal collection of graphs with terminals that contains all single-edge graphs with their two vertices as terminals, and is closed under the following operations:
\begin{itemize}[wide]
    \item The series operation $\circ_s$ can be performed on two GSP graphs $\gwt{G} = (G, u, v)$ and $\gwt{H} = (H, v, w)$ such that $V(G) \cap V(H) = \{v\}$. The result of this operation is the new GSP graph $\gwt{G} \circ_s \gwt{H} = (G \cup H, u, w)$. 
    \item The parallel operation $\circ_p$ can be performed on two GSP graphs $\gwt{G} = (G, u, v)$ and $\gwt{H} = (H, u, v)$ such that $V(G) \cap V(H) = \{u, v\}$. The result of this operation is the new GSP graph $\gwt{G} \circ_p \gwt{H} = (G \cup H, u, v)$. 
    \item The first branch operation $\circ_b$ can be performed on two GSP graphs $\gwt{G} = (G, u, v)$, $\gwt{H} = (H, u, w)$ such that $V(G) \cap V(H) = \{u\}$. The result of this operation is the new GSP graph $\gwt{G} \circ_b \gwt{H} = (G \cup H, u, v)$. 
    \item The second branch operation $\circ_{b'}$ can be performed on two GSP graphs $\gwt{G} = (G, u, v)$, $\gwt{H} = (H, v, w)$ such that $V(G) \cap V(H) = \{v\}$. The result of this operation is the new GSP graph $\gwt{G} \circ_{b'} \gwt{H} = (G \cup H, u, v)$. 
\end{itemize}
A graph $G$ is \emphd{GSP} if there is some choice of terminals $u$, $v \in V(G)$ such that $(G,u,v)$ is a GSP graph.\label{defn:GSP}
\end{defn}

The way a GSP graph is built up from individual edges is encoded in its GSP decomposition:

\begin{defn}[GSP Decompositions]
Given a GSP graph $\gwt{G} = (G, u, v)$, a \emphd{GSP decomposition} of $\gwt{G}$ is a rooted binary tree $T$ whose vertices are GSP graphs satisfying the following recursive conditions: 
\begin{itemize}[wide]
    \item The root node of $T$ is $\gwt{G}$.
    \item If $G$ comprises a single edge, then $T$ comprises a single node. 
    \item Otherwise, there exist two GSP graphs $\gwt{H}$ and $\gwt{K}$ such that $\gwt{G} = \gwt{H} \circ \gwt{K}$ for some operation $\circ \in \{\circ_s, \circ_p, \circ_b, \circ_{b'}\}$, and $T$ comprises GSP decompositions $T_H, T_K$ of $\gwt{H}$ and $\gwt{K}$, joined to the root node. 
\end{itemize}

\begin{figure}[b]
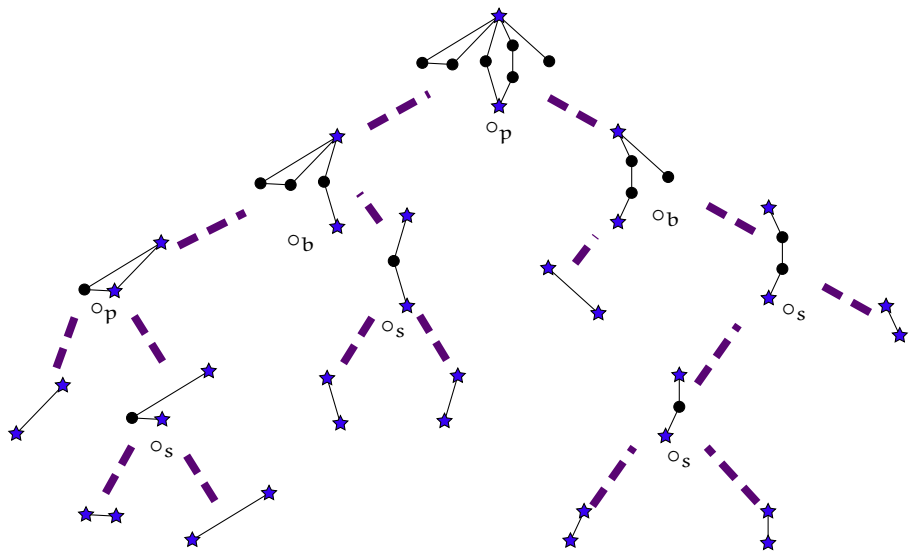

\begin{center}


\caption{A GSP decomposition $T$. The starred vertices are terminals of their respective graphs. Each node $\gwt{H}$ is labelled with $\circ_{T, \gwt{H}}$.\label{fig:gspdecomp}}
\end{center}
\end{figure}
It is clear that every GSP graph $\gwt{G}$ admits a GSP decomposition (not necessarily unique). Fig.~\ref{fig:gspdecomp} shows an example of a GSP decomposition.


Given a GSP decomposition $T$ of $\gwt{G}$ and $\gwt{H} \in V(T)$, the \emphd{induced decomposition} of $\gwt{H}$ is the subtree of $T$ rooted at $\gwt{H}$. If $\gwt{H}$ is not a leaf of $T$, then $\circ_{T, \gwt{H}}$ (or simply $\circ_{T,H}$, where $H$ is the underlying graph of $\gwt{H}$) denotes the operation in  $\{\circ_s, \circ_p, \circ_b, \circ_{b'}\}$ by which $\gwt{H}$ is obtained from its children.

We call $T$ a \emphd{series-parallel decomposition} if $\circ_{T, \gwt{H}} \in \{\circ_s, \circ_p\}$ for all $\gwt{H} \in V(T)$.
\label{defn:GSP_dec}
\end{defn}

Next we define the subclass of simple GSP decompositions.

\begin{defn}[Simple GSP Decompositions]
A graph with terminals is \emphd{bridged} if it contains an edge such that every path between its terminals includes that edge (i.e. a bridge separating its terminals).

We define the \emphd{complexity} $c(T)$ of a GSP decomposition $T$ of $\gwt{G}$ recursively as follows:
\begin{itemize}[wide]
    \item If $\circ_{T, \gwt{G}} = \circ_p$, $c(T)$ is the sum of the complexities of the two subtrees rooted at the children of $\gwt{G}$.
    \item If $\circ_{T, \gwt{G}} \in \{\circ_b, \circ_{b'}\}$, $c(T)$ is the complexity of the child of $\gwt{G}$ that shares its terminals with $\gwt{G}$.
    \item Otherwise, the complexity of the decomposition is $0$ if $\gwt{G}$ is bridged and $1$ otherwise.
\end{itemize}
A GSP decomposition $T$ is \emphd{simple} if the complexity of every subtree of $T$ is at most $1$. Otherwise, it is \emphd{complex}.\label{defn:simple}
\end{defn}

Finally, we have all the terminology needed to state our result:

\begin{theo} Let $G$ be a connected graph with $\abs{V(G)} \ge 2$. The following are equivalent:
\begin{enumerate}[label=\ep{\normalfont\arabic*}]
    \item $\s_t(G) \le 3$.
    \item $G$ does not contain as a subgraph any element of $\mathcal{F}_1, \mathcal{F}_2$, or $\mathcal{F}_3$.
    \item $G$ admits a simple GSP decomposition.
\end{enumerate}\label{theo:main}
\end{theo}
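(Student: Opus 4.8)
The three statements are most naturally linked in a cycle, and I would prove $\ref{item:ts3} \Rightarrow \ref{item:families} \Rightarrow \ref{item:GSP} \Rightarrow \ref{item:ts3}$, matching the three-section breakdown announced in the introduction. Throughout I would exploit the fact that $\s_t$ is invariant under subdivision, so that containment of a (subdivided) forbidden graph is preserved under passing to further subdivisions.

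For $\ref{item:ts3} \Rightarrow \ref{item:families}$ I would argue the contrapositive: if $G$ contains a member of $\mathcal{F}_1 \cup \mathcal{F}_2 \cup \mathcal{F}_3$ as a subgraph, then $\s_t(G) \ge 4$. Since $\s(H') \ge \s(F')$ whenever $F'$ is a subgraph of $H'$, and any subdivision of $G$ still contains a subdivision of the forbidden graph, it suffices to show that every sufficiently subdivided forbidden graph $F$ has $\s(F) \ge 4$. The engine here is Proposition~\ref{prop:boundary} with $k = 3$: a successful $3$-search forces, for every target size $i$, a vertex set $C$ with $\abs{\partial(C)} \le 2$ and $\abs{C} \in \{i-2, i-1\}$. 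I would therefore classify the sets of boundary size at most $2$ in each forbidden graph and show that, once the constituent paths are long enough, the attainable sizes $\abs{C}$ leave two consecutive integers uncovered, contradicting the proposition. The branch vertices (the degree-$3$ corners in $\mathcal{F}_1$, the shared primary endpoints in $\mathcal{F}_2$ and $\mathcal{F}_3$) are what create these gaps, since any low-boundary set can interact with the branching only in a few rigid ways, exactly as in the tree and grid lower bounds proved above.

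For $\ref{item:families} \Rightarrow \ref{item:GSP}$, the first observation is that forbidding $\mathcal{F}_1$ makes $G$ free of $K_4$-subdivisions, so by Duffin's theorem \cite{Duffin} its $2$-connected blocks are series-parallel and $G$ is GSP; hence $G$ has \emph{some} GSP decomposition. The real content is upgrading this to a \emph{simple} one. I would show that a decomposition is forced to be complex precisely when $G$ contains the ``triple-fat-branch'' or ``double-bundle'' configurations encoded by $\mathcal{F}_2$ and $\mathcal{F}_3$: a parallel node both of whose sides are non-bridged yields, after tracing two internally disjoint terminal-to-terminal routes through each side, a subgraph consisting of three bipaths sharing endpoints, i.e.\ a member of $\mathcal{F}_2$; while complexity at two separated parallel nodes joined through the rest of the graph produces a member of $\mathcal{F}_3$. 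Thus, assuming no forbidden subgraph, I would induct on the block structure and select a simple decomposition, choosing at each parallel split the bridged side guaranteed by the absence of $\mathcal{F}_2$ and $\mathcal{F}_3$. I expect this structural step to be the main obstacle, since it requires pinning down exactly how complexity at least $2$ manifests as a concrete forbidden subgraph.

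Finally, for $\ref{item:GSP} \Rightarrow \ref{item:ts3}$ I would give a constructive induction on a simple GSP decomposition, producing for arbitrary $\ell$ a subdivision of $G$ that subdivides every edge at least $\ell$ times, together with an explicit successful $3$-search. The right inductive object is a search of a graph-with-terminals that clears the whole graph while treating the two terminals as controlled entry/exit points, keeping at most one searcher stationed on a terminal, so that the four operations compose cleanly: series composition concatenates two such searches at the shared terminal; the branch operations $\circ_b, \circ_{b'}$ append a side search after the main one; and the parallel operation is handled through the bridged side guaranteed by simplicity, whose separating edge lets a single searcher guard the junction while the other two sweep, keeping the composite within search size $3$. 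The long subdivisions supply the room needed to sweep each doubled path sequentially, exactly as in Example~\ref{ex:k4subdiv}. Chaining these constructions up the decomposition tree, and recalling that $\s_t$ is subdivision-invariant, yields $\s_t(G) \le 3$ and closes the cycle.
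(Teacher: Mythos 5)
Your overall architecture --- the cycle \ref{item:ts3} $\Rightarrow$ \ref{item:families} $\Rightarrow$ \ref{item:GSP} $\Rightarrow$ \ref{item:ts3}, the use of Proposition~\ref{prop:boundary} with a classification of boundary-$\leq 2$ sets for the lower bounds, and the inductive amalgamation of terminal-aligned $3$-searches over the GSP operations (with the bridge on one parallel side guarding the junction) --- coincides with the paper's. The first and third implications are sketched in essentially the paper's way.

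There is, however, a genuine gap in your plan for \ref{item:families} $\Rightarrow$ \ref{item:GSP}. You assert that a decomposition is forced to be complex \emph{precisely} when $G$ contains a member of $\mathcal{F}_2$ or $\mathcal{F}_3$, and in particular that a parallel node with two non-bridged sides yields three bipaths sharing endpoints. It does not: by Lemma~\ref{lemma:complexity}, each non-bridged side of complexity $1$ contributes only \emph{one} bipath between the terminals, so two non-bridged sides give two internally disjoint bipaths, whereas $\mathcal{F}_2$ requires three. The union of two bipaths sharing endpoints $a,b$ contains no forbidden subgraph, yet its natural decomposition rooted at $(a,b)$ is a parallel composition of two complexity-$1$ pieces and hence complex; there is no bridged side to "choose" at that split. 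Consequently your strategy of selecting the bridged side at each parallel node breaks down exactly at the hard case. What the absence of $\mathcal{F}_2$ actually buys is that \emph{at most two} of the parallel components between a given terminal pair are non-bridged, and the absence of $\mathcal{F}_3$ (via Proposition~\ref{prop:minial_complex_descendant}) localizes all complexity to a unique minimal complex descendant. The paper then repairs the decomposition not by local choice but by \emph{re-rooting}: Proposition~\ref{prop:rotation} changes the terminal pair, pushing one of the two offending bipaths through a cut-vertex of the other so that the resulting decomposition has complexity at most $1$. Some such rotation argument (or an equivalent change of terminals) is indispensable, and your sketch as written would not produce a simple decomposition for graphs like the two-bipath example.
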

The remainder of the paper is devoted to proving Theorem~\ref{theo:main}.

\section{Forbidden graphs have topological inspection number at least 4}\label{sec:forb_lower}


\subsection{A technical lemma}

We begin by establishing a slightly technical result that will allow us to control sets of vertices with boundary of size at most $2$ 
in our subsequent analysis.

\begin{lemma} Suppose $G$ is a graph and $P_1, \ldots, P_k \subseteq V(G)$ are $G$-connected. For each $i$, let $Q_i = P_i \cup N_G(P_i)$, and let $P = \bigcup_i P_i$. Assume the following hypotheses: 
\begin{enumerate}[label=\ep{\upshape\textsf{H}\arabic*}]
\item\label{item:H1} $\bigcup_{i=0}^k Q_i = V(G)$.
\item\label{item:H2} For each $i \ne j$, $P_i \cap Q_j = \0$.
\item\label{item:H3} For each $i$, $G - P_i$ is connected and $G - P_i - v$ is connected for every $v \in V(G) \setminus P$. 
\item\label{item:H4} For each cut-set $\{a, b\} \subseteq P$ of $G$
, there exists some $i$ such that $\{a, b\} \subseteq P_i$.
\end{enumerate}
Then if $S \subseteq V(G)$ is such that $\abs{\partial_G(S)} \le 2$, at least one of the following conclusions holds:
\begin{enumerate}[label=\ep{\upshape\textsf{C}\arabic*}]
    \item\label{item:C1} $\abs{S} \le 2;$
    \item\label{item:C2} $S \subseteq P_i \cup \{v\}$ for some $i$ and $v \in V(G) \setminus P$;
    \item\label{item:C3} $S \supseteq V(G) \setminus (P_i \cup A)$ for some $i$ and $A \subseteq V(G) \setminus P$ with $\abs{A} \le 1$;
    \item\label{item:C4} $S = \bigcup_{i \in I}P_i \cup A$ for some $I \subseteq \{1, \ldots, k\}$ and $A \subseteq V(G) \setminus P$.
\end{enumerate}\label{claim:technical_lemma}
\end{lemma}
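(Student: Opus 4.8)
The plan is to classify each connected set $P_i$ by how it meets $S$ and then read off the global shape of $S$ from the hypotheses. Since $P_j \subseteq Q_j$, hypothesis \ref{item:H2} makes the blobs $P_1, \dots, P_k$ pairwise disjoint (in fact pairwise non-adjacent). Fix $S$ with $\abs{\partial_G(S)} \le 2$ and call a blob $P_i$ \emph{inside} if $P_i \subseteq S$, \emph{outside} if $P_i \cap S = \0$, and \emph{straddling} otherwise. Because $P_i$ is $G$-connected, a straddling blob must contain a vertex of $\partial_G(S)$: any $P_i$-path from an $S$-vertex to a non-$S$-vertex has a last $S$-vertex, which lies in $\partial_G(S) \cap P_i$. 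As distinct blobs are disjoint, these boundary vertices are distinct, so there are \textbf{at most two straddling blobs}. I dispose of trivial cases at the outset: if $\abs{S} \le 2$ then \ref{item:C1} holds, so I assume $\abs{S} \ge 3$, and likewise $\0 \ne S \ne V(G)$; I also assume $G$ is connected, since otherwise \ref{item:H3} forces each component to be a single blob and the conclusions are immediate.

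If \emph{no} blob straddles, then $S \cap P = \bigcup_{i \in I} P_i$ where $I$ indexes the inside blobs, so $S = \bigcup_{i \in I} P_i \cup (S \setminus P)$ with $S \setminus P \subseteq V(G) \setminus P$; this is exactly \ref{item:C4}. If \emph{two} blobs $P_1, P_2$ straddle, pick $p_1 \in P_1 \cap \partial_G(S)$ and $p_2 \in P_2 \cap \partial_G(S)$; then $\partial_G(S) = \{p_1, p_2\}$. If the interior $S \setminus \partial_G(S)$ is empty then $\abs{S} = 2$ and \ref{item:C1} holds; otherwise $\{p_1, p_2\}$ separates the nonempty interior from the nonempty exterior (each straddling blob contributes exterior vertices), so it is a $2$-cut contained in $P$. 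Hypothesis \ref{item:H4} then forces $\{p_1, p_2\} \subseteq P_i$ for a single $i$, contradicting $p_1 \in P_1$, $p_2 \in P_2$ and disjointness. Hence the two-straddling case never arises.

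The crux is a \emph{single} straddling blob $P_1$. I pass to the connected graph $G - P_1$ (first clause of \ref{item:H3}) and track $S' = S \setminus P_1$, whose boundary there satisfies $\partial_{G - P_1}(S') \subseteq \partial_G(S) \setminus P_1$. If every boundary vertex of $S$ lies in $P_1$, then $\partial_{G - P_1}(S') = \0$, so connectivity gives $S' = \0$ or $S' = V(G) \setminus P_1$, yielding \ref{item:C2} (with $S \subseteq P_1$) or \ref{item:C3} (with $V(G) \setminus S \subseteq P_1$). Otherwise $\partial_G(S) = \{p_1, q\}$ with $p_1 \in P_1$ and $q \notin P_1$, and (taking the interior nonempty, else \ref{item:C1}) this is a genuine $2$-cut. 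If $q$ lies in another blob, then $\{p_1, q\} \subseteq P$ meets two blobs, contradicting \ref{item:H4}; hence $q \in V(G) \setminus P$. Now the second clause of \ref{item:H3} applies: $G - P_1 - q$ is connected, and its vertex set partitions as $\bigl((S \setminus P_1) \setminus \{q\}\bigr) \sqcup \bigl((V(G) \setminus S) \setminus P_1\bigr)$, the two parts lying in the interior and exterior of $S$, with no edges between them. Connectivity forces one part to be empty, giving $S \subseteq P_1 \cup \{q\}$ (conclusion \ref{item:C2}) or $V(G) \setminus S \subseteq P_1$ (conclusion \ref{item:C3}).

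The main obstacle is this single-straddling case: one must correctly locate the second boundary vertex $q$ and then choose the right reduction — deleting the whole blob $P_1$, versus deleting $P_1$ together with the free vertex $q$ — so that precisely one of \ref{item:H3} and \ref{item:H4} becomes applicable. The care lies in checking that each deletion faithfully transfers the ``interior versus exterior of $S$'' separation to the smaller graph (which follows from the definition of the boundary, interior vertices having all neighbors in $S$). Here \ref{item:H2} supplies disjointness and the bound of two straddling blobs, \ref{item:H4} localizes every $2$-cut to a single blob, the two clauses of \ref{item:H3} drive the reductions, and \ref{item:H1} guarantees that the blobs and their neighborhoods cover $V(G)$ so that no vertex falls outside the analysis.
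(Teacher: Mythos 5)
Your proof is correct and follows essentially the same route as the paper's: you use \ref{item:H4} to show that the (at most two) boundary vertices of $S$ cannot be spread across two distinct sets $P_i$, and then the two connectivity clauses of \ref{item:H3} to force $S$ into one of the shapes \ref{item:C2}--\ref{item:C4}. The only cosmetic difference is that you organize the cases by the number of ``straddling'' blobs, whereas the paper splits on whether $\partial_G(S)$ meets $P$ and works with the components of $G - \partial_G(S)$; the two case analyses cover the same ground and invoke the hypotheses in the same way.
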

\begin{proof} Suppose $S \subseteq V(G)$ is a set with $\abs{\partial_G(S)} \le 2$. If $\abs{S} \le 2$, then \ref{item:C1} holds. Thus, we may assume $\abs{S} > 2$. Let $H = G - \partial_G(S)$. Then each connected component of $H$ is either contained in $S$ or is disjoint from $S$. Note that $V(H) \cap S \ne \0$ since $\abs{S} > \abs{\partial_G(S)}$.

If $H$ is connected then $S = V(G)$ and \ref{item:C4} holds with $I = \{1,\ldots,k\}$ and $A = V(G) \setminus P$. Thus, we may assume $H$ is disconnected.

Suppose $\partial_G(S) \subseteq V(G) \setminus P$. For every $i$, $\partial_G(S)$ is disjoint from $P_i$ and hence $P_i$ is either contained in $S$ or disjoint from $S$. Letting $I = \{i\mid P_i \subseteq S\}$ and $A = S \setminus P$ gives us $S = \bigcup_{i \in I} P_i \cup A$, so \ref{item:C4} holds.

Otherwise, $\abs{\partial_G(S) \setminus P} \leq 1$. Note that we can find some $i$ such that $\partial_G(S) \cap P \subseteq P_i$, as $\partial_G(S) \cap P$ either comprises a single vertex or $\partial_G(S) \subseteq P$ and we can use \ref{item:H4}. By \ref{item:H3}, the graph $G - P_i - \partial_G(S) = G - P_i - (\partial_G(S) \setminus P)$ is connected and is thus either contained in $S$ or disjoint from $S$. In the former case \ref{item:C3} holds, and in the latter case $S$ is contained in $P_i \cup (\partial_G(S) \setminus P)$, fulling \ref{item:C2}.
\end{proof}

\subsection{The family $\mathcal{F}_1$}

Here we show that the topological inspection number of $K_4$ is $4$:


\begin{theo}
    $\s_t(K_4) \ge 4$. \label{res:1}
\end{theo}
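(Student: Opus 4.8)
The plan is to use the equivalent formulation of the topological inspection number and show that there is an $\ell$ for which every subdivision $H$ of $K_4$ with each edge subdivided at least $\ell$ times satisfies $\s(H) \ge 4$. Throughout, fix such an $H$, write $a,b,c,d$ for the four branch vertices (the images of the vertices of $K_4$), and let the six subdivided edges be paths of lengths $m_1, \dots, m_6$, so that $\abs{V(H)} = \sum_i m_i - 2$. I would assume toward a contradiction that $\s(H) \le 3$ and fix a successful $3$-search. The engine is Proposition~\ref{prop:boundary}: it produces, for every target $i$, a set $C$ with $\abs{\partial_H(C)} \le 2$ and $\abs{C} \in \{i-2,\,i-1\}$. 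Hence it suffices to exhibit two consecutive integers that cannot be realized as the size of any set with boundary at most $2$.

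To understand such sets I would invoke Lemma~\ref{claim:technical_lemma} with $P_1, \dots, P_6$ taken to be the interiors of the six subdivided edges. Verifying the hypotheses is the first real step: \ref{item:H1} holds because every vertex is a branch vertex or lies in some $Q_i$; \ref{item:H2} is immediate since distinct $P_i$ are interiors of distinct edges; \ref{item:H3} reduces to the fact that deleting one open path, or one open path together with a single branch vertex, leaves a subdivided $K_4$ minus an edge, which is connected; and \ref{item:H4} is the crucial topological input: every two-vertex cut of a subdivision of $K_4$ consists of two vertices on a common subdivided edge (a $2$-cut can only isolate a segment of a single path), hence lies in one $P_i$. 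With the hypotheses in place, every $S$ with $\abs{\partial_H(S)} \le 2$ is of type \ref{item:C1}--\ref{item:C4}, where now $V(H) \setminus P = \{a,b,c,d\}$. A short analysis of the purely combinatorial condition ``$\abs{\partial_H(S)} \le 2$'' on $K_4$ then shows that the type-\ref{item:C4} sets are only a single whole path together with some of its endpoints, or complements of such sets; combining this with \ref{item:C1}--\ref{item:C3} yields that the attainable sizes all lie in $[0,\,M+1] \cup [\abs{V(H)} - M + 1,\, \abs{V(H)}]$, where $M = \max_i m_i$. When no single path dominates, i.e. when $\abs{V(H)} \ge 2M + 4$, these intervals leave a gap of length at least two, and choosing $i$ inside the gap contradicts Proposition~\ref{prop:boundary}.

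The main obstacle is the opposite regime, where one subdivided edge is far longer than all the others: then $M+1$ and $\abs{V(H)} - M + 1$ overlap, the size gap closes, and Proposition~\ref{prop:boundary} alone yields nothing. Since further subdivisions of any fixed subdivision can make a single path arbitrarily long, this case genuinely must be treated, and it is precisely where the \emph{shape} information in \ref{item:C1}--\ref{item:C4}, rather than mere cardinality, must carry the argument. The idea I would pursue is to track the cleared region $\FC_t$ as it grows from $\0$ to $V(H)$ and to argue that, in order to absorb the long path, it is forced through a ``medium'' configuration separating the three internally disjoint routes of the $K_4$-core; the structural dichotomy says that any separating set with boundary at most $2$ is a single path or the complement of one, neither of which can separate the core, so $\abs{\partial_H(\FC_t)} \ge 3$ at that critical turn. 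Making this precise—locating a turn at which the cleared set is simultaneously of medium size, of small boundary, and separating—is the delicate point, and it is exactly what allows the structural lemma to substitute for the failed size gap uniformly in the path lengths.
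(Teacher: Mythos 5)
Your first half is sound and tracks the paper's opening moves: fixing a heavily subdivided $H$, verifying hypotheses \ref{item:H1}--\ref{item:H4} of Lemma~\ref{claim:technical_lemma} for the six path interiors (your justifications are the right ones), and concluding via \ref{item:C1}--\ref{item:C4} that every small-boundary set has size in $[0,M+O(1)]\cup[\abs{V(H)}-M-O(1),\abs{V(H)}]$. Your explicit classification of the admissible type-\ref{item:C4} sets (a single path interior with a subset of its endpoints, or the complement of a bare path interior) is correct and is in fact a little sharper than the paper, which merely counts the at most $2^{10}$ candidate sets and pigeonholes over sizes. Either way, in the regime $\abs{V(H)}\geq 2M+O(1)$ you get a genuine size gap and a contradiction with Proposition~\ref{prop:boundary}.

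The gap is the unbalanced regime, which you correctly identify as unavoidable but do not resolve. Your proposed mechanism --- that the cleared region must pass through a ``medium, separating'' configuration at which $\abs{\partial_H(\FC_t)}\geq 3$ --- does not work as stated, for two reasons. First, nothing forces the pre-cleared set to be separating at any turn: Proposition~\ref{prop:boundary} controls only the \emph{sizes} of the sets $\PC_{t-1}(\mathcal{S})$, and in the unbalanced regime every size is realized by an admissible shape (a subpath of the long edge or the complement of one), so no turn is certified to have large boundary. Second, even if you located a single turn with $\abs{\partial_H(\PC_t)}\geq 3$, that is not a contradiction --- it only means $\abs{\PC_{t+1}}\leq\abs{\PC_t}$ at that turn, and the search may simply make progress later; the whole force of Proposition~\ref{prop:boundary} is that the boundary must be small \emph{throughout an entire range of sizes}, and you have no analogue of that for ``shape.'' The paper closes this case by a quite different, and essential, trick: it passes to a \emph{subgraph}. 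Writing $D$ for the subdivided diamond $G-P_{ab}$ and $R_1,R_2,R_3$ for the three ``arms'' of $D$, it adjoins to $D$ only a prefix $R_4$ of the long path, of length chosen so that $\abs{R_1}+\abs{R_4}=\max_i\abs{R_i}$. In the resulting subgraph $H$ the lengths are rebalanced, Lemma~\ref{claim:technical_lemma} is applied a second time (now to $R_1,R_2,R_3$ inside $D$, with a separate case analysis to control how $R_4$ attaches at $a$), the size gap reappears, and $\s(G)\geq\s(H)\geq 4$ follows from monotonicity of $\s$ under subgraphs. Without this truncation idea, or a worked-out substitute, your argument does not cover all subdivisions, so the proof is incomplete.
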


The remainder of this subsection contains the proof of Theorem~\ref{res:1}. Note that Theorem~\ref{res:1} implies that every graph in $\mathcal{F}_1$ (i.e., every subdivision of $K_4$) has topological inspection number at least $4$.

Let $G$ be some subdivison of $K_4$ such that each edge is subdivided at least $2^{12}$ times (we did not make an attempt to optimize the value $2^{12}$). We will show that $\s(G) \geq 4$. Let $T = \{a, b, c, d\}$ be the original vertices of the $K_4$ in this subdivision. Assume, towards a contradiction, that $\s(G) \leq 3$.

We begin by applying Lemma~\ref{claim:technical_lemma} in the context of $G$. Let $P_{ab}$, $P_{ac}$, \ldots, $P_{cd}$ be the sets of internal vertices of the paths replacing the corresponding edges of $K_4$. In the notation of Lemma~\ref{claim:technical_lemma}, each set $Q_{xy}$ would then be of the form $Q_{xy} = P_{xy} \cup \{x,y\}$. 

\begin{claim} The sets $P_{ab}$, $P_{ac}$, \ldots, $P_{cd}$ satisfy the assumptions of Lemma~\ref{claim:technical_lemma}.
\end{claim}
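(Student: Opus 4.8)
The plan is to verify the four hypotheses \ref{item:H1}--\ref{item:H4} of Lemma~\ref{claim:technical_lemma} directly from the structure of a subdivision. First I would record the easy bookkeeping: each $P_{xy}$ is the vertex set of the interior of a path and is therefore $G$-connected, as the lemma requires. Since each edge of $K_4$ is subdivided at least $2^{12}$ times, every $P_{xy}$ is nonempty, and the only vertices lying outside $P = \bigcup P_{xy}$ are the four branch vertices, so $V(G) \setminus P = T = \{a,b,c,d\}$. Hypothesis \ref{item:H1} is then immediate: every internal vertex lies in its own $P_{xy} \subseteq Q_{xy}$, and each branch vertex $x$ is adjacent to the first internal vertex of each of the three paths at $x$, so $x \in N_G(P_{xy}) \subseteq Q_{xy}$. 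For \ref{item:H2}, every internal vertex has degree $2$ with both neighbors on its own path, whence $N_G(P_{xy}) \subseteq P_{xy} \cup \{x,y\}$; consequently $Q_{x'y'}$ meets no $P_{xy}$ with $xy \ne x'y'$.

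The verification of \ref{item:H3} is a short case analysis. Deleting the interior $P_{xy}$ of one path turns $G$ into a subdivision of $K_4 - xy$, which is connected, so $G - P_{xy}$ is connected. Since $V(G)\setminus P = T$, it remains to delete one additional branch vertex $v$. If $v \in \{x,y\}$ is an endpoint of the deleted path, then the two paths at $v$ other than $xy$ become pendant segments hanging off the two remaining branch vertices, and together with the surviving paths they form a connected (subdivided) triangle with pendants attached. If $v$ is one of the two branch vertices not on $xy$, then the three paths at $v$ become pendants hanging off $x$, $y$, and the remaining branch vertex; although the path along $xy$ is gone, $x$ and $y$ remain joined through that remaining branch vertex. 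In every case $G - P_{xy} - v$ is connected.

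The main point is \ref{item:H4}: every two-element cut-set $\{s,t\} \subseteq P$ must lie inside a single $P_{xy}$. Pairs on a common path satisfy this automatically, so it suffices to prove the contrapositive for pairs on distinct paths, namely that if $s \in P_{xy}$ and $t \in P_{x'y'}$ with $xy \ne x'y'$, then $G - \{s,t\}$ is connected. The key observation is that deleting an interior vertex of a path only severs the connection carried by that one path, so at the level of branch vertices the deletion of $s$ and $t$ behaves exactly like deleting the edges $xy$ and $x'y'$ from $K_4$. Hence the branch vertices stay in a single component because $K_4 - \{xy, x'y'\}$ is connected, $K_4$ being $3$-edge-connected. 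Each of the two broken paths splits into two segments, each still attached to a branch vertex, and all remaining paths are intact, so every vertex of $G - \{s,t\}$ joins the component of the branch vertices. This appeal to the $3$-edge-connectivity of $K_4$ is the crux of the whole claim; by contrast, \ref{item:H1}--\ref{item:H3} follow from purely local degree considerations and a triangle-plus-pendants picture.
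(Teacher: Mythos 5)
Your proposal is correct and follows essentially the same route as the paper: \ref{item:H1} and \ref{item:H2} by local inspection, \ref{item:H3} via the observation that $G - P_{xy}$ is a subdivision of the ($2$-connected) diamond graph, and \ref{item:H4} from the fact that $K_4$ minus two edges remains connected. The only cosmetic difference is in \ref{item:H4}, where you argue the contrapositive via $3$-edge-connectivity of $K_4$ while the paper directly exhibits a path through all four branch vertices avoiding the cut pair; both rest on the same underlying fact.
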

\begin{proof} 
It is easy to see that the sets $P_{xy}$ are $G$-connected and that statements \ref{item:H1} and \ref{item:H2} hold. 
For each pair $(x,y)$, $G - P_{xy}$ is a subdivision of a diamond graph, which is 2-connected, so \ref{item:H3} is satisfied. It thus remains to verify \ref{item:H4}.

If $\{v, w\}$ is a cut-set such that $\{v,w\} \subseteq P$---i.e., $\{v, w\} \cap T = \0$---then there exists a path through all $4$ vertices of $T$ which passes through neither $v$ nor $w$. Any vertex not on this path lies on a subdivided edge between two vertices of $T$. For such a vertex to be separated from $T$ by $\{v, w\}$, both $v$ and $w$ must be on this subdivided edge, and thus they both belong to some $P_{xy}$, as desired.
\end{proof}

Now consider the possible sizes of subsets of $V(G)$ with boundary of size at most $2$. Let $M = \max\abs{P_{xy}}$. Suppose $S \subseteq V(G)$ is a set with $\abs{\partial(S)} \le 2$ and $M + 1 < \abs{S} < \abs{V(G)} - M - 1$. Then, by Lemma~\ref{claim:technical_lemma},
\[
    S = \bigcup_{xy \in I} P_{xy} \cup T' \quad \text{for some } I \subseteq \{ab, ac, \ldots, cd\} \text{ and } T' \subseteq T.
\]
There are at most $2^{10}$ such sets $S$ (as we have $2^4$ options for $T'$ and $2^6$ options for $I$). On the other hand, by Proposition~\ref{prop:boundary}, for every $i \in (M + 1, \abs{V(G)} - M - 2)$, we must have such a set $S$ with $\abs{S} = i$ or $\abs{S} = i + 1$. Hence, $\abs{V(G)} - 2M - 4 \le 2 \cdot 2^{10}$, which yields
\begin{equation}\label{eq:2}
    \abs{V(G)} \le 2M + 2^{11} + 4. \tag{$\S$}
\end{equation}

Now suppose without loss of generality that $\abs{P_{ab}} = M$. Let
\[
    R_1 = \{a\} \cup P_{ac} \cup P_{ad}, \quad R_2 = \{b\} \cup P_{bc} \cup P_{bd}, \quad \text{and} \quad R_3 = P_{cd}.
\]
Without loss of generality, suppose $\abs{R_1} \le \abs{R_2}$. Let $N = \max_{i=1, 2, 3}\abs{R_i}$. Note that $N \leq |V(G)| - M$. Since every edge of $K_4$ is subdivided at least $2^{12}$ times, we have $|R_1| \geq 2^{12}$, so, by \eqref{eq:2},
\[N - \abs{R_1} \le \abs{V(G)} - M - 2^{12} \le M - 2^{11} + 4 < M.\]
Let $D$ be the induced subgraph of $G$ on the union of $R_1 \cup R_2 \cup R_3 \cup \{c, d\}$. Let $R_4$ be the set of the $N - \abs{R_1}$ vertices in $P_{ab}$ that are closest to $a$, and let $H = G[V(D) \cup R_4]$ (see Fig.~\ref{fig:subgraph}).

\begin{figure}[t]
\begin{center}
\begin{tikzpicture}[scale=0.8,x=0.75pt,y=0.75pt,yscale=-1,xscale=1]

\draw  [fill={rgb, 255:red, 0; green, 0; blue, 0 }  ,fill opacity=1 ] (144.53,120.73) .. controls (144.53,119.22) and (145.76,118) .. (147.27,118) .. controls (148.78,118) and (150,119.22) .. (150,120.73) .. controls (150,122.24) and (148.78,123.47) .. (147.27,123.47) .. controls (145.76,123.47) and (144.53,122.24) .. (144.53,120.73) -- cycle ;
\draw  [fill={rgb, 255:red, 0; green, 0; blue, 0 }  ,fill opacity=1 ] (188.53,51.73) .. controls (188.53,50.22) and (189.76,49) .. (191.27,49) .. controls (192.78,49) and (194,50.22) .. (194,51.73) .. controls (194,53.24) and (192.78,54.47) .. (191.27,54.47) .. controls (189.76,54.47) and (188.53,53.24) .. (188.53,51.73) -- cycle ;
\draw  [fill={rgb, 255:red, 0; green, 0; blue, 0 }  ,fill opacity=1 ] (187.53,190.73) .. controls (187.53,189.22) and (188.76,188) .. (190.27,188) .. controls (191.78,188) and (193,189.22) .. (193,190.73) .. controls (193,192.24) and (191.78,193.47) .. (190.27,193.47) .. controls (188.76,193.47) and (187.53,192.24) .. (187.53,190.73) -- cycle ;
\draw  [fill={rgb, 255:red, 0; green, 0; blue, 0 }  ,fill opacity=1 ] (233.24,120.73) .. controls (233.24,119.22) and (232.09,118) .. (230.67,118) .. controls (229.25,118) and (228.1,119.22) .. (228.1,120.73) .. controls (228.1,122.24) and (229.25,123.47) .. (230.67,123.47) .. controls (232.09,123.47) and (233.24,122.24) .. (233.24,120.73) -- cycle ;
\draw    (191.27,51.73) -- (190.27,190.73) ;
\draw    (147.27,120.73) .. controls (152.12,91.47) and (164.12,59.47) .. (188.53,51.73) ;
\draw    (230.67,120.73) .. controls (226.06,87.47) and (211.12,52.47) .. (191.27,51.73) ;
\draw    (147.27,122.73) .. controls (152.12,151.04) and (164.12,181.99) .. (188.53,189.47) ;
\draw    (230.67,122.73) .. controls (226.06,154.91) and (211.12,188.47) .. (191.27,189.47) ;
\draw  [color={rgb, 255:red, 12; green, 10; blue, 198 }  ,draw opacity=1 ][dash pattern={on 4.5pt off 4.5pt}] (135.12,121.47) .. controls (135.12,97.47) and (175.12,30.47) .. (183.12,53.47) .. controls (191.12,76.47) and (165.12,98.47) .. (165.12,121.47) .. controls (165.12,144.47) and (191.12,133.47) .. (183.12,188.47) .. controls (172.12,206.47) and (135.12,145.47) .. (135.12,121.47) -- cycle ;
\draw  [color={rgb, 255:red, 12; green, 10; blue, 198 }  ,draw opacity=1 ][dash pattern={on 4.5pt off 4.5pt}] (244.12,122.47) .. controls (244.12,98.47) and (204.91,33.47) .. (196.5,56.47) .. controls (188.1,79.47) and (214.12,97.47) .. (214.12,120.47) .. controls (214.12,143.47) and (187.12,131.47) .. (196.5,191.47) .. controls (208.06,209.47) and (244.12,146.47) .. (244.12,122.47) -- cycle ;
\draw    (70.12,120.47) -- (144.53,120.73) ;
\draw  [dash pattern={on 0.84pt off 2.51pt}]  (70.12,119.73) .. controls (25.12,80.73) and (91.12,33.73) .. (176.12,30.73) .. controls (261.12,27.73) and (334.12,96.73) .. (230.67,120) ;

\draw (152,111.4) node [anchor=north west][inner sep=0.75pt]    {$a$};
\draw (216,113.4) node [anchor=north west][inner sep=0.75pt]    {$b$};
\draw (187,31.4) node [anchor=north west][inner sep=0.75pt]    {$d$};
\draw (185,193.4) node [anchor=north west][inner sep=0.75pt]    {$c$};
\draw (159,144.4) node [anchor=north west][inner sep=0.75pt]  [color={rgb, 255:red, 36; green, 19; blue, 250 }  ,opacity=1 ]  {$R_{1}$};
\draw (198,146.4) node [anchor=north west][inner sep=0.75pt]  [color={rgb, 255:red, 36; green, 19; blue, 250 }  ,opacity=1 ]  {$R_{2}$};
\draw (94,121.4) node [anchor=north west][inner sep=0.75pt]  [color={rgb, 255:red, 0; green, 0; blue, 0 }  ,opacity=1 ]  {$R_{4}$};
\end{tikzpicture}
\end{center}
\caption{Construction of the subgraph $H$.}\label{fig:subgraph}
\end{figure}

\begin{claim}
    The sets $R_{1}$, $R_{2}$, $R_{3}$ satisfy the assumptions of Lemma~\ref{claim:technical_lemma} as subsets of $D$.
\end{claim}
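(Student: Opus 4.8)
The key observation driving this verification is that $D$ is a subdivision of the theta graph: the three sets $R_1$, $R_2$, $R_3$ are exactly the interiors of three internally vertex-disjoint $c$--$d$ paths $\pi_1$, $\pi_2$, $\pi_3$, where $\pi_1 = c$--$P_{ac}$--$a$--$P_{ad}$--$d$ passes through $a$, $\pi_2 = c$--$P_{bc}$--$b$--$P_{bd}$--$d$ passes through $b$, and $\pi_3 = c$--$P_{cd}$--$d$. In particular, each $R_i$ is the interior of a path and is therefore $D$-connected, and $V(D) \setminus R = \{c, d\}$, where $R = R_1 \cup R_2 \cup R_3$. Crucially, because the subdivided edge $ab$ (i.e.\ the set $P_{ab}$) is excluded from $D$, the only vertices shared by two of the paths $\pi_1$, $\pi_2$, $\pi_3$ are the hubs $c$ and $d$; the plan is to exploit this theta-graph structure throughout.

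First I would dispatch \ref{item:H1} and \ref{item:H2}. For \ref{item:H1}, note that $c$ is adjacent in $D$ to the endpoint of $P_{ac}$ nearest to it (such a vertex exists since every edge of $K_4$ is heavily subdivided), so $c \in Q_1$, and likewise $d \in Q_1$; since the $R_i$ already cover $V(D) \setminus \{c,d\}$, this gives $\bigcup_i Q_i = V(D)$. For \ref{item:H2}, it suffices to check that there is no edge of $D$ joining $R_i$ to $R_j$ for $i \ne j$. Any such edge would have to connect the interiors of two of the paths $\pi_i$; but these interiors meet only at $c$, $d$ (which lie in no $R_i$), and the one potential extra adjacency, an edge between $a \in R_1$ and $b \in R_2$ through $P_{ab}$, is absent from $D$. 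Hence $R_i \cap Q_j = \0$.

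Next, for \ref{item:H3}, deleting a single $R_i$ from $D$ leaves $c$ and $d$ joined by the interiors of the other two paths, which together form a single cycle through $c$ and $d$ and are therefore connected. Deleting in addition a vertex $v \in V(D) \setminus R = \{c,d\}$, say $v = c$, leaves the two remaining path-interiors each still attached to the surviving hub $d$, so $D - R_i - c$ is connected as well; the case $v = d$ is symmetric. This verifies \ref{item:H3}.

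The only real work is \ref{item:H4}, and this is where I expect the argument to require the most care, since it amounts to characterizing all $2$-vertex cut-sets of the theta graph $D$ that are contained in $R$. Let $\{x, y\} \subseteq R$ be a cut-set of $D$. Since $c, d \notin R$, both $x$ and $y$ lie in the interior of some $\pi_i$. Suppose toward a contradiction that they lie on the interiors of two \emph{distinct} paths, say $x \in \pi_i$ and $y \in \pi_j$ with $i \ne j$. Then the third path $\pi_\ell$ survives intact and keeps $c$ and $d$ connected, while deleting the single vertex $x$ from $\pi_i$ splits its interior into at most two arcs, each still attached to $c$ or to $d$; the same holds for $y$ on $\pi_j$. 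Thus every vertex of $D - \{x,y\}$ is connected to $c$ or $d$, and these are connected through $\pi_\ell$, contradicting that $\{x,y\}$ is a cut-set. Therefore any cut-set $\{x,y\} \subseteq R$ has both vertices on the interior of a single path $\pi_i$, i.e.\ $\{x,y\} \subseteq R_i$ (this correctly covers the case $x \in P_{ac}$, $y \in P_{ad}$, which isolates the segment of $\pi_1$ containing $a$ but still lies entirely in $R_1$). This establishes \ref{item:H4} and completes the verification that $R_1$, $R_2$, $R_3$ satisfy the hypotheses of Lemma~\ref{claim:technical_lemma} in $D$.
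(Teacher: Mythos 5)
Your proof is correct and follows essentially the same route as the paper: both verifications reduce to the observation that $D$ is a subdivided theta graph with hubs $c$, $d$, dispose of \ref{item:H1}--\ref{item:H3} by noting that $D - R_i$ is a cycle, and establish \ref{item:H4} by showing that two deleted vertices on distinct $cd$-paths cannot disconnect $D$ because the third path keeps $c$ and $d$ joined. Your explicit handling of the case $x \in P_{ac}$, $y \in P_{ad}$ (both still inside $R_1$) is a nice touch but matches the paper's conclusion.
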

\begin{proof}
It is easy to see that the set $R_{i}$ are connected and that statements \ref{item:H1} and \ref{item:H2} hold. 
For each $i$, $D - R_{i}$ is a cycle, which is 2-connected, so \ref{item:H3} holds. It thus remains to show \ref{item:H4}.

Note that no set $\{x, y\} \subseteq R_1 \cup R_2 \cup R_3$ may separate $c$ from $d$, as there are $3$ internally disjoint paths between them. Hence for $\{x, y\}$ to be a cut-set of $D$, it must separate some vertex $v$ from both $c$ and $d$. Whichever $R_i$ contains $v$ gives two internally disjoint paths from $v$ to each of $c$, $d$, so one of $x$, $y$ must lie on each of these paths. Hence, $x$ and $y$ are both contained in $R_i$, as desired.
\end{proof}
\begin{claim} Suppose a set $S \subseteq V(H)$ satisfies $\abs{\partial_H(S)} \le 2$. Then
 $\abs{S} \le N + 2$ or $\abs{S} \ge \abs{V(H)} - N - 2$.
\end{claim}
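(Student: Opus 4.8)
The plan is to transfer the question from $H$ to the $2$-connected graph $D$, invoke the already-verified Lemma~\ref{claim:technical_lemma}, and treat the pendant path $R_4$ by a separate charging argument. First I would set $S_D = S \cap V(D)$ and $S_4 = S \cap R_4$ and observe the inclusion $\partial_D(S_D) \subseteq \partial_H(S)$: if $u \in S_D$ has a $D$-neighbour $u' \notin S_D$, then $u' \in V(D)\setminus S$ is also an $H$-neighbour of $u$, so $u \in \partial_H(S)$. Hence $\abs{\partial_D(S_D)} \le 2$, and the preceding claim permits applying Lemma~\ref{claim:technical_lemma} to $D$ with the parts $R_1, R_2, R_3$ (for which $V(D)\setminus(R_1\cup R_2\cup R_3) = \{c,d\}$), yielding one of \ref{item:C1}--\ref{item:C4} for $S_D$.

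Next I would record the arithmetic that drives everything: $\abs{R_4} = N - \abs{R_1}$ (so $\abs{R_1}+\abs{R_4}=N$), $\abs{R_i}\le N$, $\abs{V(H)} = \abs{R_2}+\abs{R_3}+N+2$, and $\abs{V(D)} = \abs{V(H)}-N+\abs{R_1}$; also $\abs{S_4}\le \abs{R_4}$, and each $\abs{R_i}$ is far larger than $2$ since every edge of $K_4$ was subdivided at least $2^{12}$ times. The main tool is a $2$-connectivity principle: as $D$ is a subdivision of the diamond, hence $2$-connected, Lemma~\ref{claim:connectedness_lemma} gives $\abs{\partial_D(S_D)}\ge 2$ whenever $2\le\abs{S_D}$ and $S_D\ne V(D)$; equivalently, $\abs{\partial_D(S_D)}\le 1$ forces $\abs{S_D}\le 1$ or $S_D = V(D)$.

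The pendant is handled by charging. Since $R_4$ meets the rest of $H$ only at $a$, I track whether $a\in S$ and whether $R_4\subseteq S$. If the pendant is \emph{aligned} with $a$ --- either $a\notin S$ and $S_4=\emptyset$, or $a\in S$ and $S_4=R_4$ --- then $R_4$ contributes no boundary and $\abs{S_4}$ is $0$ or $\abs{R_4}$. Otherwise, scanning the path $a,w_1,\dots,w_r$ from $a$ produces a boundary vertex $z\in\partial_H(S)$ lying in $R_4$ (the first $w_j$ whose membership differs from its predecessor's), except for the lone configuration $a\in S$, $w_1\notin S$, $S_4=\emptyset$, where $z=a$. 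In every case where I invoke this, $R_1\subseteq S_D$, so $a$ has all its $D$-neighbours inside $S_D$ and hence $a\notin\partial_D(S_D)$; thus charging $z$ always yields $\abs{\partial_D(S_D)}\le 1$, and the $2$-connectivity principle then forces $S_D=V(D)$ (the alternative $\abs{S_D}\le 1$ being impossible by largeness).

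Finally I would combine $\abs{S}=\abs{S_D}+\abs{S_4}$ across \ref{item:C1}--\ref{item:C4}. When $S_D$ is confined to a single branch (cases \ref{item:C1}, \ref{item:C2}, and $I\subseteq\{1\}$, $I=\{2\}$, $I=\{3\}$ within \ref{item:C4}), the identity $\abs{R_1}+\abs{R_4}=N$ collapses the total to $\abs{S}\le N+2$; when $S_D$ fills two or more branches ($I\supseteq\{2,3\}$ in \ref{item:C4}, case \ref{item:C3} with the branch through $R_1$, or the forced $S_D=V(D)$) the bound $\abs{R_i}\le N$ gives $\abs{S}\ge\abs{R_2}+\abs{R_3}=\abs{V(H)}-N-2$. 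The main obstacle is precisely the pendant: Lemma~\ref{claim:technical_lemma} cannot be applied to $H$ directly because $a$ is a cut-vertex and so \ref{item:H4} fails. The delicate case is when a branch through $R_1$ is present in $S_D$ but $R_4$ is only partially contained in $S$, where the charging argument must rule out intermediate sizes by pinning $S_D=V(D)$ --- and this is exactly where $2$-connectivity together with the large subdivision sizes is indispensable.
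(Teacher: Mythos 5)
Your overall route coincides with the paper's: restrict $S$ to the $2$-connected graph $D$, apply Lemma~\ref{claim:connectedness_lemma} and Lemma~\ref{claim:technical_lemma} there, and treat the pendant path $R_4$ separately. The gap is in the non-aligned branch, where you assert that charging the boundary vertex $z$ produced by the pendant ``always yields $\abs{\partial_D(S_D)}\le 1$'', justified by the claim that $R_1\subseteq S_D$ in every case where this is invoked. In the configuration $a\in S$, $S_4=\0$ the only vertex you can charge is $z=a$, and nothing there forces $R_1\subseteq S_D$; if some $D$-neighbour of $a$ lies outside $S$, then $a\in\partial_D(S_D)$ and charging $a$ removes nothing from $\partial_D(S_D)$. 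Concretely, take $S=\{a\}\cup P_{ac}\cup\{c\}$: then $\partial_H(S)=\partial_D(S_D)=\{a,c\}$ has size exactly $2$, the pendant is non-aligned ($a\in S$, $S\cap R_4=\0$), $R_1\not\subseteq S_D$, and neither $\abs{S_D}\le 1$ nor $S_D=V(D)$ holds --- so the dichotomy you extract from the $2$-connectivity principle is false for this $S$. (The Claim itself survives, since $\abs{S}\le\abs{R_1}+1\le N+1$ via conclusion \ref{item:C2} of Lemma~\ref{claim:technical_lemma}, but your stated argument does not produce that bound.) A second, related imprecision: even where the charging does succeed, if $a\notin S$ the dichotomy resolves to $\abs{S_D}\le 1$ rather than to $S_D=V(D)$, and your final bookkeeping quietly needs this (e.g.\ in \ref{item:C2} or \ref{item:C4} with $i\in\{2,3\}$ one must rule out $S_4\ne\0$, which is exactly the $\abs{S_D}\le 1$ horn).

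The repair is to reorder the case analysis the way the paper does. After disposing of $\abs{S_D}<2$ and $S_D=V(D)$, one is left with $\abs{\partial_D(S_D)}=2$, and then $\partial_H(S)=\partial_D(S_D)$ because the former contains the latter and has size at most $2$. Hence $R_4$ contains \emph{no} boundary vertex of $S$, which gives $R_4\subseteq S$ or $R_4\cap S=\0$ together with the implications ``$R_4\subseteq S\Rightarrow a\in S$'' and ``$R_4\cap S=\0\Rightarrow a\notin S$ or $a\in\partial_D(S_D)$''; this is precisely the pendant information needed inside the four conclusions of Lemma~\ref{claim:technical_lemma}, and the troublesome configuration above is then handled because $a\in S_D$ forces $i=1$ in \ref{item:C2} and $1\in I$ in \ref{item:C4}, while $a\in\partial_D(S_D)$ excludes \ref{item:C3} with $i\ne 1$. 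If you insist on your alignment-first structure, you must confine the $z=a$ charging to the cases where $R_1\subseteq S_D$ is actually guaranteed by the shape of $S_D$ (namely \ref{item:C3} with $i\ne 1$ and \ref{item:C4} with $\{1\}\subsetneq I$), and run Lemma~\ref{claim:technical_lemma} with boundary $2$ directly in the remaining non-aligned configurations.
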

\begin{proof} Let $S' = S \cap V(D)$. Note that $\abs{\partial_D(S')} \le 2$ as any vertex in $\partial_D(S')$ is in $\partial_H(S)$ as well.

Since $D$ is 2-connected, either $\abs{S'} < 2$, or $V(D) \subseteq S$, or 
$\abs{\partial_D(S')} \ge 2$ (by Lemma~\ref{claim:connectedness_lemma}).

In the first case, $\abs{S} \le N - \abs{R_1} + 1 < N + 2$. In the second case, \[\abs{S} \ge \abs{V(D)} = \abs{V(H)} - N + \abs{R_1} > \abs{V(H)} - N - 2.\] The remaining case is where $\abs{\partial_D(S')} = 2$. This implies that $\partial_H(S) = \partial_D(S')$. In particular, $R_4$ does not contain a boundary vertex of $S$. Since $R_4$ is connected, it is either entirely contained in $S$ or is disjoint from it. Note that if $R_4 \subseteq S$, then $a \in S$ as otherwise the neighbor of $a$ in $R_4$ will be a boundary vertex, a contradiction. Also, if $S \cap R_4 = \0$, then either $a \not \in S$ or $a \in \partial_D(S')$.

By Lemma~\ref{claim:technical_lemma}, one of the following four cases must hold.

\smallskip

{\textbf{Case 1:}} \textsl{$\abs{S'} \leq 2$}. Then $\abs{S} \le N - \abs{R_1} + 2 < N + 2$.

\smallskip

{\textbf{Case 2:}} \textsl{$S'$ is a subset of $R_i \cup \{c, d\}$ for some $1 \leq i \leq 3$}. If $i = 1$, this implies $\abs{S} \le N - \abs{R_1} + \abs{R_1} + 2 = N + 2$. Otherwise, $a \not \in S$ so $S$ is disjoint from $R_4$. Then $\abs{S} \le \max \{\abs{R_2}, \abs{R_3}\} + 2 \le N + 2$.

\smallskip

{\textbf{Case 3:}} \textsl{$S'$ contains $V(D) \setminus (R_i \cup \{c, d\})$ for some $1 \leq i \leq 3$}. If $i \ne 1$, then $R_1 \subseteq S$, which implies that $a$ is in $S$ but not in $\partial_D(S')$. Therefore, $R_4 \subseteq S$, so $\abs{S} \ge \abs{V(H)} - \max \{\abs{R_2}, \abs{R_3}\} - 2 \ge \abs{V(H)} - N - 2$. Otherwise, $\abs{S} \ge \abs{V(H)} - (\abs{R_1} + \abs{R_4} + 2) = \abs{V(H)} - N - 2$.




\smallskip

{\textbf{Case 4:}} \textsl{$S'$ is the union of some subset of $\{R_1, R_2, R_3, \{c\}, \{d\}\}$}. If $R_2 \cup R_3 \subseteq S$, then $\abs{S} \ge \abs{V(H)} - N - 2$. On the other hand, if both $R_2$, $R_3 \not\subseteq S$, then $\abs{S} \le \abs{R_1} + \abs{R_4} + 2 = N+2$. The only remaining possibility is that exactly one of $R_2$, $R_3$ is a subset of $S$. Note that the cycle formed by $R_2 \cup R_3 \cup \{c, d\}$ contains two boundary vertices of $S$. Thus $R_1 \subseteq S$ if and only if $R_4 \subseteq S$, as otherwise one of $a$ or its neighbor in $R_4$ would be a third boundary vertex. If $R_1 \not\subseteq S$, then $\abs{S} \le \max\{\abs{R_2}, \abs{R_3}\} + 2 = N + 2$. If $R_1 \subseteq S$, then $\abs{S} \ge \abs{R_1} + \min\{\abs{R_2}, \abs{R_3}\} + \abs{R_4} \ge \abs{V(H)} - N - 2$.
\end{proof}

Note that 
\[\abs{V(H)} \ge 2N + \min\{\abs{R_2}, \abs{R_3}\} + 2 \ge 2N + 2^{12} + 2,\] so 
$\abs{V(H)} - N - 2 \ge N + 2^{12}$. Hence there does not exist a set $S \subseteq V(H)$ with $\abs{\partial_H(S)} \le 2$ and $N + 2 < \abs{S} < N+2^{12}$, and so by Proposition~\ref{prop:boundary}, $\s(G) \geq \s(H) \ge 4$, a contradiction.

Therefore, each subdivision of $K_4$ where every edge is subdivided at least $2^{12}$ times has inspection number at least $4$, which implies $\s_t(K_4) \ge 4$.

\subsection{The families $\mathcal{F}_2$ and $\mathcal{F}_3$}

To show the bound $\s_t \geq 4$ for graphs in $\mathcal{F}_2$ and $\mathcal{F}_3$, we first consider a configuration common to both families. Suppose $G$ is a graph comprising two vertices, $a$, $b$, two internally disjoint bipaths between them (the union of which we call $D$), and a path with $a$ as an endpoint which is otherwise disjoint from $D$ (see Fig.~\ref{fig:config}). We let $R$ be the set of the vertices on this path distinct from $a$. Let $I$ be the set of all pairs $\{x,y\}$ of consecutive primary vertices on the bipaths forming $D$. For each $\{x,y\} \in I$, let $P_{xy, 0}$ and $P_{xy, 1}$ be the sets of internal vertices on the two primary paths joining $x$ and $y$. We assume that all the sets $P_{xy, i}$ are nonempty.

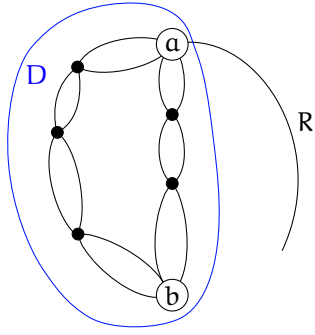
\begin{figure}[H]
\begin{center}
\begin{tikzpicture}[x=0.75pt,y=0.75pt,yscale=-1,xscale=1,scale=0.6]

\draw    (317.5,45) .. controls (384.83,28.67) and (452.83,126.67) .. (409.83,218.67) ;
\draw    (317.5,162.07) .. controls (305,154.47) and (300.5,119) .. (317.5,103) ;
\draw    (317.5,162.07) .. controls (333,152.23) and (329.07,114.77) .. (317.5,103) ;
\draw    (317.5,104.07) .. controls (305,96.47) and (300.5,61) .. (317.5,45) ;
\draw    (317.5,104.07) .. controls (333,94.23) and (329.07,56.77) .. (317.5,45) ;
\draw  [fill={rgb, 255:red, 0; green, 0; blue, 0 }  ,fill opacity=1 ] (312.57,104.07) .. controls (312.57,101.34) and (314.78,99.13) .. (317.5,99.13) .. controls (320.22,99.13) and (322.43,101.34) .. (322.43,104.07) .. controls (322.43,106.79) and (320.22,109) .. (317.5,109) .. controls (314.78,109) and (312.57,106.79) .. (312.57,104.07) -- cycle ;
\draw  [fill={rgb, 255:red, 0; green, 0; blue, 0 }  ,fill opacity=1 ] (312.57,162.07) .. controls (312.57,159.34) and (314.78,157.13) .. (317.5,157.13) .. controls (320.22,157.13) and (322.43,159.34) .. (322.43,162.07) .. controls (322.43,164.79) and (320.22,167) .. (317.5,167) .. controls (314.78,167) and (312.57,164.79) .. (312.57,162.07) -- cycle ;
\draw  [fill={rgb, 255:red, 0; green, 0; blue, 0 }  ,fill opacity=1 ] (233,64.07) .. controls (233,61.34) and (235.21,59.13) .. (237.93,59.13) .. controls (240.66,59.13) and (242.87,61.34) .. (242.87,64.07) .. controls (242.87,66.79) and (240.66,69) .. (237.93,69) .. controls (235.21,69) and (233,66.79) .. (233,64.07) -- cycle ;
\draw  [fill={rgb, 255:red, 0; green, 0; blue, 0 }  ,fill opacity=1 ] (216,119.07) .. controls (216,116.34) and (218.21,114.13) .. (220.93,114.13) .. controls (223.66,114.13) and (225.87,116.34) .. (225.87,119.07) .. controls (225.87,121.79) and (223.66,124) .. (220.93,124) .. controls (218.21,124) and (216,121.79) .. (216,119.07) -- cycle ;
\draw  [fill={rgb, 255:red, 0; green, 0; blue, 0 }  ,fill opacity=1 ] (233.36,204.62) .. controls (233.36,201.89) and (235.57,199.68) .. (238.29,199.68) .. controls (241.02,199.68) and (243.22,201.89) .. (243.22,204.62) .. controls (243.22,207.34) and (241.02,209.55) .. (238.29,209.55) .. controls (235.57,209.55) and (233.36,207.34) .. (233.36,204.62) -- cycle ;
\draw    (317.5,45) .. controls (294,32.23) and (244.43,42.47) .. (237.93,64.07) ;
\draw    (317.5,45) .. controls (309,63.23) and (260,75.23) .. (238.43,64.47) ;
\draw    (220.93,119.07) .. controls (240,109.23) and (242,83.23) .. (238.43,64.47) ;
\draw    (220.43,118.67) .. controls (216,98.23) and (220,77.23) .. (237.93,64.07) ;
\draw    (237.14,205.29) .. controls (248.34,186.54) and (237.75,129.86) .. (220.34,119.09) ;
\draw    (237.14,205.29) .. controls (221.99,195.18) and (203.98,143.03) .. (220.34,119.09) ;
\draw  (327,185.23) .. controls (324,174.23) and (324,171.23) .. (317.5,162) ;
\draw  (315.5,256) .. controls (334,241.23) and (331,202.23) .. (327,185.23) ;
\draw  (306.42,185.23) .. controls (309.83,174.23) and (309.83,171.23) .. (317.23,162) ;
\draw  (319.5,256) .. controls (298.46,241.23) and (301.87,202.23) .. (306.42,185.23) ;
\draw  (262.83,209.91) .. controls (252.06,206.17) and (249.58,204.47) .. (238.29,204.62) ;
\draw  (314.71,259.39) .. controls (312.98,235.78) and (279.11,216.22) .. (262.83,209.91) ;
\draw  (251.2,226.89) .. controls (244.05,217.86) and (241.58,216.16) .. (238.14,204.84) ;
\draw  (316.97,256.09) .. controls (292.9,265.1) and (262.65,240.25) .. (251.2,226.89) ;
\draw  [color={rgb, 255:red, 0; green, 27; blue, 255 }  ,draw opacity=1 ] (199.83,46.67) .. controls (214.83,29.67) and (232.15,16.29) .. (263.83,11.67) .. controls (295.52,7.04) and (324.5,13) .. (332.83,37.67) .. controls (341.17,62.33) and (345.83,64.67) .. (354.83,148.67) .. controls (363.83,232.67) and (342.83,266.67) .. (323.83,275.67) .. controls (304.83,284.67) and (266.83,285.67) .. (247.83,274.67) .. controls (228.83,263.67) and (217.83,247.67) .. (207.83,232.67) .. controls (197.83,217.67) and (182.96,183.67) .. (179.83,144.67) .. controls (176.71,105.67) and (184.83,63.67) .. (199.83,46.67) -- cycle ;

\node [draw,fill=white,circle,minimum size=12pt,outer sep=0pt,inner sep=0pt] (a) at (317.5,45) {$a$};
\node [draw,fill=white,circle,minimum size=12pt,outer sep=0pt,inner sep=0pt] (b) at (317.5,256) {$b$};

\draw (420,100) node [anchor=north west][inner sep=0.75pt]   [align=left] {$\displaystyle R$};
\draw (192,60) node [anchor=north west][inner sep=0.75pt]  [color={rgb, 255:red, 13; green, 0; blue, 255 }  ,opacity=1 ] [align=left] {$\displaystyle D$};

\end{tikzpicture}
\end{center}
\caption{The special configuration $H$.} \label{fig:config}
\end{figure}

\begin{claim} The sets $P_{xy, i}$ satisfy the hypotheses of Lemma \ref{claim:technical_lemma} applied to the graph $D$.
\end{claim}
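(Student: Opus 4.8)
The plan is to record the global shape of $D$ and then verify the hypotheses \ref{item:H1}--\ref{item:H4} in turn, saving the cut-set hypothesis \ref{item:H4} for last since it is the only one requiring real work. Since $D$ is the union of two internally disjoint bipaths joining $a$ and $b$, I would first observe that contracting every primary path to a single edge turns $D$ into a cycle $Z$ on the primary vertices in which each edge is \emph{doubled}: each bipath contributes a path of doubled edges from $a$ to $b$ through its primary vertices, and the two resulting paths are internally disjoint, so they close up into a single cycle (each bipath has at least three primary vertices, so $Z$ has at least four). Consequently $V(D) \setminus P$ is exactly the set of primary vertices, each interior vertex of a primary path has degree $2$ in $D$, so $D[P_{xy,i}]$ is a path (in particular $D$-connected) and $N_D(P_{xy,i}) \setminus P_{xy,i} = \{x,y\}$, giving $Q_{xy,i} = P_{xy,i} \cup \{x,y\}$.

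Hypotheses \ref{item:H1} and \ref{item:H2} are then immediate from this description of $Q_{xy,i}$. For \ref{item:H1}, every interior vertex lies in its own $P_{xy,i} \subseteq Q_{xy,i}$, while every primary vertex is an endpoint of some primary path, whose interior is nonempty by assumption, and hence lies in the corresponding $Q_{xy,i}$. For \ref{item:H2}, if $(xy,i) \ne (x'y',j)$ then $Q_{x'y',j} = P_{x'y',j} \cup \{x',y'\}$ meets $P_{xy,i}$ neither in interior vertices (distinct primary paths are internally disjoint) nor in $\{x',y'\} \subseteq V(D) \setminus P$, so $P_{xy,i} \cap Q_{x'y',j} = \0$.

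For hypothesis \ref{item:H3}, deleting $P_{xy,i}$ keeps $x$ and $y$ joined through the parallel primary path $P_{xy,1-i}$ and leaves the rest of $D$ intact, so $D - P_{xy,i}$ is connected. If I additionally delete a primary vertex $v$, I would argue through $Z$: the deletion removes the vertex $v$ from $Z$ together with the copy of the edge $\{x,y\}$ coming from $P_{xy,i}$. Now $Z - v$ is a path on at least three vertices, and removing that one edge-copy leaves its parallel copy intact when $v \notin \{x,y\}$ and is vacuous when $v \in \{x,y\}$ (the edge is already incident to $v$), so the backbone remains connected. Every surviving primary path incident to $v$ becomes a pendant path still attached at its other, primary, endpoint, so it does not affect connectivity, and $D - P_{xy,i} - v$ is connected.

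Hypothesis \ref{item:H4} is the main obstacle. Given a $2$-element cut-set $\{s,t\} \subseteq P$, I must show $s$ and $t$ lie on a common primary path, and I would do this by proving the contrapositive: if $s \in P_{xy,i}$ and $t \in P_{x'y',j}$ lie on two \emph{different} primary paths, then $D - \{s,t\}$ is connected. Deleting an interior vertex breaks its primary path into at most two fragments, each remaining attached to a primary endpoint among $x$, $y$, $x'$, $y'$, so it suffices to check that all primary vertices stay in one component. In $Z$, breaking $P_{xy,i}$ severs one copy of the edge $\{x,y\}$ and breaking $P_{x'y',j}$ severs one copy of $\{x',y'\}$; when the two paths lie in different beads these are copies of two distinct edges, each retaining its other copy, and when they lie in the same bead (so $\{x,y\} = \{x',y'\}$ with $i \ne j$) both copies of that single edge are severed, but a cycle minus one edge is still a connected path. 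In either case $Z$ stays connected, hence so does the primary backbone, and with the fragments attached $D - \{s,t\}$ is connected --- contradicting that $\{s,t\}$ is a cut-set. Therefore every cut-set contained in $P$ has both vertices on a single primary path, establishing \ref{item:H4} and completing the verification.
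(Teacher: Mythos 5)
Your proof is correct and follows essentially the same route as the paper's: a direct verification of \ref{item:H1}--\ref{item:H4}, with \ref{item:H4} established via the same contrapositive (two internal vertices on distinct primary paths cannot disconnect $D$). The paper asserts the connectivity facts in \ref{item:H3} and \ref{item:H4} without elaboration, whereas you justify them through the doubled-edge cycle $Z$ on the primary vertices; this is a clean way to supply the omitted details and introduces no gap.
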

\begin{proof}Evidently, the sets $P_{xy, i}$ are connected and both \ref{item:H1}, \ref{item:H2} hold. Removing any set $P_{xy, i}$ from $D$ keeps the remaining graph 2-connected, showing \ref{item:H3}. Finally, if $v$ and $w$ are two non-primary vertices that do not lie on the same primary path, then $D - \{v,w\}$ is connected, proving \ref{item:H4}. 
%
\end{proof}

Now we are going to make a few additional assumptions about the graph $H$. First, let $T$ be the sum of the orders of the bipaths comprising $D$ (recall that the order of a bipath is the number of its primary vertices). We shall assume that:
\begin{itemize}[wide]
    \item $\abs{P_{xy,i}} \geq 2^{4T}$ for all $\{x,y\} \in I$ and $i \in \{0,1\}$.
\end{itemize}
Next, let $N$ be the maximum of $\abs{P_{xy, i}}$ taken over all sets $P_{xy, i}$ with $a \not \in \{x,y\}$. Similarly, let $M$ be the maximum of $\abs{P_{xy, i}}$ taken over all sets $P_{xy, i}$ with $a \in \{x,y\}$. 
We assume that:
\begin{itemize}[wide]
    \item $N \geq M$ and $\abs{R} = N - M$. 
\end{itemize}
With these assumptions, we can show that the inspection number of $H$ is at least $4$:

\begin{prop} $\s(H) \ge 4$. \label{prop:donut_configuration}
\end{prop}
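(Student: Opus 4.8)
The plan is to argue exactly as in the proof that $\s_t(K_4)\ge 4$: assume $\s(H)\le 3$ and derive a contradiction from Proposition~\ref{prop:boundary}. With $k=3$ that proposition gives, for every $1\le i\le\abs{V(H)}$, a set $C$ with $\abs{\partial_H(C)}\le 2$ and $\abs{C}\in\{i-2,i-1\}$, so the whole argument reduces to understanding which sizes are realizable by sets of boundary at most $2$. For any such $S$, set $S'=S\cap V(D)$; since $\partial_D(S')\subseteq\partial_H(S)$ we have $\abs{\partial_D(S')}\le 2$, and (having verified the hypotheses of Lemma~\ref{claim:technical_lemma} for $D$ and the sets $P_{xy,i}$ above) one of \ref{item:C1}--\ref{item:C4} holds for $S'$. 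The plan is to show that medium-sized $S$ are forced into the rigid shape of \ref{item:C4}, of which there are only few, so they cannot cover the required range of sizes.

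Concretely, I would fix a constant $c$ (depending only on $T$; one may take $c\le 2T$ to absorb primary vertices) and prove the structural claim: every $S$ with $\abs{\partial_H(S)}\le 2$ and $N+c<\abs{S}<\abs{V(H)}-N-c$ has the form $S=\bigcup_{xy,i\in\mathcal I}P_{xy,i}\cup A\cup R^{*}$, where $A$ is a set of primary vertices of $D$ and $R^{*}\in\{\0,R\}$. To see this I would run through \ref{item:C1}--\ref{item:C4} while tracking whether $R\subseteq S$. Conclusions \ref{item:C1}, \ref{item:C2} describe a contiguous sub-chunk of a single $P_{xy,i}$ together with at most one primary vertex — the only source of a \emph{continuous} range of sizes — and these have size at most $N+1$; crucially, the balancing $\abs{R}=N-M$ with $N\ge M$ means that a chunk touching $a$ (size $\le M$) together with all of $R$ (size $N-M$) still totals at most $N$, while a chunk not touching $a$ cannot swallow $R$ without creating a third boundary vertex at $a$ (the $R$-neighbour of $a$ would be a boundary vertex in addition to the two ends of the chunk). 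Thus \ref{item:C1},\ref{item:C2}-type sets stay below $N+c$, their complements (\ref{item:C3}) stay above $\abs{V(H)}-N-c$, and only \ref{item:C4}, consisting of unions of \emph{full} primary paths, can produce a medium size.

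The counting then finishes the job. There are at most $2^{2T-4}$ choices of which primary paths to include, $2^{T-2}$ choices of primary vertices, and $2$ choices of $R^{*}$, hence at most $2^{3T-5}$ sets of \ref{item:C4}-type, realizing at most $2^{3T-5}$ distinct medium sizes. Since Proposition~\ref{prop:boundary} forces, for each integer $i$ in the medium range, a realizable size in $\{i-2,i-1\}$, the medium range contains at most $2\cdot 2^{3T-5}=2^{3T-4}$ integers, so $\abs{V(H)}-2N-2c\le 2^{3T-4}$. On the other hand $D$ has $2T-4\ge 8$ primary paths, each of size $\ge 2^{4T}$; as the path attaining $N$ (with $a\notin\{x,y\}$) is distinct from the one attaining $M$ (with $a\in\{x,y\}$), we get $\sum_{xy,i}\abs{P_{xy,i}}\ge N+M+(2T-6)2^{4T}$. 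Using $\abs{V(H)}=\sum_{xy,i}\abs{P_{xy,i}}+(T-2)+\abs{R}$ and $\abs{R}=N-M$, this yields
\[
\abs{V(H)}-2N=\sum_{xy,i}\abs{P_{xy,i}}+(T-2)-N-M\ge (2T-6)2^{4T}+(T-2)\ge 6\cdot 2^{4T}>2^{3T-4}+2c,
\]
a contradiction. Hence $\s(H)\ge 4$.

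The main obstacle is the structural claim of the second paragraph: the exhaustive verification, across the four conclusions of Lemma~\ref{claim:technical_lemma} and the two possibilities for $R$, that no partial-chunk (continuous-size) configuration lands in the medium range. This is delicate boundary bookkeeping at the primary vertices and especially at $a$, the attachment point of $R$, where a careless count risks an off-by-one that would let $R$ be annexed to a large chunk; the hypotheses $N\ge M$ and $\abs{R}=N-M$ are exactly what pin the $a$-touching partial configurations (together with $R$) below $N+c$. Everything else — the hypotheses of Lemma~\ref{claim:technical_lemma}, the $2$-connectedness of $D$, and the final arithmetic — is routine once this claim is in place, and the exponent $2^{4T}$ in the standing assumption is chosen precisely so that the path sizes dominate the $2^{O(T)}$ count of \ref{item:C4}-type sets.
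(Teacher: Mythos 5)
Your proposal follows the paper's proof essentially step for step: reduce via Proposition~\ref{prop:boundary} to counting sets with boundary of size at most $2$, restrict to $S'=S\cap V(D)$ and apply Lemma~\ref{claim:technical_lemma}, use the balancing $\abs{R}=N-M$ together with the cycle/third-boundary-vertex argument at $a$ to push conclusions \ref{item:C1}--\ref{item:C3} out of the medium range, and count the at most $2^{3T-5}$ remaining \ref{item:C4}-type sets against the lower bound on $\abs{V(H)}$. The one step you assert rather than justify --- that $S\cap R\in\{\0,R\}$ for medium-sized $S$ --- is exactly where the paper first pins down $\abs{\partial_D(S')}=2$ via the $2$-connectedness of $D$ and Lemma~\ref{claim:connectedness_lemma}, so that $\partial_H(S)=\partial_D(S')$ avoids $R$; with that observation inserted, your outline is correct and coincides with the paper's argument.
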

\begin{proof} Suppose $S \subseteq V(H)$ is a subset such that $\abs{\partial_G(H)} \le 2$ and
\[
    N+2 \leq |S| \leq \abs{V(H)} - N - 2.
\]
Let $S' = S \cap V(D)$. Note that $\partial_{D}(S') \subseteq \partial_H(S)$. 
Since $D$ is 2-connected, we have $\abs{S'} < 2$, or $V(D) \subseteq S$, or $\abs{\partial_D(S')} = 2$ (by Lemma~\ref{claim:connectedness_lemma}). In the first case, $\abs{S} \le |R| + 1 = N - M + 1 < N + 2$, a contradiction. In the second case, \[\abs{S} \ge \abs{V(D)} = \abs{V(H)} - |R| = \abs{V(H)} - N + M > \abs{V(H)} - N - 2,\]
which is again a contradiction. Therefore, $\abs{\partial_D(S')} = 2$, which implies that $\partial_H(S) = \partial_D(S')$. In particular, $R$ does not contain any boundary vertices of $S$. 
Since $R$ is $H$-connected, it is either entirely contained in $S$ or is disjoint from it. Note that if $R \subseteq S$, then $a \in S$ as otherwise the neighbor of $a$ in $R$ will be a boundary vertex, a contradiction. Also, if $S \cap R = \0$, then either $a \not \in S$ or $a \in \partial_D(S')$.

Since $\abs{\partial_D(S')} \le 2$, by Lemma~\ref{claim:technical_lemma}, one of the following four cases must hold.

\smallskip

{\textbf{Case 1:}} \textsl{$\abs{S'} \leq 2$}. Then $\abs{S} \le \abs{R} + 2 = N - M + 2 < N + 2$, a contradiction.

\smallskip

{\textbf{Case 2:}} \textsl{$S'$ is contained in $P_{xy,i} \cup \{z\}$ for some $P_{xy,i}$ and a primary vertex $z$}. If $a \in \{x,y\}$, then $|P_{xy,i}| \leq M$, so $\abs{S} \le \abs{R} + M + 1 = N + 1$, a contradiction. Thus, $a \not \in \{x,y\}$. Furthermore, $z = a$, as otherwise $R \cap S = \0$, which again implies $\abs{S} \leq N + 1$. Therefore, $a \in \partial_H(S)$. Consider the cycle $C$ in $H$ with $V(C) = P_{xy,0} \cup P_{xy, 1} \cup \{x,y\}$. It is 2-connected, not contained in $S$, and, since $a \not \in V(C)$, it contains at most one boundary vertex of $S$. Thus, $\abs{S \cap V(C)} \leq 1$, so $\abs{S} \le \abs{R} + 2 < N + 2$, a contradiction.

\smallskip

{\textbf{Case 3:}} \textsl{$S'$ contains $V(D) \setminus (P_{xy,i} \cup \{z\})$ for some $P_{xy,i}$ and a primary vertex $z$}. Then $S$ contains at least 3 sets $P_{uv,j}$ with $a \in \{u,v\}$. If $a \not\in S$, each of these sets includes a boundary vertex of $S$, a contradiction. Hence $a \in S$. If $a \in \{x,y\}$, then $\abs{S} \ge \abs{V(H)} - M - \abs{R} - 1 = \abs{V(H)} - N - 1$. Otherwise, considering the cycle $C$ in $H$ with vertex set $P_{xy,0} \cup P_{xy, 1} \cup \{x,y\}$ as we did in Case $2$ yields that $a \not \in \partial_H(S)$. Hence $R \subseteq S$, so $\abs{S} \ge \abs{V(H)} - N - 1$, a contradiction.



\smallskip

Therefore, any such set $S$ has to satisfy the following:

\smallskip

{\textbf{Case 4:}} \textsl{$S'$ is the union of a collection of sets $P_{xy, i}$ and some primary vertices}. Since there are $2T - 4$ sets $P_{xy,i}$ and $T - 2$ primary vertices, the number of such sets $S$ is at most
\[
    2^{2T - 4} \cdot 2^{T - 2} \cdot 2 = 2^{3T - 5},
\]
where we are multiplying by $2$ to account for the fact that $S$ either contains $R$ or is disjoint from $R$. If $\s(H) \le 3$, then, by Proposition~\ref{prop:boundary}, for each $i \in [N+2, \abs{V(H)} - N - 2)$, there exists some $S \subseteq V(H)$ with $\abs{\partial_H(S)} \le 2$ and $\abs{S} \in \{i, i+1\}$. Therefore,
\[\abs{V(H)} - 2N - 4 \le 2 \cdot 2^{3T - 5} = 2^{3T - 4} < 2^{3T}.\]
However, $V(H)$ contains as disjoint subsets $R$, a set $P_{xy,i}$ with $a \in \{x,y\}$ of size $M$, a set $P_{xy,i}$ with $a \not\in \{x,y\}$ of size $N$, and at least one other set $P_{xy,i}$, so
\[\abs{V(H)} \ge (N-M) + M + N + 2^{4T} > 2N + 4 + 2^{3T},\]
a contradiction.
\end{proof}

\begin{theo} Suppose $G_0 \in \mathcal{F}_2$. Then $\s_t(G_0) \ge 4$. \label{res:2} \end{theo}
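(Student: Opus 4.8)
The plan is to derive Theorem~\ref{res:2} from Proposition~\ref{prop:donut_configuration} by locating, inside every sufficiently subdivided copy of $G_0$, a subgraph isomorphic to one of the special configurations $H$ treated there. As in the argument that concluded $\s_t(K_4) \ge 4$, it suffices to produce a threshold $\ell$ with the property that every subdivision $G$ of $G_0$ in which each edge is subdivided at least $\ell$ times satisfies $\s(G) \ge 4$: were $\s_t(G_0) \le 3$, this $\ell$ would yield such a $G$ with $\s(G) \le 3$, a contradiction. Since $\s(G) \ge \s(H)$ whenever $H$ is a subgraph of $G$, I would reduce everything to exhibiting a subgraph $H \subseteq G$ satisfying all the hypotheses of Proposition~\ref{prop:donut_configuration}.

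So fix such a $G$. Being a subdivision of a member of $\mathcal{F}_2$, it consists of three bipaths $\beta_1, \beta_2, \beta_3$ sharing their endpoints $a$ and $b$ and otherwise pairwise vertex-disjoint, the orders of the bipaths being unchanged by subdivision. For each $i$ let $\lambda_i$ denote the largest number of internal vertices among the primary paths of $\beta_i$, labeled so that $\lambda_1 \ge \lambda_2 \ge \lambda_3$. I would take $D := \beta_2 \cup \beta_3$ as the pair of internally disjoint bipaths and carve the pendant path $R$ out of $\beta_1$. Because every bipath has order at least $3$, no primary path of $D$ is incident to both $a$ and $b$; so if $x,y$ are the endpoints of a longest primary path of $D$ (which has $\lambda_2$ internal vertices), I can choose the attachment vertex $w \in \{a,b\} \setminus \{x,y\}$. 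Letting $w$ play the role of $a$ in the proposition, this longest path is not incident to $w$, whence $N = \lambda_2$ and $M \le \lambda_2$, so the inequality $N \ge M$ holds.

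It then remains to meet the size requirement $\abs{P_{xy,i}} \ge 2^{4T}$ and the balance condition $\abs{R} = N - M$. For the former, $T$ (the sum of the orders of $\beta_2$ and $\beta_3$) is bounded by a constant depending only on $G_0$, and each primary path of $D$ has at least $\ell$ internal vertices, so fixing $\ell \ge 2^{4T}$ from the start does the job. For the balance condition, note $\abs{R} = N - M = \lambda_2 - M \le \lambda_2 \le \lambda_1$. Since $\beta_1$ has a primary path with $\lambda_1$ internal vertices and has $w$ as one endpoint, it contains a simple path starting at $w$, avoiding the other shared endpoint $w'$, with at least $\lambda_1 \ge N - M$ internal vertices (route from $w$ through $\beta_1$'s primary vertices to that longest primary path and traverse it, stopping short of $w'$ if necessary); truncating this path to exactly $N - M$ internal vertices produces an $R$ disjoint from $D$. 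Taking $H := D \cup R$, glued along $w$, meets every hypothesis of Proposition~\ref{prop:donut_configuration}, giving $\s(H) \ge 4$ and hence $\s(G) \ge 4$.

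The hard part is precisely the balance condition $\abs{R} = N - M$: for an adversarially chosen subdivision the primary paths of $D$ can be far longer than anything found in a naively chosen third bipath, so $R$ of the prescribed length might not fit. The device that overcomes this is to use the bipath with the \emph{longest} primary path as the source of $R$, which forces $N - M \le \lambda_2 \le \lambda_1$ and therefore guarantees enough room inside $\beta_1$. Everything else --- that $\beta_1,\beta_2,\beta_3$ genuinely form the configuration of Proposition~\ref{prop:donut_configuration} and that all $P_{xy,i}$ are nonempty --- is immediate from the definition of $\mathcal{F}_2$ and from $G$ being a subdivision.
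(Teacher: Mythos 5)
Your proof is correct and follows essentially the same route as the paper: both select the bipath containing the globally longest primary path as the source of the pendant path $R$, take $D$ to be the union of the other two bipaths, and verify the hypotheses of Proposition~\ref{prop:donut_configuration}, with the key observation in each case being that $N-M\le N\le\lambda_1$ so the third bipath has room for $R$. The only (immaterial) difference is the rule for choosing the attachment vertex $w$ --- you pick an endpoint not incident to the longest primary path of $D$, while the paper picks the endpoint whose adjoining primary paths have the smaller maximum; both yield $N \ge M$.
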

\begin{proof}
Let $a$, $b$ be the shared vertices of the three bipaths forming $G_0$ (see Definition~\ref{defn:families}). Let $T$ be the sum of the orders of the bipaths comprising $G_0$, and suppose $G$ is some subdivision of $G_0$ such that each edge is subdivided at least $2^{4T}$ times. We will show that $\s(G) \geq 4$. Note that $G$ is in $\mathcal{F}_2$ and that every primary path in $G$ contains at least $2^{4T}$ internal vertices. Let $P$ be a primary path of maximum length in $G$. Let $D$ be the union of the two bipaths not containing $P$. Let $M$ (resp.~$L$) be the maximum number of internal vertices on a primary path in $D$ adjoining $a$ (resp.~$b$). Without loss of generality, assume that $L \geq M$ and let $N$ be the maximum number of internal vertices on a primary path in $D$ not adjoining $a$. Note that $N \geq L \geq M$ and $P$ has at least $N$ internal vertices. 

Let $Q$ be an arbitrary $ab$-path through $P$ disjoint from $D$. 
Let $R$ be the set of $N - M$ vertices in $V(Q) \setminus \{a\}$ closest to $a$. The induced subgraph $H$ of $G$ on $V(D) \cup R$ satisfies the assumptions of Proposition~\ref{prop:donut_configuration}. Therefore, $\s(G) \geq \s(H) \geq 4$, as desired.
%
\end{proof}

\begin{theo} Suppose $G_0 \in \mathcal{F}_3$. Then $\s_t(G_0) \ge 4$. \label{res:3} \end{theo}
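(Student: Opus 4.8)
The plan is to reduce the bound $\s_t(G_0)\ge 4$ to the special configuration of Proposition~\ref{prop:donut_configuration}, in exactly the manner of Theorem~\ref{res:2}, but now building the auxiliary path out of the \emph{opposite} end of the $\mathcal{F}_3$-graph rather than from a third bipath. Write $G_0$ as in Definition~\ref{defn:families}, with bipaths $B_a,B_b$ sharing endpoints $v_1,v_2$, bipaths $B_c,B_d$ sharing endpoints $v_3,v_4$, and with $P_1$ joining $v_1,v_3$ and $P_2$ joining $v_2,v_4$. Let $T$ be the sum of the orders of all four bipaths and let $G$ be an arbitrary subdivision of $G_0$ in which every edge is subdivided at least $2^{4T}$ times; I will show $\s(G)\ge 4$, which yields $\s_t(G_0)\ge 4$ by Definition~\ref{defn:top_search}. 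As before, it suffices to exhibit a subgraph $H\subseteq G$ satisfying the hypotheses of Proposition~\ref{prop:donut_configuration}, since then $\s(G)\ge\s(H)\ge 4$ by the monotonicity $\s(H)\le\s(G)$ for subgraphs.

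The key idea is to let one pair of bipaths play the role of $D$ and to route the auxiliary path through the \emph{other} pair, which can be made long. First I would fix a primary path $P$ of globally maximum length among all four bipaths; after relabelling I may assume $P$ lies in $B_a$ or $B_b$, so that $P$ is \emph{not} contained in $D:=B_c\cup B_d$. I then take $\{a,b\}=\{v_3,v_4\}$, choosing $a$ to be the endpoint of $D$ for which the largest primary path of $D$ adjoining it has the fewer internal vertices. With $M$ and $N$ as in Proposition~\ref{prop:donut_configuration} (the maxima of $\abs{P_{xy,i}}$ over primary paths of $D$ adjoining, respectively not adjoining, $a$), this choice forces $N\ge M$, because every primary path adjoining $b$ is counted in $N$, and the maximum over paths adjoining $b$ is by construction at least $M$.

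Next I would construct the auxiliary path. Starting at $a$, I travel along $P_1$ (if $a=v_3$) or $P_2$ (if $a=v_4$) to reach $v_1$ or $v_2$, and then traverse the constituent path of $B_a$ or $B_b$ that contains $P$. Since the internal vertices of $P_1,P_2$ are disjoint from every bipath and since $B_a,B_b$ are vertex-disjoint from $D$, the result is a simple path $Q$ starting at $a$ whose only vertex in $D$ is $a$ itself; moreover $Q$ contains $P$, so it has at least $N$ internal vertices. Letting $R$ be the $N-M$ vertices of $Q\setminus\{a\}$ closest to $a$ and setting $H=G[V(D)\cup R]$, the graph $H$ is precisely the configuration of Proposition~\ref{prop:donut_configuration}: $D$ is a union of two internally disjoint bipaths between $a$ and $b$ with every primary path of length at least $2^{4T}$, $R$ is a path attached at $a$ and otherwise disjoint from $D$, and $N\ge M$ with $\abs{R}=N-M$ (which fits inside $Q$ since $\abs{R}<N$). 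Proposition~\ref{prop:donut_configuration} then gives $\s(H)\ge 4$, completing the argument.

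The main obstacle is precisely the length bookkeeping that dictates this routing. If I naively used only $P_1$ as the auxiliary path, I could not guarantee that it contains $N-M$ internal vertices, since $N$ is a length \emph{inside} $D$ and is uncontrolled relative to $\abs{P_1}\ge 2^{4T}$ in an arbitrary deep subdivision. Choosing $D$ to avoid the globally longest primary path $P$ and then steering $Q$ through $P$ is exactly what makes $Q$ long enough, mirroring the role of the maximal primary path in the proof of Theorem~\ref{res:2}. A secondary point requiring care is that $P_1$ and $P_2$ may share vertices; this is harmless, because $Q$ uses only one of the two connecting paths, so $Q$ remains a simple path meeting $D$ only at $a$, and $R$ contributes no edge of $H$ other than its single attachment at $a$.
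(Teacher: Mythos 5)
Your proposal is correct and follows essentially the same route as the paper's proof: after a relabeling of $\{v_1,v_2\}$ versus $\{v_3,v_4\}$, both arguments take $D$ to be the pair of bipaths not containing the globally longest primary path $P$, choose the terminal $a$ so that $N\ge M$, and build the auxiliary path by following the connecting path $P_1$ (or $P_2$) into the other cluster and then through the constituent path containing $P$, so that it has at least $N$ internal vertices and Proposition~\ref{prop:donut_configuration} applies. The points you flag as requiring care (routing through $P$ to get enough length, and using only one of $P_1,P_2$ so that their possible intersection is irrelevant) are exactly the ones the paper handles in the same way.
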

\begin{proof}
    Recall that by Definition~\ref{defn:families}, $G_0$ consists of 4 bipaths $B_{a}$, $B_{b}$, $B_{c}$, $B_{d}$ and two paths $P_1$, $P_2$, such that $B_{a}$, $B_{b}$ share endpoints $v_1$ and $v_2$, $B_c$, $B_d$ share endpoints $v_3$ and $v_4$, but the bipaths are otherwise pairwise vertex-disjoint. Furthermore, $P_1$ has endpoints $v_1$, $v_3$, and $P_2$ has endpoints $v_2$, $v_4$, but these paths have no other vertices in common with the bipaths (however, $P_1$ and $P_2$ may intersect each other).

    Let $T$ be the sum of the orders of the bipaths $B_{a}$, $B_{b}$, $B_{c}$, and $B_{d}$, and suppose $G$ is some subdivision of $G_0$ such that each edge is subdivided at least $2^{4T}$ times. We will show that $\s(G) \geq 4$. Note that $G$ is in $\mathcal{F}_3$ and every primary path in $G$ has at least $2^{4T}$ internal vertices. Let $P$ be a primary path of maximum length in $G$. Suppose without loss of generality that $P$ belongs to a bipath with endpoints $v_3$, $v_4$. Let $D$ be the subgraph of $G$ comprising the two bipaths with endpoints $v_1$, $v_2$. Let $M$ (resp.~$L$) be the maximum number of internal vertices on a primary path in $D$ adjoining $v_1$ (resp.~$v_2$). Without loss of generality, assume that $L \geq M$ and let $N$ be the maximum number of internal vertices on a primary path in $D$ not adjoining $v_1$. Note that $N \geq L \geq M$ and $P$ has at least $N$ internal vertices.

    Let $Q$ be a path that starts at $v_1$, follows the path $P_1$ to $v_3$, and then continues to $v_4$ through $P$. Let $R$ be the set of $N - M$ vertices in $V(Q) \setminus \{v_1\}$ closest to $v_1$. The induced subgraph $H$ of $G$ on $V(D) \cup R$ satisfies the assumptions of Proposition~\ref{prop:donut_configuration} (with $v_1$ in place of $a$ and $v_2$ in place of $b$). Therefore, $\s(G) \geq \s(H) \geq 4$, as desired.
%
\end{proof}

\section{Building simple GSP decompositions}\label{sec:forb_dec}

\subsection{GSP decompositions of $K_4$-subdivision-free graphs}\label{subsec:K4-free}

The main result of this subsection is Corollary~\ref{corl:K4-free}, which says that connected $K_4$-subdivision-free graphs admit GSP decompositions (with some control over the choice of terminals). By Theorem~\ref{res:1}, every graph $G$ with $\s_t(G) \leq 3$ must be $K_4$-subdivision-free, so Corollary~\ref{corl:K4-free} implies that such $G$ has a GSP decomposition. In the subsequent subsections we shall explain how to make this GSP decomposition simple.

\begin{lemma} Suppose $G$ is a $K_4$-subdivision-free graph and $H$ is a connected subgraph of $G$ containing distinct vertices $a$, $b$ such that $a \not \in N_H(b)$, $\{a, b\}$ is not a cut-set of $H$, and there exists a path from $a$ to $b$ in $G - (H - \{a, b\})$. Then there exists a cut vertex of $H$ separating $a$ and $b$. \label{lemma:separating}
\end{lemma}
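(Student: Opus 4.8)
The plan is to prove the statement by contradiction via its contrapositive: I assume that all three hypotheses hold yet \emph{no} cut vertex of $H$ separates $a$ and $b$, and from this I construct a subdivision of $K_4$ inside $G$, contradicting the hypothesis that $G$ is $K_4$-subdivision-free.

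First I would extract two internally disjoint $a$--$b$ paths inside $H$. The assumption that no cut vertex of $H$ separates $a$ and $b$ says exactly that no single vertex $v \in V(H) \setminus \{a,b\}$ lies on every $a$--$b$ path of $H$ (any such separating $v$ would in particular be a cut vertex of $H$). Since also $a \notin N_H(b)$, Menger's theorem yields two internally disjoint $a$--$b$ paths $P_1, P_2 \subseteq H$, and because $a$ and $b$ are non-adjacent, both paths have at least one interior vertex. Let $C = P_1 \cup P_2$ be the resulting cycle, on which $a$ and $b$ appear as two branch vertices.

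Next I would use the hypothesis that $\{a,b\}$ is not a cut-set: this makes $H - \{a,b\}$ connected, and both the interior of $P_1$ and the interior of $P_2$ are nonempty subsets of $V(H) \setminus \{a,b\}$. Choosing $R$ to be a shortest path in $H - \{a,b\}$ with one end $x$ interior to $P_1$ and the other end $y$ interior to $P_2$ guarantees that no internal vertex of $R$ lies on $C$, so $R$ is a chord-path of $C$ meeting it only at the distinct vertices $x, y$. Together with the external path $Q$ from $a$ to $b$ supplied by the third hypothesis---which, living in $G - (H - \{a,b\})$, meets $C \cup R$ only at $a$ and $b$---I now have four branch vertices $a, x, b, y$ and six internally disjoint connecting paths: the four arcs of $C$ between consecutive branch vertices (in the cyclic order $a, x, b, y$), the path $R$ joining $x$ and $y$, and the path $Q$ joining $a$ and $b$. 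Each branch vertex has degree three, so this object is a subdivision of $K_4$, the desired contradiction.

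The main thing to be careful about is the internal-disjointness bookkeeping that makes the final object a genuine $K_4$-subdivision rather than merely a theta graph: I must verify that $R$ meets $C$ only at $x, y$ (ensured by taking $R$ shortest) and that $Q$ meets $C \cup R$ only at $a, b$ (ensured because $Q$ avoids all interior vertices of $H$ while $C, R \subseteq H$). A secondary point worth stating explicitly is the equivalence used in the first step, namely that ``some cut vertex of $H$ separates $a$ and $b$'' is the same as ``some vertex of $V(H) \setminus \{a,b\}$ separates $a$ from $b$ in $H$,'' which is what licenses the application of Menger's theorem. No single step is hard, but the argument only goes through if all three hypotheses play their intended roles: non-adjacency forces the two paths to have interiors, non-separation supplies two such paths, and the non-cut-set condition produces the crossing path $R$.
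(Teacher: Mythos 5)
Your proof is correct and follows essentially the same route as the paper's: assume no cut vertex separates $a$ and $b$, invoke Menger's theorem to obtain two internally disjoint $a$--$b$ paths with nonempty interiors, use connectedness of $H - \{a,b\}$ to find a crossing path between the two interiors, and combine with the external $a$--$b$ path to exhibit a $K_4$-subdivision. Your explicit ``shortest path'' device for ensuring the crossing path meets the cycle only at its endpoints is a welcome elaboration of a step the paper merely asserts, but it is the same argument.
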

\begin{proof} Suppose $H$ contained internally disjoint paths $P$, $Q$ between $a$ and $b$. Since $a \not \in N_H(b)$, $P$ and $Q$ must each contain at least one intermediate vertex. As $H$ remains connected after removing $a$ and $b$, there must be some path connecting intermediate vertices of $P$ and $Q$ that is internally disjoint from $P$ and $Q$. Let $v$ and $w$ be the endpoints of such a path. Then there are pairwise disjoint paths connecting each of the pairs of vertices in $\{a, b, v, w\}$ (the path between $a$ and $b$ in particular is taken to be one contained in $G - (H - \{a, b\})$ given by the hypothesis). Thus $G$ contains a subdivision of $K_4$, a contradiction. Hence $H$ cannot contain a pair of internally disjoint paths between $a$ and $b$, and so the claim follows by Menger's theorem \cite[Corollary 3.3.5]{Diestel}.
\end{proof}
\begin{prop} Suppose $G$ is a $2$-connected $K_4$-subdivision-free graph, and $\{a, b\} \subseteq V(G)$ are either neighbors or a cut-set of $G$. Then $(G, a, b)$ admits a series-parallel decomposition. \label{prop:seriesparallel}
\end{prop}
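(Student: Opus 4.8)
The plan is to prove a statement more general than the proposition, by induction on the number of edges, flexible enough to recurse into the pieces produced by series and parallel splits. The difficulty is that, although $G$ itself is $2$-connected, the subgraphs arising when we repeatedly peel off parts need be neither $2$-connected nor have adjacent terminals, so I cannot simply induct on ``$2$-connected graphs whose terminals are adjacent or a cut-set.'' Instead I would prove: \emph{if $G$ is $K_4$-subdivision-free, $H\subseteq G$ is a connected subgraph with terminals $a\neq b$ such that every vertex and edge of $H$ lies on an $a$--$b$ path in $H$ (call $H$ an $a$--$b$ network), and moreover at least one of the following holds — $ab\in E(H)$, or $\{a,b\}$ is a cut-set of $H$, or there is a path from $a$ to $b$ in $G-(H-\{a,b\})$ — then $(H,a,b)$ admits a series-parallel decomposition.} The proposition is the special case $H=G$: a $2$-connected $G$ is automatically an $a$--$b$ network, and the hypothesis that $a,b$ are neighbors or a cut-set is exactly the first two alternatives of the trichotomy.

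The induction has three cases according to which alternative is used; the base case is a single edge $ab$. If $ab\in E(H)$, I would split off that edge with a parallel operation, writing $(H,a,b)=(ab,a,b)\circ_p(H-ab,a,b)$; since $H$ is an $a$--$b$ network with more than one edge, $ab$ is not a bridge, so $H-ab$ is again a connected $a$--$b$ network, and the edge $ab$ itself serves as the internally disjoint $a$--$b$ path certifying the trichotomy for the smaller instance. If $ab\notin E(H)$ but $\{a,b\}$ is a cut-set, I would split in parallel over the components $C_1,\dots,C_m$ (with $m\ge 2$) of $H-\{a,b\}$, taking $H_i=H[C_i\cup\{a,b\}]$ and composing $(H,a,b)=(H_1,a,b)\circ_p\cdots\circ_p(H_m,a,b)$; each $H_i$ is a strictly smaller $a$--$b$ network, and for each $i$ a path through any other component $C_j$ supplies the internally disjoint $a$--$b$ path needed to continue.

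The crux is the remaining case, where $ab\notin E(H)$ and $\{a,b\}$ is not a cut-set of $H$; then the trichotomy forces a path from $a$ to $b$ in $G-(H-\{a,b\})$, which is precisely the hypothesis of Lemma~\ref{lemma:separating}. That lemma then produces a cut vertex $c$ of $H$ separating $a$ from $b$, which lets me perform a \emph{series} split: because $H$ is an $a$--$b$ network and $c$ lies on every $a$--$b$ path, $H-c$ has exactly two sides, and with $H_1,H_2$ the two sides together with $c$ I write $(H,a,b)=(H_1,a,c)\circ_s(H_2,c,b)$. To keep the induction running I must re-certify the trichotomy for each half; for $(H_1,a,c)$, for instance, I would build the required $a$--$c$ path avoiding the interior of $H_1$ by going from $c$ through $H_2$ to $b$ and then from $b$ back to $a$ along the original external path. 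Since for a $2$-connected $G$ the top-level instance already falls under the first two cases, this third case is only ever invoked on strictly smaller subgraphs, so the recursion terminates.

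I expect the main obstacle to be pinning down the inductive statement exactly: the invariant must be strong enough to be re-established after every split yet weak enough to hold for all pieces, and the delicate point is the path-splicing in the series case. The role of $K_4$-subdivision-freeness is concentrated entirely in Lemma~\ref{lemma:separating}: it is what guarantees that in the third case a separating cut vertex always exists, thereby excluding the ``diamond''-type configuration (a $2$-connected graph with non-adjacent, non-separating terminals — for instance $K_4$ minus an edge with its two degree-$2$ vertices as terminals) that is genuinely not series-parallel and whose presence together with an external path would manufacture a $K_4$-subdivision in $G$. Once the statement is fixed, each case is a short structural check, and the proposition follows by applying the general statement with $H=G$.
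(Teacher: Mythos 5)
Your proof is correct, and it rests on the same two ingredients as the paper's: Lemma~\ref{lemma:separating} to manufacture a cut vertex separating the terminals, and the two splitting moves (parallel over the edge $ab$ and over the components of $H-\{a,b\}$, series at the cut vertex). What is genuinely different is the inductive scaffolding. The paper keeps the invariant ``$2$-connected and ($a,b$ adjacent or a cut-set)'' throughout: after the parallel split it analyses the block--cut tree of each piece $H_i$, shows it is a path of blocks from $a$ to $b$, and uses the contrapositive of Lemma~\ref{lemma:separating} to check that each block again satisfies the original hypothesis, so the induction runs on $\abs{V(G)}$ over $2$-connected graphs with a built-in series composition along the whole block path at once. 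You instead relax the invariant to ``$a$--$b$ network plus the trichotomy (adjacent / cut-set / external path)'', perform exactly one split per step, and induct on edges; this avoids the block--cut-tree analysis entirely, at the price of having to re-verify the network property and re-certify the trichotomy after every split. The checks you flag as delicate all go through: if $ab\in E(H)$ and $H$ has more than one edge then $ab$ is not a bridge and every other edge lies on an $a$--$b$ path avoiding $ab$; an $a$--$b$ path through a vertex of one component of $H-\{a,b\}$ cannot leave that component; a cut vertex separating $a$ from $b$ in an $a$--$b$ network leaves exactly two sides; and the spliced $c$--$b$--$a$ walk is a genuine path because the external path meets $H$ only in $\{a,b\}$. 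So the decomposition you produce is the same one the paper builds, reached by a more incremental (and slightly more general) induction.
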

\begin{proof} We shall prove this via induction on $\abs{V(G)}$. The base case where $\abs{V(G)} = 2$ is trivial.

To prove the general case, let the connected components of $G - \{a, b\}$ be $H_0'$, \ldots, $H_k'$, and for each $i$, let $H_i$ be obtained from $G[H_i' \cup \{a, b\}]$ by removing the edge between $a$ and $b$ if it exists. For each $i$, the graph $G - H_i'$ contains an $ab$-path $P_i$. Indeed, if $a$ and $b$ are neighbors, the edge between $a$ and $b$ is such a path. Otherwise, $\{a, b\}$ is a cut-set of $G$ so there are at least two connected components of $G - \{a, b\}$. Hence $k > 0$ and we can choose some $H_j'$ distinct from $H_i'$. Since $G$ is $2$-connected, $G[H_j' \cup \{a, b\}]$ is a connected subgraph of $G - H_i'$, and thus it contains an $ab$-path. By Lemma~\ref{lemma:separating}, we may conclude that $H_i$ must contain a cut-vertex separating $a$ and $b$.

Consider the block-cut tree of $H_i$. If it is not a single path from the block containing $a$ to the block containing $b$ (which are distinct by the above), we can find some leaf block $L$ with a cut-vertex $v$ such that $a$, $b \not\in V(L) \setminus \{v\}$. Then $v$ separates $V(L) \setminus \{v\}$ from the rest of $G$, which is impossible as $G$ is $2$-connected. 

Thus the block-cut tree of $H_i$ is a path from the block containing $a$ to the block containing $b$. Enumerate the blocks of $H_i$ as $B_0$, \ldots, $B_{\ell-1}$ in order of increasing distance from $a$, and let $v_j$ be the cut-vertex between $B_{j-1}$ and $B_{j}$. We also write $v_0 = a$, $v_\ell = b$. We can find a path from each $v_j$ to $v_{j+1}$ without passing through any other vertices of $B_j$: take a path from $v_j$ to $a$ through the blocks $B_{j-1}$, \ldots, $B_0$ in order, then $P_i$, then a path from $b$ to $v_{j+1}$ through the blocks $B_{\ell-1}$, \ldots, $B_{j+1}$ in order. Furthermore, $B_j$ cannot contain a cut-vertex separating $v_{j}$ from $v_{j+1}$, so by the contrapositive of Lemma~\ref{lemma:separating}, either $\{v_j, v_{j+1}\}$ is a cut-set of $B_j$, or $v_j \in N_{B_j}(v_{j+1})$. In either case we can apply the induction hypothesis to see that $(B_j, v_j, v_{j+1})$ admits a series-parallel decomposition.

Note that 
\[
    (H_i, a, b) = (B_0, v_0, v_1) \circ_s (B_1, v_1, v_2) \circ_s \cdots \circ_s (B_{\ell-1}, v_{\ell-1}, v_\ell),
\]
so this implies that $(H_i, a, b)$ admits a series-parallel decomposition. Furthermore, we have
\[
    (G, a, b) = \begin{cases}(H_0, a, b) \circ_p \cdots \circ_p (H_k, a, b) &\text{if } a \not \in N_G(b);\\
    (H_0, a, b) \circ_p \cdots \circ_p (H_k, a, b) \circ_p (e_{ab}, a, b) &\text{if } a \in N_G(b),
    \end{cases}
\]
where $e_{ab}$ is the graph comprising the single edge between $a$ and $b$, so $G$ admits a series-parallel decomposition, as desired.
\end{proof}
\begin{lemma} If $\gwt{G} = (G, a, b)$ admits a series-parallel decomposition $T$, then $G$'s block-cut tree comprises a single path \ep{possibly a single block}, and $a$ and $b$ are each contained in a leaf block. If we let $v_1$, \ldots, $v_{k-1}$ be the cut-vertices of $G$ in order of distance from $a$, and $v_0 = a$, $v_k = b$, then for every block $B$, $(B, v_i, v_{i+1}) \in V(T)$ where $i$ is the unique index such that $\{v_i, v_{i+1}\} \subseteq V(B)$. Furthermore, for any $v \in V(G)$, we can find a path from $a$ through $v$ to $b$. \label{lemma:series_parallel_blocks}
\end{lemma}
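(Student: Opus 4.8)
The plan is to induct on the number of nodes of the series-parallel decomposition $T$ (equivalently, on $\abs{E(G)}$), proving all four assertions at once. For the induction to close cleanly it is convenient to strengthen the hypothesis slightly: I will also carry the bookkeeping that $a$ and $b$ are \emph{not} cut vertices of $G$, so that each lies in a unique block, and that these are exactly the two leaf blocks $B_0$ and $B_{k-1}$ at the ends of the path. The base case is a single edge $ab$: the block-cut tree is one block, both terminals lie in it, the only block is $(G,a,b)=\gwt{G}\in V(T)$, and the edge itself is an $a$-$b$ path through either endpoint. For the inductive step write $\gwt{G}=\gwt{H}\circ\gwt{K}$ with $\circ\in\{\circ_s,\circ_p\}$ the operation at the root of $T$; the subtrees $T_H$, $T_K$ are series-parallel decompositions of $\gwt{H}$ and $\gwt{K}$ with strictly fewer nodes, so all (strengthened) conclusions hold for them by induction.

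First I would treat the series case $\gwt{G}=(H,a,c)\circ_s(K,c,b)$ with $V(H)\cap V(K)=\{c\}$. Since $c$ is the only shared vertex and $a\in H-c$, $b\in K-c$ are both present, $c$ becomes a cut vertex of $G$; moreover every block of $G$ lies entirely in $H$ or in $K$ (a $2$-connected subgraph cannot straddle the cut vertex $c$, and each edge belongs to $H$ or $K$). The induction hypothesis says $c$ is a non-cut vertex of both $H$ and $K$, so the cut vertices of $G$ are precisely those of $H$, those of $K$, and the genuinely new vertex $c$; and the same bookkeeping shows $a$, $b$ stay non-cut vertices of $G$. Concatenating the block-cut path of $H$ (ending at its leaf block containing $c$) with that of $K$ (beginning at its leaf block containing $c$), joined through the new node $c$, produces a single path whose two leaf blocks contain $a$ and $b$. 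Relabelling its blocks and cut vertices as $B_0,\dots,B_{k-1}$ and $v_1,\dots,v_{k-1}$, the membership $(B_i,v_i,v_{i+1})\in V(T)$ follows from the corresponding memberships in $V(T_H)\subseteq V(T)$ and $V(T_K)\subseteq V(T)$. The last conclusion is obtained by gluing an $a$-$c$ path in $H$ to a $c$-$b$ path in $K$ (each supplied by the induction hypothesis and routed through the prescribed vertex); as they meet only at $c$, their union is a simple $a$-$b$ path through that vertex.

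The parallel case $\gwt{G}=(H,a,b)\circ_p(K,a,b)$ with $V(H)\cap V(K)=\{a,b\}$ is where the real work lies, and I expect the main obstacle to be proving that $G$ is $2$-connected (a single block), whence its block-cut tree degenerates to one node and $(B_0,v_0,v_1)=(G,a,b)=\gwt{G}\in V(T)$. The crucial tool is exactly the ``path through any prescribed vertex'' conclusion of the induction hypothesis for $H$ and $K$. To show $G-z$ is connected for every $z$: if $z=a$ (resp.\ $z=b$), every vertex of $H$ or $K$ lies on an $a$-$b$ path whose sub-path to $b$ (resp.\ to $a$) avoids $z$, so all of $G-z$ reaches $b$ (resp.\ $a$). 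If $z\notin\{a,b\}$, say $z\in V(H)$, then $K$ is untouched, so $a$, $b$, and all of $V(K)$ stay connected through $K$; and for any $u\in V(H)\setminus\{z\}$ I take an $a$-$b$ path of $H$ through $u$ and delete $z$ from it, after which $u$ survives in the sub-path meeting $a$ or the one meeting $b$, so $u$ reaches $\{a,b\}$ in $G-z$. Hence $G$ has no cut vertex (it has at least three vertices, since the parallel product of two single edges is excluded among simple graphs). The final conclusion for $G$ is immediate: any $v$ lies in $H$ or $K$, and the corresponding $a$-$b$ path persists in $G$. This closes the induction and establishes all four assertions.
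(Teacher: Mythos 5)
Your proof is correct and follows essentially the same route as the paper's: induction on the decomposition tree, with the series case handled by joining the two block-cut paths at the new cut vertex $c$, and the parallel case reduced to proving $2$-connectedness via the ``path through any prescribed vertex'' part of the induction hypothesis. The added bookkeeping that $a$ and $b$ are non-cut vertices is a harmless (and mildly clarifying) strengthening that the paper leaves implicit.
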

\begin{proof} We shall show this by induction on $\abs{V(T)}$. The claim is clearly true if $T$ is trivial, where $G$ must comprise a single edge.

If $\circ_{T, \gwt{G}} = \circ_s$, suppose $(H, a, c)$ and $(K, c, b)$ are the children of $\gwt{G}$ in $T$. Then $c$ separates $V(H) \setminus \{c\}$ from $V(K) \setminus \{c\}$ in $G$, so the block-cut tree of $G$ is simply the union of the block-cut trees of $H$ and $K$ with an additional edge between each of their blocks which contain $c$. One notes that this, combined with the induction hypothesis
, gives the desired structure of the block-cut graph of $G$. Each block $(B, v_i, v_{i+1})$ is either a block of $H$ or a block of $K$, so it is in $V(T)$ by the induction hypothesis. 
Finally, for any $v \in V(G)$, suppose without loss of generality that $v \in V(H)$. Then taking a path in $H$ from $a$ through $v$ to $c$ and extending it by any path in $K$ from $c$ to $b$ gives a desired path from $a$ through $v$ to $b$ in $G$.

If $\circ_{T, \gwt{G}} = \circ_p$, we will show that $G$ is $2$-connected. Let $(H, a, b)$ and $(K, a, b)$ be the children of $\gwt{G}$ in $T$. Consider any vertex $v \in V(G)$. We will show that the graph $G - v$ is connected. We may assume that $v \neq a$ \ep{the argument when $v = a$ is the same but with the roles of $a$ and $b$ exchanged}. We claim that every vertex $w \in V(G) \setminus \{v\}$ is reachable in $G - v$ by a path from $a$. Without loss of generality, we may assume that $w \in V(H)$. By the induction hypothesis, we can find a path $P$ from $a$ through $w$ to $b$ in $H$ and a path $Q$ from $a$ to $b$ in $K$. Write $P = P_1 \cup P_2$, where $P_1$ is an $aw$-path and $P_2$ is a $wb$-path, noting that $P_1$ and $P_2$ share only a single vertex $w$. If $v \not \in V(P_1)$, then $P_1$ is a desired $aw$-path in $G - v$. Otherwise, i.e., if $v \in V(P_1)$, a desired $aw$-path in $G - v$ is $Q \cup P_2$. 
Now that we have established that $G$ is $2$-connected, 
the block-cut graph of $G$ comprises a single block, and the desired properties in the claim follow immediately. To find a path from $a$ through $v$ to $b$ for any $v \in V(G)$, we simply take the path in whichever of $H$ or $K$ contains $v$ using the induction hypothesis.
\end{proof}

\begin{lemma} Suppose $\gwt{G} = (G, a, b)$ and $\gwt{H} = (H, c, d)$ admit GSP decompositions $T_G$, $T_H$ respectively, and $V(G) \cap V(H) = \{c\}$. Let $K$ be the union of $G$ and $H$. Then $\gwt{K} = (K, a, b)$ admits a GSP decomposition $T$ with $\gwt{H} \in V(T)$ such that the induced decomposition of $\gwt{H}$ in $T$ is $T_H$.

Furthermore, if both $T_G$ and $T_H$ are simple, then we can additionally further restrict $T$ to be simple and have the same complexity as $T_G$.\label{lemma:blockmerge}
\end{lemma}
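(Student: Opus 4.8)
The plan is to prove both assertions at once by structural induction on the GSP decomposition $T_G$ of $\gwt{G} = (G,a,b)$. The guiding idea is that although the branch operations $\circ_b, \circ_{b'}$ can only attach $\gwt{H}$ at a \emph{terminal} of the graph they act on, the shared vertex $c$ may well be an internal vertex of $G$; the point is that $T_G$ nevertheless exhibits $c$ as a terminal of some GSP graph sitting at a node of $T_G$, and we attach $\gwt{H}$ there by descending the tree. For the base case, $G$ is a single edge, so $c \in \{a,b\}$, and we take $T$ to be $\gwt{G} \circ_b \gwt{H}$ when $c = a$ (valid since $\gwt{H} = (H,c,d) = (H,a,d)$ and $V(G) \cap V(H) = \{a\}$) and $\gwt{G} \circ_{b'} \gwt{H}$ when $c = b$; in either case $\gwt{H}$ is a child carrying its decomposition $T_H$, so the induced decomposition of $\gwt{H}$ is $T_H$.

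For the inductive step, write $\gwt{G} = \gwt{G_1} \circ \gwt{G_2}$ with $\circ \in \{\circ_s, \circ_p, \circ_b, \circ_{b'}\}$, and choose a child $\gwt{G_i}$ whose underlying graph contains $c$ (if both do, i.e.\ $c$ is a shared vertex of the operation, either choice works). Since $V(G_i) \cap V(H) \subseteq V(G) \cap V(H) = \{c\}$ and $c \in V(G_i)$, we get $V(G_i) \cap V(H) = \{c\}$, so the induction hypothesis applies to $\gwt{G_i}$ and $\gwt{H}$ and yields a decomposition $T_i$ of $\gwt{K_i} = (G_i \cup H, \ldots)$ containing $\gwt{H}$ with induced decomposition $T_H$. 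The routine verification here is that $\gwt{K}$ is recovered from $\gwt{K_i}$ and the unchanged sibling $\gwt{G_j}$ by the \emph{same} operation $\circ$: one checks that $V(H) \cap V(G_j)$ equals $\{c\}$ when $c$ lies in the shared set of $\circ$ (namely $c = m$ for $\circ_s$, $c \in \{a,b\}$ for $\circ_p$, $c = a$ for $\circ_b$) and is empty otherwise, so in every case $V(K_i) \cap V(G_j)$ collapses to exactly the set $\{m\}$, $\{a,b\}$, or $\{a\}$ required by $\circ$. Replacing the decomposition of the chosen child by $T_i$ and keeping the sibling's decomposition produces the desired $T$.

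For the \textbf{furthermore} clause I would strengthen the inductive statement to: when $T_G$ and $T_H$ are simple, the constructed $T$ is simple with $c(T) = c(T_G)$. Because the top operation of $T$ agrees with that of $T_G$, the complexity matches in every case. For $\circ_p$, the complexity is the sum over the two children and only the chosen child changes, its complexity preserved by the inductive hypothesis; for $\circ_b, \circ_{b'}$, the complexity equals that of the terminal-sharing child, which is either left untouched or has its complexity preserved inductively, and the branch $\gwt{H}$ never contributes. The only genuinely new ingredient is the series case, where $c(T)$ depends only on whether $\gwt{K}$ is bridged, so one must show $\gwt{K} = (K,a,b)$ is bridged if and only if $\gwt{G} = (G,a,b)$ is. This holds because $c$ is a cut vertex of $K$ separating $V(H) \setminus \{c\}$ from the rest, whence every $ab$-path in $K$ lies entirely inside $G$; thus the $ab$-paths of $K$ and of $G$ coincide, and an edge lies on all of them in $K$ exactly when it does in $G$. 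Simplicity of $T$ then follows since its root has complexity $c(T_G) \le 1$, the modified child's decomposition $T_i$ is simple by induction, and the sibling's decomposition is a simple subtree of $T_G$.

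The main obstacle is conceptual rather than computational: realizing that $\gwt{H}$ should not be attached at the terminals of $G$, but at the node of $T_G$ where $c$ surfaces as a terminal, and that descending through the tree keeps $\gwt{H}$ as a pure branch (a non-terminal-sharing child). This is precisely what makes the complexity bookkeeping work, since the complexity measure ignores branch children. The most delicate verification is the bridgedness equivalence for $\gwt{K}$ and $\gwt{G}$ in the series case, together with the uniform check that the sibling intersection collapses to the operation's shared set irrespective of whether $c$ happens to coincide with a shared terminal or cut vertex.
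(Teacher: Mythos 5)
Your proposal is correct and follows essentially the same route as the paper: induction on the structure of $T_G$, attaching $\gwt{H}$ via $\circ_b/\circ_{b'}$ at the base case and otherwise descending into the child containing $c$ and recombining with the same operation. You in fact spell out two verifications the paper leaves implicit — that the sibling intersection collapses to the operation's required shared set, and that $(K,a,b)$ is bridged iff $(G,a,b)$ is (needed for the complexity claim when the root operation is $\circ_s$) — both of which are correct.
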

\begin{proof} We shall proceed by induction on $\abs{V(G)}$. For the base case, $\abs{V(G)} = 2$ and $G$ comprises a single edge. If $c = a$, $\gwt{K} = \gwt{G} \circ_b \gwt{H}$. Similarly, if $c = b$, $\gwt{K} = \gwt{G} \circ_{b'} \gwt{H}$ and $T$ can be obtained directly by joining $T_G$ to $T_H$. In either case, the complexity of $T$ is equal to the complexity of $T_G$, so if $T_G$ and $T_H$ were simple, $T$ would be simple too.

In the general case, $T_G$ is nontrivial. Let $\gwt{G_0} = (G_0, s, t)$, $\gwt{G_1}$ be the children of $G$ in $T_G$. Without loss of generality we may assume that $c \in V(G_0)$. Then by the induction hypothesis, $(G_0 \cup H, s, t)$ admits a GSP decomposition $T'$ containing $\gwt{H}$ with induced decomposition $T_H$. Connecting $T'$ with the induced decomposition of $T_G$ on $\gwt{G_1}$ gives the desired decomposition $T$. If $T_G$ and $T_H$ were simple, the induced decompositions on $G_0$ and $G_1$ would be simple too. By the induction hypothesis, $T'$ is simple and has the same complexity as the induced decomposition on $G_0$ by $T_G$, so $T$ has the same complexity as $T_G$ and is simple.
\end{proof}
\begin{corl} Suppose $G$ is a connected $K_4$-subdivision-free graph, and $\{a, b\} \subseteq V(G)$ are either neighbors or a cut-set of a 2-connected component of $G$. Then $(G, a, b)$ is a GSP graph.\label{corl:K4-free}
\end{corl}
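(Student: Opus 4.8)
The plan is to prove the statement by induction on $\abs{V(G)}$, using Proposition~\ref{prop:seriesparallel} to handle the $2$-connected situation and Lemma~\ref{lemma:blockmerge} to glue on the remaining pendant parts of $G$ via the branch operations. First I would locate a block $B_0$ of $G$ containing both $a$ and $b$: if $a$, $b$ are neighbors, take $B_0$ to be the unique block containing the edge $\{a,b\}$; if $\{a,b\}$ is a cut-set of a $2$-connected component, take $B_0$ to be that component. In either case $B_0$ is a maximal $2$-connected subgraph (allowing the degenerate block $B_0 = K_2$), it is $K_4$-subdivision-free as a subgraph of $G$, and $\{a,b\}$ are neighbors in $B_0$ or a cut-set of $B_0$. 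Hence Proposition~\ref{prop:seriesparallel} applies and $(B_0, a, b)$ admits a series-parallel decomposition, so it is GSP. This settles the base case $V(G) = V(B_0)$, where in fact $G = B_0$ since maximality of $B_0$ forbids any additional edge.

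For the inductive step, assume $V(G) \neq V(B_0)$. Since $G$ is connected and has a vertex $w$ outside $B_0$, tracing the block-cut tree from $w$ toward $B_0$ produces a cut-vertex $c \in V(B_0)$ of $G$ together with a nonempty union $C$ of connected components of $G - c$ that are disjoint from $V(B_0)$. I would then set $H = G[\{c\} \cup C]$ and $G' = G[V(G) \setminus C]$ and verify the following routine facts: $H$ is connected (each component comprising $C$ is adjacent to $c$) and $K_4$-subdivision-free, with $\abs{V(H)} < \abs{V(G)}$ because $V(B_0) \setminus \{c\} \neq \varnothing$; $G'$ is connected and $K_4$-subdivision-free, still contains $B_0$ as a block witnessing the hypothesis on $\{a,b\}$, and has $\abs{V(G')} < \abs{V(G)}$; moreover $V(G') \cap V(H) = \{c\}$ and $G = G' \cup H$, the last point holding because $C$ is a union of components of $G - c$, so no edge joins $C$ to $V(G) \setminus (C \cup \{c\})$.

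Choosing any neighbor $d$ of $c$ in $H$, the pair $\{c,d\}$ is an edge of $H$, so the induction hypothesis applied to $H$ gives that $(H, c, d)$ is GSP; likewise the induction hypothesis applied to $G'$ gives that $(G', a, b)$ is GSP. Since $G'$ and $H$ meet exactly in the single vertex $c$, which is precisely the first terminal of $(H, c, d)$, Lemma~\ref{lemma:blockmerge} produces a GSP decomposition of $(G' \cup H, a, b) = (G, a, b)$, completing the induction and hence the proof.

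The individual verifications ($K_4$-subdivision-freeness, connectivity, the disjointness $V(G') \cap V(H) = \{c\}$) are all inherited from subgraphs essentially for free, so I do not expect difficulty there. The only genuine content of the argument is organizing the induction so that the pieces glue along a single vertex: the main obstacle is to peel off exactly one ``$c$-branch'' at a time---a whole family of components hanging off one cut-vertex of $B_0$---so that the shared-vertex hypothesis of Lemma~\ref{lemma:blockmerge} is met and the branch operation $\circ_b$ or $\circ_{b'}$ can be invoked, rather than trying to attach the exterior of $B_0$ all at once.
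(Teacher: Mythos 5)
Your proof is correct and follows essentially the same strategy as the paper's: handle the $2$-connected case via Proposition~\ref{prop:seriesparallel} and then inductively peel off material hanging at a cut-vertex, gluing it back with Lemma~\ref{lemma:blockmerge}. The only (immaterial) difference is granularity --- the paper inducts on the number of blocks and detaches one leaf block at a time, applying Proposition~\ref{prop:seriesparallel} directly to that block, whereas you detach an entire branch at a cut-vertex of $B_0$ and recurse on it; both uses of Lemma~\ref{lemma:blockmerge} are legitimate since in each case the shared vertex is the first terminal of the attached piece.
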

\begin{proof} We proceed by induction on the number of blocks in $G$. In the case where the number of blocks is $1$, this reduces to Proposition~\ref{prop:seriesparallel}. In the general case, take a leaf block $L$ which does not contain both $a$ and $b$. Let $v$ be the cut-vertex of $L$. By the induction hypothesis, $(G - (L - \{v\}), a, b)$ is a GSP graph, and by Proposition~\ref{prop:seriesparallel}, $(L, \{v, w\})$ is a GSP graph for any neighbor $w$ of $v$ in $L$. Hence by Lemma~\ref{lemma:blockmerge}, $(G, a, b)$ is a GSP graph.
\end{proof}

\subsection{Simple series-parallel decompositions of 2-connected graphs}

The bulk of the work in the proof of the implication \ref{item:families} $\Longrightarrow$ \ref{item:GSP} in Theorem~\ref{theo:classification} is done in this subsection, where we show that $2$-connected graphs $G$ without any member of $\mathcal{F}_1 \cup \mathcal{F}_2 \cup \mathcal{F}_3$ as a subgraph admit simple series-parallel decompositions (Proposition~\ref{prop:series_parallel_for_block}). 

\begin{lemma} Suppose $\gwt{G} = (G, a, b)$ has a GSP decomposition $T$ with complexity $k$. Then $G$ contains at least $k$ pairwise internally vertex-disjoint bipaths with endpoints $\{a, b\}$. \label{lemma:complexity} \end{lemma}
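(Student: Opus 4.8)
The plan is to induct on the number of nodes of the decomposition tree $T$, proving at each node that $\gwt{G}=(G,a,b)$ contains at least $c(T)$ pairwise internally vertex-disjoint bipaths with endpoints $\{a,b\}$. The base case is a single-edge leaf, which is bridged and thus has complexity $0$, so the conclusion holds vacuously. For the inductive step I would split according to the top operation $\circ_{T,\gwt{G}}$, matching the three clauses of the complexity definition in Definition~\ref{defn:simple}.

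The parallel and branch cases are pure bookkeeping. If $\gwt{G}=\gwt{H}\circ_p\gwt{K}$ with $V(H)\cap V(K)=\{a,b\}$, the induction hypothesis furnishes $c(T_H)$ and $c(T_K)$ pairwise internally disjoint bipaths with endpoints $\{a,b\}$ inside $H$ and $K$ respectively. The internal vertices of a bipath between $a$ and $b$ in $H$ lie in $V(H)\setminus\{a,b\}$, and likewise for $K$; as these two sets are disjoint, the two families combine into $c(T_H)+c(T_K)=c(T)$ pairwise internally disjoint bipaths, each still a bipath in $G=H\cup K$. In the branch cases $\circ_b,\circ_{b'}$, the result shares both terminals with exactly one child $\gwt{G'}$, and the complexity is defined to equal $c(T_{G'})$; every bipath between the terminals produced inside $G'$ by the induction hypothesis remains such a bipath in the larger graph, so the conclusion transfers verbatim.

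The crux is the series case $\gwt{G}=\gwt{H}\circ_s\gwt{K}$ with $V(H)\cap V(K)=\{v\}$, where $c(T)$ is $0$ if $G$ is bridged and $1$ otherwise. When $G$ is bridged there is nothing to prove, so assume $G$ is not bridged and exhibit one bipath with endpoints $\{a,b\}$. The key observation is that $v$ is a cut-vertex of $G$ separating $a$ from $b$, so the block-cut path of $G$ from $a$ to $b$ contains at least two blocks; listing the cut-vertices along it as $a=w_0,w_1,\dots,w_r=b$, we have $r\ge 2$ and $v=w_i$ for some $0<i<r$. Since no edge of $G$ is a bridge separating $a$ and $b$, none of the blocks on this path is a single edge, so each block $B_j$ (containing the consecutive cut-vertices $w_j,w_{j+1}$) is $2$-connected and, by Menger's theorem, contains two internally vertex-disjoint $w_jw_{j+1}$-paths. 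Concatenating the first choices across all blocks, and separately the second choices, yields two edge-disjoint $a$-$b$ paths $P_1,P_2$ meeting exactly in $\{w_0,\dots,w_r\}$ and traversing these vertices in the same order. As $\{a,v,b\}\subseteq\{w_0,\dots,w_r\}$ consists of at least three distinct vertices, $P_1\cup P_2$ is a bipath with endpoints $\{a,b\}$, completing this case and the induction.

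I expect this series step to be the main obstacle: one must upgrade the condition ``not bridged'' into a genuine bipath rather than a mere cycle. What makes this work is precisely the separating cut-vertex $v$ supplied by the series operation, which forces at least three primary vertices $a,v,b$ and hence a bipath of order at least $3$; without such a cut-vertex (a single $2$-connected block) one would only obtain a cycle, which is not a bipath, and this is exactly why bridgeless single blocks are assigned complexity that does not feed a bipath on their own.
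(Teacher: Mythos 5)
Your proof is correct and follows essentially the same route as the paper's: induction on the decomposition, trivial combination of internally disjoint families in the parallel and branch cases, and in the non-bridged series case the use of the separating cut-vertex supplied by $\circ_s$ together with the block-cut path (no block being a single edge) to assemble two edge-disjoint $ab$-paths sharing at least the three vertices $a$, $v$, $b$ in order. The paper phrases the per-block step as finding a cycle through the two relevant vertices of each block rather than invoking Menger explicitly, but this is the same argument.
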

\begin{proof} We proceed by induction on $\abs{V(G)}$. If $T$ is trivial, we are done as then $k = 0$.

If $\circ_{T, \gwt{G}} = \circ_p$, $\circ_b$ or $\circ_{b'}$, then by definition $k$ is the sum of the complexities of the children of $\gwt{G}$ which have terminals $(a, b)$. By the induction hypothesis, each of these children have at least as many bipaths with terminals $\{a, b\}$ as their respective complexities, and since these children may only intersect at $\{a, b\}$, these bipaths are internally vertex-disjoint, proving the claim.

Otherwise, $\circ_{T, \gwt{G}} = \circ_s$. Then, by definition, either $\gwt{G}$ is bridged and $k = 0$ or $\gwt{G}$ is not bridged and $k = 1$. In the former case, there is nothing to prove, so we may suppose that $k = 1$. Consider the block-cut tree of $G$. Since $\circ_{T, \gwt{G}} = \circ_s$, there exists at least one cut-vertex separating $a$ from $b$, so $a$ and $b$ do not share a block. Let $P$ be the unique path in the block-cut tree from the block containing $a$ to the block containing $b$. If any block in $P$ comprises a single edge, this edge would be a bridge separating $a$ from $b$, contradicting the assumption that $\gwt{G}$ is not bridged. Hence every block in $P$ contains at least $3$ vertices, and thus has a cycle containing the cut-vertices it shares with its neighbors in $P$ (or in the case of the endpoint blocks, the one shared cut-vertex and whichever of $a$ or $b$ it contains). The union of these cycles is a bipath, as desired.
\end{proof}
We can note the following corollary, which immediately results from the previous lemma:
\begin{corl} Suppose $T$ is a complex GSP decomposition of a graph $G$. Then some $\gwt{H} \in V(T)$ contains as a subgraph a pair of internally disjoint bipaths between its terminals. \label{cor:complex_bipaths}
\end{corl}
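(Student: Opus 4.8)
The plan is to unwind the definition of \emph{complex} and then apply Lemma~\ref{lemma:complexity} to a suitably chosen node of $T$. By Definition~\ref{defn:simple}, a GSP decomposition is \emph{simple} precisely when every subtree has complexity at most $1$; since $T$ is complex, this fails, so there must exist a subtree of $T$ whose complexity is at least $2$. Every subtree of $T$ is, by Definition~\ref{defn:GSP_dec}, the induced decomposition of some node $\gwt{H} \in V(T)$, i.e.\ the subtree of $T$ rooted at $\gwt{H}$. Thus the first step is simply to fix such a node $\gwt{H} = (H, a, b)$ together with its induced decomposition $T'$, for which $c(T') \ge 2$.

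Next I would apply Lemma~\ref{lemma:complexity} directly to the graph with terminals $\gwt{H}$ equipped with its own GSP decomposition $T'$. The lemma guarantees that $H$ contains at least $c(T') \ge 2$ pairwise internally vertex-disjoint bipaths with endpoints $\{a, b\}$, the terminals of $\gwt{H}$. In particular $H$ contains (at least) two such bipaths that are internally disjoint from each other, which is exactly the asserted conclusion that some $\gwt{H} \in V(T)$ contains a pair of internally disjoint bipaths between its terminals.

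There is essentially no obstacle here: the only mild care needed is the bookkeeping observation that an induced decomposition of $\gwt{H}$ is itself a legitimate GSP decomposition of $\gwt{H}$, so Lemma~\ref{lemma:complexity} applies to it verbatim, and that the complexity appearing in the definition of ``simple'' is the complexity of that induced decomposition. Given these identifications, the corollary follows in one line from the lemma, which is why it is stated as an immediate consequence.
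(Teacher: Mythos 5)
Your proof is correct and is exactly the argument the paper intends: the paper states this corollary as an immediate consequence of Lemma~\ref{lemma:complexity}, and your unwinding of ``complex'' into ``some induced decomposition has complexity at least $2$'' followed by a direct application of that lemma is the right (and only) step needed.
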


\begin{lemma} Suppose $\gwt{G} = (G, a, b)$ admits a simple GSP decomposition $T$. Then $(G, b, a)$ also admits a simple GSP decomposition. Furthermore, if $T$ is a series-parallel decomposition then $(G, b, a)$ also admits a simple series-parallel decomposition.
\label{lemma:inversion}
\end{lemma}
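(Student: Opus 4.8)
The plan is to strengthen the statement slightly and argue by induction on the number of nodes of $T$. Concretely, I will prove that $(G,b,a)$ admits a simple GSP decomposition $T'$ with $c(T') = c(T)$, and that $T'$ is a series-parallel decomposition whenever $T$ is. Tracking the complexity exactly (not merely bounding it) is what makes the parallel and branch cases close up. The only genuinely new observation needed is that the property of being \emph{bridged} is symmetric in the two terminals: the set of paths between $a$ and $b$ is literally the same as the set of paths between $b$ and $a$, so an edge lies on every $ab$-path if and only if it lies on every $ba$-path. Hence $(G,b,a)$ is bridged exactly when $(G,a,b)$ is.

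For the base case $T$ is trivial and $G$ is a single edge; then $(G,b,a)$ is again a single edge, whose trivial decomposition is simple and bridged, of complexity $0 = c(T)$. For the inductive step I split on the root operation $\circ_{T,\gwt{G}}$. If it is $\circ_s$, write $\gwt{G} = (H,a,c)\circ_s(K,c,b)$; then $(G,b,a) = (K,b,c)\circ_s(H,c,a)$, where I reverse the two children into simple decompositions by the induction hypothesis (using $V(H)\cap V(K)=\{c\}$). Since the root is again $\circ_s$, the complexity of the result depends only on whether $(G,b,a)$ is bridged, which by the symmetry observation equals $c(T)$. If it is $\circ_p$, write $\gwt{G}=(H,a,b)\circ_p(K,a,b)$; then $(G,b,a)=(H,b,a)\circ_p(K,b,a)$, the two reversed children keep their complexities by induction, and the root complexity (a sum) is therefore preserved. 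In both cases only $\circ_s$ and $\circ_p$ are used, which yields the ``furthermore'' clause.

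For the branch cases I use the interplay $\circ_b \leftrightarrow \circ_{b'}$. If $\circ_{T,\gwt{G}}=\circ_b$, write $\gwt{G}=(H,a,b)\circ_b(K,a,w)$ with $V(H)\cap V(K)=\{a\}$; the reversal is $(G,b,a)=(H,b,a)\circ_{b'}(K,a,w)$, where I reverse $\gwt{H}$ by induction and leave the subtree on $\gwt{K}$ untouched (it is already simple as a subtree of $T$). Symmetrically, if $\circ_{T,\gwt{G}}=\circ_{b'}$, write $\gwt{G}=(H,a,b)\circ_{b'}(K,b,w)$ and reverse to $(G,b,a)=(H,b,a)\circ_b(K,b,w)$. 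In each branch case the root complexity equals that of the reversed terminal-sharing child $(H,b,a)$, which by induction equals the complexity of the original terminal-sharing child $(H,a,b)$, namely $c(T)$.

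It remains to verify that $T'$ is simple. Every proper subtree of $T'$ is either a subtree of a reversed child decomposition (simple by induction) or a subtree of an untouched $\gwt{K}$-subtree (simple because it is a subtree of $T$), and $T'$ itself has complexity $c(T')=c(T)\le 1$ as $T$ is simple. Thus all subtrees of $T'$ have complexity at most $1$, so $T'$ is simple, completing the induction. I expect the only part requiring care to be the complexity bookkeeping: one must carry the equality $c(T')=c(T)$ through the induction so that the additive $\circ_p$ case and the inherited branch cases go through, while the symmetry of bridgedness is exactly what anchors the equality in the $\circ_s$ and base cases.
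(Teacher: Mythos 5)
Your proof is correct and follows essentially the same route as the paper: induction on the decomposition, reversing the two children by the inductive hypothesis and swapping $\circ_b \leftrightarrow \circ_{b'}$ at the root while leaving $\circ_s$ and $\circ_p$ in place. In fact you are more careful than the paper's own proof, which never explicitly verifies that the resulting decomposition is simple; your observations that bridgedness is symmetric in the terminals and that the equality $c(T')=c(T)$ must be carried through the induction (an inequality $c\le 1$ on the children would not survive the additive $\circ_p$ case) are exactly the details needed to make that verification rigorous.
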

\begin{proof} We proceed by induction on $\abs{V(G)}$. The base case where $G$ comprises a single edge is trivial. Suppose $(G_0, s_0, t_0)$ and $(G_1, s_1, t_1)$ are the children of $\gwt{G}$ in $T$. Their induced decompositions must be simple GSP decompositions, so by the induction hypothesis $(G_0, t_0, s_0)$, $(G_1, t_1, s_1)$ also admit simple GSP decompositions. If $\circ_{T, \gwt{G}} = \circ_s$ or $\circ_p$, then $\gwt{G} = (G_1, t_1, s_1) \circ_{T, \gwt{G}} (G_0, t_0, s_0)$. Otherwise, $\gwt{G} = (G_1, t_0, s_0) \circ_* (G_0, s_1, t_1)$ where $\circ_*$ is whichever of $\{\circ_b, \circ_{b'}$ is not $\circ_{T, \gwt{G}}$. In either case, connecting the two GSP decompositions given of the children gives a GSP decomposition of $G$ as desired.
\end{proof}
\begin{prop} Suppose $G$ is a $2$-connected graph and $(G, a, b) = (H, a, b) \circ_p (K, a, b)$, where both $(H, a, b)$ and $(K, a, b)$ admit simple series-parallel decompositions $T_H$, $T_K$ respectively. Then $G$ admits a simple series-parallel decomposition with $a$ as a terminal.\label{prop:rotation}
\end{prop}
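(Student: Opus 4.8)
The plan is to induct, taking as base case the situation where one of the two parallel parts already has complexity $0$. Write $c_H = c(T_H)$ and $c_K = c(T_K)$; since $T_H,T_K$ are simple, $c_H,c_K \in \{0,1\}$. If $c_H = 0$ or $c_K = 0$, then joining $T_H$ and $T_K$ under a common $\circ_p$-root gives a series-parallel decomposition $T$ of $(G,a,b)$ with $c(T) = c_H + c_K \le 1$; every proper subtree of $T$ lies in $T_H$ or $T_K$ and hence has complexity $\le 1$, so $T$ is simple and has $a$ as a terminal. Thus we may assume $c_H = c_K = 1$.

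In this case each of $H$ and $K$ has, in its top-level parallel decomposition (obtained by descending through $\circ_p$-nodes into the complexity-$1$ child), a unique branch that is not bridged; call these $H_1$ and $K_1$. Each such branch has a series root and, being non-bridged and not a single edge, is a chain of at least two $2$-connected blocks running between $a$ and $b$ (this is where I invoke the block structure of series-parallel graphs from Lemma~\ref{lemma:series_parallel_blocks} together with Lemma~\ref{lemma:complexity}); all other branches of $H$ and of $K$ are bridged. I induct on $\min(\abs{V(H_1)},\abs{V(K_1)})$, and, using the symmetry of $\circ_p$, assume $\abs{V(K_1)} \le \abs{V(H_1)}$, so that I reroute around $K_1$.

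\textbf{Rotation.} Let $B_0$ be the first block of the chain $K_1$, with terminals $a$ and $c$, where $c$ is the first cut-vertex of $K_1$ (which exists and differs from $a$ since $K_1$ has at least two blocks); note $B_0$ is a node of $T_K$, hence simple with $c(B_0,a,c) \in \{0,1\}$. Let $\hat{K}$ be the remainder of the chain, a simple series-parallel graph with terminals $(c,b)$, and let $G''$ be the parallel composition of all $a$--$b$ branches of $G$ other than $K_1$ (that is, of $H$ and of the bridged branches of $K$); then $c(G'',a,b) = c_H = 1$ and $G''$ is simple. Using $2$-connectivity of $G$ one checks that $\{a,c\}$ separates the interior of $B_0$ from the rest of $G$, and that the only $a$--$c$ route avoiding $B_0$ runs $a \to b$ through $G''$ and then $b \to c$ through $\hat{K}$; this yields
\[
(G, a, c) \;=\; (B_0, a, c) \circ_p \big[(G'', a, b) \circ_s (\hat{K}, b, c)\big],
\]
where the two sides meet exactly in $\{a,c\}$ and their union is $G$. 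Write $L := (G'', a, b) \circ_s (\hat{K}, b, c)$. Since $G''$ contains the non-bridged $H_1$, the graph $L$ has a series root and is not bridged, so $c(L,a,c) = 1$; moreover $L$ is simple, because $G''$, $\hat{K}$, and all their sub-pieces have complexity $\le 1$.

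\textbf{Finishing.} If $c(B_0,a,c) = 0$, the displayed decomposition has complexity $0 + 1 = 1$ and all of its subtrees have complexity $\le 1$, so it is a simple series-parallel decomposition of $G$ with terminal $a$. If instead $c(B_0,a,c) = 1$, the identity exhibits $G$ anew as a parallel composition of two simple series-parallel graphs $B_0$ and $L$ of complexity $1$, so the induction hypothesis applies to this instance and produces a simple series-parallel decomposition of $G$ with terminal $a$. The induction terminates because in the new instance the branch rerouted around is the non-bridged branch $B_0^1$ of $B_0$, which is a proper subgraph of $B_0 \subseteq K_1$ (as the $2$-connected $B_0$ has at least two parallel branches), and $\abs{V(B_0^1)} < \abs{V(K_1)} \le \abs{V(H_1)} \le \abs{V(L)}$, so $\min$ of the two non-bridged branch sizes strictly decreases. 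The main obstacle is precisely the rotation identity and its bookkeeping: one must verify from $2$-connectivity of $G$ and the block structure of $K_1$ that peeling off the first block $B_0$ both leaves a graph whose only $a$--$c$ connection is the single series route through $b$ (forcing the remainder $L$ to collapse to complexity $1$) and keeps every piece simple; granting this, the remainder is a clean induction on the size of the smaller non-bridged branch.
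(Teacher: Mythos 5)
Your proof is correct and rests on the same key idea as the paper's: the rotation identity that re-expresses $G$ as a parallel composition at $(a,c)$, where $c$ is the first cut-vertex of a non-bridged block chain, with the tail of the chain reversed and composed in series after the remaining parallel branches. The differences are only organizational---the paper inducts on $\abs{V(H)}+\abs{E(H)}$ of the smaller parallel part and handles a parallel root by reassociating a complexity-$0$ branch, whereas you first normalize both parts to complexity $1$ and induct on the size of the smaller non-bridged branch---and you should cite Lemma~\ref{lemma:inversion} to justify reversing $\hat{K}$ from terminals $(c,b)$ to $(b,c)$.
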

\begin{proof} Suppose without loss of generality that $|V(H)| \leq |V(K)|$. We argue by induction on $|V(H)| + \abs{E(H)}$.

For the base case, where $\abs{V(H)} + \abs{E(H)} = 3$, i.e. $H \cong K_2$, directly combining $T_H$ and $T_K$ gives a series-parallel decomposition $T$ of $G$ with complexity at most $1$, since $H$ comprises a single edge and so $T_H$ is trivial, with complexity $0$. Every proper subtree of $T$ is a subtree of $T_H$ or $T_K$, and thus must have complexity at most $1$, so $T$ is simple as desired.

In the general case, $T_H$ is nontrivial. Suppose $\circ_{T_H, \gwt{H}} = \circ_p$ and $(H_0, a, b)$, $(H_1, a, b)$ are the children of $\gwt{H}$ in $T$. Since $T_H$ has complexity at most $1$, one of these children has an induced decomposition of complexity $0$; suppose without loss of generality that this is $H_1$. This naturally gives a simple series-parallel decomposition of $\pr{(H_1, a, b) \circ_p (K, a, b)}$. Then, note that
\[
    (G, a, b) = (H_0, a, b) \circ_p \pr{(H_1, a, b) \circ_p (K, a, b)}.
\]
Since $H_0$ is a proper subgraph of $H$, the induction hypothesis implies that $G$ has a simple series-parallel decomposition with $a$ as a terminal as desired.

Now suppose $\circ_{T_H, \gwt{H}} = \circ_s$. By Lemma~\ref{lemma:series_parallel_blocks}, the block-cut tree of $H$ comprises a path of blocks from the block containing $a$ to the block containing $b$, and each block is contained in $V(T_H)$. Let $B_0$, \ldots, $B_{k-1}$ be the blocks in order of increasing distance from $a$, and let $v_0$, \ldots, $v_k$ be defined as in Lemma~\ref{lemma:series_parallel_blocks}. We have $(B_i, v_i, v_{i+1}) \in V(T_H)$ for each $i$, and so they have simple induced series-parallel decompositions. By Lemma~\ref{lemma:inversion}, $(B_i, v_{i+1}, v_i)$ also each admit simple series-parallel decompositions. Now, note that
\[
    (G, a, v_1) = (B_0, a, v_1) \circ_p \pr{(K, a, b) \circ_s (B_{k-1}, v_k, v_{k-1}) \circ_s \ldots \circ_s (B_1, v_2, v_1)},
\]
and each graph with terminals admits a simple series-parallel decomposition. Since $B_0$ has strictly fewer vertices than $H$, the induction hypothesis implies that $G$ has a simple series-parallel decomposition with $a$ as a terminal as desired.
\end{proof}
\begin{lemma} If $T$ is a GSP decomposition of a graph $G$, and $(H, a, b) \in T$, then $\{a, b\}$ separates $V(H) \setminus \{a, b\}$ from the rest of $G$. \label{lemma:terminals_disconnect}
\end{lemma}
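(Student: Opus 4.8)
The plan is to induct on the distance of the node $\gwt{H} = (H,a,b)$ from the root of $T$, after first reducing the separation statement to a purely local, edge-based condition. Since $a,b \in V(H)$, the two sets $V(H)\setminus\{a,b\}$ and $V(G)\setminus V(H)$ partition $V(G)\setminus\{a,b\}$. Hence it suffices to prove that no edge of $G$ joins a vertex of $V(H)\setminus\{a,b\}$ to a vertex of $V(G)\setminus V(H)$: if there is no such edge, then every connected component of $G - \{a,b\}$ lies entirely within one of these two sets (a component meeting both would contain a path crossing from one set to the other, whose crossing edge is forbidden), which is exactly the separation condition from Section~\ref{sec:prelim}. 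I will call an edge \emph{crossing} if it has one endpoint in each of these two sets, so the goal becomes the statement that no node $\gwt{H}$ admits a crossing edge.

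For the base case, the root node has $V(H)=V(G)$, so $V(G)\setminus V(H)=\0$ and there are trivially no crossing edges. For the inductive step, let $\gwt{H}=(H,a,b)$ be a non-root node whose parent $\gwt{G'}=(G',u,v)$ is obtained from $\gwt{H}$ and its sibling $\gwt{K}$ by one of the four operations, and assume the result for $\gwt{G'}$, i.e.\ no edge of $G$ joins $V(G')\setminus\{u,v\}$ to $V(G)\setminus V(G')$. Suppose toward a contradiction that $pq\in E(G)$ with $p\in V(H)\setminus\{a,b\}$ and $q\in V(G)\setminus V(H)$. Writing $V(G)\setminus V(H)=(V(G')\setminus V(H))\cup(V(G)\setminus V(G'))$, I split into two cases. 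If $q\in V(G')\setminus V(H)$, then since $G'=H\cup K$ has edge set $E(H)\cup E(K)$ and $q\notin V(H)$, the edge $pq$ must lie in $K$, forcing $p\in V(H)\cap V(K)$. If instead $q\in V(G)\setminus V(G')$, then the induction hypothesis for $\gwt{G'}$ excludes $p\in V(G')\setminus\{u,v\}$, so $p\in\{u,v\}\cap V(H)$.

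In both cases the contradiction comes from two structural facts read off Definition~\ref{defn:GSP}: (i) the shared vertex set $V(H)\cap V(K)$ of the two children is contained in the terminal set of each child, so in particular $V(H)\cap V(K)\subseteq\{a,b\}$; and (ii) each terminal of the parent $\gwt{G'}$ is a terminal of at least one of its children. From (i), the first case gives $p\in\{a,b\}$, contradicting $p\in V(H)\setminus\{a,b\}$. For the second case I combine (i) and (ii): a parent terminal $t\in\{u,v\}$ is a terminal of some child; if it is a terminal of $H$ then $t\in\{a,b\}$ directly, and otherwise it is a terminal of $K$, whence $t\in V(H)\cap V(K)\subseteq\{a,b\}$ by (i). Either way $p\in\{a,b\}$, again a contradiction. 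This closes the induction and hence proves the lemma.

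The main obstacle is pure bookkeeping: verifying facts (i) and (ii) requires inspecting all four operations $\circ_s,\circ_p,\circ_b,\circ_{b'}$ and checking, for each identification of $\gwt{H}$ with the left or the right child, that the shared vertex and the parent terminals fall where claimed (using that the two terminals of any single child are distinct). This is the only place the precise terminal conventions of the operations enter, and it must be done exhaustively; the rest is a clean two-case induction whose engine is simply that the union $G'=H\cup K$ has edge set $E(H)\cup E(K)$. I expect no genuine difficulty beyond completing this finite case analysis.
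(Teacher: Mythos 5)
Your argument is correct and takes essentially the same route as the paper's proof: an induction toward the root in which the inductive hypothesis for the parent $G'$ controls edges leaving $V(G')$, a one-level unwinding of the operation producing $G'$ controls edges inside $V(G')$, and your facts (i) and (ii) are exactly the finite case check of the four operations that the paper defers to the reader. The one step you assert without full justification---that an edge of $G$ with both endpoints in $V(G')$ and one endpoint in $V(H)\setminus\{a,b\}$ actually lies in $E(G')$---is glossed over at the same level of detail in the paper's own argument (its first appeal to the inductive hypothesis likewise only controls edges of the parent graph, not edges of $G$ joining two of its vertices), so this does not set your write-up apart.
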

\begin{proof} We proceed by induction on $\abs{V(G)} - \abs{V(H)}$. If $H = G$, the claim is trivial. 
Otherwise, $H$ is a proper subgraph of $G$ and thus $\gwt{H}$ is a child of some $\gwt{K}$ in $T$. When $K = G$, the claim immediately follows from the definition of the operations $\circ_s$, $\circ_p$, $\circ_b$, $\circ_{b'}$. The remaining case is when $K$ is also a proper subgraph of $G$. By the induction hypothesis, substituting $K$ for $G$, the only vertices of $H$ which could have neighbors in $V(K) \setminus V(H)$ are $\{a, b\}$. If we instead substitute $K$ for $H$, we get that the only vertices of $K$ which could have neighbors in $V(G) \setminus V(K)$ are its terminals, and whichever of these terminals are contained in $H$ must themselves be terminals of $H$ (one can easily verify this by considering each possible choice of $\circ_{T, \gwt{K}}$). These two combined show the claim, completing the proof.
\end{proof}


We say that graphs with terminals $\gwt{G} = (G, a, b)$ and $\gwt{H} = (H, c, d)$ are \emphd{non-terminally disjoint} if $V(G)\cap V(H) \subseteq \{a,b,c,d\}$. 

\begin{prop} Suppose $G$ is a 2-connected graph that does not contain any member of $\mathcal{F}_3$ as a subgraph. Let $T_0$, $T_1$ be series-parallel decompositions of $G$. If $\gwt{H_0} \in V(T_0)$, $\gwt{H_1} \in V(T_1)$ are such that both their induced decompositions are complex, then they are not non-terminally disjoint. \label{prop:minial_complex_descendant}
\end{prop}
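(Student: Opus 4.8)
The plan is to argue by contradiction: assuming that $\gwt{H_0}=(H_0,a_0,b_0)$ and $\gwt{H_1}=(H_1,a_1,b_1)$ are non-terminally disjoint, I will exhibit a subgraph of $G$ belonging to $\mathcal{F}_3$, contradicting the hypothesis on $G$. Since both induced decompositions are complex, Corollary~\ref{cor:complex_bipaths} supplies nodes $\gwt{K_0}=(K_0,x_0,y_0)$ and $\gwt{K_1}=(K_1,x_1,y_1)$, lying in the subtrees of $T_0$ and $T_1$ rooted at $\gwt{H_0}$ and $\gwt{H_1}$ respectively, such that $K_0$ contains two internally vertex-disjoint bipaths $B_a,B_b$ between $x_0,y_0$, and $K_1$ contains two internally vertex-disjoint bipaths $B_c,B_d$ between $x_1,y_1$. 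These two ``double bipaths'' are meant to play the roles of $B_a\cup B_b$ and $B_c\cup B_d$ in Definition~\ref{defn:families}, with $v_1=x_0$, $v_2=y_0$, $v_3=x_1$, $v_4=y_1$; it then remains to produce the connecting paths $P_1,P_2$ and to verify the disjointness requirements.

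First I would record the separation structure. By Lemma~\ref{lemma:terminals_disconnect}, the pair $\{x_0,y_0\}$ separates $U:=V(K_0)\setminus\{x_0,y_0\}$ from $V(G)\setminus V(K_0)$, and $\{x_1,y_1\}$ separates $W:=V(K_1)\setminus\{x_1,y_1\}$ from $V(G)\setminus V(K_1)$; in particular every neighbour of a vertex of $U$ lies in $V(K_0)$, and likewise for $W$ and $K_1$. Because $V(K_0)\subseteq V(H_0)$ and $V(K_1)\subseteq V(H_1)$, the non-terminal-disjointness assumption forces $V(K_0)\cap V(K_1)\subseteq\{a_0,b_0,a_1,b_1\}$, a set of at most four vertices. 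I would use this to show that the two double bipaths $B_a\cup B_b$ and $B_c\cup B_d$ are vertex-disjoint: any common vertex lies in this four-element set, and if such a vertex were interior to both gadgets, then its entire neighbourhood would be confined to $V(K_0)\cap V(K_1)$, which is incompatible with the separation property together with the $2$-connectivity of $G$ (the large bipath interiors on either side could then not be joined to the rest of $G$). The finitely many remaining cases, where a shared vertex is a terminal of one or both gadgets, are ruled out by the same separation analysis. I expect this disjointness verification to be the main obstacle of the argument.

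Next I would construct $P_1$ and $P_2$. Form the graph $\tilde G$ from $G$ by deleting the interiors $U$ and $W$ and adding the edges $e_0=x_0y_0$ and $e_1=x_1y_1$ (if not already present). Since $\{x_0,y_0\}$ and $\{x_1,y_1\}$ are $2$-separators of the $2$-connected graph $G$ with the interiors hanging off them, replacing each gadget by an edge preserves $2$-connectivity, so $\tilde G$ is $2$-connected. Hence the two edges $e_0$ and $e_1$ lie on a common cycle $C$ of $\tilde G$. Deleting $e_0$ and $e_1$ from $C$ leaves two internally disjoint arcs, one joining $x_0$ to a vertex of $\{x_1,y_1\}$ and the other joining $y_0$ to the remaining vertex of $\{x_1,y_1\}$. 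Lifting these arcs back to $G$ (they use none of the deleted vertices and neither new edge) yields internally disjoint paths $P_1,P_2$ in $G$ that avoid $U\cup W$ and meet the double bipaths only at $x_0,y_0,x_1,y_1$.

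Finally, relabelling $v_3,v_4$ so that $P_1$ joins $v_1$ to $v_3$ and $P_2$ joins $v_2$ to $v_4$, the bipaths $B_a,B_b,B_c,B_d$ together with $P_1,P_2$ satisfy every requirement of Definition~\ref{defn:families}: $B_a,B_b$ share $v_1,v_2$, $B_c,B_d$ share $v_3,v_4$, the four bipaths are pairwise vertex-disjoint by the disjointness established above, and $P_1,P_2$ meet the bipaths only at their prescribed endpoints. Thus $G$ contains a member of $\mathcal{F}_3$ as a subgraph, which is the desired contradiction, so $\gwt{H_0}$ and $\gwt{H_1}$ cannot be non-terminally disjoint. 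I expect the routine parts (existence of the gadgets, the cycle extraction, and the final matching) to be straightforward, with essentially all the difficulty concentrated in verifying that the two double bipaths may be taken vertex-disjoint once the terminals are permitted to coincide with the at most four shared vertices.
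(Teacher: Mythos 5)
Your proposal follows the same skeleton as the paper's proof: extract, via Corollary~\ref{cor:complex_bipaths}, descendants $\gwt{K_0}$, $\gwt{K_1}$ of $\gwt{H_0}$, $\gwt{H_1}$ each containing two internally disjoint bipaths between its terminals, use Lemma~\ref{lemma:terminals_disconnect} for the separation structure, and join the two gadgets by two disjoint paths meeting them only in terminals to exhibit a member of $\mathcal{F}_3$. The only genuinely different step is how the connecting paths are produced: the paper applies Menger's theorem directly to the sets $V(K_0)$ and $V(K_1)$ (which admit no separator of size $1$ since $G$ is $2$-connected), whereas you delete the interiors, add edges $x_0y_0$ and $x_1y_1$, argue the surgery preserves $2$-connectivity, and take a cycle through both new edges. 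The two routes are equivalent in substance; yours buys nothing extra and costs the (true but not free) verification that $\tilde G$ is $2$-connected, so this part is fine but no improvement.

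The gap is in the step you yourself flag as the crux: showing the two double-bipaths meet only in terminals. Your argument --- a vertex interior to both gadgets has its whole neighbourhood confined to the four-element set $\{a_0,b_0,a_1,b_1\}$, "which is incompatible with the separation property together with the $2$-connectivity of $G$" --- does not close as stated: having all neighbours inside a four-element set does not by itself contradict $2$-connectivity, and the clause about "the large bipath interiors not being joined to the rest of $G$" is an assertion, not a proof. The clean route, which is what the paper's one-line claim that $\gwt{K_0}$ and $\gwt{K_1}$ are non-terminally disjoint rests on, uses the fact that $T_0$ and $T_1$ are \emph{series-parallel} decompositions: for both $\circ_s$ and $\circ_p$, a terminal of a node that lies in a child is a terminal of that child, so by induction any terminal of $\gwt{H_i}$ belonging to $V(K_i)$ is a terminal of $\gwt{K_i}$. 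Combined with $V(K_0)\cap V(K_1)\subseteq V(H_0)\cap V(H_1)\subseteq\{a_0,b_0,a_1,b_1\}$, every shared vertex is a terminal of $K_0$ or of $K_1$, and one then only has to rule out a terminal of one gadget lying in the interior of the other (which follows from Lemma~\ref{lemma:terminals_disconnect} together with the fact that such a terminal sends several internally disjoint paths into its own gadget's interior). Without some version of this, your case analysis "the finitely many remaining cases \ldots are ruled out by the same separation analysis" is not actually carried out, so the proposal as written is incomplete at exactly the point you predicted.
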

\begin{proof} Suppose otherwise, so $H_0$ and $H_1$ are non-terminally disjoint. By Corollary~\ref{cor:complex_bipaths}, the induced decompositions of $H_0$ and $H_1$ (in $T_0$ and $T_1$ respectively) contain $(K_0, a_0, b_0)$ and $(K_1, a_1, b_1)$ which themselves contain two disjoint bipaths between their respective terminals.

Since $G$ is $2$-connected, the sets $V(K_0)$ and $V(K_1)$ do not have a vertex separator of size $1$, so by Menger's theorem \cite[Theorem 3.3.1]{Diestel}, there exist two disjoint paths $P_0$, $P_1$ between them. Since $\gwt{K_0}$ and $\gwt{K_1}$ are non-terminally disjoint, by Lemma~\ref{lemma:terminals_disconnect}, we may assume that $P_0$ and $P_1$ intersect $K_0$ and $K_1$ only at the terminals. Then the union of $P_0$, $P_1$, $K_0$ and $K_1$ contains as a subgraph an element of $\mathcal{F}_3$, a contradiction.
%
%
\end{proof}

If $T$ is a series-parallel decomposition of a graph $G$, then for any two graphs $\gwt{H_0}$, $\gwt{H_1} \in V(T)$, either they are non-terminally disjoint or one of them contains the other as a subgraph. Therefore, Proposition~\ref{prop:minial_complex_descendant} implies that, given any complex series-parallel decomposition $T$ of a graph $G$ without any member of $\mathcal{F}_3$ as a subgraph, there is a unique inclusion-minimal $\gwt{H} \in V(T)$ whose induced decomposition is complex. We call this $\gwt{H}$ the \emphd{minimal complex descendant} of $G$ in $T$. 

\begin{lemma} If $\gwt{G} = (G, a, b)$ admits a series-parallel decomposition $T$, then $\gwt{G}$ can be obtained by combining, via $\circ_p$, the graphs with terminals $(H, a, b) \in V(T)$ such that no child of $(H, a, b)$ in $T$ has terminals $(a, b)$.\label{lemma:combining_parallel}
\end{lemma}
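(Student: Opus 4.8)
The plan is to induct on $\abs{V(T)}$, proving the slightly more self-contained statement that for any series-parallel decomposition $T$ whose root is $\gwt{G} = (G, a, b)$, the collection $\mathcal{H}$ of nodes of $T$ with terminal pair exactly $(a,b)$ and no child having terminal pair $(a,b)$ can be combined via $\circ_p$ to recover $\gwt{G}$. The base case, where $T$ is a single node and $G$ is a single edge, is immediate, since then $\mathcal{H}$ consists only of the root. For the inductive step I would split on $\circ_{T, \gwt{G}}$.

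If $\circ_{T, \gwt{G}} = \circ_s$, write $\gwt{G} = (G_0, a, c) \circ_s (G_1, c, b)$. The two children have terminal pairs $(a,c)$ and $(c,b)$, neither equal to $(a,b)$, so the root itself belongs to $\mathcal{H}$; the point is to show it is the only element. Since underlying graphs are nested down the tree, every proper descendant has vertex set contained in $V(G_0)$ or in $V(G_1)$, and as $V(G_0) \cap V(G_1) = \{c\}$ with $a \notin V(G_1)$ and $b \notin V(G_0)$, no descendant can have both $a$ and $b$ among its vertices, hence none has terminal pair $(a,b)$. Thus $\mathcal{H} = \{\gwt{G}\}$ and the parallel combination is trivially $\gwt{G}$.

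If $\circ_{T, \gwt{G}} = \circ_p$, write $\gwt{G} = \gwt{G_0} \circ_p \gwt{G_1}$ with $\gwt{G_i} = (G_i, a, b)$ and $V(G_0) \cap V(G_1) = \{a,b\}$, and let $T_0, T_1$ be the induced decompositions rooted at $\gwt{G_0}, \gwt{G_1}$. Because a node's children and terminals are identical in $T$ and in whichever $T_i$ contains it, the membership condition defining $\mathcal{H}$ is preserved on each branch; and the root $\gwt{G}$ is itself excluded from $\mathcal{H}$, since its children $\gwt{G_0}, \gwt{G_1}$ do have terminal pair $(a,b)$. The partition $V(T) = \{\gwt{G}\} \sqcup V(T_0) \sqcup V(T_1)$ therefore yields $\mathcal{H} = \mathcal{H}_0 \sqcup \mathcal{H}_1$, where $\mathcal{H}_i$ is the corresponding collection for $T_i$. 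Applying the induction hypothesis to $T_0$ and $T_1$ recovers $\gwt{G_0}$ and $\gwt{G_1}$ as the parallel combinations of $\mathcal{H}_0$ and $\mathcal{H}_1$, after which $\gwt{G} = \gwt{G_0} \circ_p \gwt{G_1}$ rebuilds $\gwt{G}$.

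The main obstacle I anticipate is bookkeeping rather than conceptual: making the iterated parallel operation well-defined, which requires that the underlying graphs of distinct members of $\mathcal{H}$ meet in exactly $\{a,b\}$, so that each application of $\circ_p$ is legal. Within a single branch this is precisely the pairwise intersection guaranteed by the induction hypothesis (each $\gwt{G_i}$ really is the parallel combination of $\mathcal{H}_i$), while across the two branches it follows from $V(G_0) \cap V(G_1) = \{a,b\}$ together with the fact that $a$ and $b$ are vertices of every member. I would record once, at the start, that $\circ_p$ is associative and commutative in the relevant sense, so that the phrase ``combine all members via $\circ_p$'' is unambiguous and independent of the order chosen.
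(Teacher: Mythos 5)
Your proposal is correct and follows essentially the same route as the paper's proof: a structural induction on the decomposition that splits on $\circ_{T,\gwt{G}}$, showing in the $\circ_s$ case that the root is the unique node with terminals $(a,b)$ \ep{hence the unique ``good'' graph} and in the $\circ_p$ case that the good graphs partition between the two subtrees. The only cosmetic difference is that you induct on $\abs{V(T)}$ while the paper inducts on the number of good graphs, and you spell out the $\circ_s$ case and the legality of the iterated $\circ_p$ in more detail than the paper does.
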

\begin{proof} Call the graphs with terminals $(H, a, b) \in V(T)$ such that no child of $(H, a, b)$ in $T$ has terminals $(a, b)$ \textit{good graphs}. We argue by induction on the number of good graphs. If there is a single good graph, then $(G, a, b)$ itself must be good, as each graph with terminals $(a, b)$ is either good or has two children with terminals $(a, b)$. Otherwise, $\gwt{G} = (H, a, b) \circ_p (K, a, b)$ for some subgraphs $H$, $K$. Applying the induction hypothesis on $H$ and $K$ and their induced decompositions shows that $\gwt{H}$ and $\gwt{K}$ can each be obtained by combining some subset of the good graphs via $\circ_p$, and hence so can $\gwt{G}$, proving the lemma.
\end{proof}


\begin{prop} Suppose $G$ is a 2-connected graph without any member of $\mathcal{F}_1 \cup \mathcal{F}_2 \cup \mathcal{F}_3$ as a subgraph, and let $T$ be a series-parallel decomposition of $G$. Then either $T$ is simple or $G$ admits a simple series-parallel decomposition sharing a terminal with the minimal complex descendant of $G$ in $T$.\label{prop:series_parallel_for_block}
\end{prop}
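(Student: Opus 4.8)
The plan is to induct on $\abs{V(G)}$, with Proposition~\ref{prop:rotation} as the engine that fuses a parallel pair of simple pieces into a single globally simple decomposition while preserving one terminal. If $T$ is simple there is nothing to prove, so assume $T$ is complex and let $\gwt{H} = (H, a', b')$ be the minimal complex descendant of $G$ in $T$ (well-defined by Proposition~\ref{prop:minial_complex_descendant}). By minimality both children of $\gwt{H}$ have simple induced decompositions, so each has complexity at most $1$; since a $\circ_s$-node always has complexity at most $1$, the inequality $c \ge 2$ at $\gwt{H}$ forces $\circ_{T, \gwt{H}} = \circ_p$ and $\gwt{H} = (H_0, a', b') \circ_p (H_1, a', b')$ with $c(H_0) = c(H_1) = 1$. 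In particular, by Lemma~\ref{lemma:complexity}, $H$ contains two internally vertex-disjoint bipaths between $a'$ and $b'$.

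Next I would isolate $H$ from the rest of $G$. By Lemma~\ref{lemma:terminals_disconnect}, $\{a', b'\}$ separates $V(H) \setminus \{a', b'\}$ from the remaining vertices, so writing $K = G[(V(G) \setminus V(H)) \cup \{a', b'\}]$ we get $G = H \cup K$ with $V(H) \cap V(K) = \{a', b'\}$, and hence $V(H_0) \cap (V(H_1) \cup V(K)) = \{a', b'\}$. A bipath between $a'$ and $b'$ inside $K$ would, together with the two disjoint bipaths inside $H$, produce three pairwise internally disjoint bipaths sharing the endpoints $a', b'$, i.e.\ a member of $\mathcal{F}_2$; since $G$ is $\mathcal{F}_2$-free, $K$ contains no such bipath and $c(K) = 0$. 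If $K$ is trivial (i.e.\ $V(K) = \{a', b'\}$), then $G = H$ and Proposition~\ref{prop:rotation} applied to $(G, a', b') = (H_0, a', b') \circ_p (H_1, a', b')$ already yields a simple decomposition of $G$ with $a'$ a terminal, sharing the terminal $a'$ with $\gwt{H}$. Otherwise the $2$-connectivity of $G$ forces $K$ to be connected with every cut-vertex separating $a'$ from $b'$, so its block--cut tree is a path whose end blocks contain $a'$ and $b'$ (the argument mirrors Lemma~\ref{lemma:series_parallel_blocks}).

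The heart of the argument is to produce a \emph{simple} series-parallel decomposition of $(K, a', b')$: once this is in hand, combining it with the simple decomposition of $(H_1, a', b')$ via $\circ_p$ gives a simple decomposition of $(H_1 \cup K, a', b')$ of complexity $c(H_1) + c(K) = 1$, and Proposition~\ref{prop:rotation} applied to $(G, a', b') = (H_0, a', b') \circ_p (H_1 \cup K, a', b')$ delivers a simple decomposition of $G$ with $a'$ a terminal, as required. To decompose $K$ I would work block by block along its block--cut path; each block is $2$-connected, free of every forbidden subgraph, and strictly smaller than $G$ (as $V(H) \setminus \{a', b'\} \ne \0$), so it admits a series-parallel decomposition by Proposition~\ref{prop:seriesparallel} and I may apply the induction hypothesis to it. The forbidden families then do most of the work: a complex descendant $\gwt{M} = (M, p, q)$ inside a block is vertex-disjoint from $H$ except possibly at $\{a', b'\}$, so if $\{p, q\} \cap \{a', b'\} = \0$ then $M$, $H$, and two disjoint $V(M)$--$V(H)$ paths (supplied by Menger's theorem and routed to the terminals via Lemma~\ref{lemma:terminals_disconnect}, exactly as in the proof of Proposition~\ref{prop:minial_complex_descendant}) form a member of $\mathcal{F}_3$. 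Hence every complex descendant of a block must have a terminal in $\{a', b'\}$; in particular the interior blocks are simple outright, and only the two end blocks can be complex, with their minimal complex descendants pinned to share the terminal $a'$ (respectively $b'$).

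The main obstacle is the terminal bookkeeping in this last step. The induction hypothesis guarantees only a simple decomposition sharing \emph{one} terminal with the minimal complex descendant, whereas series-reassembling $K$ requires each block to carry \emph{both} of its prescribed terminals (its two incident cut-vertices, or $a'$/$b'$ at the ends), and feeding the result into Proposition~\ref{prop:rotation} ultimately demands control of both terminals of $K$. The clause ``sharing a terminal with the minimal complex descendant'' in the statement is engineered precisely to thread this: the forbidden-subgraph analysis above is meant to force the relevant complex descendant's shared terminal to coincide with the very cut-vertex (or endpoint $a'$, $b'$) at which the re-composition takes place, while Lemma~\ref{lemma:inversion} lets me orient each recovered decomposition freely. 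Checking that these shared terminals always align with the series/parallel gluing points, and that the remaining ``floating'' terminal never obstructs the reconstruction, is the delicate, case-heavy core of the proof, and it is exactly the members of $\mathcal{F}_3$ (and, for the existence of the block decompositions, of $\mathcal{F}_1$) that rule out the bad alignments.
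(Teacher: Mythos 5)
Your opening and closing moves are exactly right and match the paper: you correctly identify that the minimal complex descendant $\gwt{H} = (H,a',b')$ must be a $\circ_p$-node whose two children each have complexity $1$, extract two internally disjoint bipaths between $a'$ and $b'$ from Lemma~\ref{lemma:complexity}, use $\mathcal{F}_2$-freeness to forbid a third bipath in the complement, and plan to finish with Proposition~\ref{prop:rotation}. The problem is the middle: you need a \emph{simple} series-parallel decomposition of $K = G - (H - \{a',b'\})$ with terminals exactly $(a',b')$, and the route you propose --- decompose $K$ block by block and invoke the induction hypothesis on each block --- does not close. As you yourself observe, the statement being proved only guarantees a simple decomposition sharing \emph{one} terminal with the block's minimal complex descendant; to reassemble $K$ via $\circ_s$ you need each block's decomposition to carry \emph{both} of its prescribed cut-vertices as terminals, and nothing in the induction hypothesis (or in Lemma~\ref{lemma:inversion}, which only swaps the two terminals you already have) delivers the second one. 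The $\mathcal{F}_3$ argument you sketch also quietly relies on applying Lemma~\ref{lemma:terminals_disconnect} inside a single block while needing separation from all of $G$, which fails when a cut-vertex of $K$ is a non-terminal vertex of the putative complex descendant. So the ``delicate, case-heavy core'' you defer is not bookkeeping that is routine to check --- it is the part of the argument that is missing.

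The paper avoids this entirely by never inducting and never touching the block structure of $K$. It first re-roots: by Lemma~\ref{lemma:terminals_disconnect} the pair $\{a',b'\}$ is a cut-set (or adjacent), so Proposition~\ref{prop:seriesparallel} yields a series-parallel decomposition $T'$ of $(G,a',b')$ itself. Lemma~\ref{lemma:combining_parallel} then factors $(G,a',b')$ as a $\circ_p$-combination of nodes $(K_1,a',b'),\dots,(K_\ell,a',b')$ of $T'$, each of which is shown to lie entirely in $H$ or entirely in $G-(H-\{a',b'\})$; the factors inside $H$ union to $H$, and each factor $K_i$ outside $H$ is non-terminally disjoint from $H$, hence has a non-complex induced decomposition in $T'$ by Proposition~\ref{prop:minial_complex_descendant}, and is bridged by the $\mathcal{F}_2$ argument --- so it is automatically simple of complexity $0$, with the correct terminals $(a',b')$ for free. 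Gluing these onto the simple decomposition of $(H_0,a',b')$ and applying Proposition~\ref{prop:rotation} to the remaining parallel factor $(H_1,a',b')$ finishes the proof. If you want to salvage your write-up, replace the block-by-block induction on $K$ with this re-rooting and parallel-factorization step; as it stands, the proposal has a genuine gap.
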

\begin{proof}\stepcounter{ForClaims} \renewcommand{\theForClaims}{\ref{prop:series_parallel_for_block}}
We may operate under the assumption that $T$ is not simple. Let $(H, a, b)$ be the minimal complex descendant of $G$ in $T$.

\begin{smallclaim}
    The graph with terminals $(G,a,b)$ admits a series-parallel decomposition $T'$.
\end{smallclaim}
\begin{claimproof}
    If $V(H) = V(G)$, we can take $T' = T$. Otherwise, by Lemma~\ref{lemma:terminals_disconnect}, $\{a, b\}$ separates $V(H) \setminus \{a, b\}$ from $V(G) \setminus V(H)$, so $(G,a,b)$ has a series-parallel decomposition by Proposition~\ref{prop:seriesparallel}.
\end{claimproof}


    Let $T'$ be a series-parallel decomposition of $(G, a, b)$.
    
    \begin{smallclaim}
        Suppose $(K, a, b) \in V(T')$ has no child in $T'$ with terminals $(a,b)$. Then $K$ is a subgraph of either $H$ or $G - (H - \{a,b\})$.
    \end{smallclaim}
    \begin{claimproof}
        If $K$ comprises a single edge, then it is a subgraph of $H$ or of $G - (H - \{a, b\})$ depending on whether $a$ and $b$ are neighbors in $H$. Otherwise, the induced decomposition on $\gwt{K}$ in $T'$ is nontrivial, and we must have $\circ_{T', \gwt{K}} = \circ_s$ as otherwise $\gwt{K}$ would have a child with terminals $(a, b)$. By Lemma~\ref{lemma:series_parallel_blocks}, the block-cut graph of $K$ comprises a path, with the blocks containing $a$ and $b$ as distinct endpoints. If $K$ contains vertices of both $H - \{a, b\}$ and $G - H$, one of these blocks, say $B$, must contain vertices from both. By Lemma~\ref{lemma:terminals_disconnect} applied to $\gwt{H}$ with respect to $T$, $\{a, b\}$ must separate $H - \{a, b\}$ from $G - H$, but $B$ cannot contain both $a$ and $b$, so it remains connected after the removal of $\{a, b\}$, a contradiction.
    \end{claimproof}

    By Lemma~\ref{lemma:combining_parallel}, we can write
\[
    (G, a, b) = (K_1, a, b) \circ_p \cdots \circ_p (K_\ell, a, b),
\]
where $\gwt{K_1}$, \ldots, $\gwt{K_\ell} \in V(T')$ are such that their children do not have terminals $(a, b)$. We may order these graphs so that for some $1 \le m \leq \ell$, the $K_i$ with $i < m$ are subgraphs of $G - (H - \{a, b\})$ and the $K_i$ with $i \geq m$ are subgraphs of $H$. Then $\bigcup_{i = m}^\ell K_i = H$, so
\[
    (G, a, b) = (K_1, a, b) \circ_p \ldots \circ_p (K_{m-1}, a, b) \circ_p (H, a, b).
\]
Since $(H, a, b)$ is the minimal complex descendant of $G$ in $T$, the induced decompositions of its children are simple and $\circ_{T, \gwt{H}} = \circ_p$. Let $(H_0, a, b)$ and $(H_1, a, b)$ be its children in $T$. Then
\[
    (G, a, b) = \pr{(K_1, a, b) \circ_p \cdots \circ_p (K_{m-1}, a, b) \circ_p (H_0, a, b)} \circ_p (H_1, a, b).
\]
Consider any $K_i$ with $i < m$. Since it is a subgraph of $G - (H - \{a, b\})$, its induced decomposition in $T'$ cannot be complex, as otherwise $H$ and $K_i$ 
contradict Proposition~\ref{prop:minial_complex_descendant}. Furthermore, $(K_i, a, b)$ must be bridged, as otherwise, by Lemma~\ref{lemma:complexity}, it would contain a bipath between $a$ and $b$ which is internally disjoint from the two bipaths between $a$ and $b$ contained in $H$ (also given by Lemma~\ref{lemma:complexity}), and so $G$ would contain as a subgraph a member of $\mathcal{F}_2$.

Hence each $(K_i, a, b)$ is bridged and has a simple induced decomposition in $T'$ of complexity $0$. By connecting each of these decompositions with the induced decomposition of $(H_0, a, b)$ from $T$, we obtain a simple series-parallel decomposition of $(K_1, a, b) \circ_p \ldots \circ_p (K_{m-1}, a, b) \circ_p (H_0, a, b)$. Finally, we may apply Proposition~\ref{prop:rotation} to show that $G$ admits a simple series-parallel decomposition with $a$ as a terminal.
\end{proof}

\subsection{The general case} In this subsection we complete the proof of implication \ref{item:families} $\Longrightarrow$ \ref{item:GSP} in Theorem~\ref{theo:classification} by handling non-$2$-connected graphs.

\begin{lemma} Let $G$ be a connected graph which does not contain any member of $\mathcal{F}_3$ as a subgraph. Suppose $H$, $K$ are distinct leaf blocks of $G$ with respective cut-vertices $v_H$, $v_K$. If $T_H$, $T_K$ are series-parallel decompositions of $H$ and $K$ respectively, and $(H', a, b) \in T_H$ has a complex induced decomposition with $v_H \not \in V(H')$, then any $\gwt{K'} \in T_K$ with a complex induced decomposition must have $v_K \in V(K')$. \label{lemma:disjoint_complex_blocks}
\end{lemma}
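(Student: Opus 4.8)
The plan is to argue by contradiction, adapting the strategy of Proposition~\ref{prop:minial_complex_descendant} to the present, not-necessarily-$2$-connected situation. Suppose, toward a contradiction, that there is some $\gwt{K'} = (K', c, d) \in V(T_K)$ whose induced decomposition is complex and with $v_K \notin V(K')$. Applying Corollary~\ref{cor:complex_bipaths} to the induced decompositions of $\gwt{H'}$ and of $\gwt{K'}$, I obtain nodes $(K_0, x_1, x_2)$ (a descendant of $\gwt{H'}$ in $T_H$) and $(L_0, y_1, y_2)$ (a descendant of $\gwt{K'}$ in $T_K$) such that $K_0$ contains two internally disjoint bipaths $B_a, B_b$ between $x_1$ and $x_2$, and $L_0$ contains two internally disjoint bipaths $B_c, B_d$ between $y_1$ and $y_2$. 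Since $K_0 \subseteq H' \subseteq H$ and $v_H \notin V(H')$, we have $v_H \notin V(K_0)$; likewise $v_K \notin V(L_0)$. As $H$ and $K$ are distinct blocks, $V(H) \cap V(K)$ consists of at most a single cut-vertex, which lies in neither $K_0$ nor $L_0$; hence $V(K_0) \cap V(L_0) = \0$, so $B_a, B_b, B_c, B_d$ are pairwise vertex-disjoint except that $B_a, B_b$ share $\{x_1, x_2\}$ and $B_c, B_d$ share $\{y_1, y_2\}$. This is exactly the bipath configuration required by $\mathcal{F}_3$.

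Next I would produce the two connecting paths. The key local fact is that $H$ is $2$-connected (it contains the cycle $B_a \cup B_b$) and, by Lemma~\ref{lemma:terminals_disconnect} applied to $T_H$, the set $\{x_1, x_2\}$ separates $V(K_0) \setminus \{x_1, x_2\}$ from $V(H) \setminus V(K_0)$. Consequently, in the connected graph $H - x_2$ the vertex $x_1$ separates $V(K_0) \setminus \{x_1, x_2\}$ from $V(H) \setminus V(K_0) \ni v_H$, so the component of $v_H$ in $(H - x_2) - x_1$ lies entirely in $V(H) \setminus V(K_0)$ and is joined to $x_1$; this yields a path $\alpha$ from $x_1$ to $v_H$ avoiding $V(K_0) \setminus \{x_1\}$, and symmetrically a path $\beta$ from $x_2$ to $v_H$ avoiding $V(K_0) \setminus \{x_2\}$. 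The same argument inside $K$ produces paths $\gamma$ from $y_1$ to $v_K$ avoiding $V(L_0) \setminus \{y_1\}$ and $\delta$ from $y_2$ to $v_K$ avoiding $V(L_0) \setminus \{y_2\}$. Finally, the block-cut tree of $G$ supplies a path $\pi$ from $v_H$ to $v_K$ whose internal vertices lie outside $V(H) \cup V(K)$ (and $\pi$ is a single vertex when $v_H = v_K$).

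Assembling these gives $P_1 = \alpha \concat \pi \concat \gamma$ from $x_1$ to $y_1$ and $P_2 = \beta \concat \pi \concat \delta$ from $x_2$ to $y_2$. Each $P_i$ is a genuine path: its $H$-portion avoids the interior of $K_0$, its $K$-portion avoids the interior of $L_0$, and $\pi$ avoids both $H$ and $K$ except at $v_H, v_K \notin V(K_0) \cup V(L_0)$; moreover $\alpha$ and $\gamma$ (respectively $\beta$ and $\delta$) meet only possibly at a common cut-vertex, so no vertex repeats. Thus $P_1$ and $P_2$ meet the four bipaths only at the terminals $x_1, x_2, y_1, y_2$. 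Although $P_1$ and $P_2$ overlap (they share $\pi$), this is explicitly permitted by the definition of $\mathcal{F}_3$. Hence $B_a \cup B_b \cup B_c \cup B_d \cup P_1 \cup P_2$ is a member of $\mathcal{F}_3$ contained in $G$, contradicting the hypothesis.

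I expect the main obstacle to be the rerouting argument that peels the connecting paths off the interiors of $K_0$ and $L_0$. Unlike Proposition~\ref{prop:minial_complex_descendant}, here $G$ need not be $2$-connected, so Menger's theorem cannot be invoked to secure two globally disjoint connecting paths; instead both paths are forced through the cut-vertices $v_H$ and $v_K$. The crucial observation that makes the argument work is that $\mathcal{F}_3$ tolerates $P_1$ and $P_2$ sharing vertices, so it suffices to route both through a single block-cut-tree path $\pi$ and to ensure disjointness from $V(K_0)$ and $V(L_0)$ purely locally within each leaf block, using the separation property of Lemma~\ref{lemma:terminals_disconnect} together with the $2$-connectivity of $H$ and $K$.
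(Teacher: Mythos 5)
Your proof is correct and follows essentially the same route as the paper's: both arguments extract two internally disjoint bipaths from the complex decompositions, use Lemma~\ref{lemma:terminals_disconnect} together with $2$-connectedness of the leaf blocks to route connecting paths from the relevant terminals to $v_H$ and $v_K$ while avoiding the non-terminal vertices, join them through a $v_Hv_K$-path, and exploit the fact that $\mathcal{F}_3$ permits $P_1$ and $P_2$ to intersect. The only (harmless, arguably more careful) deviation is that you descend via Corollary~\ref{cor:complex_bipaths} to nodes $K_0$, $L_0$ guaranteed to carry the two bipaths between their own terminals, whereas the paper invokes Lemma~\ref{lemma:complexity} directly on $H'$ and $K'$.
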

\begin{proof} By 2-connectedness of $H$, there exists a path from $a$ to $v_H$ in $G - \{b\}$. By Lemma~\ref{lemma:terminals_disconnect}, any such path of minimum length cannot contain any non-terminal vertex of $H'$. Likewise, we can find a path from $b$ to $v_H$ which does not contain a non-terminal vertex of $H'$.

If $K'$ does not contain $v_K$, we can similarly find paths from $v_K$ to each of its terminals which does not contain the non-terminal vertices of $K'$. Finally, we can take any path from $v_H$ to $v_K$ to construct paths between each pair of terminals of $H'$ and $K'$ each avoiding the non-terminal vertices of $H'$ and $K'$. Since each of $H'$, $K'$ contain a pair of bipaths between their terminals (Lemma~\ref{lemma:complexity}), $G$ will contain as a subgraph a member of $\mathcal{F}_3$, contradicting the assumption.
\end{proof}
\begin{theo} Suppose $G$ is a connected graph which does not contain any member of $\mathcal{F}_1 \cup \mathcal{F}_2 \cup \mathcal{F}_3$ as a subgraph. Then $G$ has a simple GSP decomposition.
\end{theo}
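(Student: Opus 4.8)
The plan is to induct on the number of blocks of $G$, building the decomposition one block at a time. Since $G$ contains no subdivision of $K_4$ (no member of $\mathcal{F}_1$), it is $K_4$-subdivision-free, so Corollary~\ref{corl:K4-free} already guarantees that $G$ is GSP; all of the work lies in arranging for the decomposition to be \emph{simple}. For the base case, where $G$ is $2$-connected, I would fix any edge $\{a,b\}$, invoke Proposition~\ref{prop:seriesparallel} to obtain a series-parallel decomposition of $(G,a,b)$, and then apply Proposition~\ref{prop:series_parallel_for_block}: either that decomposition is already simple, or $G$ outright admits a simple series-parallel decomposition. In either case the $2$-connected graph $G$ has a simple series-parallel (hence GSP) decomposition, and a single-edge block is trivially simple.

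For the inductive step I would peel off a single leaf block. The entire difficulty is to peel off a leaf block $L$, with cut-vertex $v$, in such a way that $L$ admits a simple series-parallel decomposition having $v$ as one of its terminals. Granting this, Lemma~\ref{lemma:blockmerge} splices a simple decomposition of $L$ (attached at the single vertex $v$) onto an inductively obtained simple decomposition of $G' := G - (L - \{v\})$, and crucially the spliced result is again simple with the same complexity as that of $G'$, so nothing accumulates. Since $G'$ is a connected subgraph of $G$ with one fewer block and still contains no member of $\mathcal{F}_1\cup\mathcal{F}_2\cup\mathcal{F}_3$, the inductive hypothesis supplies its simple decomposition. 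Thus everything reduces to choosing the right $L$ and realizing $v$ as a terminal.

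To control the terminal, I would call a leaf block \emph{errant} if some series-parallel decomposition of it has a complex descendant whose vertex set misses the cut-vertex. Lemma~\ref{lemma:disjoint_complex_blocks} shows that $G$ has at most one errant leaf block, since two of them would force a member of $\mathcal{F}_3$. Because the block-cut tree has at least two leaf blocks whenever $G$ has at least two blocks, there is always a non-errant leaf block $L$ available to peel. For such an $L$ with at least three vertices, I would first pick a series-parallel decomposition with $v$ as a root terminal, using Proposition~\ref{prop:seriesparallel} with $v$ and one of its neighbors. A short observation — that in a series-parallel decomposition a root terminal remains a terminal of every descendant node that contains it — combined with non-errancy shows that the minimal complex descendant $(H',a',b')$, if it exists at all, contains $v$ and therefore has $v \in \{a',b'\}$. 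Proposition~\ref{prop:series_parallel_for_block} then yields a simple series-parallel decomposition of $L$ sharing a terminal with $(H',a',b')$; since that minimal complex descendant is a parallel node symmetric in $a'$ and $b'$, Lemma~\ref{lemma:inversion} lets me arrange the shared terminal to be exactly $v$. (If the $v$-rooted decomposition was already simple, or if $L \cong K_2$, there is nothing to do.)

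The hard part will be precisely this terminal-realization step: Proposition~\ref{prop:series_parallel_for_block} promises only a terminal shared with the minimal complex descendant, not the specific cut-vertex $v$ demanded by the splice. The resolution rests on the persistence of root terminals under the series and parallel operations together with the non-errancy of $L$, which together pin $v$ to a terminal of the complex core, after which Lemma~\ref{lemma:inversion} selects the correct one. The $\mathcal{F}_3$-avoidance encoded in Lemma~\ref{lemma:disjoint_complex_blocks} is what guarantees a non-errant leaf block exists to peel in the first place, while the $\mathcal{F}_2$-avoidance already consumed inside Proposition~\ref{prop:series_parallel_for_block} keeps each individual block simple.
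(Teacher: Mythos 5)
Your proposal is correct and follows essentially the same route as the paper: induction on the number of blocks, with the base case handled by Propositions~\ref{prop:seriesparallel} and \ref{prop:series_parallel_for_block}, and the inductive step peeling off a leaf block whose minimal complex descendant (if any) contains the cut-vertex — guaranteed to exist by Lemma~\ref{lemma:disjoint_complex_blocks} — and splicing via Lemma~\ref{lemma:blockmerge}. Your ``errant'' terminology is just a repackaging of the paper's comparison of two leaf blocks, and your explicit remarks on root-terminal persistence and on using Lemma~\ref{lemma:inversion} to pin the shared terminal to $v$ make precise two points the paper passes over silently.
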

\begin{proof} We proceed by induction on the number of blocks in $G$. The base case when $G$ has only one block is handled by Proposition~\ref{prop:series_parallel_for_block}. 
Now suppose $G$ has at least $2$ blocks and let $H$, $K$ be distinct leaf blocks with respective cut-vertices $v_H$, $v_K$. By Proposition~\ref{prop:seriesparallel}, we can find series-parallel decompositions $T_H$, $T_H$ of $H$ and $K$ respectively with each having $v_H$, $v_K$ respectively as a terminal (we can make the terminals be $v_H/v_K$ and a neighbor, for instance).

If at least one of $T_H$, $T_K$ is simple, then by the inductive hypothesis we can find a simple decomposition of the rest of the graph, and then, by Lemma~\ref{lemma:blockmerge}, we get a simple decomposition of $G$. Otherwise, both $T_H$ and $T_K$ are complex, so Lemma~\ref{lemma:disjoint_complex_blocks} implies that at least one (without loss of generality, let it be $H$) has its minimal complex descendant contain $v_H$. Since $\gwt{H}$ has $v_H$ as a terminal, $v_H$ must be a terminal of this minimal complex descendant. Hence by Proposition~\ref{prop:series_parallel_for_block}, we can find a simple decomposition of $H$ with $v_H$ as a terminal, then again we can find a simple decomposition of the rest of the graph, and by Lemma~\ref{lemma:blockmerge}, we have a simple decomposition of $G$.
\end{proof}

\section{Searching subdivisions}\label{sec:dec_ts}

\subsection{Introduction}

In this section we prove implication \ref{item:GSP} $\Longrightarrow$ \ref{item:ts3} of Theorem~\ref{theo:classification}; i.e., we show that every graph $G$ with a simple GSP decomposition has topological inspection number at most $3$. We begin by noting that if a graph $G$ has a simple GSP decomposition, then so do all its subdivisions:

\begin{lemma}
Let $\gwt{G} = (G, a,b)$ be a graph with terminals and let $T$ be a GSP decomposition of $\gwt{G}$. Suppose that $G'$ is a subdivision of $G$. For each subgraph $D$ of $G$, let $D'$ denote the corresponding subgraph of $G'$ obtained by subdividing $D$.

Then $\gwt{G'} = (G, a,b)$ has a GSP decomposition $T'$ such that for all $\gwt{D} = (D,s,t) \in V(T)$, we have $\gwt{D'} = (D',s,t) \in V(T')$ and $\circ_{T, \gwt{D}} = \circ_{T', \gwt{D'}}$. Additionally, if $T$ is simple, then $T'$ is simple too.\label{lemma:subd_GSP}
\end{lemma}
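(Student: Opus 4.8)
The plan is to induct on $\abs{V(T)}$, building $T'$ by ``expanding'' each leaf of $T$ (a single edge) into the natural series decomposition of the path that replaces it, while copying the operation at every internal node verbatim. Throughout, the embedding $\gwt{D} \mapsto \gwt{D'}$ will send a node of $T$ to a node of $T'$, and the extra nodes of $T'$ will all lie in the interiors of these expanded leaves.

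For the base case $\abs{V(T)} = 1$, the graph $G$ is a single edge $\{a,b\}$ and $G'$ is a path $a = u_0, u_1, \dots, u_n = b$. If $n = 1$ we take $T'$ trivial; otherwise we let $T'$ be the series decomposition $(e_{u_0u_1}, u_0, u_1) \circ_s (e_{u_1u_2}, u_1, u_2) \circ_s \cdots \circ_s (e_{u_{n-1}u_n}, u_{n-1}, u_n)$, whose root is $\gwt{G'} = (G', a, b)$. Since $\gwt{G}$ is a leaf, the operation-matching condition is vacuous, and $\gwt{G'} \in V(T')$ as required. Every subtree of $T'$ is the decomposition of a subpath, and a path is bridged by its terminals (each of its edges lies on the unique terminal-to-terminal path), so every such subtree has complexity $0$; thus $T'$ is simple.

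For the inductive step, write $\gwt{G} = \gwt{H} \circ \gwt{K}$ with $\circ \in \{\circ_s,\circ_p,\circ_b,\circ_{b'}\}$, where $T_H, T_K$ are the induced decompositions of the two children. Because the leaves of a GSP decomposition are exactly the edges of the underlying graph, $H$ and $K$ are edge-disjoint and meet only in the terminal(s) they share. Hence the subdivision $G'$ restricts to edge-disjoint subdivisions $H'$ and $K'$ whose vertex sets again meet only in those shared terminals, so the very same operation $\circ$ can be applied: $\gwt{G'} = \gwt{H'} \circ \gwt{K'}$, with the terminals unchanged (they are original, non-subdivided vertices). Applying the induction hypothesis to $(\gwt{H}, T_H, H')$ and $(\gwt{K}, T_K, K')$ yields decompositions $T_H', T_K'$ with the claimed properties, and we let $T'$ be $T_H'$ and $T_K'$ joined under the new root $\gwt{G'}$. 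The embedding of $V(T)$ into $V(T')$ then sends the root to the root, with $\circ_{T',\gwt{G'}} = \circ = \circ_{T,\gwt{G}}$, and agrees with the inductively built embeddings on the two subtrees.

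It remains to transfer simplicity, which is the step needing the most care. I would first record the key invariance: for a graph with terminals $(D,s,t)$ and any subdivision $(D',s,t)$, the pair $(D,s,t)$ is bridged if and only if $(D',s,t)$ is. Indeed, subdivision is a homeomorphism, so $s$--$t$ paths in $D'$ correspond, after contracting the subdivision vertices, to $s$--$t$ paths in $D$; an edge lying on every $s$--$t$ path of $D$ forces its whole subdivided segment, and hence each of its edges, to lie on every $s$--$t$ path of $D'$, while conversely any edge of $D'$ lying on every $s$--$t$ path sits inside an original edge that must then lie on every $s$--$t$ path of $D$. Using this, I would prove by a parallel induction that for every $\gwt{D} \in V(T)$ the induced sub-decomposition of $\gwt{D'}$ in $T'$ has the same complexity as that of $\gwt{D}$ in $T$: the $\circ_p$ and $\circ_b/\circ_{b'}$ cases are immediate from the recursive definition of $c(\cdot)$, since both the operations and the child-structure are copied, whereas the remaining ``series-or-leaf'' case reduces exactly to the bridged invariance just established (a subdivided leaf becomes a bridged path of complexity $0$, matching the original single edge). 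Finally, since every subtree of $T'$ is either the image of a subtree of $T$, hence of equal complexity, or the decomposition of a subpath inside an expanded leaf, hence of complexity $0$, all subtree complexities of $T'$ are at most $1$ whenever those of $T$ are; so $T'$ is simple whenever $T$ is. The main obstacle is precisely this bookkeeping — establishing that ``bridged'' is a subdivision invariant and that it propagates correctly through the recursive complexity computation — rather than the essentially formal reconstruction of the GSP structure itself.
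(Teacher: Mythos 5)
Your proof is correct and follows essentially the same route as the paper's: replace each node $(D,s,t)$ by $(D',s,t)$, expand each leaf into the series decomposition of the corresponding path, and use the fact that being bridged is invariant under subdivision to see that all subtree complexities are preserved (with the new subtrees inside expanded leaves having complexity $0$). The only difference is that you organize the argument as an explicit induction and spell out the bridged-invariance, which the paper states without proof.
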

\begin{proof} Replace each node $(D,s,t)$ in $T$ with $(D',s,t)$. The leaf nodes now correspond to subdivided edges (i.e., paths), which have series-parallel decompositions using only the series operation. Replacing each leaf node with its series-parallel decomposition gives the desired GSP decomposition $T'$. Note that the series-parallel decompositions replacing leaf nodes have complexity $0$. Since whether a graph with terminals is bridged is not changed by subdivision, the complexity of each subtree in the decomposition remains the same as before, and hence if $T$ is simple, then $T'$ is simple as well.
\end{proof}

In view of Lemma \ref{lemma:subd_GSP}, to prove implication \ref{item:GSP} $\Longrightarrow$ \ref{item:ts3} of Theorem~\ref{theo:classification}, we just need to argue that every graph $G$ with a simple GSP decomposition $T$ has a subdivision $G'$ such that $\s(G') \leq 3$. Indeed, by Lemma~\ref{lemma:subd_GSP}, this would imply that every subdivision of $G$ has a further subdivision with inspection number at most $3$, as desired.

\subsection{Aligned searches and quotient graphs}

Given a graph $G$ with a simple GSP decomposition $T$, we shall construct a subdivision $G'$ of $G$ with $\s(G') \leq 3$ inductively by ``amalgamating'' successful $3$-searches on subdivisions of the descendants of $G$ in $T$. In order for this amalgamation process to work, we need these $3$-searches to satisfy a certain property defined as follows:

\begin{defn}[Aligned Searches] A $k$-search $\mathcal{S}$ of length $\ell$ on a graph $G$ is \emphd{aligned} to an ordered pair of vertices $(a, b) \in V(G)^2$ if $a \in \PC_t(\mathcal{S})$ for each $1 \leq t \leq \ell$, and $b \not\in \FC_t(\mathcal{S})$ for each $0 \leq t < \ell$.\label{defn:aligned}

If $A \subseteq V(G)$ and $\mathcal{S}$ is a $k$-search of length $\ell$, we say that $\mathcal{S}$ is \emphd{aligned to $(a, b) \in V(G)^2$ over $A$} if $a \in \PC_t(\mathcal{S}, A)$ for each $1 \leq t \leq \ell$, and $b \not\in \FC_i(\mathcal{S}, A)$ for each $0 \leq t < \ell$.

Here $\PC_t(\mathcal{S}, A)$ and $\FC_t(\mathcal{S}, A)$ refer to the sets of pre-cleared and fully cleared vertices if the set $A$ is initially cleared (see Definition \ref{defn:search}).
\end{defn}



The next definition provides the terminology and notation that will be used in describing the amalgamation process: 

\begin{defn}[Quotient Graphs]
Let $G$ be a graph and let $\mathcal{E}$ be an equivalence relation on $V(G)$. The \emphd{quotient graph} $G/\mathcal{E}$ is the graph with vertex set $V(G)/\mathcal{E}$ that includes an edge between distinct $\mathcal{E}$-equivalence classes $c_1$, $c_2 \in V(G)/\mathcal{E}$ if and only if there are adjacent vertices $v_1 \in c_1$ and $v_2 \in c_2$. 

A subset $X \subseteq V(G)$ is \emphd{$\mathcal{E}$-invariant} if it is a union of $\mathcal{E}$-equivalence classes. For an $\mathcal{E}$-invariant set $X \subseteq V(G)$, we write $X/\mathcal{E}$ to denote the set of all $\mathcal{E}$-equivalence classes contained in $X$.

Define two maps $\wedge_\mathcal{E}$, $\vee_\mathcal{E} \colon \mathcal{P}(V(G)) \to \mathcal{P}(V(G)/\mathcal{E})$ via
\[
    \wedge_\mathcal{E}(X) = \{c \in V(G)/\mathcal{E} \,:\, c \subseteq X\} \quad \text{and} \quad  \vee_\mathcal{E}(X) = \{c \in V(G)/\mathcal{E} \,:\, c \cap X \neq \0\}.
\]
Note that if $X$ is $\mathcal{E}$-invariant, then $\wedge_\mathcal{E}(X) = \vee_\mathcal{E}(X) = X/\mathcal{E}$.

Let $\mathcal{S}$ be a $k$-search of length $\ell$ on $G$ and let $A \subseteq V(G)$. We say that $\mathcal{S}$ is \emphd{$\mathcal{E}$-invariant over $A$} if for all $1 \leq t \leq \ell$, the set $\PC_t(\mathcal{S}, A)$ is $\mathcal{E}$-invariant. Given a $k$-search $\mathcal{S} = (S_t)_{t=1}^\ell$ on $G$, we let $\vee_\mathcal{E}(\mathcal{S}) = (\vee_\mathcal{E}(S_t))_{t=1}^\ell$. 
\end{defn}

If $\mathcal{E}$ is an equivalence relation on $V(G)$ and $\mathcal{S}$ is a $k$-search on $G$, then $\vee_\mathcal{E}(\mathcal{S})$ is a $k$-search on $G/\mathcal{E}$. The next proposition allows us to compute the sets of pre-cleared and fully cleared vertices for it.

\begin{prop} Let $G$ be a graph and let $\mathcal{E}$ be an equivalence relation on $V(G)$. Suppose that $A \subseteq V(G)$ and $\mathcal{S}$ is a $k$-search of length $\ell$ on $G$ that is $\mathcal{E}$-invariant over $A$. Then:
\begin{itemize}
    \item $\FC_{t}(\vee_\mathcal{E}(\mathcal{S}), \wedge_\mathcal{E}(A)) = \wedge_\mathcal{E}(\FC_{t}(\mathcal{S}, A))$ for all $0 \leq t \leq \ell$;
    
    \item $\PC_t(\vee_\mathcal{E}(\mathcal{S}), \wedge_\mathcal{E}(A))= \PC_t(\mathcal{S}, A)/\mathcal{E}$ for all $1 \leq t \leq \ell$.
\end{itemize}
\label{prop:quotient_search}
\end{prop}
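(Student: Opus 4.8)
The plan is to prove the two statements simultaneously by induction on $t$, alternating between the pre-cleared and fully cleared claims. Throughout I would abbreviate $P_t = \PC_t(\mathcal{S}, A)$ and $F_t = \FC_t(\mathcal{S}, A)$, and recall that the hypothesis that $\mathcal{S}$ is $\mathcal{E}$-invariant over $A$ means precisely that each $P_t$ (for $1 \le t \le \ell$) is $\mathcal{E}$-invariant, so that $P_t/\mathcal{E} = \wedge_\mathcal{E}(P_t) = \vee_\mathcal{E}(P_t)$. I would stress at the outset that the sets $F_t$ need \emph{not} be $\mathcal{E}$-invariant (a class contained in $P_t$ can have one vertex on $\partial_G(P_t)$ and another not), which is exactly why the fully cleared statement is phrased via $\wedge_\mathcal{E}$ rather than a plain quotient.

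For the base case $t=0$, the fully cleared statement reads $\FC_0(\vee_\mathcal{E}(\mathcal{S}), \wedge_\mathcal{E}(A)) = \wedge_\mathcal{E}(A)$, which holds by definition, since the initial cleared set of the quotient search is $\wedge_\mathcal{E}(A) = \wedge_\mathcal{E}(F_0)$. For the inductive step I would assume the fully cleared statement at index $t$ and deduce first the pre-cleared statement at $t+1$, and then the fully cleared statement at $t+1$.

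The pre-cleared step is where the $\mathcal{E}$-invariance hypothesis does the real work. Unwinding the recursion in $G/\mathcal{E}$ gives $\PC_{t+1}(\vee_\mathcal{E}(\mathcal{S}), \wedge_\mathcal{E}(A)) = \FC_t(\vee_\mathcal{E}(\mathcal{S}), \wedge_\mathcal{E}(A)) \cup \vee_\mathcal{E}(S_{t+1})$, which by the inductive hypothesis equals $\wedge_\mathcal{E}(F_t) \cup \vee_\mathcal{E}(S_{t+1})$. On the other hand, since $P_{t+1} = F_t \cup S_{t+1}$ is $\mathcal{E}$-invariant, $P_{t+1}/\mathcal{E} = \vee_\mathcal{E}(F_t) \cup \vee_\mathcal{E}(S_{t+1})$. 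The inclusion $\wedge_\mathcal{E}(F_t) \subseteq \vee_\mathcal{E}(F_t)$ gives one containment for free; for the reverse I would take a class $c$ meeting $F_t$ but not contained in $F_t$, note $c \subseteq P_{t+1} = F_t \cup S_{t+1}$ by invariance, and conclude that any vertex of $c$ lying outside $F_t$ must lie in $S_{t+1}$, so $c \in \vee_\mathcal{E}(S_{t+1})$. This reconciles the two expressions and yields $\PC_{t+1}(\vee_\mathcal{E}(\mathcal{S}), \wedge_\mathcal{E}(A)) = P_{t+1}/\mathcal{E}$.

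The fully cleared step at $t+1$ is then a local boundary computation in the quotient graph, which I expect to be the most delicate bookkeeping. Writing $P = P_{t+1}$, I would show that for a class $c \in P/\mathcal{E}$ one has $c \notin \partial_{G/\mathcal{E}}(P/\mathcal{E})$ if and only if $c \cap \partial_G(P) = \0$. For the forward direction: if some $v \in c$ had a neighbor $v' \notin P$, then the class of $v'$ is disjoint from $P$ (by invariance of $P$) yet adjacent to $c$ in $G/\mathcal{E}$, placing $c$ in the quotient boundary; the converse is symmetric. Since $\FC_{t+1}(\vee_\mathcal{E}(\mathcal{S}), \wedge_\mathcal{E}(A)) = P/\mathcal{E} \setminus \partial_{G/\mathcal{E}}(P/\mathcal{E})$ by the pre-cleared step, this equivalence identifies it with $\{c \in P/\mathcal{E} : c \cap \partial_G(P) = \0\} = \wedge_\mathcal{E}(P \setminus \partial_G(P)) = \wedge_\mathcal{E}(F_{t+1})$, closing the induction. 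The only genuine obstacle is keeping straight the interplay between $\wedge_\mathcal{E}$ and $\vee_\mathcal{E}$ together with the non-invariance of the fully cleared sets; once the invariance of $P_{t+1}$ is invoked at the right moment, the rest is routine.
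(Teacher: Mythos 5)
Your proposal is correct and follows essentially the same route as the paper: an induction alternating between the pre-cleared identity at step $t+1$ (using that $\PC_{t+1}(\mathcal{S},A)$ is $\mathcal{E}$-invariant to show a class meeting $\FC_t$ but not contained in it must meet $S_{t+1}$) and the fully cleared identity (matching the quotient boundary with the classes meeting $\partial_G(\PC_{t+1}(\mathcal{S},A))$). The bookkeeping with $\wedge_\mathcal{E}$ versus $\vee_\mathcal{E}$ is handled correctly, so nothing further is needed.
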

\begin{proof} We shall prove the following two implications for all $1 \leq t \leq \ell$:
\begin{enumerate}[label=\ep{\emph{\alph*}}]
    \item\label{item:quot_a} If $\FC_{t-1}(\vee_\mathcal{E}(\mathcal{S}), \wedge_\mathcal{E}(A)) = \wedge_\mathcal{E}(\FC_{t-1}(\mathcal{S}, A))$, then $\PC_t(\vee_\mathcal{E}(\mathcal{S}), \wedge_\mathcal{E}(A))= \PC_t(\mathcal{S}, A)/\mathcal{E}$.
    
    \item If $\PC_t(\vee_\mathcal{E}(\mathcal{S}), \wedge_\mathcal{E}(A))= \PC_t(\mathcal{S}, A)/\mathcal{E}$, then $\FC_{t}(\vee_\mathcal{E}(\mathcal{S}), \wedge_\mathcal{E}(A)) = \wedge_\mathcal{E}(\FC_{t}(\mathcal{S}, A))$.
\end{enumerate}
Together with the observation that $\FC_0(\vee_\mathcal{E}(\mathcal{S}), \wedge_\mathcal{E}(A)) = \wedge_\mathcal{E}(A) = \wedge_\mathcal{E}(\FC_{0}(\mathcal{S}, A))$, these immediately yield the desired result.

First suppose that $\FC_{t-1}(\vee_\mathcal{E}(\mathcal{S}), \wedge_\mathcal{E}(A)) = \wedge_\mathcal{E}(\FC_{t-1}(\mathcal{S}, A))$. Then, by definition,
\[
    \PC_t(\vee_\mathcal{E}(\mathcal{S}), \wedge_\mathcal{E}(A)) = \wedge_\mathcal{E}(\FC_{t-1}(\mathcal{S}, A)) \cup \vee_\mathcal{E}(S_t) \quad \text{and} \quad \PC_t(\mathcal{S}, A) = \FC_{t-1}(\mathcal{S}, A) \cup S_t.
\]
Since the set $\PC_t(\mathcal{S}, A)$ is $\mathcal{E}$-invariant, every $\mathcal{E}$-equivalence class $c$ is either contained in it or disjoint from it. Therefore,
\begin{align*}
    c \in \PC_t(\mathcal{S}, A)/\mathcal{E} \, &\Longleftrightarrow \, c \subseteq \FC_{t-1}(\mathcal{S}, A)  \text{ or } c \cap S_t \neq \0 \\
    & \Longleftrightarrow \, c \in \wedge_\mathcal{E}(\FC_{t-1}(\mathcal{S}, A))  \text{ or } c \in \vee_\mathcal{E}(S_t)  \\
    & \Longleftrightarrow \, c \in \PC_t(\vee_\mathcal{E}(\mathcal{S}), \wedge_\mathcal{E}(A)).
\end{align*}

Now suppose that $\PC_t(\vee_\mathcal{E}(\mathcal{S}), \wedge_\mathcal{E}(A))= \PC_t(\mathcal{S}, A)/\mathcal{E}$. Then $\FC_t(\vee_\mathcal{E}(\mathcal{S}), \wedge_\mathcal{E}(A))$ comprises exactly those equivalence classes $c \in \PC_t(\mathcal{S}, A)/\mathcal{E}$ that satisfy $c \cap \partial_G(\PC_t(\mathcal{S}, A)) = \0$, i.e., that are fully contained in $\FC_t(\mathcal{S},A)$. Therefore, $\FC_t(\vee_\mathcal{E}(\mathcal{S}), \wedge_\mathcal{E}(A)) = \wedge_\mathcal{E}(\FC_t(\mathcal{S},A))$, as desired.
\end{proof}

\subsection{Clearing balls}

Recall that for $v \in V(G)$ and $r \in \mathbb{N}$, $B_G(v, r)$ denotes the ball of radius $r$ around $v$ in $G$. In this subsection we establish two useful auxiliary constructions that allow us to clear a ball of large radius around a vertex in a subdivision of a given graph.

\begin{lemma} Let $G$ be a graph and let $v$, $w \in V(G)$ be distinct vertices. Let $d$ be the degree of $w$ in $G$. Fix some $r \in \mathbb{N}$ and let $H$ be a subdivision of $G$ such that each edge incident to $w$ is subdivided at least $2^{d-1}r + 1$ times. Then there exists a $3$-search $\mathcal{S}$ on $H$ of length $\ell = (2^{d} - 1)r$ such that $w \in S_t$ for all $1 \leq t \leq \ell$, $\mathcal{S}$ is aligned to $(w, v)$, and $\FC_*(\mathcal{S}) \supseteq B_H(w, r)$. \label{lemma:clearing_from_bottom}
\end{lemma}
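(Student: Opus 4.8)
The plan is to construct the search $\mathcal{S}$ explicitly by sweeping the $d$ legs of $H$ emanating from $w$ one at a time, with $w$ permanently occupying one of the three tokens. Since $w$ has degree $d$ in $G$, in $H$ the vertex $w$ is the common endpoint of $d$ internally disjoint paths (``legs''); write the internal vertices of the $i$-th leg, in order of increasing distance from $w$, as $x^i_1, x^i_2, \dots$, so that each leg has at least $2^{d-1}r+1$ of them. Because every leg has more than $r$ internal vertices and the legs meet only at $w$, the ball decomposes as $B_H(w,r) = \{w\} \cup \{x^i_j : 1 \le i \le d,\ 1 \le j \le r\}$. Set $K_i := 2^{d-i}r$ and $T_i := K_1 + \dots + K_i$, so that $T_d = \sum_{i=1}^d 2^{d-i}r = (2^d-1)r = \ell$. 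I would divide $\mathcal{S}$ into $d$ consecutive phases, the $i$-th phase sweeping leg $i$ outward to depth $K_i$: during the $m$-th step of phase $i$ (global time $t = T_{i-1}+m$, with $1 \le m \le K_i$) I set $S_t = \{w,\, x^i_m,\, x^i_{m+1}\}$. The token on $w$ is never moved, so $w \in S_t$ for every $t$, and the deepest vertex searched on leg $1$ is $x^1_{K_1+1} = x^1_{2^{d-1}r+1}$, which is exactly why each leg must be subdivided at least $2^{d-1}r+1$ times.

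Next I would verify, by induction on $t$, the following depth invariant (writing $\FC_t$ for $\FC_t(\mathcal{S})$): leg $i$ is fully cleared precisely up to some depth $D_i(t)$, where $D_i(t)=0$ for $t \le T_{i-1}$, then increases by $1$ per step during phase $i$ up to $D_i(T_i)=K_i$, and then decreases by $1$ per step afterward. Two elementary $\PC/\FC$ computations drive the induction, and both rely on the fact that within the searched region the legs are pairwise disjoint simple paths; this holds because every index involved is at most $K_1+1 = 2^{d-1}r+1$, at most the number of internal vertices on each leg, so all searched vertices other than $w$ have degree $2$. First, a leg not currently being searched loses exactly its single deepest cleared vertex each step: if $x^i_1,\dots,x^i_K \in \FC_{t-1}$ and no token other than the one on $w$ touches leg $i$ at time $t$, then $x^i_K \in \partial_G(\PC_t(\mathcal{S}))$, while $x^i_1,\dots,x^i_{K-1} \notin \partial_G(\PC_t(\mathcal{S}))$ since each of their neighbours (including $w$, which is always in $\PC_t(\mathcal{S})$) lies in $\PC_t(\mathcal{S})$. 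Second, the step $S_t = \{w,x^i_m,x^i_{m+1}\}$ advances leg $i$ from depth $m-1$ to depth $m$ by the same local computation. Crucially, the token on $w$ keeps every $x^i_1$ pre-cleared, so a cleared leg never recontaminates through $w$, and $w$ itself leaves $\partial_G(\PC_t(\mathcal{S}))$ only once all $d$ legs simultaneously have positive depth.

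I would then read off the conclusion at the final time $t = \ell = T_d$. For each $i$ the depth is $D_i(\ell) = K_i - (T_d - T_i) = K_i - \sum_{k>i}K_k = 2^{d-i}r - r(2^{d-i}-1) = r$, so $x^i_1,\dots,x^i_r \in \FC_\ell$ for all $i$; since every neighbour $x^i_1$ of $w$ is then in $\PC_\ell(\mathcal{S})$, we also get $w \in \FC_\ell$. Hence $\FC_*(\mathcal{S}) \supseteq B_H(w,r)$. For the alignment to $(w,v)$, the condition $a \in \PC_t(\mathcal{S})$ is immediate from $w \in S_t \subseteq \PC_t(\mathcal{S})$ for all $t$. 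For the second condition, note that $v$ is an original vertex of $G$ distinct from $w$, so any path from $w$ to $v$ in $H$ must traverse a full subdivided edge incident to $w$, whence $d_H(w,v) \ge 2^{d-1}r+2$; since $\mathcal{S}$ only ever searches $w$ and vertices $x^i_j$ with $j \le 2^{d-1}r+1$, the vertex $v$ is never searched, hence never pre-cleared and never in $\FC_t(\mathcal{S})$ for any $t$, in particular for all $t < \ell$.

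The main obstacle is the simultaneous bookkeeping of the $d$ legs: while one leg is actively swept, all previously swept legs are receding, and the scheme succeeds only because the depths $K_i = 2^{d-i}r$ are tuned so that every leg has receded to depth exactly $r$ — no less — at the single moment $t = \ell$ at which $w$ is finally cleared. Keeping the off-by-one accounting of the phase boundaries $T_i$ and of the last searched vertex on each leg consistent with the subdivision bound $2^{d-1}r+1$ is the delicate part; the local clearing and recession computations themselves are routine once leg-disjointness within the searched region is established.
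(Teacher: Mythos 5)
Your proposal is correct and constructs exactly the same search as the paper: phase $i$ sweeps the $i$-th leg outward to depth $2^{d-i}r$ with sets $\{w, x^i_m, x^i_{m+1}\}$, so that at time $\ell$ every leg has receded to depth exactly $r$. The only difference is presentational: the paper verifies the search by lifting it to a disjoint union of the legs and applying the quotient-graph invariance machinery of Proposition~\ref{prop:quotient_search}, whereas you verify the same depth dynamics by a direct induction on $\FC_t$, which is equally valid.
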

\begin{proof}
    Let $P_1$, \ldots, $P_d$ be the paths in $H$ replacing the edges of $G$ incident to $w$. Denote the endpoint of $P_i$ distinct from $w$ by $u_i$. Let $H'$ be the graph obtained from $H$ by removing all the vertices in $V(P_1) \cup \ldots \cup V(P_d)$ except for $u_1$, \ldots, $u_d$, and let $H^\ast$ be the graph obtained as a disjoint union of $H'$ and $d$ paths $P_1^\ast$, \ldots, $P_d^\ast$, where each $P_i^\ast$ is of the same length as $P_i$. Denote the two endpoints of $P_i^\ast$ by $w_i$ and $u_i^\ast$ and define an equivalence relation $\mathcal{E}$ on $V(H^\ast)$ by making the following sets the only nontrivial equivalence classes: $\{w_1, \ldots, w_d\}$, $\{u_1, u_1^\ast\}$, \ldots, $\{u_d, u_d^\ast\}$. 
    Then there is an obvious isomorphism $H \cong H^\ast/\mathcal{E}$, which allows us to identify $H$ with $H^\ast/\mathcal{E}$.

    For $1 \leq i \leq d$, denote by $V_i[a, b]$ the set of all vertices in $P_i^\ast$ at distance between $a$ and $b$ \ep{inclusive} from $w_i$. Let $\mathcal{S}_i$ be the $(d+2)$-search on $H^\ast$ of length $2^{d-i}r$, where for each $1 \le t \le 2^{d-i}r$,
    \[
        (S_i)_t = \{w_1, \ldots, w_d\} \cup V_i[t, t+1].
    \]
    Note that since the length of $P_i^\ast$ is at least $2^{d-1}r+2$, $u_i^\ast \not \in (S_i)_t$. Let $\mathcal{S}^\ast = \mathcal{S}_1 \concat \cdots \concat \mathcal{S}_d$, where $\concat$ denotes concatenation of sequences, and define $\mathcal{S} = \vee_\mathcal{E}(\mathcal{S}^\ast)$. We claim that $\mathcal{S}$ is as desired.
    
    It is clear that $\mathcal{S}$ is a $3$-search \ep{since the vertices $w_1$, \ldots, $w_d$ represent the same equivalence class}. The length of $\mathcal{S}$ is equal to the length of $\mathcal{S}^\ast$, which is
    \[
        2^{d-1}r + 2^{d-2}r + \cdots + r = (2^d - 1)r.
    \]
    Also, for each $t$, $w$ is the unique vertex of $G$ in $S_t$. This implies that $\mathcal{S}$ is aligned to $(w,v)$. 
    
    Next we observe that the search $\mathcal{S}^\ast$ is $\mathcal{E}$-invariant. Indeed, the equivalence class $\{w_1, \ldots, w_d\}$ is contained in every $S^\ast_t$, so all its elements are always pre-cleared. On the other hand, the vertices in an equivalence class of the form $\{u_i, u_i^\ast\}$ are never pre-cleared.

    It is straightforward to verify that $V_i[0, 2^{d-i}r] \subseteq \FC_\ast(\mathcal{S}_i)$ and hence $V_i[0, 2^{d-i}r] \subseteq \FC_{(2^d - 2^{d-i})r}(\mathcal{S}^\ast)$. It follows that 
    for each $t \ge 1$,
    \[V_i[0, 2^{d-i}r -t + 1] \subseteq \PC_{(2^d - 2^{d-i})r + t}(\mathcal{S}^\ast) \quad \text{and} \quad V_i[0, 2^{d-i}r -t] \subseteq \FC_{(2^{d} - 2^{d-i})r + t}(\mathcal{S}^\ast).\]
    By applying this with $t = (2^{d-i} - 1)r$, we get that $V_i[0, r] \subseteq \FC_*(\mathcal{S}^\ast)$ for each $i$. Hence, by Proposition~\ref{prop:quotient_search}, $\FC_\ast(\mathcal{S})$ contains all the vertices of $H$ at distance at most $r$ from $w$, as desired.
\end{proof}

\begin{lemma} Let $G$ be a graph and $v$, $w \in V(G)$ be distinct vertices. Let $d$ be the degree of $v$ in $G$. Fix some $r \in \mathbb{N}$ and let $H$ be a subdivision of $G$ such that each edge incident to $v$ is subdivided at least $2^{d-1}r$ times. Let $A = V(H) \setminus B_H(v, r)$. Then there exists a $3$-search $\mathcal{S}$ on $H$ of length $\ell=(2^d-1)r$ such that $v \in S_t$ for all $1 \leq t \leq \ell$, $\mathcal{S}$ is aligned to $(w, v)$ over $A$, and $\FC_*(\mathcal{S}, A) = V(H)$. \label{lemma:clearing_to_top}
\end{lemma}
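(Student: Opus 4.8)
The plan is to mirror the construction in the proof of Lemma~\ref{lemma:clearing_from_bottom}, reversing the direction in which the sliding window is swept along each path. First I would set up the same ``splitting'' apparatus, now centered at $v$: let $P_1, \ldots, P_d$ be the paths of $H$ replacing the edges of $G$ incident to $v$, with far endpoints $u_1, \ldots, u_d$; form $H'$ by deleting the interiors of the $P_i$ (keeping each $u_i$ and deleting $v$), and let $H^\ast$ be the disjoint union of $H'$ with fresh copies $P_1^\ast, \ldots, P_d^\ast$ of these paths, whose endpoints are $v_i$ and $u_i^\ast$. Gluing $\{v_1, \ldots, v_d\}$ into one class and each $\{u_i, u_i^\ast\}$ into a class gives an equivalence relation $\mathcal{E}$ with $H \cong H^\ast/\mathcal{E}$. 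I take $A^\ast \subseteq V(H^\ast)$ to be the $\mathcal{E}$-invariant preimage of $A$, so that $\wedge_\mathcal{E}(A^\ast) = A$; indexing $P_i^\ast$ from $v_i = y_{i,0}$, this $A^\ast$ consists of $H'$ together with the ``far'' portion $y_{i,r+1}, \ldots, u_i^\ast$ of each $P_i^\ast$. Since $w \in V(G)$ is an original vertex distinct from $v$, and $B_H(v,r)$ contains only $v$ and interior subdivision vertices at distance at most $r$, we have $w \notin B_H(v,r)$, so $w$ lies in the interior of the cleared region $A$.

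Next I would build an $\mathcal{E}$-invariant $(d+2)$-search $\mathcal{S}^\ast$ on $H^\ast$ that searches every $v_i$ at every turn (so that the quotient is a $3$-search keeping $v$ pre-cleared) and clears the paths one at a time, sweeping a $2$-vertex window \emph{inward} toward $v_i$. The crucial point is the order and the sweep lengths. Because the far region $A^\ast$ is initially cleared, the contaminated interface on any not-yet-cleared path recedes outward by exactly one vertex per turn while we work elsewhere; hence if path $i$ is the $k$-th to be cleared, by the time we reach it the front has advanced to distance $r + \tau$, where $\tau$ is the number of preceding turns, and sweeping it inward then costs $r + \tau$ turns. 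Solving the resulting recursion ($F_{(1)} = r$ and $F_{(k)} = r + \sum_{j<k} F_{(j)}$) shows the $k$-th cleared path needs a sweep of length $2^{k-1}r$, for a total of $\sum_{k=1}^d 2^{k-1}r = (2^d-1)r = \ell$; in particular the front on the last path reaches distance $2^{d-1}r$, which is exactly why subdividing each incident edge at least $2^{d-1}r$ times suffices. I then set $\mathcal{S} = \vee_\mathcal{E}(\mathcal{S}^\ast)$ and invoke Proposition~\ref{prop:quotient_search} to transfer the pre-cleared and fully cleared sets back to $H$.

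The remaining verifications are routine. The search $\mathcal{S}$ has size $3$, since $\{v_1,\ldots,v_d\}$ collapses to $v$ and the window contributes at most two further vertices; its length is $(2^d-1)r$; and $v \in S_t$ for all $t$. Alignment to $(w,v)$ over $A$ holds because $v = \{v_1,\ldots,v_d\}$ becomes fully cleared only once every $y_{i,1}$ is cleared, i.e.\ only at the final turn (so $v \notin \FC_t(\mathcal{S},A)$ for all $t < \ell$), while $w$, lying in the stable interior of $A$ and never being searched, stays fully cleared and therefore pre-cleared throughout. Finally $\FC_\ast(\mathcal{S}^\ast, A^\ast) = V(H^\ast)$, because every path is eventually swept clean and $H'$ never recontaminates, so Proposition~\ref{prop:quotient_search} yields $\FC_\ast(\mathcal{S}, A) = V(H)$.

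The main obstacle is establishing the $\mathcal{E}$-invariance of $\mathcal{S}^\ast$ over $A^\ast$, which amounts to controlling the erosion of $A^\ast$: I must show that the contaminated front on each path never reaches its far endpoint $u_i^\ast$ before that path is swept, so that $u_i^\ast$ and $u_i$ are simultaneously (pre-)cleared at every turn and the class $\{u_i,u_i^\ast\}$ stays invariant. This is precisely where the bound $2^{d-1}r$ is tight: when an incident edge is subdivided exactly $2^{d-1}r$ times, the front reaches the vertex adjacent to $u_i^\ast$ at the very turn the inward sweep of that path begins, and one must exploit the distinction between pre-cleared and fully cleared vertices — namely that $u_i^\ast$ remains in $\PC$ for the one extra turn needed to start the sweep — to confirm both that invariance is preserved and that the front vertex can indeed be cleared. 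Once this erosion bound is verified, all the assertions above follow.
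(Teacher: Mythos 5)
Your construction is the same as the paper's: split $H$ at $v$ into $H'$ plus disjoint copies $P_i^\ast$ of the incident paths, take the quotient by gluing $\{v_1,\dots,v_d\}$ and $\{u_i,u_i^\ast\}$, sweep a two-vertex window inward along each path in turn with sweep lengths $r, 2r, \dots, 2^{d-1}r$ (your recursion $F_{(k)} = r + \sum_{j<k}F_{(j)}$ just re-derives these), and transfer back via Proposition~\ref{prop:quotient_search}. Your identification of the key verification --- that the eroding contaminated front on each $P_i^\ast$ stops one vertex short of $u_i^\ast$, which is exactly where the hypothesis of $2^{d-1}r$ subdivisions is used to secure $\mathcal{E}$-invariance over $A^\ast$ --- matches the paper's argument, so the proposal is correct and essentially identical to the published proof.
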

\begin{proof}
    Let $P_1$, \ldots, $P_d$ be the paths in $H$ replacing the edges of $G$ incident to $v$. Denote the endpoint of $P_i$ distinct from $v$ by $u_i$. Let $H'$ be the graph obtained from $H$ by removing all the vertices in $V(P_1) \cup \ldots \cup V(P_d)$ except for $u_1$, \ldots, $u_d$, and let $H^\ast$ be the graph obtained as a disjoint union of $H'$ and $d$ paths $P_1^\ast$, \ldots, $P_d^\ast$, where each $P_i^\ast$ is of the same length as $P_i$. Denote the two endpoints of $P_i^\ast$ by $v_i$ and $u_i^\ast$ and define an equivalence relation $\mathcal{E}$ on $V(H^\ast)$ by making the following sets the only nontrivial equivalence classes: $\{v_1, \ldots, v_d\}$, $\{u_1, u_1^\ast\}$, \ldots, $\{u_d, u_d^\ast\}$. 
    Then there is an obvious isomorphism $H \cong H^\ast/\mathcal{E}$, which allows us to identify $H$ with $H^\ast/\mathcal{E}$. Let $A^\ast \subseteq V(H^\ast)$ be the $\mathcal{E}$-invariant set such that $A^\ast/\mathcal{E} = A$. Explicitly, $A^\ast$ comprises $V(H')$ together with the vertices on each of the paths $P_i$ at distance more than $r$ from $v_i$.
    

    For $1 \leq i \leq d$, denote by $V_i[a, b]$ the set of all vertices in $P_i^\ast$ at distance between $a$ and $b$ \ep{inclusive} from $v_i$. Let $\mathcal{S}_i$ be the $(d+2)$-search on $H^\ast$ of length $2^{i - 1}r$ where for each $1 \le t \le 2^{i - 1}r$,
    \[
        (S_i)_t = \{v_1, \ldots, v_d\} \cup V_i[2^{i - 1}r - t, 2^{i - 1}r - t + 1].
    \]
    Let $\mathcal{S}^\ast = \mathcal{S}_1 \concat \cdots \concat \mathcal{S}_d$ and $\mathcal{S} = \vee_\mathcal{E}(\mathcal{S}^\ast)$. We claim that $\mathcal{S}$ is as desired. 
    
    It is clear that $\mathcal{S}$ is a $3$-search \ep{since the vertices $v_1$, \ldots, $v_d$ represent the same equivalence class}. The length of $\mathcal{S}$ is equal to the length of $\mathcal{S}^\ast$, which is
    \[
        r + 2r + 2^2 r + \cdots + 2^{d-1}r = (2^d - 1)r.
    \]
    By construction, $v \in S_t$ for all $t$.
    
    For $t < (2^{i-1}-1)r$ (the first turn of $\mathcal{S}_i$ in $\mathcal{S}'$), we have $V(P_i) \setminus \FC_{t}(\mathcal{S}^\ast, A^\ast) = V_i[0, r + t]$. Thus,
    \[
        V(P_i) \setminus \FC_{(2^{i-1}-1)r}(\mathcal{S}^\ast, A^\ast) = V_i[0, 2^{i-1}r].
    \]
    By turn $(2^i-1)r$, all the vertices in $P_i$ will become fully cleared and will remain so for the remainder of $\mathcal{S}^\ast$. Since $V(H') \subseteq A^\ast$, we conclude that $\FC_*(\mathcal{S}^\ast, A^\ast) = V(H^\ast)$.

    Note that each $v_i$ is in $S_t^\ast$ for all $t$, while $u_i$ and $u_i^\ast$ are in $\PC_t(\mathcal{S}^\ast, A^\ast)$ for all $t$. This shows that $\mathcal{S}^\ast$ is $\mathcal{E}$-invariant over $A^\ast$. Hence, by Proposition~\ref{prop:quotient_search}, $\FC_\ast(\mathcal{S}, A) = V(H)$, as desired. 
    Finally, $\mathcal{S}$ is aligned to $(w, v)$ over $A$, since every vertex of $G$ except $v$ is fully cleared throughout, while $v$ does not become fully cleared until the last move.
\end{proof}

    \subsection{Amalgamation} In this subsection we combine aligned searches by the GSP operations. 

    \begin{prop} Let $(G_0, a, c)$ and $(G_1, c, b)$ be connected graphs with terminals such that $V(G_0) \cap V(G_1) = \{c\}$ and let $(G, a, b) = (G_0, a, c) \circ_s (G_1, c, b)$. Suppose that for each $i \in \{0, 1\}$, there is a subdivision $H_i$ of $G_i$ admitting a successful $3$-search $\mathcal{S}_i$ aligned to the corresponding terminals. Then there is a subdivision of $G$ admitting a successful $3$-search aligned to $(a, b)$.
    \label{prop:amalgamation_s}
    \end{prop}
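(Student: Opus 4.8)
The plan is to build the desired subdivision of $G$ by gluing $H_0$ and $H_1$ along the shared terminal $c$, and to obtain the desired search by concatenating $\mathcal{S}_0$ and $\mathcal{S}_1$. Concretely, let $H = H_0 \cup H_1$; since $H_i$ is a subdivision of $G_i$ and $G_0$, $G_1$ meet only at the vertex $c$ (which is not subdivided), $H$ is a subdivision of $G$. Writing $\ell_i = \abs{\mathcal{S}_i}$, I set $\mathcal{S} = \mathcal{S}_0 \concat \mathcal{S}_1$ and claim that $\mathcal{S}$ is a successful $3$-search on $H$ aligned to $(a, b)$. The verification splits into the two phases corresponding to $\mathcal{S}_0$ and $\mathcal{S}_1$, and the only delicate point throughout is the behaviour of the cut vertex $c$ (note that $a \neq c$ and $b \neq c$ since terminals are distinct, and $b \notin V(H_0)$ since $V(G_0)\cap V(G_1)=\{c\}$).

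For the first phase (turns $1 \le t \le \ell_0$) I would prove by induction on $t$ that $\PC_t(\mathcal{S}) = \PC_t(\mathcal{S}_0)$ and $\FC_t(\mathcal{S}) = \FC_t(\mathcal{S}_0) \setminus \{c\}$, where the right-hand sides are computed in $H_0$. The point is that every searched set $(S_0)_t$ lies in $V(H_0)$, so no vertex of $H_1 \setminus \{c\}$ is ever pre-cleared during this phase; for $v \in V(H_0) \setminus \{c\}$ all neighbours lie in $H_0$, so its boundary status is unchanged, whereas $c$ always has a contaminated neighbour in $H_1$ and is therefore permanently on the boundary of $\PC_t(\mathcal{S})$. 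Here the alignment of $\mathcal{S}_0$ to $(a,c)$ is exactly what makes the induction go through: since $c \notin \FC_t(\mathcal{S}_0)$ for $t < \ell_0$, removing $c$ from $\FC_t(\mathcal{S}_0)$ changes nothing before the last turn, so the recursions for $\mathcal{S}$ and $\mathcal{S}_0$ stay synchronized. At $t = \ell_0$ this gives $\FC_{\ell_0}(\mathcal{S}) = V(H_0)\setminus\{c\}$, and along the way $a \in \PC_t(\mathcal{S}_0) = \PC_t(\mathcal{S})$ while $b \in V(H_1)\setminus V(H_0)$ guarantees $b \notin \FC_t(\mathcal{S})$.

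For the second phase (turns $\ell_0 + 1 \le \ell_0 + s \le \ell_0 + \ell_1$) I would show by induction on $s$ that
\[
\PC_{\ell_0+s}(\mathcal{S}) = (V(H_0)\setminus\{c\}) \cup \PC_s(\mathcal{S}_1) \quad\text{and}\quad \FC_{\ell_0+s}(\mathcal{S}) = (V(H_0)\setminus\{c\}) \cup \FC_s(\mathcal{S}_1),
\]
with the $\mathcal{S}_1$-quantities computed in $H_1$. The crucial input now is that $\mathcal{S}_1$ is aligned to $(c,b)$, so $c \in \PC_s(\mathcal{S}_1)$ for every $s \ge 1$: this keeps $c$ pre-cleared throughout, which in turn keeps every neighbour of $c$ in $H_0$ (and hence, propagating outward, all of $V(H_0)\setminus\{c\}$) fully cleared and prevents any recontamination of the already-cleared half. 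Because $H_0$ and $H_1$ share only $c$, the boundary of $(V(H_0)\setminus\{c\}) \cup \PC_s(\mathcal{S}_1)$ in $H$ meets $V(H_1)$ exactly in $\partial_{H_1}(\PC_s(\mathcal{S}_1))$ — including at $c$, whose $H_0$-neighbours are all cleared — so the $H_1$-part of the dynamics reproduces $\mathcal{S}_1$ verbatim. Taking $s = \ell_1$ and using that $\mathcal{S}_1$ is successful yields $\FC_*(\mathcal{S}) = (V(H_0)\setminus\{c\}) \cup V(H_1) = V(H)$, so $\mathcal{S}$ succeeds; and $b \notin \FC_s(\mathcal{S}_1)$ for $s < \ell_1$ together with $a \in V(H_0)\setminus\{c\}$ finishes the check that $\mathcal{S}$ is aligned to $(a,b)$.

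The main obstacle is precisely this bookkeeping at the cut vertex $c$: one must verify that $c$'s extra neighbours in the other half never cause spurious recontamination nor prematurely clear $c$. Both alignment hypotheses are used for exactly this purpose — the ``$c$ not fully cleared'' half of the $\mathcal{S}_0$-alignment makes Phase 1 agree with $\mathcal{S}_0$, while the ``$c$ always pre-cleared'' half of the $\mathcal{S}_1$-alignment makes Phase 2 protect the cleared subgraph $H_0$. I expect no further subdivision of $G$ to be needed, so the statement follows directly with $H = H_0 \cup H_1$ and $\mathcal{S} = \mathcal{S}_0 \concat \mathcal{S}_1$.
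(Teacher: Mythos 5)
Your proposal is correct, and the construction is exactly the paper's: the subdivision is $H = H_0 \cup H_1$, the search is $\mathcal{S}_0 \concat \mathcal{S}_1$, and the two alignment hypotheses are used for precisely the purpose the paper uses them --- keeping $c$ out of the fully cleared set throughout the first phase and in the pre-cleared set throughout the second. The only difference is in how the verification is organised. The paper splits $c$ into two copies $c_0 \in H_0$ and $c_1 \in H_1$, works in the disjoint union, shows that the duplicated search is invariant under the equivalence relation identifying $c_0$ with $c_1$, and then invokes Proposition~\ref{prop:quotient_search} to transfer the conclusion to the quotient. You instead verify the clearing recursion directly, with the explicit closed forms $\PC_t(\mathcal{S}) = \PC_t(\mathcal{S}_0)$ and $\FC_t(\mathcal{S}) = \FC_t(\mathcal{S}_0)\setminus\{c\}$ in the first phase, and $(V(H_0)\setminus\{c\}) \cup \PC_s(\mathcal{S}_1)$ and $(V(H_0)\setminus\{c\}) \cup \FC_s(\mathcal{S}_1)$ in the second; I checked these and they hold (the one small point worth stating explicitly is that $c$ has at least one neighbour in $V(H_1)\setminus\{c\}$, which follows since $G_1$ is connected with two distinct terminals, so $c$ really is permanently on the boundary during phase one). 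Your route is more elementary and self-contained for this particular proposition; what the paper's quotient formalism buys is reusability, since the same $\mathcal{E}$-invariance template carries the substantially heavier bookkeeping in Propositions~\ref{prop:amalgamation_b}, \ref{prop:amalgamation_b2}, and \ref{prop:amalgamation_p}, where a direct turn-by-turn computation would be much more painful.
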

\begin{figure}[t]
\begin{center}
\begin{tikzpicture}[scale=0.9,x=0.75pt,y=0.75pt,yscale=-1,xscale=1]

\draw    (73.12,251.47) .. controls (46.12,233.47) and (33.12,179.47) .. (73.12,149.47) ;
\draw    (73.12,251.47) .. controls (102.12,225.47) and (104.12,181.47) .. (73.12,149.47) ;
\draw  [fill={rgb, 255:red, 0; green, 0; blue, 0 }  ,fill opacity=1 ] (70.12,250.69) .. controls (70.12,249.15) and (71.36,247.91) .. (72.9,247.91) .. controls (74.43,247.91) and (75.68,249.15) .. (75.68,250.69) .. controls (75.68,252.22) and (74.43,253.47) .. (72.9,253.47) .. controls (71.36,253.47) and (70.12,252.22) .. (70.12,250.69) -- cycle ;
\draw  [fill={rgb, 255:red, 0; green, 0; blue, 0 }  ,fill opacity=1 ] (69.56,150.47) .. controls (69.56,148.93) and (70.8,147.69) .. (72.34,147.69) .. controls (73.87,147.69) and (75.12,148.93) .. (75.12,150.47) .. controls (75.12,152) and (73.87,153.25) .. (72.34,153.25) .. controls (70.8,153.25) and (69.56,152) .. (69.56,150.47) -- cycle ;
\draw    (72.9,150.69) .. controls (45.9,132.69) and (32.9,78.69) .. (72.9,48.69) ;
\draw    (73.12,151.47) .. controls (102.12,125.47) and (104.12,81.47) .. (73.12,49.47) ;
\draw  [fill={rgb, 255:red, 0; green, 0; blue, 0 }  ,fill opacity=1 ] (69.56,50.47) .. controls (69.56,48.93) and (70.8,47.69) .. (72.34,47.69) .. controls (73.87,47.69) and (75.12,48.93) .. (75.12,50.47) .. controls (75.12,52) and (73.87,53.25) .. (72.34,53.25) .. controls (70.8,53.25) and (69.56,52) .. (69.56,50.47) -- cycle ;
\draw [color={rgb, 255:red, 24; green, 61; blue, 247 }  ,draw opacity=1, very thick ]   (85,249) .. controls (111.71,240.59) and (128.6,203.6) .. (106.17,172.87) ;
\draw [shift={(105.12,171.47)}, rotate = 412.25] [color={rgb, 255:red, 24; green, 61; blue, 247 }  ,draw opacity=1, very thick ][line width=0.75]    (10.93,-3.29) .. controls (6.95,-1.4) and (3.31,-0.3) .. (0,0) .. controls (3.31,0.3) and (6.95,1.4) .. (10.93,3.29)   ;
\draw [color={rgb, 255:red, 24; green, 61; blue, 247 }  ,draw opacity=1, very thick ]   (75,239) .. controls (95.59,212.15) and (90.4,184.87) .. (74.37,165.91) ;
\draw [shift={(73.12,164.47)}, rotate = 408.18] [color={rgb, 255:red, 24; green, 61; blue, 247 }  ,draw opacity=1, very thick ][line width=0.75]    (10.93,-3.29) .. controls (6.95,-1.4) and (3.31,-0.3) .. (0,0) .. controls (3.31,0.3) and (6.95,1.4) .. (10.93,3.29)   ;
\draw [color={rgb, 255:red, 24; green, 61; blue, 247 }  ,draw opacity=1, very thick ]   (56,246) .. controls (26.71,219.02) and (26.13,183.9) .. (48.71,164.63) ;
\draw [shift={(50.12,163.47)}, rotate = 501.63] [color={rgb, 255:red, 24; green, 61; blue, 247 }  ,draw opacity=1, very thick ][line width=0.75]    (10.93,-3.29) .. controls (6.95,-1.4) and (3.31,-0.3) .. (0,0) .. controls (3.31,0.3) and (6.95,1.4) .. (10.93,3.29)   ;
\draw [line width=3]    (134,154) -- (206.12,154.44) ;
\draw [shift={(211.12,154.47)}, rotate = 180.35] [color={rgb, 255:red, 0; green, 0; blue, 0 }  ][line width=3]    (20.77,-6.25) .. controls (13.2,-2.65) and (6.28,-0.57) .. (0,0) .. controls (6.28,0.57) and (13.2,2.66) .. (20.77,6.25)   ;
\draw [color={rgb, 255:red, 24; green, 61; blue, 247 }  ,draw opacity=1, very thick ]   (264.12,251.47) .. controls (237.12,233.47) and (224.12,179.47) .. (264.12,149.47) ;
\draw [color={rgb, 255:red, 24; green, 61; blue, 247 }  ,draw opacity=1, very thick ]   (264.12,251.47) .. controls (293.12,225.47) and (295.12,181.47) .. (264.12,149.47) ;
\draw  [color={rgb, 255:red, 24; green, 61; blue, 247 }  ,draw opacity=1, very thick ][fill={rgb, 255:red, 24; green, 61; blue, 247 }  ,fill opacity=1 ] (261.12,250.69) .. controls (261.12,249.15) and (262.36,247.91) .. (263.9,247.91) .. controls (265.43,247.91) and (266.68,249.15) .. (266.68,250.69) .. controls (266.68,252.22) and (265.43,253.47) .. (263.9,253.47) .. controls (262.36,253.47) and (261.12,252.22) .. (261.12,250.69) -- cycle ;
\draw  [fill={rgb, 255:red, 0; green, 0; blue, 0 }  ,fill opacity=1 ] (260.56,150.47) .. controls (260.56,148.93) and (261.8,147.69) .. (263.34,147.69) .. controls (264.87,147.69) and (266.12,148.93) .. (266.12,150.47) .. controls (266.12,152) and (264.87,153.25) .. (263.34,153.25) .. controls (261.8,153.25) and (260.56,152) .. (260.56,150.47) -- cycle ;
\draw    (263.9,150.69) .. controls (236.9,132.69) and (223.9,78.69) .. (263.9,48.69) ;
\draw    (264.12,151.47) .. controls (293.12,125.47) and (295.12,81.47) .. (264.12,49.47) ;
\draw  [fill={rgb, 255:red, 0; green, 0; blue, 0 }  ,fill opacity=1 ] (260.56,50.47) .. controls (260.56,48.93) and (261.8,47.69) .. (263.34,47.69) .. controls (264.87,47.69) and (266.12,48.93) .. (266.12,50.47) .. controls (266.12,52) and (264.87,53.25) .. (263.34,53.25) .. controls (261.8,53.25) and (260.56,52) .. (260.56,50.47) -- cycle ;
\draw [color={rgb, 255:red, 24; green, 61; blue, 247 }  ,draw opacity=1, very thick ]   (277,148) .. controls (303.71,139.59) and (320.6,102.6) .. (298.17,71.87) ;
\draw [shift={(297.12,70.47)}, rotate = 412.25] [color={rgb, 255:red, 24; green, 61; blue, 247 }  ,draw opacity=1, very thick ][line width=0.75]    (10.93,-3.29) .. controls (6.95,-1.4) and (3.31,-0.3) .. (0,0) .. controls (3.31,0.3) and (6.95,1.4) .. (10.93,3.29)   ;
\draw [color={rgb, 255:red, 24; green, 61; blue, 247 }  ,draw opacity=1, very thick ]   (267,138) .. controls (287.59,111.15) and (282.4,83.87) .. (266.37,64.91) ;
\draw [shift={(265.12,63.47)}, rotate = 408.18] [color={rgb, 255:red, 24; green, 61; blue, 247 }  ,draw opacity=1, very thick ][line width=0.75]    (10.93,-3.29) .. controls (6.95,-1.4) and (3.31,-0.3) .. (0,0) .. controls (3.31,0.3) and (6.95,1.4) .. (10.93,3.29)   ;
\draw [color={rgb, 255:red, 24; green, 61; blue, 247 }  ,draw opacity=1, very thick ]   (248,145) .. controls (218.71,118.02) and (218.13,82.9) .. (240.71,63.63) ;
\draw [shift={(242.12,62.47)}, rotate = 501.63] [color={rgb, 255:red, 24; green, 61; blue, 247 }  ,draw opacity=1, very thick ][line width=0.75]    (10.93,-3.29) .. controls (6.95,-1.4) and (3.31,-0.3) .. (0,0) .. controls (3.31,0.3) and (6.95,1.4) .. (10.93,3.29)   ;
\draw [line width=3]    (343,154) -- (415.12,154.44) ;
\draw [shift={(420.12,154.47)}, rotate = 180.35] [color={rgb, 255:red, 0; green, 0; blue, 0 }  ][line width=3]    (20.77,-6.25) .. controls (13.2,-2.65) and (6.28,-0.57) .. (0,0) .. controls (6.28,0.57) and (13.2,2.66) .. (20.77,6.25)   ;
\draw [color={rgb, 255:red, 24; green, 61; blue, 247 }  ,draw opacity=1, very thick ]   (473.12,251.47) .. controls (446.12,233.47) and (433.12,179.47) .. (473.12,149.47) ;
\draw [color={rgb, 255:red, 24; green, 61; blue, 247 }  ,draw opacity=1, very thick ]   (473.12,251.47) .. controls (502.12,225.47) and (504.12,181.47) .. (473.12,149.47) ;
\draw  [color={rgb, 255:red, 24; green, 61; blue, 247 }  ,draw opacity=1, very thick ][fill={rgb, 255:red, 24; green, 61; blue, 247 }  ,fill opacity=1 ] (470.12,250.69) .. controls (470.12,249.15) and (471.36,247.91) .. (472.9,247.91) .. controls (474.43,247.91) and (475.68,249.15) .. (475.68,250.69) .. controls (475.68,252.22) and (474.43,253.47) .. (472.9,253.47) .. controls (471.36,253.47) and (470.12,252.22) .. (470.12,250.69) -- cycle ;
\draw  [color={rgb, 255:red, 24; green, 61; blue, 247 }  ,draw opacity=1, very thick ][fill={rgb, 255:red, 24; green, 61; blue, 247 }  ,fill opacity=1 ] (469.56,150.47) .. controls (469.56,148.93) and (470.8,147.69) .. (472.34,147.69) .. controls (473.87,147.69) and (475.12,148.93) .. (475.12,150.47) .. controls (475.12,152) and (473.87,153.25) .. (472.34,153.25) .. controls (470.8,153.25) and (469.56,152) .. (469.56,150.47) -- cycle ;
\draw [color={rgb, 255:red, 24; green, 61; blue, 247 }  ,draw opacity=1, very thick ]   (472.9,150.69) .. controls (445.9,132.69) and (432.9,78.69) .. (472.9,48.69) ;
\draw [color={rgb, 255:red, 24; green, 61; blue, 247 }  ,draw opacity=1, very thick ]   (473.12,151.47) .. controls (502.12,125.47) and (504.12,81.47) .. (473.12,49.47) ;
\draw  [color={rgb, 255:red, 24; green, 61; blue, 247 }  ,draw opacity=1, very thick ][fill={rgb, 255:red, 24; green, 61; blue, 247 }  ,fill opacity=1 ] (469.56,50.47) .. controls (469.56,48.93) and (470.8,47.69) .. (472.34,47.69) .. controls (473.87,47.69) and (475.12,48.93) .. (475.12,50.47) .. controls (475.12,52) and (473.87,53.25) .. (472.34,53.25) .. controls (470.8,53.25) and (469.56,52) .. (469.56,50.47) -- cycle ;

\draw (58,192.4) node [anchor=north west][inner sep=0.75pt]    {$H_{0}$};
\draw (62,92.4) node [anchor=north west][inner sep=0.75pt]    {$H_{1}$};
\draw (58,246.4) node [anchor=north west][inner sep=0.75pt]    {$a$};
\draw (54,144.4) node [anchor=north west][inner sep=0.75pt]    {$c$};
\draw (58,33.4) node [anchor=north west][inner sep=0.75pt]    {$b$};
\draw (116,222.4) node [anchor=north west][inner sep=0.75pt]  [color={rgb, 255:red, 24; green, 61; blue, 247 }  ,opacity=1 ]  {$\mathcal{S}_{0}$};
\draw (252,192.4) node [anchor=north west][inner sep=0.75pt]  [color={rgb, 255:red, 24; green, 61; blue, 247 }  ,opacity=1 ]  {$H_{0}$};
\draw (249,92.4) node [anchor=north west][inner sep=0.75pt]    {$H_{1}$};
\draw (249,246.4) node [anchor=north west][inner sep=0.75pt]  [color={rgb, 255:red, 24; green, 61; blue, 247 }  ,opacity=1 ]  {$a$};
\draw (245,144.4) node [anchor=north west][inner sep=0.75pt]    {$c$};
\draw (249,33.4) node [anchor=north west][inner sep=0.75pt]    {$b$};
\draw (315,104.4) node [anchor=north west][inner sep=0.75pt]  [color={rgb, 255:red, 24; green, 61; blue, 247 }  ,opacity=1 ]  {$\mathcal{S}_{1}$};
\draw (461,192.4) node [anchor=north west][inner sep=0.75pt]  [color={rgb, 255:red, 24; green, 61; blue, 247 }  ,opacity=1 ]  {$H_{0}$};
\draw (462,92.4) node [anchor=north west][inner sep=0.75pt]  [color={rgb, 255:red, 24; green, 61; blue, 247 }  ,opacity=1 ]  {$H_{1}$};
\draw (458,246.4) node [anchor=north west][inner sep=0.75pt]  [color={rgb, 255:red, 24; green, 61; blue, 247 }  ,opacity=1 ]  {$a$};
\draw (454,144.4) node [anchor=north west][inner sep=0.75pt]  [color={rgb, 255:red, 24; green, 61; blue, 247 }  ,opacity=1 ]  {$c$};
\draw (458,33.4) node [anchor=north west][inner sep=0.75pt]  [color={rgb, 255:red, 24; green, 61; blue, 247 }  ,opacity=1 ]  {$b$};
\end{tikzpicture}
\end{center}
\caption{A pictorial representation of the construction in Proposition~\ref{prop:amalgamation_s}.}\label{fig:amalgam_s}
\end{figure}
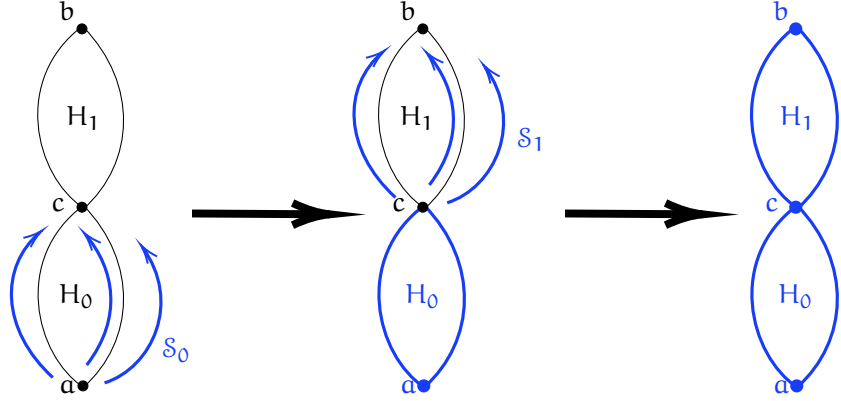
\begin{proof}
    The proof is illustrated in Fig.~\ref{fig:amalgam_s}. The desired subdivision of $G$ is simply the union of $H_0$ and $H_1$, and the desired $3$-search is the concatenation $\mathcal{S}_0 \concat \mathcal{S}_1$. To aid in our analysis, we let $H$ be the \emph{disjoint} union of the graphs $H_0$ and $H_1$ and denote the copy of $c$ in each $H_i \subset H$ by $c_i$.  
    Let $\mathcal{E}$ be the equivalence relation on $V(H)$ with $\{c_0, c_1\}$ as the only nontrivial equivalence class. Then $H/\mathcal{E}$ can be naturally identified with the \ep{non-disjoint} union of $H_0$ and $H_1$. 

    Construct searches $\mathcal{S}_0'$ and $\mathcal{S}_1'$ on $H$ from $\mathcal{S}_0$ and $\mathcal{S}_1$ respectively by replacing every appearance of $c$ by both $c_0$ and $c_1$, 
    and let $\mathcal{S} = \mathcal{S}_0' \concat \mathcal{S}_1'$. Then $\lor_{\mathcal{E}}(\mathcal{S})$ is a $3$-search on $H / \mathcal{E}$. \ep{If we identify $H/\mathcal{E}$ with the union of $H_0$ and $H_1$, then $\lor_{\mathcal{E}}(\mathcal{S}) = \mathcal{S}_0 \concat \mathcal{S}_1$.}

    We now check that $\mathcal{S}$ is $\mathcal{E}$-invariant. To this end, we need to show that for all $t$, $c_0 \in \PC_t(\mathcal{S}) \,\Longleftrightarrow \, c_1 \in \PC_t(\mathcal{S})$. 
    We claim that for $t \leq \abs{\mathcal{S}_0}$, 
        \[
            c_0 \in \PC_t(\mathcal{S}) \, \Longleftrightarrow \, c_0 \in S_t \, \Longleftrightarrow \, c_1 \in S_t \, \Longleftrightarrow \, c_1 \in \PC_t(\mathcal{S}).
        \]
    The first equivalence follows since $\mathcal{S}_0$ is aligned to $(a, c)$ and hence $c_0 \not\in \FC_{t-1}(\mathcal{S})$ for any $t \leq \abs{\mathcal{S}_0}$. The second equivalence holds by the construction of $\mathcal{S}$. The final equivalence holds since no neighbor of $c_1$ is pre-cleared during the first $\abs{\mathcal{S}_0}$ moves. 
        
    Now suppose $t > \abs{\mathcal{S}_0}$. Then both $c_0$ and $c_1$ are pre-cleared. Indeed, $c_0$ is fully cleared since $\mathcal{S}_0$ is a successful search on $H_0$, while $c_1$ is pre-cleared since $\mathcal{S}_1$ is aligned to $(c, b)$.

    Since $\mathcal{S}$ is $\mathcal{E}$-invariant, we may apply Proposition~\ref{prop:quotient_search} to conclude that $\lor_{\mathcal{E}}(\mathcal{S})$ is successful and aligned to $(a, b)$, as desired.
\end{proof}

    \begin{prop}
        Let $(G_0, a, c)$ and $(G_1, a, c)$ be connected graphs with terminals such that $V(G_0) \cap V(G_1) = \{a\}$ and let $(G, a, b) = (G_0, a, b) \circ_b (G_1, a, c)$. Suppose that there is a subdivision $H_1$ of $G_1$ with a successful $3$-search $\mathcal{S}_1$ aligned to $(a, c)$, and every subdivision of $G_0$ has a further subdivision with a successful $3$-search aligned to $(a, b)$. Then there is a subdivision of $G$ admitting a successful $3$-search aligned to $(a, b)$.
    \label{prop:amalgamation_b}
    \end{prop}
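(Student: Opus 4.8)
The plan is to mirror the amalgamation for $\circ_s$ in Proposition~\ref{prop:amalgamation_s}, but with the branch $G_1$ cleared first and the shared anchor $a$ held throughout. Using the hypothesis on $G_0$ (applied to any subdivision), fix a subdivision $H_0$ of $G_0$ carrying a successful $3$-search $\mathcal{S}_0$ aligned to $(a,b)$. Form the \emph{disjoint} union $H$ of $H_0$ and $H_1$, write $a_0$, $a_1$ for the two copies of $a$, and let $\mathcal{E}$ be the equivalence relation whose only nontrivial class is $\{a_0,a_1\}$; then $H/\mathcal{E}$ is the union of $H_0$ and $H_1$ along $a$, i.e.\ the desired subdivision of $G$. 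Build $\mathcal{S}_i'$ from $\mathcal{S}_i$ by replacing every occurrence of $a$ by both $a_0$ and $a_1$, and set $\mathcal{S} = \mathcal{S}_1' \concat \mathcal{S}_0'$ (branch phase followed by the $G_0$-phase). Since $[a]$ is a single class, $\vee_\mathcal{E}(\mathcal{S})$ is a $3$-search on $H/\mathcal{E}$, and it suffices to verify that $\mathcal{S}$ is $\mathcal{E}$-invariant, that $\FC_*(\mathcal{S}) = V(H)$, and that $\vee_\mathcal{E}(\mathcal{S})$ is aligned to $(a,b)$; Proposition~\ref{prop:quotient_search} then transports these facts to the quotient.

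The $G_0$-phase is the easy half. In the disjoint union the neighbours of $a_1$ all lie in $H_1$, so once $\mathcal{S}_1'$ has cleared $H_1$, a short induction shows $V(H_1) \subseteq \PC_t(\mathcal{S})$ for every later $t$: the vertices of $H_1 \setminus \{a_1\}$ stay fully cleared because all their neighbours stay pre-cleared, and $a_1$ is re-pre-cleared at each step either because $\mathcal{S}_0$ inspects $a$ or, once $H_1$ is sealed, because $a_1$ is itself fully cleared in $H$. Hence $a_1 \in \PC_t(\mathcal{S})$ throughout this phase, while $a_0 \in \PC_t(\mathcal{S})$ by the alignment of $\mathcal{S}_0$ to $(a,b)$, so $\PC_t(\mathcal{S})$ is $\mathcal{E}$-invariant here. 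Alignment of the result to $(a,b)$ is then immediate: $a$ is pre-cleared at every turn of both phases, while $b \in V(H_0)\setminus\{a\}$ is untouched during the first phase and is not fully cleared before the last move of $\mathcal{S}_0$.

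The first phase is where the real difficulty lies. During it all of $H_0 \setminus \{a_0\}$ is contaminated, so $a_0$ can never be fully cleared and is pre-cleared \emph{exactly} at the turns at which it is inspected; on the other side $a_1 \in \PC_t(\mathcal{S})$ holds at every turn by the alignment of $\mathcal{S}_1$ to $(a,c)$. Thus $\mathcal{E}$-invariance in the first phase forces $a$ to be inspected at \emph{every} turn of the branch search. This is unavoidable rather than an artefact of the construction: since we want $\vee_\mathcal{E}(\mathcal{S})$ aligned to $(a,b)$ we need $a \in \PC_t(\vee_\mathcal{E}(\mathcal{S}))$ for all $t$, and while $G_0$ is still dirty the merged anchor has contaminated neighbours, so it can only stay pre-cleared by being inspected. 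The crux is therefore to arrange that $\mathcal{S}_1$ inspects $a$ on every move while using at most two other vertices, so that $\vee_\mathcal{E}(\mathcal{S})$ remains a $3$-search.

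I expect this anchor-guarding requirement to be the main obstacle. I would handle it by strengthening what is carried for the branch: instead of an arbitrary search aligned to $(a,c)$, I would use (or first establish) that $G_1$ has a subdivision with a successful $3$-search that is aligned to $(a,c)$ \emph{and} inspects $a$ at every turn. This is precisely the form produced by the ball-clearing construction of Lemma~\ref{lemma:clearing_from_bottom}, where the distinguished vertex is inspected on every move, and it is the natural invariant to thread through the induction feeding this proposition. Granting it, the first-phase analysis collapses: both $a_0$ and $a_1$ are pre-cleared at every turn, so $\PC_t(\mathcal{S})$ is $\mathcal{E}$-invariant; $\mathcal{S}_1'$ clears $H_1$ and $\mathcal{S}_0'$ clears $H_0$, giving $\FC_*(\mathcal{S}) = V(H)$; and Proposition~\ref{prop:quotient_search} yields a successful $3$-search on $H/\mathcal{E}$ aligned to $(a,b)$, as desired.
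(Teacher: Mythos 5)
Your diagnosis of the obstacle is exactly right: with the two-phase concatenation $\mathcal{S}_1'\concat\mathcal{S}_0'$, during the branch phase $a_0$ has only contaminated neighbours in $H_0$, so $a_0\in\PC_t(\mathcal{S})$ iff $a\in (S_1)_t$, while $a_1\in\PC_t(\mathcal{S})$ always; $\mathcal{E}$-invariance therefore forces $a$ to be inspected at every turn of $\mathcal{S}_1$. The gap is in your proposed resolution. A successful $3$-search on a subdivision of $G_1$ that pins one of its three slots on $a$ at every turn is in general impossible: take $G_1$ to be a long path from $a$ to a vertex $v$ followed by a cycle through $v$ and $c$. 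With $a$ permanently occupied, only two searched vertices per turn remain for the cycle, and no $2$-search clears a cycle (this is exactly the argument in Proposition~\ref{prop:paths}). The same difficulty shows the strengthened property cannot be ``threaded through the induction'' feeding Theorem~\ref{theo:searching}: already in the series amalgamation of Proposition~\ref{prop:amalgamation_s} the terminal $a$ is \emph{not} inspected during the entire second phase (it survives because it is fully cleared and shielded by the cut vertex), so the invariant ``$a$ is searched on every move'' is destroyed at the very first $\circ_s$ node. Reordering the two phases does not help, by symmetry.

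The paper's fix is different and is the idea you are missing: protect $a_0$ during the branch phase by a \emph{buffer} rather than by a pinned searcher. One first chooses $H_0$ so that every edge of $G_0$ incident to $a$ is subdivided at least $2^{d-1}\abs{\mathcal{S}_1}+1$ times, and runs a preliminary $3$-search $\mathcal{S}_a$ on $H_0$ (Lemma~\ref{lemma:clearing_from_bottom}) which does inspect $a$ on every move, is aligned to $(a,b)$, and ends with $B_{H_0}(a,\abs{\mathcal{S}_1})\subseteq\FC_*(\mathcal{S}_a)$. The full search is $\mathcal{S}_a\concat\mathcal{S}_1\concat\mathcal{S}_0$: during $\mathcal{S}_a$ both copies of $a$ are in every searched set, so $\mathcal{E}$-invariance is immediate; during $\mathcal{S}_1$ the vertex $a_0$ stays pre-cleared not because it is inspected but because recontamination advances one step per turn and cannot traverse the cleared ball of radius $\abs{\mathcal{S}_1}$ within $\abs{\mathcal{S}_1}$ moves; the final phase is handled as in your write-up. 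So Lemma~\ref{lemma:clearing_from_bottom} enters the proof as a ball-clearing device applied to $H_0$, not as a template constraining the branch search $\mathcal{S}_1$, which is left untouched.
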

    
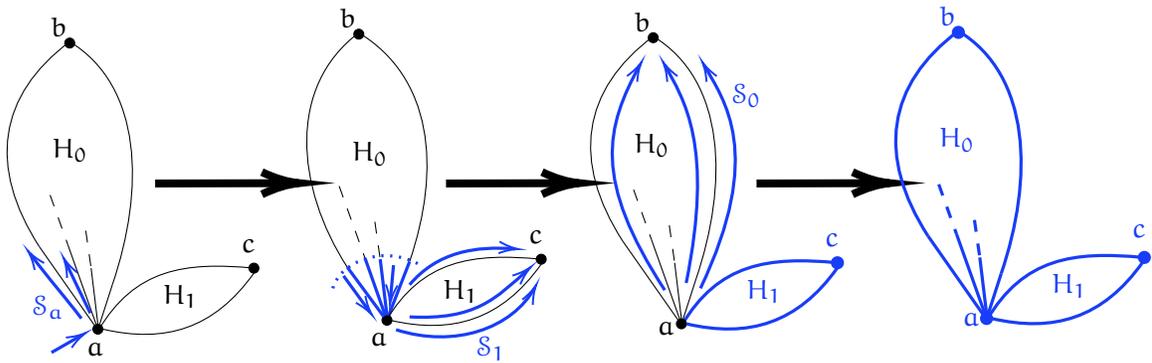
\begin{figure}[b]
\begin{center}
\begin{tikzpicture}[scale=0.9,x=0.75pt,y=0.75pt,yscale=-1,xscale=1]

\draw [color={rgb, 255:red, 24; green, 61; blue, 247 }  ,draw opacity=1, very thick]   (387.12,198.47) .. controls (407.58,163.8) and (435.58,159.8) .. (474.34,163.47) ;
\draw    (27.12,154.47) .. controls (-1.88,109.47) and (4.12,69.47) .. (44.12,39.47) ;
\draw    (68.12,175.47) .. controls (86.12,122.47) and (85.12,64.47) .. (44.12,39.47) ;
\draw  [fill={rgb, 255:red, 0; green, 0; blue, 0 }  ,fill opacity=1 ] (57.12,200.69) .. controls (57.12,199.15) and (58.36,197.91) .. (59.9,197.91) .. controls (61.43,197.91) and (62.68,199.15) .. (62.68,200.69) .. controls (62.68,202.22) and (61.43,203.47) .. (59.9,203.47) .. controls (58.36,203.47) and (57.12,202.22) .. (57.12,200.69) -- cycle ;
\draw  [fill={rgb, 255:red, 0; green, 0; blue, 0 }  ,fill opacity=1 ] (41.34,40.25) .. controls (41.34,38.71) and (42.58,37.47) .. (44.12,37.47) .. controls (45.65,37.47) and (46.9,38.71) .. (46.9,40.25) .. controls (46.9,41.78) and (45.65,43.02) .. (44.12,43.02) .. controls (42.58,43.02) and (41.34,41.78) .. (41.34,40.25) -- cycle ;
\draw  [fill={rgb, 255:red, 0; green, 0; blue, 0 }  ,fill opacity=1 ] (144.56,166.47) .. controls (144.56,164.93) and (145.8,163.69) .. (147.34,163.69) .. controls (148.87,163.69) and (150.12,164.93) .. (150.12,166.47) .. controls (150.12,168) and (148.87,169.25) .. (147.34,169.25) .. controls (145.8,169.25) and (144.56,168) .. (144.56,166.47) -- cycle ;
\draw    (62.68,200.69) .. controls (97.58,208.8) and (125.12,200.47) .. (147.34,169.25) ;
\draw    (60.12,201.47) .. controls (80.58,166.8) and (108.58,162.8) .. (147.34,166.47) ;
\draw    (27.12,154.47) -- (58.12,198.69) ;
\draw    (43.12,153.47) -- (58.9,197.91) ;
\draw    (68.12,175.47) -- (60.9,197.91) ;
\draw    (56.12,166.47) -- (59.9,197.91) ;
\draw  [dash pattern={on 4.5pt off 4.5pt}]  (33.12,125.47) -- (43.12,153.47) ;
\draw  [dash pattern={on 4.5pt off 4.5pt}]  (53.12,145.47) -- (56.12,166.47) ;
\draw [color={rgb, 255:red, 24; green, 61; blue, 247 }  ,draw opacity=1, very thick]   (50.67,194.73) .. controls (41.98,183.15) and (32.37,172.5) .. (21.82,158.29) ;
\draw [shift={(20.67,156.73)}, rotate = 413.75] [color={rgb, 255:red, 24; green, 61; blue, 247 }  ,draw opacity=1, very thick][line width=0.75]    (10.93,-3.29) .. controls (6.95,-1.4) and (3.31,-0.3) .. (0,0) .. controls (3.31,0.3) and (6.95,1.4) .. (10.93,3.29)   ;
\draw [color={rgb, 255:red, 24; green, 61; blue, 247 }  ,draw opacity=1, very thick]   (55.67,191.73) .. controls (50.03,180.45) and (46.15,171.82) .. (41.56,162.52) ;
\draw [shift={(40.67,160.73)}, rotate = 423.43] [color={rgb, 255:red, 24; green, 61; blue, 247 }  ,draw opacity=1, very thick][line width=0.75]    (10.93,-3.29) .. controls (6.95,-1.4) and (3.31,-0.3) .. (0,0) .. controls (3.31,0.3) and (6.95,1.4) .. (10.93,3.29)   ;
\draw [line width=3]    (92,119) -- (167.58,118.81) ;
\draw [shift={(172.58,118.8)}, rotate = 539.86] [color={rgb, 255:red, 0; green, 0; blue, 0 }  ][line width=3]    (20.77,-6.25) .. controls (13.2,-2.65) and (6.28,-0.57) .. (0,0) .. controls (6.28,0.57) and (13.2,2.66) .. (20.77,6.25)   ;
\draw    (199.58,165.8) .. controls (170.58,120.8) and (166.12,64.47) .. (206.12,34.47) ;
\draw    (234.58,162.8) .. controls (252.58,109.8) and (247.12,59.47) .. (206.12,34.47) ;
\draw  [fill={rgb, 255:red, 0; green, 0; blue, 0 }  ,fill opacity=1 ] (219.12,195.69) .. controls (219.12,194.15) and (220.36,192.91) .. (221.9,192.91) .. controls (223.43,192.91) and (224.68,194.15) .. (224.68,195.69) .. controls (224.68,197.22) and (223.43,198.47) .. (221.9,198.47) .. controls (220.36,198.47) and (219.12,197.22) .. (219.12,195.69) -- cycle ;
\draw  [fill={rgb, 255:red, 0; green, 0; blue, 0 }  ,fill opacity=1 ] (203.34,35.25) .. controls (203.34,33.71) and (204.58,32.47) .. (206.12,32.47) .. controls (207.65,32.47) and (208.9,33.71) .. (208.9,35.25) .. controls (208.9,36.78) and (207.65,38.02) .. (206.12,38.02) .. controls (204.58,38.02) and (203.34,36.78) .. (203.34,35.25) -- cycle ;
\draw [color={rgb, 255:red, 24; green, 61; blue, 247 }  ,draw opacity=1, very thick]   (199.58,165.8) -- (220.12,193.69) ;
\draw [color={rgb, 255:red, 24; green, 61; blue, 247 }  ,draw opacity=1, very thick]   (209.58,160.8) -- (220.9,192.91) ;
\draw [color={rgb, 255:red, 24; green, 61; blue, 247 }  ,draw opacity=1, very thick]   (234.58,162.8) -- (222.9,192.91) ;
\draw [color={rgb, 255:red, 24; green, 61; blue, 247 }  ,draw opacity=1, very thick]   (218.12,161.47) -- (221.9,192.91) ;
\draw  [dash pattern={on 4.5pt off 4.5pt}]  (195.12,120.47) -- (209.58,160.8) ;
\draw  [dash pattern={on 4.5pt off 4.5pt}]  (215.12,140.47) -- (218.12,161.47) ;
\draw [color={rgb, 255:red, 24; green, 61; blue, 247 }  ,draw opacity=1, very thick]   (227,201) .. controls (259.91,211.58) and (292.26,200.27) .. (304.84,177.23) ;
\draw [shift={(305.58,175.8)}, rotate = 476.57] [color={rgb, 255:red, 24; green, 61; blue, 247 }  ,draw opacity=1, very thick][line width=0.75]    (10.93,-3.29) .. controls (6.95,-1.4) and (3.31,-0.3) .. (0,0) .. controls (3.31,0.3) and (6.95,1.4) .. (10.93,3.29)   ;
\draw [color={rgb, 255:red, 24; green, 61; blue, 247 }  ,draw opacity=1, very thick]   (234.58,193.8) .. controls (271.63,195.75) and (283.02,183.44) .. (301.17,167.07) ;
\draw [shift={(302.58,165.8)}, rotate = 498.18] [color={rgb, 255:red, 24; green, 61; blue, 247 }  ,draw opacity=1, very thick][line width=0.75]    (10.93,-3.29) .. controls (6.95,-1.4) and (3.31,-0.3) .. (0,0) .. controls (3.31,0.3) and (6.95,1.4) .. (10.93,3.29)   ;
\draw [color={rgb, 255:red, 24; green, 61; blue, 247 }  ,draw opacity=1, very thick]   (234.58,175.8) .. controls (250.18,157.28) and (271.49,153.96) .. (293.86,155.66) ;
\draw [shift={(295.58,155.8)}, rotate = 184.97] [color={rgb, 255:red, 24; green, 61; blue, 247 }  ,draw opacity=1, very thick][line width=0.75]    (10.93,-3.29) .. controls (6.95,-1.4) and (3.31,-0.3) .. (0,0) .. controls (3.31,0.3) and (6.95,1.4) .. (10.93,3.29)   ;
\draw [color={rgb, 255:red, 24; green, 61; blue, 247 }  ,draw opacity=1, very thick]   (225.58,164.8) -- (223.76,184.81) ;
\draw [shift={(223.58,186.8)}, rotate = 275.19] [color={rgb, 255:red, 24; green, 61; blue, 247 }  ,draw opacity=1, very thick][line width=0.75]    (10.93,-3.29) .. controls (6.95,-1.4) and (3.31,-0.3) .. (0,0) .. controls (3.31,0.3) and (6.95,1.4) .. (10.93,3.29)   ;
\draw [color={rgb, 255:red, 24; green, 61; blue, 247 }  ,draw opacity=1, very thick]   (197.58,170.8) -- (212.41,191.18) ;
\draw [shift={(213.58,192.8)}, rotate = 233.97] [color={rgb, 255:red, 24; green, 61; blue, 247 }  ,draw opacity=1, very thick][line width=0.75]    (10.93,-3.29) .. controls (6.95,-1.4) and (3.31,-0.3) .. (0,0) .. controls (3.31,0.3) and (6.95,1.4) .. (10.93,3.29)   ;
\draw [color={rgb, 255:red, 24; green, 61; blue, 247 }  ,draw opacity=1, very thick] [dash pattern={on 0.84pt off 2.51pt}]  (191.58,177.8) .. controls (195.58,162.8) and (220.58,152.8) .. (240.58,164.8) ;
\draw  [fill={rgb, 255:red, 0; green, 0; blue, 0 }  ,fill opacity=1 ] (305.56,161.47) .. controls (305.56,159.93) and (306.8,158.69) .. (308.34,158.69) .. controls (309.87,158.69) and (311.12,159.93) .. (311.12,161.47) .. controls (311.12,163) and (309.87,164.25) .. (308.34,164.25) .. controls (306.8,164.25) and (305.56,163) .. (305.56,161.47) -- cycle ;
\draw    (223.68,195.69) .. controls (258.58,203.8) and (286.12,195.47) .. (308.34,164.25) ;
\draw    (221.12,196.47) .. controls (241.58,161.8) and (269.58,157.8) .. (308.34,161.47) ;
\draw    (354.12,151.47) .. controls (325.12,106.47) and (331.12,66.47) .. (371.12,36.47) ;
\draw [color={rgb, 255:red, 0; green, 0; blue, 0 }  ,draw opacity=1 ]   (395.12,172.47) .. controls (413.12,119.47) and (412.12,61.47) .. (371.12,36.47) ;
\draw  [fill={rgb, 255:red, 0; green, 0; blue, 0 }  ,fill opacity=1 ] (384.12,197.69) .. controls (384.12,196.15) and (385.36,194.91) .. (386.9,194.91) .. controls (388.43,194.91) and (389.68,196.15) .. (389.68,197.69) .. controls (389.68,199.22) and (388.43,200.47) .. (386.9,200.47) .. controls (385.36,200.47) and (384.12,199.22) .. (384.12,197.69) -- cycle ;
\draw  [fill={rgb, 255:red, 0; green, 0; blue, 0 }  ,fill opacity=1 ] (368.34,37.25) .. controls (368.34,35.71) and (369.58,34.47) .. (371.12,34.47) .. controls (372.65,34.47) and (373.9,35.71) .. (373.9,37.25) .. controls (373.9,38.78) and (372.65,40.02) .. (371.12,40.02) .. controls (369.58,40.02) and (368.34,38.78) .. (368.34,37.25) -- cycle ;
\draw  [color={rgb, 255:red, 24; green, 61; blue, 247 }  ,draw opacity=1, very thick][fill={rgb, 255:red, 24; green, 61; blue, 247 }  ,fill opacity=1 ] (471.56,163.47) .. controls (471.56,161.93) and (472.8,160.69) .. (474.34,160.69) .. controls (475.87,160.69) and (477.12,161.93) .. (477.12,163.47) .. controls (477.12,165) and (475.87,166.25) .. (474.34,166.25) .. controls (472.8,166.25) and (471.56,165) .. (471.56,163.47) -- cycle ;
\draw [color={rgb, 255:red, 24; green, 61; blue, 247 }  ,draw opacity=1, very thick]   (389.68,197.69) .. controls (424.58,205.8) and (452.12,197.47) .. (474.34,166.25) ;
\draw    (354.12,151.47) -- (385.12,195.69) ;
\draw    (370.12,150.47) -- (385.9,194.91) ;
\draw    (395.12,172.47) -- (387.9,194.91) ;
\draw    (383.12,163.47) -- (386.9,194.91) ;
\draw  [dash pattern={on 4.5pt off 4.5pt}]  (360.12,122.47) -- (370.12,150.47) ;
\draw  [dash pattern={on 4.5pt off 4.5pt}]  (380.12,142.47) -- (383.12,163.47) ;
\draw [line width=3]    (255,119) -- (324.58,118.81) ;
\draw [shift={(329.58,118.8)}, rotate = 539.85] [color={rgb, 255:red, 0; green, 0; blue, 0 }  ][line width=3]    (20.77,-6.25) .. controls (13.2,-2.65) and (6.28,-0.57) .. (0,0) .. controls (6.28,0.57) and (13.2,2.66) .. (20.77,6.25)   ;
\draw [line width=3]    (429,119) -- (498.58,118.81) ;
\draw [shift={(503.58,118.8)}, rotate = 539.85] [color={rgb, 255:red, 0; green, 0; blue, 0 }  ][line width=3]    (20.77,-6.25) .. controls (13.2,-2.65) and (6.28,-0.57) .. (0,0) .. controls (6.28,0.57) and (13.2,2.66) .. (20.77,6.25)   ;
\draw [color={rgb, 255:red, 24; green, 61; blue, 247 }  ,draw opacity=1, very thick]   (397.58,177.8) .. controls (433.66,101.75) and (410.79,72.28) .. (399.43,52.32) ;
\draw [shift={(398.58,50.8)}, rotate = 421.19] [color={rgb, 255:red, 24; green, 61; blue, 247 }  ,draw opacity=1, very thick][line width=0.75]    (10.93,-3.29) .. controls (6.95,-1.4) and (3.31,-0.3) .. (0,0) .. controls (3.31,0.3) and (6.95,1.4) .. (10.93,3.29)   ;
\draw [color={rgb, 255:red, 24; green, 61; blue, 247 }  ,draw opacity=1, very thick]   (389.58,174.8) .. controls (405.18,97.78) and (388.46,72.08) .. (377.42,52.31) ;
\draw [shift={(376.58,50.8)}, rotate = 421.19] [color={rgb, 255:red, 24; green, 61; blue, 247 }  ,draw opacity=1, very thick][line width=0.75]    (10.93,-3.29) .. controls (6.95,-1.4) and (3.31,-0.3) .. (0,0) .. controls (3.31,0.3) and (6.95,1.4) .. (10.93,3.29)   ;
\draw [color={rgb, 255:red, 24; green, 61; blue, 247 }  ,draw opacity=1, very thick]   (377.58,178.8) .. controls (337.2,116.75) and (344.35,87.68) .. (363.69,52.42) ;
\draw [shift={(364.58,50.8)}, rotate = 479.05] [color={rgb, 255:red, 24; green, 61; blue, 247 }  ,draw opacity=1, very thick][line width=0.75]    (10.93,-3.29) .. controls (6.95,-1.4) and (3.31,-0.3) .. (0,0) .. controls (3.31,0.3) and (6.95,1.4) .. (10.93,3.29)   ;
\draw [color={rgb, 255:red, 24; green, 61; blue, 247 }  ,draw opacity=1, very thick]   (558.12,195.47) .. controls (578.58,160.8) and (606.58,156.8) .. (645.34,160.47) ;
\draw [color={rgb, 255:red, 24; green, 61; blue, 247 }  ,draw opacity=1, very thick]   (525.12,148.47) .. controls (496.12,103.47) and (502.12,63.47) .. (542.12,33.47) ;
\draw [color={rgb, 255:red, 24; green, 61; blue, 247 }  ,draw opacity=1, very thick]   (566.12,169.47) .. controls (584.12,116.47) and (583.12,58.47) .. (542.12,33.47) ;
\draw  [color={rgb, 255:red, 24; green, 61; blue, 247 }  ,draw opacity=1, very thick][fill={rgb, 255:red, 24; green, 61; blue, 247 }  ,fill opacity=1 ] (555.12,194.69) .. controls (555.12,193.15) and (556.36,191.91) .. (557.9,191.91) .. controls (559.43,191.91) and (560.68,193.15) .. (560.68,194.69) .. controls (560.68,196.22) and (559.43,197.47) .. (557.9,197.47) .. controls (556.36,197.47) and (555.12,196.22) .. (555.12,194.69) -- cycle ;
\draw  [color={rgb, 255:red, 24; green, 61; blue, 247 }  ,draw opacity=1, very thick][fill={rgb, 255:red, 24; green, 61; blue, 247 }  ,fill opacity=1 ] (539.34,34.25) .. controls (539.34,32.71) and (540.58,31.47) .. (542.12,31.47) .. controls (543.65,31.47) and (544.9,32.71) .. (544.9,34.25) .. controls (544.9,35.78) and (543.65,37.02) .. (542.12,37.02) .. controls (540.58,37.02) and (539.34,35.78) .. (539.34,34.25) -- cycle ;
\draw  [color={rgb, 255:red, 24; green, 61; blue, 247 }  ,draw opacity=1, very thick][fill={rgb, 255:red, 24; green, 61; blue, 247 }  ,fill opacity=1 ] (643.56,160.47) .. controls (643.56,158.93) and (644.8,157.69) .. (646.34,157.69) .. controls (647.87,157.69) and (649.12,158.93) .. (649.12,160.47) .. controls (649.12,162) and (647.87,163.25) .. (646.34,163.25) .. controls (644.8,163.25) and (643.56,162) .. (643.56,160.47) -- cycle ;
\draw [color={rgb, 255:red, 24; green, 61; blue, 247 }  ,draw opacity=1, very thick]   (560.68,194.69) .. controls (595.58,202.8) and (623.12,194.47) .. (645.34,163.25) ;
\draw [color={rgb, 255:red, 24; green, 61; blue, 247 }  ,draw opacity=1, very thick]   (525.12,148.47) -- (556.12,192.69) ;
\draw [color={rgb, 255:red, 24; green, 61; blue, 247 }  ,draw opacity=1, very thick]   (541.12,147.47) -- (556.9,191.91) ;
\draw [color={rgb, 255:red, 24; green, 61; blue, 247 }  ,draw opacity=1, very thick]   (566.12,169.47) -- (558.9,191.91) ;
\draw [color={rgb, 255:red, 24; green, 61; blue, 247 }  ,draw opacity=1, very thick]   (554.12,160.47) -- (557.9,191.91) ;
\draw [color={rgb, 255:red, 24; green, 61; blue, 247 }  ,draw opacity=1, very thick] [dash pattern={on 4.5pt off 4.5pt}]  (531.12,119.47) -- (541.12,147.47) ;
\draw [color={rgb, 255:red, 24; green, 61; blue, 247 }  ,draw opacity=1, very thick] [dash pattern={on 4.5pt off 4.5pt}]  (551.12,139.47) -- (554.12,160.47) ;
\draw [color={rgb, 255:red, 24; green, 61; blue, 247 }  ,draw opacity=1, very thick]   (33.9,213.8) -- (51.36,203.79) ;
\draw [shift={(53.1,202.8)}, rotate = 510.19] [color={rgb, 255:red, 24; green, 61; blue, 247 }  ,draw opacity=1, very thick][line width=0.75]    (10.93,-3.29) .. controls (6.95,-1.4) and (3.31,-0.3) .. (0,0) .. controls (3.31,0.3) and (6.95,1.4) .. (10.93,3.29)   ;

\draw (33,92.4) node [anchor=north west][inner sep=0.75pt]    {$H_{0}$};
\draw (52.1,207.2) node [anchor=north west][inner sep=0.75pt]    {$a$};
\draw (32,24.4) node [anchor=north west][inner sep=0.75pt]    {$b$};
\draw (95,174.4) node [anchor=north west][inner sep=0.75pt]    {$H_{1}$};
\draw (139.34,148.87) node [anchor=north west][inner sep=0.75pt]    {$c$};
\draw (22,180.4) node [anchor=north west][inner sep=0.75pt]  [color={rgb, 255:red, 24; green, 61; blue, 247 }  ,opacity=1 ]  {$\mathcal{S}_{a}$};
\draw (201,94.4) node [anchor=north west][inner sep=0.75pt]    {$H_{0}$};
\draw (211,201.4) node [anchor=north west][inner sep=0.75pt]    {$a$};
\draw (194,19.4) node [anchor=north west][inner sep=0.75pt]    {$b$};
\draw (271,203.4) node [anchor=north west][inner sep=0.75pt]  [color={rgb, 255:red, 24; green, 61; blue, 247 }  ,opacity=1 ]  {$\mathcal{S}_{1}$};
\draw (252,171.4) node [anchor=north west][inner sep=0.75pt]    {$H_{1}$};
\draw (300.34,143.87) node [anchor=north west][inner sep=0.75pt]    {$c$};
\draw (359,89.4) node [anchor=north west][inner sep=0.75pt]    {$H_{0}$};
\draw (372,195.4) node [anchor=north west][inner sep=0.75pt]    {$a$};
\draw (359,21.4) node [anchor=north west][inner sep=0.75pt]    {$b$};
\draw (422,171.4) node [anchor=north west][inner sep=0.75pt]  [color={rgb, 255:red, 24; green, 61; blue, 247 }  ,opacity=1 ]  {$H_{1}$};
\draw (466.34,145.87) node [anchor=north west][inner sep=0.75pt]  [color={rgb, 255:red, 24; green, 61; blue, 247 }  ,opacity=1 ]  {$c$};
\draw (414,61.4) node [anchor=north west][inner sep=0.75pt]  [color={rgb, 255:red, 24; green, 61; blue, 247 }  ,opacity=1 ]  {$\mathcal{S}_{0}$};
\draw (530,86.4) node [anchor=north west][inner sep=0.75pt]  [color={rgb, 255:red, 24; green, 61; blue, 247 }  ,opacity=1 ]  {$H_{0}$};
\draw (543,190.4) node [anchor=north west][inner sep=0.75pt]  [color={rgb, 255:red, 24; green, 61; blue, 247 }  ,opacity=1 ]  {$a$};
\draw (530,18.4) node [anchor=north west][inner sep=0.75pt]  [color={rgb, 255:red, 24; green, 61; blue, 247 }  ,opacity=1 ]  {$b$};
\draw (593,168.4) node [anchor=north west][inner sep=0.75pt]  [color={rgb, 255:red, 24; green, 61; blue, 247 }  ,opacity=1 ]  {$H_{1}$};
\draw (638.34,140.87) node [anchor=north west][inner sep=0.75pt]  [color={rgb, 255:red, 24; green, 61; blue, 247 }  ,opacity=1 ]  {$c$};
\end{tikzpicture}
\end{center}
\caption{A pictorial representation of the construction in Proposition~\ref{prop:amalgamation_b}.}\label{fig:amalgam_b}
\end{figure}
\begin{proof}
    The proof is illustrated in Fig.~\ref{fig:amalgam_b}.
    Let $d$ be the degree of $a$ in $G_0$ and let $H_0$ be a subdivision of $G_0$ such that every edge incident to $a$ is subdivided at least $2^{d-1}\abs{\mathcal{S}_1} + 1$ times and there is a successful $3$-search $\mathcal{S}_0$ on $H_0$ aligned to $(a, b)$. By Lemma~\ref{lemma:clearing_from_bottom}, there exists a $3$-search $\mathcal{S}_a$ on $H_0$ aligned to $(a, b)$ with $a \in (S_a)_t$ for all $t$ such that $B_{H_0}(a, \abs{\mathcal{S}_1}) \subseteq \FC_*(\mathcal{S}_a)$. The desired subdivision of $G$ is the union of $H_0$ and $H_1$, and the desired $3$-search is the concatenation $\mathcal{S}_a \concat \mathcal{S}_1 \concat \mathcal{S}_0$.

    Let $H$ be the {disjoint} union of $H_0$ and $H_1$ and denote the copy of $a$ in each $H_i \subset H$ by $a_i$. 
    Let $\mathcal{E}$ be the equivalence relation on $V(H)$ with $\{a_0, a_1\}$ as the only nontrivial equivalence class. Then $H/\mathcal{E}$ can be naturally identified with the \ep{non-disjoint} union of $H_0$ and $H_1$. 

    Construct searches $\mathcal{S}_a'$, $\mathcal{S}_0'$, and $\mathcal{S}_1'$ on $H$ from $\mathcal{S}_a$, $\mathcal{S}_0$, and $\mathcal{S}_1$ respectively 
    by replacing every appearance of $a$ with both $a_0$ and $a_1$. Let $\mathcal{S} = \mathcal{S}_a'\concat\mathcal{S}_1'\concat\mathcal{S}_0'$. For brevity, let $t_0 = \abs{\mathcal{S}_a}$ and $t_1 = t_0 + \abs{\mathcal{S}_1}$.

    We now check that $\mathcal{S}$ is $\mathcal{E}$-invariant. To this end, we need to show that for all $t$, $a_0 \in \PC_t(\mathcal{S}) \,\Longleftrightarrow \, a_1 \in \PC_t(\mathcal{S})$. By construction, $a_0 \in S_t$ for $t \le t_0$. Since $B_H(a_0, \abs{\mathcal{S}_1}) \subseteq \FC_{t_0}(\mathcal{S})$, we have $a_0 \in \PC_t(\mathcal{S})$ for $t_0 < t \le t_1$. Since $\mathcal{S}_0$ is aligned to $(a, b)$, $a_0 \in \PC_t(\mathcal{S})$ for $t \ge t_1$. Thus, $a_0 \in \PC_t(\mathcal{S})$ for every $t$.

    For $t \le t_0$, we have $a_1 \in S_t$ by construction. Since $\mathcal{S}_1$ is aligned to $(a, c)$, $a_1 \in \PC_t(\mathcal{S})$ for $t_0 < t \le t_1$. Since $\mathcal{S}_1$ is successful, $a_1 \in \FC_t(\mathcal{S})$ for $t > t_1$. Thus $a_1 \in \PC_t(\mathcal{S})$ for every $t$ too.

    We have thus shown that $\mathcal{S}$ is $\mathcal{E}$-invariant. Note that $\lor_{\mathcal{E}}(\mathcal{S})$ is a $3$-search on $H / \mathcal{E}$ \ep{if we identify $H/\mathcal{E}$ with the union of $H_0$ and $H_1$, then $\lor_{\mathcal{E}}(\mathcal{S}) = \mathcal{S}_a \concat \mathcal{S}_1 \concat \mathcal{S}_0$}, and furthermore, by Proposition~\ref{prop:quotient_search}, $\lor_{\mathcal{E}}(\mathcal{S})$ is successful and aligned to $(a, b)$, as desired.
\end{proof}

    \begin{prop}
        Let $(G_0, a, b)$ and $(G_1, b, c)$ be connected graphs with terminals such that $V(G_0) \cap V(G_1) = \{b\}$ and let $(G, a, b) = (G_0, a, b) \circ_{b'} (G_1, b, c)$. Suppose that there is a subdivision $H_1$ of $G_1$ with a successful $3$-search $\mathcal{S}_1$ aligned to $(c, b)$, and every subdivision of $G_0$ has a further subdivision with a successful $3$-search aligned to $(a, b)$. Then there is a subdivision of $G$ admitting a successful $3$-search aligned to $(a, b)$.
    \label{prop:amalgamation_b2}
    \end{prop}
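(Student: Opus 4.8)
The plan is to mirror the construction in Proposition~\ref{prop:amalgamation_b}, but dualized to account for the branch $H_1$ being attached at the \emph{sink} terminal $b$ rather than at the source. Accordingly, the role played there by Lemma~\ref{lemma:clearing_from_bottom} (clearing a ball around a vertex that is kept pre-cleared as a source) will here be played by its sink-analog, Lemma~\ref{lemma:clearing_to_top}. As in Proposition~\ref{prop:amalgamation_b}, I would work on the disjoint union $H$ of subdivisions $H_0$ of $G_0$ and $H_1$ of $G_1$, denote the two copies of $b$ by $b_0 \in V(H_0)$ and $b_1 \in V(H_1)$, let $\mathcal{E}$ be the equivalence relation whose only nontrivial class is $\{b_0,b_1\}$, identify $H/\mathcal{E}$ with the desired subdivision of $G$, and ultimately invoke Proposition~\ref{prop:quotient_search} applied to an $\mathcal{E}$-invariant search $\mathcal{S}$ obtained by concatenating searches on the two pieces (with every occurrence of $b$ replaced by both $b_0$ and $b_1$).

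Concretely, let $d$ be the degree of $b$ in $G_0$ and, using the inductive hypothesis for $G_0$, fix a subdivision $H_0$ whose edges at $b$ are subdivided at least $2^{d-1}\abs{\mathcal{S}_1}$ times and which carries a successful $3$-search $\mathcal{S}_0$ aligned to $(a,b)$. Applying Lemma~\ref{lemma:clearing_to_top} with $v=b$, $w=a$ and $r=\abs{\mathcal{S}_1}$ produces a $3$-search $\mathcal{S}_b$ on $H_0$ that keeps $b$ searched at every step, is aligned to $(a,b)$ over $A=V(H_0)\setminus B_{H_0}(b,r)$, and satisfies $\FC_\ast(\mathcal{S}_b,A)=V(H_0)$. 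The intended search is then the concatenation $\mathcal{S}_0 \concat \mathcal{S}_1 \concat \mathcal{S}_b$: first clear the bulk of $H_0$ (leaving only a neighborhood of $b$ contaminated), then clear the branch $H_1$ toward $b$ via $\mathcal{S}_1$ (which is aligned to $(c,b)$, so it sweeps from the free end $c$ and holds $b$ contaminated), and finally use $\mathcal{S}_b$ to clear the remaining ball around $b$, ending exactly at $b$. I would then verify the two required properties: that $\mathcal{S}$ is $\mathcal{E}$-invariant, i.e.\ $b_0 \in \PC_t(\mathcal{S}) \Leftrightarrow b_1 \in \PC_t(\mathcal{S})$ for all $t$ (checked phase by phase: in each phase neither copy is ever \emph{fully} cleared before the last move, so membership in $\PC_t$ reduces to whether $b$ is searched at step $t$, and our replacement searches $b_0$ and $b_1$ simultaneously), and that the resulting quotient search $\vee_{\mathcal{E}}(\mathcal{S})$ is successful and aligned to $(a,b)$, which follows from Proposition~\ref{prop:quotient_search} together with the alignment of $\mathcal{S}_0$ and $\mathcal{S}_b$ to $(a,b)$.

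The main obstacle --- and the place where this argument is genuinely harder than Proposition~\ref{prop:amalgamation_b} --- is controlling recontamination around the sink $b$. In the source case the attach point is kept pre-cleared throughout, so it never leaks contamination and a ball of the fixed radius $\abs{\mathcal{S}_1}$ shields everything; here $b$ must remain contaminated almost until the end, and the branch hanging off it is liable to be reinfected through $b$. The key point to establish is therefore a \emph{confinement} statement: when $\mathcal{S}_0$ and $\mathcal{S}_1$ finish, the still-contaminated set is contained in a ball around $b$ whose radius can be bounded in terms of $\abs{\mathcal{S}_1}$ only, \emph{independently} of $\abs{\mathcal{S}_0}$. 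This is what allows the final clearing radius $r=\abs{\mathcal{S}_1}$, and hence the subdivision count $2^{d-1}\abs{\mathcal{S}_1}$, to be fixed in advance, avoiding a circular dependence of the required number of subdivisions on the length of $\mathcal{S}_0$. Making this confinement precise (so that $A$ is indeed clear by the time $\mathcal{S}_b$ begins and the branch is not left with an uncontrolled contaminated patch) is the crux; once it is in hand, the $\mathcal{E}$-invariance bookkeeping and the appeal to Proposition~\ref{prop:quotient_search} proceed exactly as in Proposition~\ref{prop:amalgamation_b}. An alternative one might hope for --- reducing directly to Proposition~\ref{prop:amalgamation_b} by reversing the search directions via Lemma~\ref{lemma:inversion} --- does not seem available, since the paper provides no way to reverse the source/sink roles of a \emph{search} (as opposed to a decomposition), so a direct dual construction appears necessary.
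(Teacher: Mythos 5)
Your architecture is the same as the paper's (disjoint union $H$ of $H_0$ and $H_1$, quotient by the relation identifying $b_0$ with $b_1$, concatenation of a search on $H_0$, then $\mathcal{S}_1$, then a cleanup search from Lemma~\ref{lemma:clearing_to_top}, transferred via Proposition~\ref{prop:quotient_search}), but as written the construction has a concrete flaw at the boundary between phase~0 and phase~1, and the ``confinement'' step you flag as the crux is left unproven. The flaw: you use the successful search $\mathcal{S}_0$ unmodified. In the \emph{disjoint} union $H$ the set $V(H_0)$ has empty boundary, so after the last move of $\mathcal{S}_0'$ we have $\PC_{t_0}(\mathcal{S}) \supseteq V(H_0)$ and hence $b_0 \in \FC_{t_0}(\mathcal{S})$ --- the copy $b_0$ \emph{is} fully cleared at the end of phase~0, contrary to your blanket claim that ``in each phase neither copy is ever fully cleared before the last move.'' Consequently $b_0 \in \PC_{t_0+1}(\mathcal{S})$ automatically, while $b_1 \in \PC_{t_0+1}(\mathcal{S})$ only if $b$ happens to lie in the first set of $\mathcal{S}_1$; nothing in the hypothesis that $\mathcal{S}_1$ is aligned to $(c,b)$ guarantees this, so $\mathcal{E}$-invariance fails and Proposition~\ref{prop:quotient_search} cannot be invoked. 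The paper's fix is exactly the small modification you omitted: replace $\mathcal{S}_0$ by $\mathcal{S}_0^\ast$, obtained by deleting $b$ from the last searched set, so that $\PC_{t_0}(\mathcal{S}) = V(H_0)\setminus\{b_0\}$ and $\FC_{t_0}(\mathcal{S}) = V(H_0)\setminus B_{H_0}(b_0,1)$; then no neighbor of $b_0$ is ever pre-cleared during phase~1, and for both copies membership in $\PC_t$ reduces to membership in $S_t$, as you wanted.

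Once that modification is in place, the confinement statement you describe as ``the crux'' is a one-line induction rather than a hard step: starting from $\FC_{t_0}(\mathcal{S}) = V(H_0)\setminus B_{H_0}(b_0,1)$, recontamination advances by at most one unit of distance per turn, so $\FC_{t_1}(\mathcal{S}) \supseteq V(H_0)\setminus B_{H_0}(b_0,\abs{\mathcal{S}_1}+1)$, independently of $\abs{\mathcal{S}_0}$. Note the off-by-one this introduces: the correct initial set for Lemma~\ref{lemma:clearing_to_top} is $A = V(H_0)\setminus B_{H_0}(b,\abs{\mathcal{S}_1}+1)$, with the edges at $b$ subdivided at least $2^{d-1}(\abs{\mathcal{S}_1}+1)$ times, rather than your radius $\abs{\mathcal{S}_1}$ and count $2^{d-1}\abs{\mathcal{S}_1}$. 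Your closing observation --- that one cannot reduce to Proposition~\ref{prop:amalgamation_b} by ``reversing'' a search via Lemma~\ref{lemma:inversion} --- is correct; a direct dual construction is indeed what the paper does.
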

\begin{figure}[b]
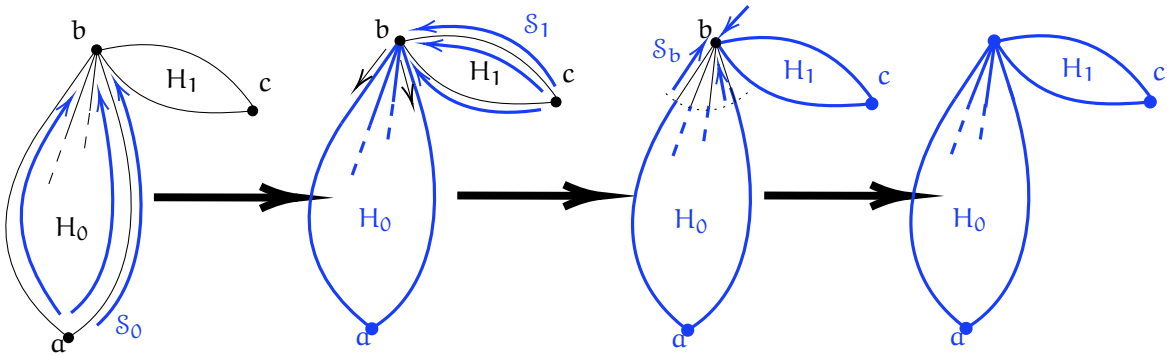

\begin{center}

\end{center}
\caption{A pictorial representation of the construction in Proposition~\ref{prop:amalgamation_b2}.}\label{fig:amalgam_b2}
\end{figure}
\begin{proof}
    The proof is illustrated in Fig.~\ref{fig:amalgam_b2}. Let $d$ be the degree of $b$ in $G_0$ and let $H_0$ be a subdivision of $G_0$ such that every edge incident to $b$ is subdivided at least $2^{d-1}(\abs{\mathcal{S}_1} + 1)$ times and there is a successful $3$-search $\mathcal{S}_0$ on $H_0$ aligned to $(a, b)$. For our analysis, it will be convenient to modify $\mathcal{S}_0$ so that the vertex $b$ remains uncleared even after the last step. To this end, we let $\mathcal{S}_0^\ast$ be the search on $H_0$ obtained from $\mathcal{S}_0$ by removing $b$ from the set $(S_0)_{\abs{\mathcal{S}_0}}$ \ep{i.e., from the last searched set in $\mathcal{S}_0$}.
    
    Let $A = V(H_0) \setminus B_{H_0}(b, \abs{\mathcal{S}_1} + 1)$. By Lemma~\ref{lemma:clearing_to_top}, there exists a $3$-search $\mathcal{S}_b$ on $H_0$ aligned to $(a, b)$ over $A$ with $b \in (S_b)_t$ for all $t$ such that $\FC_*(\mathcal{S}_b, A) = V(H_0)$. The desired subdivision of $G$ is the union of $H_0$ and $H_1$, and the desired $3$-search is the concatenation $\mathcal{S}_0^\ast \concat \mathcal{S}_1 \concat \mathcal{S}_b$. 
    
    Let $H$ be the {disjoint} union of $H_0$ and $H_1$ and denote the copy of $b$ in each $H_i \subset H$ by $b_i$. Let $\mathcal{E}$ be the equivalence relation on $V(H)$ with $\{b_0, b_1\}$ as the only nontrivial equivalence class. Then $H/\mathcal{E}$ can be naturally identified with the \ep{non-disjoint} union of $H_0$ and $H_1$.

    We construct searches $\mathcal{S}_0'$, $\mathcal{S}_1'$, and $\mathcal{S}_b'$ on $H$ from $\mathcal{S}_0^\ast$, $\mathcal{S}_1$, and $\mathcal{S}_b$ respectively by replacing every appearance of $b$ with both $b_0$ and $b_1$. Let $\mathcal{S} = \mathcal{S}_0' \concat \mathcal{S}_1' \concat \mathcal{S}_b'$. For brevity, let $t_0 = \abs{\mathcal{S}_0}$, and $t_1 = t_0 + \abs{\mathcal{S}_1}$.

    We now check that $\mathcal{S}$ is $\mathcal{E}$-invariant. To this end, we claim that for all $t$,
        \[
            b_0 \in \PC_t(\mathcal{S}) \, \Longleftrightarrow \, b_0 \in S_t \, \Longleftrightarrow \, b_1 \in S_t \, \Longleftrightarrow \, b_1 \in \PC_t(\mathcal{S}).
        \]
    The second equivalence holds by the construction of $\mathcal{S}$, so we only need to establish the first and third equivalences. We start by considering the case $t \leq t_0$. The first equivalence then follows since $\mathcal{S}_0$ is aligned to $(a, b)$ and hence $b_0 \not\in \FC_{t-1}(\mathcal{S})$, while the third equivalence holds since no neighbor of $b_1$ is pre-cleared during the first $t_0$ moves.
    

    Next we consider the case $t_0 < t \leq t_1$. Since $\mathcal{S}_0$ is aligned to $(a, b)$ and, by construction, $b \not \in (S^\ast_0)_{t_0}$, we have $b \not \in \PC_{t_0}(\mathcal{S})$. Since $\PC_*(\mathcal{S}_0) = V(H_0)$, we have $\PC_{t_0}(\mathcal{S}) = V(H_0) \setminus \{b_0\}$ and hence $\FC_{t_0}(\mathcal{S}) = V(H_0) \setminus B_{H_0}(b_0, 1)$. Therefore, if $t_0 < t \leq t_1$, then no neighbor of $b_0$ is in $\PC_{t}(\mathcal{S})$. Thus $b_0 \not\in\FC_t(\mathcal{S})$ for such $t$, which yields $b_0 \in \PC_t(\mathcal{S}) \Longleftrightarrow b_0 \in S_t$. Since $\mathcal{S}_1$ is aligned to $(c, b)$, $b_1 \not \in \FC_t(\mathcal{S})$ for $t < t_1$, so $b_1 \in \PC_{t}(\mathcal{S}) \Longleftrightarrow b_1 \in S_t$ as well. 

    Finally, if $t > t_1$, then $b_0$, $b_1 \in (S_b)_{t} \subseteq \PC_t(\mathcal{S})$ always. We have thus shown that the search $\mathcal{S}$ is $\mathcal{E}$-invariant. Note that $\lor_{\mathcal{E}}(\mathcal{S})$ is a $3$-search on $H / \mathcal{E}$, and furthermore, by Proposition~\ref{prop:quotient_search}, it is successful and aligned to $(a, b)$, proving the claim.
\end{proof}

\begin{prop} Let $(G_0, a, b)$, $(G_1, a, c)$, and $(G_2, d, b)$ be connected graphs with terminals such that $V(G_0)\cap V(G_1) = \{a\}$, $V(G_0) \cap V(G_2) = \{b\}$, and $V(G_1) \cap V(G_2) = \0$. Let
    \[(G, a, b) = (G_0, a, b) \circ_{p} ((G_1, a, c) \circ_s (e_{cd}, c, d) \circ_s (G_2, d, b)),\]
where $e_{cd}$ is the graph comprising a single edge between $c$ and $d$. Suppose that for each $i \in \{1,2\}$, there exists a subdivision $H_i$ of $G_i$ with a successful $3$-search $\mathcal{S}_i$ aligned to $(a,c)$ or $(d,b)$ respectively, and every subdivision of $G_0$ has a further subdivision with a successful $3$-search aligned to $(a, b)$. Then there is a subdivision of $G$ admitting a successful $3$-search aligned to $(a, b)$.
\label{prop:amalgamation_p}
\end{prop}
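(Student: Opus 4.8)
The plan is to reduce the statement to a parallel amalgamation of two pieces glued at \emph{both} terminals, and then run a four‑stage search that protects $a$ at the start (as in Proposition~\ref{prop:amalgamation_b}) while delaying $b$ to the very end (as in Proposition~\ref{prop:amalgamation_b2}). First I would collapse the chain $(G_1, a, c) \circ_s (e_{cd}, c, d) \circ_s (G_2, d, b)$ into a single piece. A subdivision of the edge $e_{cd}$ is a path, which plainly admits a $3$-search aligned to $(c, d)$ (sweep from $c$ toward $d$, keeping $c$ searched), so two applications of Proposition~\ref{prop:amalgamation_s} produce a subdivision $H_{ch}$ of the chain together with a successful $3$-search $\mathcal{S}_{ch}$ aligned to $(a, b)$. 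The crucial structural fact, recorded for later use, is that $H_{ch}$ is \emph{bridged}: the subdivided $e_{cd}$ is a bridge separating $a$ from $b$, and this is exactly what keeps the parallel composition with $G_0$ within inspection number $3$ (dropping it lands one in $\mathcal{F}_2$ or $\mathcal{F}_3$). The problem becomes: $(G, a, b) = (G_0, a, b) \circ_p (H_{ch}, a, b)$, where both factors carry $3$-searches aligned to $(a, b)$.

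Next I would fix a subdivision $H_0$ of $G_0$ whose edges at $a$ and at $b$ are subdivided enough for the ball-clearing lemmas, carrying a successful $3$-search $\mathcal{S}_0$ aligned to $(a, b)$ (such $H_0$ exists by the hypothesis on $G_0$). Form the \emph{disjoint} union $H = H_0 \sqcup H_{ch}$, write $a_0, b_0$ and $a_{ch}, b_{ch}$ for the two copies of $a$, $b$, and let $\mathcal{E}$ have the only nontrivial classes $\{a_0, a_{ch}\}$ and $\{b_0, b_{ch}\}$, so that $H/\mathcal{E}$ is the desired subdivision of $G$. On $H$ I would run $\mathcal{S} = \mathcal{S}_a \concat \mathcal{S}_{ch}^\ast \concat \mathcal{S}_0^\ast \concat \mathcal{S}_b$, where $\mathcal{S}_a$ is the ball-clearing search around $a_0$ in $H_0$ from Lemma~\ref{lemma:clearing_from_bottom}, of radius exceeding $\abs{\mathcal{S}_{ch}^\ast} + \abs{\mathcal{S}_0^\ast}$, with every occurrence of $a$ replaced by both $a_0$ and $a_{ch}$ (so both copies are searched on every turn); $\mathcal{S}_{ch}^\ast$ and $\mathcal{S}_0^\ast$ are $\mathcal{S}_{ch}$ and $\mathcal{S}_0$ with $b$ deleted from their last searched sets exactly as in the proof of Proposition~\ref{prop:amalgamation_b2}, so that each clears its piece except for a small ball about $b_{ch}$, resp.\ $b_0$, and leaves $b$ uncleared; and $\mathcal{S}_b$ is the ball-clearing search about $b$ from Lemma~\ref{lemma:clearing_to_top}, applied on the quotient $H/\mathcal{E}$ with $v = b$ (hence handling the full degree $\deg_G(b)$) and initial cleared set $A = V(H/\mathcal{E}) \setminus B_{H/\mathcal{E}}(b, r)$, which clears the remaining ball with $b$ cleared last.

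Finally I would check that the first three stages form an $\mathcal{E}$-invariant search whose quotient clears precisely $V(H/\mathcal{E}) \setminus B_{H/\mathcal{E}}(b, r)$, so that appending $\mathcal{S}_b$ gives, by Proposition~\ref{prop:quotient_search}, a successful $3$-search on $H/\mathcal{E}$ aligned to $(a,b)$. For $a$: during $\mathcal{S}_a$ both copies are searched, hence pre-cleared; the ball cleared by $\mathcal{S}_a$ keeps $a_0$ pre-cleared throughout $\mathcal{S}_{ch}^\ast$, after which $\mathcal{S}_0^\ast$ keeps it pre-cleared by alignment, while $a_{ch}$ is pre-cleared by the alignment of $\mathcal{S}_{ch}$ and thereafter stays fully cleared, so both copies are pre-cleared on every turn. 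For $b$: the balls about $b_0$ and $b_{ch}$ are never searched before $\mathcal{S}_b$, and the both-copies substitution forces $b_0$ and $b_{ch}$ to be searched on exactly the same turns; since in each stage the other piece's ball about $b$ stays uncleared, no neighbor of either copy is pre-cleared when $b$ is not searched, so the two copies are simultaneously non-pre-cleared until the final stage, where they are cleared together on the last turn. The main obstacle is precisely this joint bookkeeping at two glue points at once: unlike the single-terminal branch cases, $a$ must be protected and $b$ delayed \emph{simultaneously} on both pieces, and the terminal clearing of $b$ must be carried out as one search of $b$'s full degree on the quotient—rather than independently on the two factors, whose degrees at $b$ differ—so that both copies become cleared on the same final turn.
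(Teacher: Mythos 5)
Your reduction of the chain to a single bridged piece $H_{ch}$ with a search $\mathcal{S}_{ch}$ aligned to $(a,b)$ is fine (two applications of Proposition~\ref{prop:amalgamation_s}), but the subsequent plan has a genuine gap: running all of $\mathcal{S}_{ch}^\ast$ \emph{before} $\mathcal{S}_0^\ast$ leaves the cleared chain exposed to recontamination from $b$ for the entire duration of $\mathcal{S}_0$. At the end of $\mathcal{S}_{ch}^\ast$ the uncleared part of $H_{ch}$ is $B_{H_{ch}}(b_{ch},1)$; during the $\abs{\mathcal{S}_0}$ turns of $\mathcal{S}_0^\ast$ no internal vertex of $H_{ch}$ is ever searched, so contamination spreads back to essentially all of $B_{H_{ch}}(b_{ch},\abs{\mathcal{S}_0}+1)$. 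Since $H_2$ is a \emph{fixed} subdivision supplied by the hypothesis (you cannot re-subdivide it and keep $\mathcal{S}_2$), and $\abs{\mathcal{S}_0}\ge\abs{V(H_0)}/3$ is forced to be large, this region will in general swallow all of $H_2$, the $cd$-path, and part of $H_1$. Your final stage is a single ball-clearing $\mathcal{S}_b$ of radius $r'$ about $b$ on the quotient, which would therefore need $r'>\abs{\mathcal{S}_0}$; Lemma~\ref{lemma:clearing_to_top} then requires every edge of $G$ at $b$ --- including the edges of $G_0$, i.e.\ of $H_0$ --- to be subdivided more than $2^{\deg_G(b)-1}\abs{\mathcal{S}_0}$ times. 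That is circular: the subdivision depth of $H_0$ at $b$ would have to exceed the length of the successful search on $H_0$ itself, and nothing guarantees such an $H_0$ exists (more vertices generally force longer searches). The same circularity blocks the natural repair of lengthening the $cd$-path inside $H_{ch}$ as a buffer: that length must be fixed before $H_0$ is chosen, because the radius of the $a$-ball in $H_0$ (hence $H_0$ itself) must exceed $\abs{\mathcal{S}_{ch}}$, while the buffer would need length exceeding $\abs{\mathcal{S}_0}$, which is only known after $H_0$ is chosen. (A smaller inaccuracy: your claim that ``no neighbor of either copy of $b$ is pre-cleared'' between stages is false --- after $\mathcal{S}_{ch}^\ast$ the neighbors of $b_{ch}$ are pre-cleared but not fully cleared; what matters is that $b$ itself never enters $\FC$, which needs the argument above.)

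This is exactly the difficulty the paper's proof is built around, and it is why the chain is \emph{not} collapsed first: the paper searches $H_1$ before $\mathcal{S}_0$ and $H_2$ after $\mathcal{S}_0$, and realizes $e_{cd}$ as a path $H_3$ of length $\abs{\mathcal{S}_0}+5$ chosen \emph{after} $H_0$. The interleaved search $\mathcal{S}_a\concat\mathcal{S}_1\concat\mathcal{S}_{pa}\concat\mathcal{S}_0\concat\mathcal{S}_{pb}\concat(\{d\})\concat\mathcal{S}_2\concat\mathcal{S}_b$ uses the sweeps $\mathcal{S}_{pa},\mathcal{S}_{pb}$ along $H_3$ (keeping $a$, resp.\ $b$, searched) so that $H_3$ absorbs the backward recontamination during $\mathcal{S}_0$, and the ball radii at $a$ and $b$ depend only on $\abs{\mathcal{S}_1}$ and $\abs{\mathcal{S}_2}$, which are fixed in advance. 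If you want to salvage your outline, you must reinstate this ordering and the buffer path; as written, the four-stage search is not successful.
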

\begin{figure}[t]
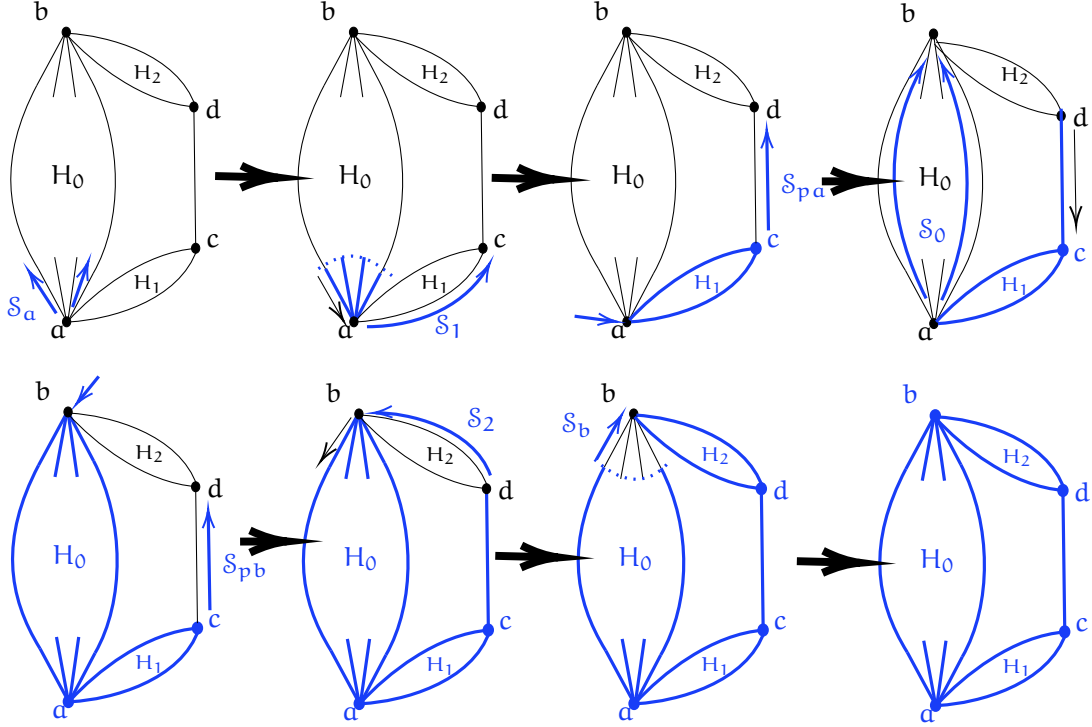

\begin{center}

\end{center}
\caption{A pictorial representation of the construction in Proposition~\ref{prop:amalgamation_p}.}\label{fig:amalgam_p}
\end{figure}
\begin{proof}
    The proof is illustrated in Fig.~\ref{fig:amalgam_p}. Let $\Delta$ be the maximum of the degrees of $a$ and $b$ in $G_0$ and let $H_0$ be a subdivision of $G_0$ such that every edge incident to $a$ is subdivided at least $2^{\Delta - 1}\abs{\mathcal{S}_1} + 1$ times, every edge incident to $b$ is subdivided at least $2^{\Delta - 1}(\abs{\mathcal{S}_2} + 1)$ times, and there is a successful $3$-search $\mathcal{S}_0$ on $H_0$ aligned to $(a, b)$. By Lemma~\ref{lemma:clearing_from_bottom}, there exists a $3$-search $\mathcal{S}_a$ on $H_0$ aligned to $(a, b)$ with $a \in (S_{a})_t$ for each $t$ such that $B_{H_0}(a, \abs{\mathcal{S}_1}) \subseteq \FC_*(\mathcal{S}_a)$. Also, by Lemma~\ref{lemma:clearing_to_top}, there is a $3$-search $\mathcal{S}_{b}$ on $H_0$ aligned to $(a, b)$ with $b \in (S_{b})_t$ for each $t$ such that $\FC_*(\mathcal{S}_b, V(H_0) \setminus B_{H_0}(b, \abs{\mathcal{S}_2} + 1)) = V(H_0)$.

    Let $H_3$ be a path from $c$ to $d$ of length $\abs{\mathcal{S}_0} + 5$ that is disjoint from $H_0 \cup H_1 \cup H_2$ except for the vertices $c$ and $d$. The desired subdivision of $G$ is the union $H_0 \cup H_1 \cup H_2 \cup H_3$.
    
    Let $\mathcal{S}_{p a}$ be a $3$-search on $H_0 \cup H_1 \cup H_2 \cup H_3$ of length $\abs{\mathcal{S}_0} + 2$, where $(S_{p a})_t$ comprises the vertices of distance $t-1$ and $t$ from $c$ in $H_3$, as well as $a$. Similarly, let $\mathcal{S}_{p b}$ be a search of length $\abs{\mathcal{S}_0} + 2$, where $(S_{p b})_t$ comprises the vertices of distance $t+2$ and $t+3$ from $c$ in $H_3$, as well as $b$. \ep{In particular, the last set in $\mathcal{S}_{pb}$ contains $d$.} The desired successful $3$-search on $H_0 \cup H_1 \cup H_2 \cup H_3$ is the concatenation
    \[
        \mathcal{S}_a \concat \mathcal{S}_1 \concat \mathcal{S}_{pa} \concat \mathcal{S}_0 \concat \mathcal{S}_{pb} \concat (\{d\}) \concat \mathcal{S}_2 \concat \mathcal{S}_b.
    \]
    Notice that in this search, after performing $\mathcal{S}_{pb}$ we just check the one-element set $\{d\}$. This is not strictly speaking necessary but makes the analysis a bit simpler.

    Let $H$ be the disjoint union of $H_0$, $H_1$, $H_2$, and $H_3$ and denote the copy of each vertex $x \in \{b, c, d\}$ in $H_i \subset H$ by $x_i$. Let $\mathcal{E}$ be the equivalence relation on $V(H)$ relating $a_0$ with $a_1$, $b_0$ with $b_2$, $c_1$ with $c_3$, and $d_2$ with $d_3$. Then the quotient graph $H/\mathcal{E}$ can be naturally identified with the \ep{non-disjoint} union of $H_0$, $H_1$, $H_2$, and $H_3$. 

    We construct searches $\mathcal{S}_{a}'$, $\mathcal{S}_{b}'$, $\mathcal{S}_0'$, $\mathcal{S}_1'$, $\mathcal{S}_2'$, $\mathcal{S}_{p a}'$, and $\mathcal{S}_{p b}'$ on $H$ from $\mathcal{S}_{a}$, $\mathcal{S}_{b}$, $\mathcal{S}_0$, $\mathcal{S}_1$, $\mathcal{S}_2$, $\mathcal{S}_{p a}$, and $\mathcal{S}_{p b}$ respectively by replacing every appearance of $b$, $c$, or $d$ with both of their copies in $H$. Define 
    \[
        \mathcal{S} = \mathcal{S}'_{a}\concat\mathcal{S}_1'\concat\mathcal{S}_{pa}' \concat \mathcal{S}_0' \concat \mathcal{S}_{pb}',
    \]
    and $\mathcal{R} = (\{d_2, d_3\}) \concat \mathcal{S}_2' \concat \mathcal{S}'_{b}$. For brevity, define
    \[
        t_0 = \abs{\mathcal{S}_{a}}, \quad t_1 = t_0 + \abs{\mathcal{S}_1}, \quad t_2 = t_1 + \abs{\mathcal{S}_{pa}}, \quad t_3 = t_2 + \abs{\mathcal{S}_0}, \quad t_4 = t_3 + \abs{\mathcal{S}_{pb}}, \quad  t_5 = 1 + \abs{\mathcal{S}_2}.
    \]

    We now check that $\mathcal{S}$ is $\mathcal{E}$-invariant. For $t \le t_0$, since $a \in (S_{a})_t$, both $a_0$ and $a_1$ belong to $S_t$, and thus $a_0$, $a_1 \in \PC_t(\mathcal{S})$. It is also clear that none of $b_0$, $b_2$, $c_1$, $c_3$, $d_2$, $d_3$ are in $\PC_t(\mathcal{S})$ for $t \leq t_0$.

    For $t_0 < t \le t_1$, since $B_{H}(a_0, \abs{\mathcal{S}_1}) \subseteq \FC_{t_0}(\mathcal{S})$, we have $N_{H}(a_0) \cup \{a_0\} \subseteq \FC_{t - 1}(\mathcal{S})$, and so $a_0 \in \PC_t(\mathcal{S})$. Since $\mathcal{S}_1$ is aligned to $(a, c)$, we also have $a_1 \in \PC_t(\mathcal{S})$ and $c \not \in \FC_{t - 1}(\mathcal{S})$ so 
    \[c_1 \in \PC_{t}(\mathcal{S}) \Longleftrightarrow c_1 \in S_t \Longleftrightarrow c_3 \in S_t \Longleftrightarrow c_3 \in \PC_t(\mathcal{S}).\]
    It is also clear that none of $b_0$, $b_2$, $d_2$, $d_3$ are in $\PC_t(\mathcal{S})$ for $t_0 < t \leq t_1$.

    Since $\mathcal{S}_1$ is successful, $V(H_1) \subseteq \FC_{t_1}(\mathcal{S})$, and hence $\{a_1, c_1\} \subseteq \PC_{t}(\mathcal{S})$ for all $t \geq t_1$. Furthermore, if $t_1 < t \leq t_2$, then $a_0 \in \PC_t(\mathcal{S})$ since $\{a_0, a_1\} \subseteq S_t$. Note also that if $t_1 < t \leq t_2$, then $\PC_{t}(\mathcal{S}) \cap V(H_3) = B_{H}(c_3, t-t_1)$, so $c_3 \in \PC_t(\mathcal{S})$. Finally, none of $b_0$, $b_2$, $d_2$, $d_3$ are in $\PC_t(\mathcal{S})$ for $t_1 < t \leq t_2$. 

    For $t_2 < t \le t_3$, we have $\{a_1, c_1\} \subseteq \PC_t(\mathcal{S})$. Since $\mathcal{S}_0$ is aligned to $(a, c)$, $a_0 \in \PC_t(\mathcal{S})$ and
    \[b_0 \in \PC_{t}(\mathcal{S}) \Longleftrightarrow b_0 \in S_t \Longleftrightarrow b_2 \in S_t \Longleftrightarrow b_2 \in \PC_t(\mathcal{S}).\]
    Since $B_{H}(c_3, \abs{\mathcal{S}_0} + 1) \subseteq \FC_{t_2}(\mathcal{S})$, we have $c_3 \in \PC_t(\mathcal{S})$. Finally, neither $d_2$ nor $d_3$ are in $\PC_t(\mathcal{S})$.

    For $t_3 < t \le t_4$, we have $\{a_1, c_1\} \subseteq \PC_t(\mathcal{S})$. Since $\mathcal{S}_0$ is successful, $V(H_0) \subseteq \PC_{t_3}(\mathcal{S})$, and hence $\{a_0, b_0\} \subseteq \PC_t(\mathcal{S})$. We note that $\FC_{t_2}(\mathcal{S})$ contains $B_H(c_3, \abs{\mathcal{S}_0} + 1)$ by construction, so $\FC_{t_3}(\mathcal{S})$ contains $B_H(c_3, 1)$. We then see that $\FC_t(\mathcal{S})$ includes $B_H(c_3, 1+t- t_3)$, and in particular it contains $c_3$. Both $b_0$ and $b_2$ are in $\PC_t(\mathcal{S})$ since $\{b_0, b_1\} \subseteq S_t$. Finally, it is clear that
    \[
        d_2 \in \PC_{t}(\mathcal{S}) \Longleftrightarrow d_3 \in \PC_t(\mathcal{S}) \Longleftrightarrow t = t_4.
    \]

    We have thus shown the $\mathcal{E}$-invariance of $\mathcal{S}$. It is straightforward to check that $\FC_*(\mathcal{S}) = V(H_0) \sqcup V(H_1) \sqcup V(H_3)$, so, by Proposition~\ref{prop:quotient_search}, $\lor_\mathcal{E}(\mathcal{S})$ is a $3$-search on $H/\mathcal{E}$ with
    \[
        \FC_*(\lor_\mathcal{E}(\mathcal{S})) = V(H_0) \cup V(H_1) \cup V(H_3) \setminus \{b, d\}
    \]
    \ep{here the union is non-disjoint}. Let
    \[
        A = V(H_0) \sqcup V(H_1) \sqcup V(H_3) \setminus \{b_0, d_3\}.
    \]
    Then $A$ is an $\mathcal{E}$-invariant subset of $V(H)$ and $A / \mathcal{E} = \FC_*(\lor_\mathcal{E}(\mathcal{S}))$. We now wish to show that the search $\mathcal{R}$ is $\mathcal{E}$-invariant over $A$.

    Note that $\{a_0, a_1, c_1, c_3, d_2, d_3\} \subseteq \PC_1(\mathcal{R}, A)$ and $\{b_0, b_2\} \cap \PC_1(\mathcal{R},A) = \0$. Furthermore, $\FC_1(\mathcal{R}, A) = V(H_0) \sqcup V(H_1) \sqcup V(H_3) \setminus B_H(b_0, 1)$.
    
    For $1 < t \le t_5$, $\{a_0, a_1, c_1, c_3, d_3\} \subseteq \FC_t(\mathcal{R}, A)$. Since $\mathcal{S}_2$ is aligned to $(d, b)$, $d_2 \in \PC_t(\mathcal{R}, A)$ and
    \[b_0 \in \PC_{t}(\mathcal{R}, A) \Longleftrightarrow b_0 \in R_t \Longleftrightarrow b_2 \in R_t \Longleftrightarrow b_2 \in \PC_t(\mathcal{R}, A).\]
    Since $\mathcal{S}_2$ is successful, $V(H_2) \subseteq \FC_{t_5}(\mathcal{R}, A)$.

    For $t > t_5$, we have $\{a_1, c_1, b_2, d_2, c_3, d_3\} \subseteq \FC_t(\mathcal{R}, A)$. Since $\mathcal{S}_{b}$ is aligned to $(a, b)$, we have $a_0 \in \PC_t(\mathcal{R}, A)$, and by construction $b_0 \in R_t \subseteq \PC_t(\mathcal{R}, A)$.
    
    We have thus shown that $\mathcal{R}$ is $\mathcal{E}$-invariant over $A$. Hence, $\lor_\mathcal{E}(\mathcal{R})$ is a successful $3$-search with $A / \mathcal{E}$ as its initial cleared set. It is also clear that $\lor_\mathcal{E}(\mathcal{R})$ is aligned to $(a, b)$, so $\lor_\mathcal{E}(\mathcal{S}) \concat \lor_\mathcal{E}(\mathcal{R})$ is a successful $3$-search on $H / \mathcal{E}$ aligned to $(a, b)$ as desired.
\end{proof}

    \subsection{Searching graphs with simple GSP decompositions}
    
    We are now ready to complete the proof of the implication \ref{item:GSP} $\Longrightarrow$ \ref{item:ts3} in Theorem~\ref{theo:main}.

    \begin{theo} Suppose $G$ is a connected graph with a simple GSP decomposition. Then $\s_t(G) \le 3$. \label{theo:searching}
    \end{theo}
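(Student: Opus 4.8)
The plan is to prove, by induction on $\abs{E(G)}$, the following strengthening for every graph with terminals $(G,a,b)$ admitting a simple GSP decomposition $T$: every subdivision of $G$ has a further subdivision carrying a successful $3$-search aligned to $(a,b)$ in the sense of Definition~\ref{defn:aligned}. Call this property $(\star)(G,a,b)$. Forgetting the alignment condition, $(\star)$ says that every subdivision of $G$ has a further subdivision with inspection number at most $3$, which is exactly $\s_t(G) \le 3$ by Definition~\ref{defn:top_search}; so $(\star)$ implies the theorem. I carry both the alignment requirement and the ``for every subdivision'' quantifier because these are precisely the hypotheses consumed by the amalgamation Propositions~\ref{prop:amalgamation_s}--\ref{prop:amalgamation_p}: one operand always needs the full topological guarantee (to absorb the heavy subdivision near a terminal demanded by Lemmas~\ref{lemma:clearing_from_bottom} and~\ref{lemma:clearing_to_top}), while the other only needs a single aligned search, which $(\star)$ supplies a fortiori. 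Crucially, the induction on $\abs{E(G)}$ closes because in each operation the relevant parts are proper and have strictly fewer edges, so $(\star)$ for them is available from the inductive hypothesis; we never need to re-run the induction on the (larger) subdivision $H$ itself.

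The base case $\abs{E(G)}=1$ is a single edge, whose subdivisions are paths $a=x_0,x_1,\dots,x_n=b$. Sweeping with $S_t=\{a,x_{t-1},x_t\}$ for $t=1,\dots,n$ gives $\FC_t(\mathcal{S})=\{a,x_1,\dots,x_{t-1}\}$, so the search is successful, keeps $a$ pre-cleared throughout, and clears $b$ only on the final move; hence it is a successful $3$-search aligned to $(a,b)$. For the inductive step I fix a simple decomposition $T$ with $\abs{E(G)}\ge 2$ and branch on the root operation. By Lemma~\ref{lemma:subd_GSP} every subdivision $H$ of $G$ inherits a simple decomposition with the same root operation, so $H$ is the corresponding gluing of subdivisions $H_i$ of the parts. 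Since each part has a simple decomposition with strictly fewer edges, the inductive hypothesis gives $(\star)$ for it; applying $(\star)$ to the relevant $H_i$ (after first pre-subdividing near the appropriate terminal to meet the subdivision counts appearing in the amalgamation statements, which is harmless as $(\star)$ only subdivides more) yields further subdivisions $H_i'$ of the $H_i$ equipped with aligned searches. Gluing the $H_i'$ by the construction of the relevant proposition produces a further subdivision of $H$ with a successful aligned $3$-search: Proposition~\ref{prop:amalgamation_s} handles $\circ_s$, and Propositions~\ref{prop:amalgamation_b} and~\ref{prop:amalgamation_b2} handle $\circ_b$ and $\circ_{b'}$, in each of which the terminal-sharing child is the topological operand and the attached child needs only one search.

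The genuinely delicate case is $\circ_p$, and it is where simplicity enters. Here I would first flatten the parallel structure: by Lemma~\ref{lemma:combining_parallel}, $(G,a,b)=K_1\circ_p\cdots\circ_p K_\ell$ where each $K_i$ is a \emph{good} node, i.e.\ has no child with terminals $(a,b)$. Inspecting the four operations shows that a good node can only be a leaf or a $\circ_s$-node, and for these the complexity of Definition~\ref{defn:simple} is $0$ exactly when the node is bridged; as $T$ is simple the complexities of the $K_i$ sum to at most $1$, so at least $\ell-1\ge 1$ of them are bridged. I then peel off one bridged factor $K$, writing $(G,a,b)=(G',a,b)\circ_p(K,a,b)$ with $G'$ the parallel combination of the remaining factors (still simple, with fewer edges), and split $K$ at its bridge $e_{cd}$ via the block-path structure of Lemma~\ref{lemma:series_parallel_blocks} as $K=(K_A,a,c)\circ_s(e_{cd},c,d)\circ_s(K_B,d,b)$. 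In a subdivision the bridge is a path, so after subdividing it sufficiently all three pieces $G'$, $K_A$, $K_B$ are proper parts with simple decompositions and strictly fewer edges; the inductive hypothesis supplies $(\star)$ for each, and Proposition~\ref{prop:amalgamation_p} glues them into the required further subdivision of $H$. The main obstacle is thus confined to this $\circ_p$ step: converting the complexity bound into the existence of a bridged parallel factor, splitting that factor at its bridge into two smaller simple-GSP graphs with the correct terminals, and keeping the subdivision-count bookkeeping consistent so that the output is genuinely a further subdivision of the prescribed $H$. The remaining three operations are routine applications of the amalgamation propositions.
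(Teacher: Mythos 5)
Your overall architecture --- an induction carrying both the alignment condition and the ``every subdivision has a further subdivision'' quantifier, with the four amalgamation propositions discharging the four root operations --- is exactly the paper's, and the cases $\circ_s$, $\circ_b$, $\circ_{b'}$ are fine (modulo the small point that for $\circ_{b'}$ you must first invert the terminals of the attached child via Lemma~\ref{lemma:inversion} so that its search is aligned to $(w,b)$ rather than $(b,w)$, as Proposition~\ref{prop:amalgamation_b2} requires). The gap is in the $\circ_p$ case. Lemma~\ref{lemma:combining_parallel} and Lemma~\ref{lemma:series_parallel_blocks} are stated, and proved, only for \emph{series-parallel} decompositions; the simple GSP decomposition $T$ you are given may contain branch operations among the nodes with terminals $(a,b)$, and then the flattening $(G,a,b)=K_1\circ_p\cdots\circ_p K_\ell$ into good nodes is simply false. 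For example, if $(G,a,b)=(P_1,a,b)\circ_p\bigl((P_2,a,b)\circ_b(P_3,a,w)\bigr)$ with $P_1,P_2,P_3$ paths, the decomposition is simple, the good nodes with terminals $(a,b)$ are $P_1$ and $P_2$, and their parallel combination omits $P_3$ entirely. The correct parallel factors are the maximal non-$\circ_p$ nodes of the $(a,b)$-skeleton, which may be $\circ_b/\circ_{b'}$ nodes, and for those neither your ``complexity $0$ iff bridged'' dichotomy nor the block-path structure of Lemma~\ref{lemma:series_parallel_blocks} applies as stated.

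The second, related, missing piece is the splitting of the bridged factor $K$ at its bridge $cd$ into $(K_A,a,c)\circ_s(e_{cd},c,d)\circ_s(K_B,d,b)$ \emph{with $K_A$ and $K_B$ again admitting simple GSP decompositions} --- without this the inductive hypothesis cannot be applied to them, and Proposition~\ref{prop:amalgamation_p} cannot be fed. This is not a consequence of Lemma~\ref{lemma:series_parallel_blocks}; it requires its own induction over the GSP decomposition of $K$, tracking how the branch operations distribute their attached subgraphs to the $a$-side or the $b$-side of the bridge (this is precisely the Claim inside the paper's proof of Theorem~\ref{theo:searching}). Your identification of the $\circ_p$ step as the crux is correct, but the two series-parallel lemmas you invoke to resolve it do not cover general simple GSP decompositions, so the argument as written does not close.
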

\begin{proof}\stepcounter{ForClaims} \renewcommand{\theForClaims}{\ref{theo:searching}}
We shall show by induction on $\abs{V(G)}$ that if $\gwt{G} = (G, a, b)$ admits a simple GSP decomposition $T$, then every subdivision of $G$ has a further subdivision which has a successful $3$-search aligned to $(a, b)$. The base case, where $G$ is a single edge $ab$, is clear.

    Now assume the GSP decomposition $T$ is nontrivial. If $\circ_{T, \gwt{G}} \in \{\circ_s, \circ_b\}$, then we are done by Propositions~\ref{prop:amalgamation_s} and \ref{prop:amalgamation_b} respectively and the inductive hypothesis. If $\circ_{T, \gwt{G}} = \circ_{b'}$, i.e., $\gwt{G} = \gwt{G_0} \circ_{b'} \gwt{G_1}$, then we can use Lemma~\ref{lemma:inversion} to switch the order of the terminals in $\gwt{G_1}$ and then apply Proposition~\ref{prop:amalgamation_b2}  and the inductive hypothesis. 
    
    It remains to consider the case when $\circ_{T, \gwt{G}} = \circ_p$, i.e., $(G, a,b) = (G_0, a, b) \circ_p (K, a, b)$. Since $T$ is simple, we may assume, without loss of generality, that $K$ has a bridge $cd$ separating $a$ and $b$, with $c$ closer to $a$ and $d$ closer to $b$. To apply Proposition~\ref{prop:amalgamation_p}, we need to decompose $(K, a, b)$ as $(G_1, a, c) \circ_s (e_{cd}, c, d) \circ_s (G_2, d, b)$, where $e_{cd}$ is the graph comprising the single edge $cd$.
    

    \begin{smallclaim} Suppose $(K, a, b)$ contains a bridge $cd$ separating its terminals \ep{with $c$ closer to $a$ and $d$ closer to $b$} and admits a simple GSP decomposition $T'$. Let $G_0$ be the subgraph of $K$ induced by the vertices that are separated from $b$ by $d$, and let $G_1$ be the subgraph of $K$ induced by the vertices that are separated from $a$ by $c$. Then $(G_0, a, c)$ and $(G_1, d, b)$ each either comprise a single vertex or admit simple GSP decompositions.
    \end{smallclaim}
    \begin{claimproof}
    We shall induct on $\abs{V(K)}$. The base case where $K$ comprises a single edge is trivial. Now suppose the decomposition $T'$ is nontrivial. Note that $\circ_{T', \gwt{K}} \neq \circ_p$, since otherwise $K$ would contain two internally disjoint $ab$-paths. If $\circ_{T', \gwt{K}} = \circ_b$, then $(K, a, b) = (K_0, a, b) \circ_b (K_1, a, x)$ for some vertex $x$. Then $cd$ is a bridge separating the terminals of $(K_0, a, b)$, so, by the inductive hypothesis, the subgraphs $H_0$ and $H_1$ of $K_0$ comprising the vertices separated from each terminal by $d$, $c$ respectively have simple GSP decompositions. But $G_1 = H_1$ and $(G_0, a, c) = (H_0, a, c) \circ_b (K_1, a, x)$. This yields simple GSP decompositions of $G_0$ and $G_1$, as desired. The case where $\circ_{T', \gwt{K}} = \circ_{b'}$ follows similarly, \emph{mutatis mutandis}.

    Finally, suppose $\circ_{T', \gwt{K}} = \circ_s$, i.e., $(K, a, b) = (K_0, a, x) \circ_s (K_1, x, b)$ for some vertex $x$. Since $c$ and $d$ are adjacent, they must both be contained in either $K_0$ or $K_1$; without loss of generality, say $c$, $d \in V(K_0)$. By the inductive hypothesis, the subgraphs $H_0$ and $H_1$ of $K_0$ comprising the vertices separated from each terminal by $d$, $c$ respectively have simple GSP decompositions. But $G_0 = H_0$, and $(G_1, d, b) = (H_1, d, x) \circ_s (K_1, x, b)$, which yields simple GSP decompositions of $G_0$ and $G_1$, as desired.
    \end{claimproof}

    If necessary, we may replace $K$ with a subdivision and assume that $\{a,b\} \cap \{c,d\} = \0$. By the above claim, we may then write $(K, a, b) = (G_0, a, c) \circ_s (e_{cd}, c ,d) \circ_s (G_1, d, b)$, where $(G_0, a, c)$ and $(G_1, d, b)$ admit simple GSP decompositions. We can then apply Proposition~\ref{prop:amalgamation_p} to finish the proof.
\end{proof}

\section{Further investigations}\label{sec:conclusion}
    
Our main result, Theorem~\ref{theo:classification}, provides a complete characterization of graphs with topological inspection number at most $3$. A natural next step would be to consider graphs with topological inspection number at most $k$ for larger values of $k$. In this case, it is perhaps too optimistic to seek a complete characterization; however, even many rather fundamental specific questions remain open. For instance, we do not know the values of $\s_t$ for such basic families of graphs as complete graphs or complete bipartite graphs:

\begin{prob}
    Determine $\s_t(K_n)$ and $\s_t(K_{s,t})$.
\end{prob}

Since $\s_t(K_2) = 2$, $\s_t(K_3) = 3$, and \ep{by Theorem~\ref{res:1}} $\s_t(K_4) = 4$, a natural guess would be that $\s_t(K_n) = n$ for all $n$. This, however, turns out not to be the case, as the following bound shows:
    
\begin{prop} $\s_t(K_n) \le \lceil\frac{n}{3}\rceil + 2$.
\end{prop}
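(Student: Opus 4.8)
The plan is to prove the bound through the subdivision characterization of $\s_t$. By Definition~\ref{defn:top_search}, it suffices to produce, for every $\ell$, a subdivision $H$ of $K_n$ in which every edge is subdivided at least $\ell$ times and which admits a successful $\pr{\lceil n/3\rceil + 2}$-search; I would in fact allow the edges to be subdivided \emph{non-uniformly} (all lengths $\ge \ell$, but some much longer than others), since this extra freedom turns out to matter. Note that for $n \ge 4$ the graph $K_n$ contains a subdivision of $K_4$ and so is not GSP; hence Theorem~\ref{theo:searching} does not apply and the search has to be built by hand. The construction I would use is a direct generalization of the amalgamation machinery of Section~\ref{sec:dec_ts}: the two ``sweeping'' searchers account for the additive $+2$, exactly as the window of size $2$ does in Lemmas~\ref{lemma:clearing_from_bottom} and~\ref{lemma:clearing_to_top}, while the remaining $\lceil n/3 \rceil$ searchers serve as \emph{guards} sitting on the hubs, i.e.\ the original vertices of $K_n$. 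The entire content of the proof is to schedule these guards so that, throughout a single globally coordinated (highly non-monotone) search, at most $\lceil n/3\rceil$ of the $n$ hubs are guarded at any one time.

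First I would fix the skeleton: label the hubs $v_1,\dots,v_n$, partition them into $m = \lceil n/3\rceil$ triples $B_1,\dots,B_m$ (the last possibly smaller), and choose the subdivision lengths so that the three paths internal to each triple are very long compared with the inter-triple paths. The source of the factor $3$ is that the three hubs of a triple together with the three paths joining them form a subdivided triangle, i.e.\ a cycle, and a cycle can be cleared by a single guard together with two sweepers; my aim is to make a triple behave, as far as the guard count is concerned, like a single guarded vertex. Concretely I would process the triples in order, and when finishing a triple $B_j$ use Lemmas~\ref{lemma:clearing_from_bottom} and~\ref{lemma:clearing_to_top} to clear (and, where necessary, re-clear) the long paths incident to $B_j$, invoking the doubling idea so that all paths incident to a given hub end up cleared \emph{simultaneously} even though they are swept one at a time. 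As in the amalgamation proofs, the formal device tying this together is the quotient-graph calculus: I would build the search on a disjoint ``unfolded'' copy $H^\ast$ of $H$, verify $\mathcal{E}$-invariance of the composite search for the equivalence relation $\mathcal{E}$ identifying the copies created when the pieces are glued, and then apply Proposition~\ref{prop:quotient_search} to transfer the computation of $\FC_\ast$ and $\PC_\ast$ down to $H \cong H^\ast/\mathcal{E}$. This is what lets a guard stationed on the identified copies of a hub cost only a single searcher while pinning several sweeping sub-searches together.

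The hard part will be the re-contamination bookkeeping, and this is genuinely where the argument must differ from the GSP case. Because $K_n$ has no nonempty proper vertex cut of size less than $n-1$, any cleared set that splits the hubs into two nonempty parts has boundary at least $n-1$; consequently there is \emph{no} stable ``half-finished'' configuration reachable with few guards, and the search can only attain the fully cleared state $\FC_\ast = V(H)$ in one coordinated finish, never passing through an intermediate state in which a proper subset of the hubs is permanently cleared. I would therefore have to show that the only sets $\PC_t(\mathcal{S})$ of large size with small boundary that the search passes through are the ``clique-sets'' consisting of $\le \lceil n/3\rceil$ fully guarded hubs together with (partially) cleared mutual paths — these have boundary equal to the number of guarded hubs — and that one can slide this guarded set of $\le \lceil n/3 \rceil$ hubs around, re-contaminating and re-clearing the long internal paths via the doubling routine, so that every hub is ultimately resolved without ever guarding more than $\lceil n/3\rceil$ at once. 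The delicate points, which I expect to consume most of the write-up, are (i) checking that during each sliding step the searched sets and the boundaries of the $\PC_t$ never exceed size $\lceil n/3\rceil + 2$, and (ii) verifying the $\mathcal{E}$-invariance hypotheses of Proposition~\ref{prop:quotient_search} at each gluing so that the per-triple and per-path routines compose into one successful search aligned appropriately; the choice of widely separated subdivision lengths is precisely what makes the doubling estimates in Lemmas~\ref{lemma:clearing_from_bottom}–\ref{lemma:clearing_to_top} close with room to spare.
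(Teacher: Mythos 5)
Your overall strategy (non\-/uniform subdivision, a quotient\-/graph argument via Proposition~\ref{prop:quotient_search}, $+2$ for two sweepers and $\lceil n/3\rceil$ for guards on hubs) is in the right spirit, but the combinatorial design at its core --- partitioning the hubs into $\lceil n/3\rceil$ \emph{triples}, processing them in order, and letting each triple ``behave like a single guarded vertex'' --- cannot work, and the obstruction is one you half-state yourself and then do not resolve. Let $H$ be any subdivision of $K_n$, let $\mathcal{S}$ be a $k$-search on it, and at time $t$ write $f$ for the number of hubs in $\FC_t(\mathcal{S})$, $g$ for the number of hubs in $\PC_t(\mathcal{S})\setminus\FC_t(\mathcal{S})$, and $u$ for the number of hubs outside $\PC_t(\mathcal{S})$. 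The $g$ guarded hubs lie in $\partial_H(\PC_t(\mathcal{S}))$, and for every pair $(v,w)$ with $v$ a fully cleared hub and $w$ a hub outside $\PC_t(\mathcal{S})$, the subdivided edge $P_{vw}$ contributes a further, distinct, internal boundary vertex; hence $\abs{\partial_H(\PC_t(\mathcal{S}))}\ge g+fu\ge g+f+u-1=n-1$ whenever $f,u\ge 1$. Since $\abs{\FC_t}>\abs{\FC_{t-1}}$ forces $\abs{\partial_H(\PC_t)}<k$, and $k=\lceil n/3\rceil+2<n-1$ for $n\ge 6$, at every step where the search makes progress either no hub is fully cleared or every hub is pre-cleared. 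So the hubs cannot be resolved triple by triple. Moreover, a pre-cleared hub that is not fully cleared must be re-searched on every turn to remain pre-cleared, so at most $k$ hubs can be held in the guarded state at once; at the first progress step with $u=0$ the remaining $\ge n-k\approx 2n/3$ hubs must therefore be \emph{fully cleared simultaneously}. Your sliding window of $\lceil n/3\rceil$ individually guarded hubs never reaches such a state (the $2n/3$ hubs outside the window are neither searched nor fully cleared), and ``a cycle needs only one guard'' is a mirage here: each hub of a triple still has $n-3$ incident subdivided edges to other triples along which it is instantly recontaminated, so keeping a finished triple clear costs three guards, not one.

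The paper resolves this with the dual partition: \emph{three} classes $A,B,C$ of size at most $\lceil n/3\rceil$, rather than $\lceil n/3\rceil$ classes of size three. Contracting each class yields a three-vertex quotient graph made of parallel paths and pendant cycles, which admits a simple GSP decomposition; Theorem~\ref{theo:searching} and Propositions~\ref{prop:amalgamation_s}--\ref{prop:amalgamation_p} then give a successful $3$-search on a suitable subdivision that uses at most one of $a,b,c$ per turn, and lifting through Proposition~\ref{prop:quotient_search} replaces that single quotient vertex by its entire class. This is precisely what makes the endgame possible: the one ``fat guard'' of size $\lceil n/3\rceil$ holds a whole class while the other two classes ($\approx 2n/3$ hubs) become fully cleared in unison, because each class is always searched as a unit and so its hubs are pre-cleared and cleared simultaneously. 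To repair your write-up, replace the triples by thirds and the per-hub guards by a per-class guard; the rest of your outline (widely separated subdivision lengths, $\mathcal{E}$-invariance, the clearing lemmas) then coincides with the paper's proof.
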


\begin{proof}[Proof sketch] Given any subdivision $G$ of $K_n$, partition the vertices of $K_n$ in $G$ into three parts $A$, $B$, $C$, each of size at most $\lceil\frac{n}{3}\rceil$. Let $\mathcal{E}$ be the equivalence relation on $V(G)$ where $A$, $B$, and $C$ are the nontrivial equivalence classes \ep{that is, every vertex of $G$ that is not one of the original vertices of $K_n$ forms its own equivalence class}. Now consider $G / \mathcal{E}$. This graph comprises three vertices $a$, $b$, $c$, which are the images of $A$, $B$, $C$ respectively under the quotient map, along with several internally vertex-disjoint paths between each of them, as well as some cycles which only intersect the rest of the graph at one of $a$, $b$, or $c$. Let $P_{ab}$, $P_{bc}$, $P_{ac}$ be the subgraphs of $G / \mathcal{E}$ comprising the paths between their respective subscripted vertices, and let $C_a$, $C_b$, $C_c$ be the subgraphs comprising the cycles containing their respective subscripted vertices.

We will construct a successful $3$-search on some subdivision of $G / \mathcal{E}$ in which at most one of $a$, $b$, or $c$ is contained in each search set. Note that 
\[(G / \mathcal{E}, a, c) = (P_{ac}, a, c) \circ_p ((P_{ab}, a, b) \circ_s ((P_{bc}, b, c) \circ_b (C_b, b, y)) \circ_b (C_a, a, x) \circ_{b'} (C_c, c, z),\]
where $x$, $y$, and $z$ are arbitrary vertices of $C_a$, $C_b$, and $C_c$ respectively distinct from $a$, $b$, and $c$. 
By Theorem~\ref{theo:searching}, every subdivision of each of the graphs $(P_{ac}, a, c)$, $(P_{ab}, a, b)$, $(P_{bc}, b, c)$, $(C_b, b, y)$, $(C_a, a, x)$, $(C_c, c, z)$ admits a further subdivision that has a successful $3$-search aligned to the corresponding terminals; furthermore, it is easy to verify that if the graphs are sufficiently subdivided, then the searches constructed in the proof of Theorem~\ref{theo:searching} include at most one terminal vertex in each search set. 
Combining these searches via Propositions~\ref{prop:amalgamation_s}, \ref{prop:amalgamation_b}, \ref{prop:amalgamation_b2}, and \ref{prop:amalgamation_p} gives a successful $3$-search $\mathcal{S}$ on some subdivision of $G/\mathcal{E}$ in which at most one of $a$, $b$, or $c$ is contained in each search set, as desired.

Now note that every subdivision of $G / \mathcal{E}$ is the quotient graph under $\mathcal{E}$ of some subdivision of $G$. Let $G'$ be the subdivision of $G$ such that $G' / \mathcal{E}$ is the graph on which $\mathcal{S}$ is defined. Let $\mathcal{S}'$ be the search on $G'$ obtained by taking $\mathcal{S}$ and replacing each instance of $a$, $b$, or $c$ with the corresponding equivalence class in $\mathcal{E}$. Since at most one of $a$, $b$, or $c$ is in each $S_t$, we have $\abs{S_t'} \le \lceil\frac{n}{3}\rceil + 2$ and thus $\mathcal{S}'$ is a $(\lceil\frac{n}{3}\rceil + 2)$-search. By construction, $\mathcal{S}'$ is $\mathcal{E}$-invariant, and thus by Proposition~\ref{prop:quotient_search} it is successful, proving the claim.
\end{proof}

In particular, $\s_t(K_5) = \s_t(K_6) = 4$. It also follows that $4 \leq \s_t(K_7) \leq 5$, but we do not know which one is the correct value. We conjecture that $\lim_{n \to \infty} \s_t(K_n) = \infty$, but the best lower bound that we can currently prove is $4$. Indeed, we do not know is there exist \emph{any} graphs $G$ with $\s_t(G) > 4$!

\begin{prob}
    Is there $n \in \mathbb{N}$ such that $\s_t(K_n) > 4$? Is it true that in fact $\lim_{n \to \infty} \s_t(K_n) = \infty$? 
\end{prob}

Another interesting class of graphs to consider comprises planar graphs. It follows from Theorem~\ref{theo:classification} that outerplanar graphs have topological inspection number at most $3$, and that some planar graphs \ep{e.g., $K_4$} have topological inspection number $4$. We do not know if there is a constant upper bound that works for all planar graphs:

\begin{prob}
    Is there a constant $k$ such that $\s_t(G) \leq k$ for all planar graphs $G$? Does $k = 4$ work?
\end{prob}

    We finish the paper with an enticing conjecture that may provide a characterization of the inspection number in terms of vertex sets with small boundary. Observe that every lower bound on the inspection number established in this paper is proved using Proposition~\ref{prop:boundary} \ep{this includes Theorems \ref{theo:grids}, \ref{theo:trees}, \ref{res:1}, \ref{res:2}, and \ref{res:3}}. Namely, to show that $\s(G) > k$, we exhibit a subgraph $H$ of $G$ that does not contain a subset $C \subseteq V(H)$ of size close to a certain value and with $\abs{\partial_H(C)} < k$. Since this is the only method we have for proving lower bounds on $\s(G)$, it is natural to wonder if Proposition~\ref{prop:boundary} has a converse: 
    \begin{prob}
        If $\s(G) > k$, do there necessarily exist a subgraph $H \subseteq G$ and an integer $1 \leq i \leq \abs{V(H)}$ such that there is no subset $C \subseteq V(H)$ with $\abs{\partial_H(C)} < k$ and $i-k < \abs{C} < i$?
    \end{prob}
\probend
\printbibliography
\probend
\end{document}